\apptocmd{\lim}{\limits}{}{}
\newcommand{\model}{\models}
\theoremstyle{definition}
\newtheorem{thm}{Theorem}[section]
\newtheorem{theorem}[thm]{Theorem}
\newtheorem{lemma}[thm]{Lemma}
\newtheorem{cor}[thm]{Corollary}
\numberwithin{subcase}{case}
\theoremstyle{definition}
\newtheorem{definition}[thm]{Definition}
\newtheorem{corollary}[thm]{Corollary}
\newtheorem{remark}[thm]{Remark}
\newtheorem{example}[thm]{Example}
\def\forkindep{\mathrel{\raise0.2ex\hbox{\ooalign{\hidewidth$\vert$\hidewidth\cr\raise-0.9ex\hbox{$\smile$}}}}}
\def\Ind{\setbox0=\hbox{$x$}\kern\wd0\hbox to 0pt{\hss$\mid$\hss}
	\lower.9\ht0\hbox to 0pt{\hss$\smile$\hss}\kern\wd0}
\def\Notind{\setbox0=\hbox{$x$}\kern\wd0\hbox to 0pt{\mathchardef
		\nn=12854\hss$\nn$\kern1.4\wd0\hss}\hbox to 0pt{\hss$\mid$\hss}\lower.9\ht0
	\hbox to 0pt{\hss$\smile$\hss}\kern\wd0}
\def\phi{\varphi}
\def\<{\langle}
\def\>{\rangle}
\def\blfootnote{\xdef\@thefnmark{}\@footnotetext}
\begin{document}	

	\bibliographystyle{plain}
	
	\author{Douglas Ulrich\!\!\
	\thanks{Partially supported
by Laskowski's NSF grant DMS-1308546.}\\
Department of Mathematics\\University of California, Irvine}
	\title{Keisler's Order and  Full Boolean-Valued Models}
	\date{\today} 
	
	\blfootnote{2010 \emph{Mathematics Subject Classification:} 03C55.}
	\blfootnote{\emph{Key Words and Phrase:} Uncountable model theory, Keisler's Order, ultraproducts, Boolean-valued models.}
	
	\maketitle
	
	
\begin{abstract}
We prove a compactness theorem for full Boolean-valued models. As an application, we show that if $T$ is a complete countable theory and $\mathcal{B}$ is a complete Boolean algebra, then $\lambda^+$-saturated $\mathcal{B}$-valued models of $T$ exist. Moreover, if $\mathcal{U}$ is an ultrafilter on $T$ and $\mathbf{M}$ is a $\lambda^+$-saturated $\mathcal{B}$-valued model of $T$, then whether or not $\mathbf{M}/\mathcal{U}$ is $\lambda^+$-saturated just depends on $\mathcal{U}$ and $T$; we say that $\mathcal{U}$ $\lambda^+$-saturates $T$ in this case. We show that Keisler's order can be formulated as follows: $T_0 \trianglelefteq T_1$ if and only if for every cardinal $\lambda$, for every complete Boolean algebra $\mathcal{B}$ with the $\lambda^+$-c.c., and for every ultrafilter $\mathcal{U}$ on $\mathcal{B}$, if $\mathcal{U}$ $\lambda^+$-saturates $T_1$, then $\mathcal{U}$ $\lambda^+$-saturates $T_0$. 
\end{abstract}

Keisler's order is concerned with the following scenario: suppose we are given a structure $M$, a cardinal $\lambda$, and an ultrafilter $\mathcal{U}$ on $\mathcal{P}(\lambda)$, we wish to understand when $M^\lambda/\mathcal{U}$ is $\lambda^+$-saturated. This becomes a more robust property if we require either that $M$ is $\lambda^+$-saturated, or else that $\mathcal{U}$ is $\lambda$-regular (meaning there is a family $\mathcal{X} \subseteq \mathcal{U}$ of size $\lambda$, such that every infinite intersection from $\mathcal{X}$ is empty).

Keisler proved the following fundamental theorem in \cite{Keisler}:

\begin{theorem}\label{KeislerOrigSecond}
	Suppose $T$ is a complete countable theory, and $\mathcal{U}$ is an ultrafilter on $\mathcal{P}(\lambda)$, and $M_0, M_1 \models T$. Then the following both hold:
	
	\begin{itemize}
		\item[(A)] If $\mathcal{U}$ is $\lambda$-regular, then $M_0^\lambda/\mathcal{U}$ is $\lambda^+$-saturated if and only if $M_1^\lambda/\mathcal{U}$ is. 
		\item[(B)] If $M_0, M_1$ are both $\lambda^+$-saturated, then $M_0^\lambda/\mathcal{U}$ is $\lambda^+$-saturated if and only if $M_1^\lambda/\mathcal{U}$ is.
	\end{itemize}
\end{theorem}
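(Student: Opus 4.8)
The plan is to show that $\lambda^+$-saturation of $M^\lambda/\mathcal{U}$ is equivalent to a purely combinatorial condition on the pair $(\mathcal{U},T)$ that makes no reference to the particular model $M \models T$; once that is in hand, both (A) and (B) are immediate, since the right-hand condition is literally the same for $M_0$ and $M_1$. First I would reduce, by the usual argument, to realizing types $p(x)$ in a single variable over parameter sets of size $\le \lambda$, and enumerate such a type as $p = \{\psi_i(x,\bar a_i/\mathcal{U}) : i<\lambda\}$, where $\bar a_i = \langle \bar a_i(t) : t<\lambda\rangle$. To each finite $s \in [\lambda]^{<\omega}$ I attach the set
\[
A_s = \{t<\lambda : M \models \exists x\, \textstyle\bigwedge_{i\in s}\psi_i(x,\bar a_i(t))\},
\]
which is monotone decreasing in $s$. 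By {\L}o\'s's theorem, $p$ is consistent (finitely satisfiable) exactly when every $A_s \in \mathcal{U}$, and I call $s \mapsto A_s$ the distribution of $p$.

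The mechanism linking realization to the distribution is that $p$ is realized in $M^\lambda/\mathcal{U}$ if and only if its distribution admits a multiplicative refinement: a map $g : [\lambda]^{<\omega} \to \mathcal{U}$ with $g(s) \subseteq A_s$ and $g(s\cup s') = g(s)\cap g(s')$. The forward direction is bookkeeping: if $b/\mathcal{U}$ realizes $p$, set $g(s) = \bigcap_{i\in s}\{t : M \models \psi_i(b(t),\bar a_i(t))\}$. For the converse, given $g$, put $s_t = \{i : t \in g(\{i\})\}$; multiplicativity forces $\{\psi_i(x,\bar a_i(t)) : i\in s_t\}$ to be finitely satisfiable in $M$ for every $t$, and it remains to produce a witness $b(t)$ realizing it. This last step is exactly where the two hypotheses enter. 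Under (B), $M$ is $\lambda^+$-saturated and $|s_t|\le\lambda$, so finite satisfiability upgrades to realization in $M$ directly. Under (A), I would first intersect $g$ with a $\lambda$-regularizing family $\{X_i : i<\lambda\}\subseteq \mathcal{U}$ (replacing $g(\{i\})$ by $g(\{i\})\cap X_i$, so that each $t$ lies in only finitely many of the new columns); then each $s_t$ is finite, and finite satisfiability of a finite set of formulas gives the witness outright. Either way a witness at every coordinate yields a realization, so under either hypothesis $p$ is realized iff its distribution admits a multiplicative refinement.

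Since the existence of a multiplicative refinement of $(A_s)_s$ depends only on the sets $A_s\subseteq\lambda$ and on $\mathcal{U}$, saturation is now reduced to the condition: every consistent distribution arising from a model of $T$ admits a multiplicative refinement. The heart of the proof, and the step I expect to be the main obstacle, is showing this condition is model-independent, i.e.\ transferring a potential counterexample between $M_0$ and $M_1$. Concretely, given a consistent type $p$ over $M_1^\lambda/\mathcal{U}$, I must build a consistent type over $M_0^\lambda/\mathcal{U}$ whose distribution $(A'_s)$ satisfies $A'_s \subseteq A_s$ with each $A'_s \in \mathcal{U}$; then any multiplicative refinement of $(A'_s)$ is automatically one of $(A_s)$, so realizability transfers, and symmetry yields the biconditional. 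Under (B) this is clean: since $M_0$ is $\lambda^+$-saturated it realizes every $T$-consistent type in $\le\lambda$ parameters, so one copies the parameter-types of $p$ coordinate-by-coordinate into $M_0$ and recovers the same distribution. Under (A) the models need not be saturated, and a coordinatewise copy can fail because $M_0$ may omit the required parameter-types; here the work is to use regularity to spread the finitely many constraints active at each coordinate across the index set, invoking that any finite conjunction consistent with $T$ is, by completeness of $T$, satisfiable in every model of $T$. Making this spreading precise — so that the resulting distribution both refines $(A_s)$ and remains $\mathcal{U}$-large — is the genuinely delicate part of the argument; the remainder is routine.
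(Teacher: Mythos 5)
Your overall architecture is the right one and is essentially the route this paper takes: reduce saturation to the statement that every distribution of a consistent type has a multiplicative refinement in $\mathcal{U}$, prove the ``realized iff multiplicative refinement'' equivalence using either saturation of the base model (for (B)) or regularity of $\mathcal{U}$ (for (A)), and then transfer types between $M_0^\lambda$ and $M_1^\lambda$ so that the combinatorial condition is visibly model-independent. Your treatment of (B) is complete: copying the parameters coordinate-by-coordinate via $\lambda^+$-universality of $M_0$ reproduces the distribution exactly, which is precisely how the paper recovers (B) from Example~\ref{SatModelsExample} together with Theorem~\ref{JustifyDefOfSat}.

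The gap is in part (A), and you have flagged it yourself: the transfer of a type from $M_1^\lambda/\mathcal{U}$ to $M_0^\lambda/\mathcal{U}$ is asserted but not carried out, and the hint you give --- that any finite conjunction consistent with $T$ is satisfiable in every model of $T$ --- is not the statement you actually need. Two things are missing. First, to get a new distribution $(B_s)$ with $B_s \subseteq A_s$ you must transfer the \emph{negative} information as well: at each coordinate $t$, after cutting down by a regularizing family so that only a finite set $u_t$ of indices is active, you need parameters $\bar a'_i(t) \in M_0$ ($i \in u_t$) realizing the entire finite consistency \emph{pattern} of $(\bar a_i(t))_{i\in u_t}$ in $M_1$, i.e.\ for every $s \subseteq u_t$, the conjunction $\exists x\bigwedge_{i\in s}\psi_i(x,\bar a'_i(t))$ holds in $M_0$ if and only if it held in $M_1$; this is a single first-order sentence true in $M_1$, hence true in $M_0$ by completeness of $T$ --- that is the correct invocation, and it is exactly the paper's notion of an $(I,T,\overline\phi)$-possibility (Theorem~\ref{DistributionLocal2}). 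Second, the parameters $\bar a'_i(t)$ for $i\notin u_t$ must be set arbitrarily, and then $B_s$ need \emph{not} be contained in $A_s$ for $s\not\subseteq u_t$; one must instead observe that any multiplicative refinement $g$ of $(B_s)$ in $\mathcal{U}$ can be replaced by $g'(s) = g(s)\cap\bigcap_{i\in s}X_i$, which is still multiplicative, still in $\mathcal{U}$, and now lands inside $A_s$. This is the role of conservative refinements in the paper (Lemma~\ref{ConsRefinements} and Lemma~\ref{StrongRegularityHereditary}(IV)), and it is the content of the implication (C)$\Rightarrow$(B) in Theorem~\ref{UltCharKeisler}. Without these two points the ``delicate part'' is genuinely open rather than merely tedious; with them, your proof closes and coincides with the paper's.
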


Motivated by this theorem, Keisler investigated the following  pre-ordering $\trianglelefteq$ on complete first-order theories; $\trianglelefteq$ is now called Keisler's order.

\begin{definition}
	Suppose $\mathcal{U}$ is a $\lambda$-regular ultrafilter on $\mathcal{P}(\lambda)$. Then say that $\mathcal{U}$ $\lambda^+$-saturates $T$ if for some or every $M \models T$, $M^\lambda/\mathcal{U}$ is $\lambda^+$-saturated.

	Given complete countable theories $T_0, T_1$, say that $T_0 \trianglelefteq_\lambda T_1$ if whenever $\mathcal{U}$ is a $\lambda$-regular ultrafilter on $\mathcal{P}(\lambda)$, if $\mathcal{U}$ $\lambda^+$-saturates $T_1$ then $\mathcal{U}$ $\lambda^+$-saturates $T_0$. Say that $T_0 \trianglelefteq T_1$ if $T_0 \trianglelefteq_\lambda T_1$ for all $\lambda$.

\end{definition}

In \cite{ShelahIso}, Shelah connects the construction of regular ultrafilters on $\mathcal{P}(\lambda)$ with the construction of arbitrary ultrafilters on general Boolean algebras. Malliaris and Shelah elaborate on this theme in \cite{IndFunctUlts}, \cite{ConsUltrPOV}, and \cite{DividingLine}. Specifically, in \cite{DividingLine}, given an ultrafilter $\mathcal{U}$ on the complete Boolean algebra $\mathcal{B}$ and a complete countable theory $T$, the authors define what it means for $\mathcal{U}$ to be $(\lambda, \mathcal{B}, T)$ moral. For $\lambda$-regular ultrafilters on $\mathcal{P}(\lambda)$, $\mathcal{U}$ is $(\lambda, \mathcal{P}(\lambda), T)$ moral if and only if $\mathcal{U}$ $\lambda^+$-saturates $T$.

In \cite{DividingLine}, Malliaris and Shelah prove the following (it is the key consequence of their existence and separation of variables theorems). Parente \cite{Parente1} \cite{Parente2} has recently formulated this theorem in terms of saturation of Boolean ultrapowers, in the case when $\mathcal{U}$ is $\lambda$-regular.

\begin{theorem}\label{UltrafilterPullbacks0Second}
	Suppose $\mathcal{B}$ is a complete Boolean algebra with the $\lambda^+$-c.c. and with $|\mathcal{B}| \leq 2^\lambda$. Suppose $\mathcal{U}$ is an ultrafilter on $\mathcal{B}$. Then there is a $\lambda$-regular ultrafilter $\mathcal{U}_*$ on $\mathcal{P}(\lambda)$, such that for every complete countable theory $T$, $\mathcal{U}_*$ $\lambda^+$-saturates $T$ if and only if $\mathcal{U}$ is $(\lambda, \mathcal{B}, T)$ moral.
\end{theorem}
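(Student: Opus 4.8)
The plan is to realize $\mathcal{U}_*$ as a \emph{pullback} of $\mathcal{U}$ that comes prepackaged with a regularizing family, so that regularity and the dependence on $\mathcal{B}$ are cleanly separated. Concretely, I would first produce a $\lambda$-regular filter $D$ on $\lambda$ together with a surjective Boolean homomorphism $\pi : \mathcal{P}(\lambda) \to \mathcal{B}$ whose kernel contains $D$ (equivalently, a realization of $\mathcal{B}$ as the relevant quotient of $\mathcal{P}(\lambda)$ modulo $D$), and then set $\mathcal{U}_* := \pi^{-1}[\mathcal{U}] = \{A \subseteq \lambda : \pi(A) \in \mathcal{U}\}$. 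This is exactly where the two hypotheses on $\mathcal{B}$ enter: the bound $|\mathcal{B}| \leq 2^\lambda = |\mathcal{P}(\lambda)|$ is what gives $\mathcal{B}$ room to appear as such a quotient at all, while the $\lambda^+$-c.c. forces every antichain of $\mathcal{B}$ to have size $\leq \lambda$, matching the index set $\lambda$ and keeping $\pi$ compatible with the regular filter. Since $\pi$ is a surjective homomorphism and $\mathcal{U}$ an ultrafilter, $\mathcal{U}_*$ is an ultrafilter on $\mathcal{P}(\lambda)$; and since $\pi^{-1}(1_{\mathcal{B}}) \supseteq D$, any regularizing family witnessing $D$ lies in $\mathcal{U}_*$, so $\mathcal{U}_*$ is $\lambda$-regular.

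Next I would set up the dictionary between saturation of $M^\lambda/\mathcal{U}_*$ and morality of $\mathcal{U}$, which is the heart of the separation of variables. Fix $M \models T$; by $\lambda$-regularity and Theorem~\ref{KeislerOrigSecond}(A) the choice of $M$ is irrelevant. Given a type $p(x) = \{\varphi_i(x, \bar a_i) : i < \lambda\}$ over $M^\lambda/\mathcal{U}_*$ with each $\bar a_i$ represented by $\bar f_i : \lambda \to M$, I attach to each finite $s \subseteq \lambda$ the set $A_s = \{t < \lambda : M \models \exists x \bigwedge_{i \in s} \varphi_i(x, \bar f_i(t))\}$, which lies in $\mathcal{U}_*$ by \L o\'s's theorem and finite satisfiability, and then its image $b_s := \pi(A_s) \in \mathcal{U}$. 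The monotone assignment $s \mapsto b_s$ is precisely a possibility pattern over $\mathcal{B}$ of the kind appearing in the definition of $(\lambda, \mathcal{B}, T)$-morality, and realizing $p$ amounts to producing a multiplicative refinement $s \mapsto b'_s$ with $b'_s \leq b_s$, all $b'_s \in \mathcal{U}$, and $b'_s \wedge b'_{s'} = b'_{s \cup s'}$.

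The two directions then run as follows. For morality $\Rightarrow$ saturation, given $p$ I push forward to the pattern $s \mapsto b_s$, invoke $(\lambda,\mathcal{B},T)$-morality to obtain a multiplicative refinement $s \mapsto b'_s$ in $\mathcal{U}$, lift each $b'_s$ through $\pi$ to a set $B_s \in \mathcal{U}_*$, and then use $\lambda$-regularity to correct the $B_s$ into a genuinely multiplicative system of index sets from which a function $g : \lambda \to M$ with $[g]$ realizing $p$ can be assembled; this correction is the standard distribution argument, where the regularizing family spreads the $\lambda$ constraints of $p$ across the index set so that each coordinate sees only finitely many of them. For saturation $\Rightarrow$ morality, I argue contrapositively: a possibility pattern on $\mathcal{B}$ with no multiplicative refinement in $\mathcal{U}$ can, using surjectivity of $\pi$ and $|\mathcal{B}| \leq 2^\lambda$, be realized as the pattern attached to an honest type $p$ over some $M^\lambda/\mathcal{U}_*$, and unsolvability of the moral problem then forces $p$ to be omitted, so $\mathcal{U}_*$ fails to $\lambda^+$-saturate $T$.

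I expect the main obstacle to be the separation of variables correspondence itself --- verifying that the index-set combinatorics are entirely absorbed by $\lambda$-regularity, so that saturation is controlled purely by the Boolean data on $\mathcal{B}$. The two delicate points are the construction in the first paragraph (producing $\pi$, with a regularizing family in its kernel, faithfully presenting $\mathcal{B}$ as a quotient of $\mathcal{P}(\lambda)$, which is precisely where $\lambda^+$-c.c. and $|\mathcal{B}| \leq 2^\lambda$ are consumed) and the ``correction'' step in the morality $\Rightarrow$ saturation direction, where a refinement that is multiplicative only modulo $\pi$ must be turned into an actual realization. Both lean on the interplay of the $\lambda^+$-c.c. --- keeping all relevant antichains of size $\leq \lambda$ --- with $\lambda$-regularity. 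Since this combinatorial core is exactly the content of the existence and separation of variables theorems of Malliaris and Shelah, my plan is to isolate the pullback construction of $\mathcal{U}_*$ as above and then drive the equivalence by those theorems, rather than reprove the core from scratch.
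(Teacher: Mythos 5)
Your overall architecture matches the paper's: realize $\mathcal{U}_*$ as the pullback $\pi^{-1}[\mathcal{U}]$ along a surjective homomorphism $\pi\colon \mathcal{P}(\lambda)\to\mathcal{B}$, translate types over $M^\lambda/\mathcal{U}_*$ into distributions over $\mathcal{B}$, and reduce saturation to the existence of multiplicative refinements (this is exactly the Existence Theorem~\ref{ExistenceTheoremFirst} plus Separation of Variables~\ref{SeparationOfVariablesFirst}, combined with Theorem~\ref{DistributionsWork2}). But there is a genuine gap in your version of the construction: you only require the kernel filter $\mathcal{D}=\pi^{-1}(1_{\mathcal{B}})$ to contain a $\lambda$-\emph{regular} filter, whereas the property that actually drives the equivalence is that $\mathcal{D}$ be $\lambda^+$-\emph{good}. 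This is visible already in the degenerate case $\mathcal{B}=\{0,1\}$: there $\mathcal{U}$ is principal, hence $(\lambda,\mathcal{B},T)$-moral for every $T$, and $\mathcal{U}_*=\pi^{-1}(\mathcal{U})=\mathcal{D}$ is just a $\lambda$-regular ultrafilter on $\mathcal{P}(\lambda)$; the theorem would then assert that $\mathcal{U}_*$ is $\lambda^+$-good, which an arbitrary regular ultrafilter is not. So a surjection with merely regular kernel cannot suffice.

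The misstep is localized in your ``correction'' step. After lifting a multiplicative refinement $s\mapsto b'_s$ from $\mathcal{U}$ to sets $B_s\in\mathcal{U}_*$, the family $(B_s)$ can be taken genuinely multiplicative, but it refines the \L o\'s map $(A_s)$ only \emph{modulo} $\mathcal{D}$; repairing this to an honest refinement is not ``the standard distribution argument'' via regularity. In the paper it is done (Lemma~\ref{SepOfVarsLemma1}) by forming the error distribution $\mathbf{C}(s)=\bigwedge_{t\subseteq s}(A_t\vee\lnot B_t)$, which lies in $\mathcal{D}$, and taking a multiplicative refinement of $\mathbf{C}$ \emph{inside} $\mathcal{D}$ --- precisely an invocation of $\lambda^+$-goodness of $\mathcal{D}$. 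The same issue recurs in your converse direction, where lifting a pattern from $\mathcal{B}$ to an actual distribution on $\mathcal{P}(\lambda)$ again consumes goodness (Lemma~\ref{SepOfVarsLemma2}). Your closing remark that you would ``drive the equivalence by'' the Malliaris--Shelah theorems does rescue the plan, but only because the Existence Theorem supplies exactly the good kernel that your own first paragraph omits; as written, the construction you describe does not produce the object those theorems need. (The paper, incidentally, goes further: Theorem~\ref{UltrafilterPullbacks} removes the hypothesis $|\mathcal{B}|\leq 2^\lambda$ by an additional downward L\"{o}wenheim--Skolem step, but that is not needed for the statement you were asked to prove.)
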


 Malliaris and Shelah then introduce the following general strategy: find a special Boolean algebra $\mathcal{B}$ with the $\lambda$-c.c., and construct an ultrafilter $\mathcal{U}$ on $\mathcal{B}$ which is as generic as possible. The chain condition will prevent $\mathcal{U}$ from being $(\lambda, \mathcal{B}, T)$-moral for complicated theories $T$, and genericity will ensure that $\mathcal{U}$ is $(\lambda, \mathcal{B}, T)$-moral for less complicated theories $T$. Malliaris and Shelah apply this technique further in \cite{InfManyClass}, \cite{Optimals}.

Suppose $\mathcal{B}$ is a complete Boolean algebra. The combinatorial definition of $(\lambda, \mathcal{B}, T)$-morality is technical, and in order to show it holds or fails of some particular ultrafilter, Malliaris and Shelah frequently find it necessary to pull back to a $\lambda$-regular ultrafilter on $\mathcal{P}(\lambda)$ by Theorem~\ref{UltrafilterPullbacks0Second}.  We will show how to avoid this, which will avoid a significant amount of overhead in these style of proofs. The key concept is that of a full $\mathcal{B}$-valued $\mathcal{L}$-structure; this is a pair $(\mathbf{M}, \| \cdot \|)$ where $\mathbf{M}$ is a set and $\|\cdot \|$ is a map from the formulas of $\mathcal{L}$ with parameters from $\mathbf{M}$, into $\mathcal{B}$, which satisfies a certain list of axioms. For example, if $M$ is an ordinary $\mathcal{L}$-structure, then $M^\lambda$ is a full $\mathcal{P}(\lambda)$-valued structure, with $\|\phi(f_i: i < n)\|_{M^\lambda} = \{\alpha < \lambda: M \models \phi(f_i(\alpha): i < n)\}$. 

In Section~\ref{SurveyCompactnessSection}, we prove Corollary~\ref{Compactness}, a compactness theorem for full $\mathcal{B}$-valued models; this is the cornerstone of our development. The following is a simplified version:

\begin{theorem}\label{CompactnessFirst} Suppose $\mathcal{B}$ is a complete Boolean algebra, $X$ is a set, and $F: \mathcal{L}(X) \to \mathcal{B}$. Then the following are equivalent:
	
	\begin{itemize}
		\item[(A)] There is some full $\mathcal{B}$-valued structure $\mathbf{M} \supseteq X$ such that $\| \cdot\|_{\mathbf{M}}$ extends $F$;
		\item[(B)] For every finite $\Gamma \subseteq \mathcal{L}(X)$, there is some full $\mathcal{B}$-valued structure $\mathbf{M} \supseteq X$ such that $\|\cdot\|_{\mathbf{M}}$ extends $F \restriction_\Gamma$.
	\end{itemize}
	
\end{theorem}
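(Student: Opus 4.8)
The implication (A)$\Rightarrow$(B) is immediate: a single full $\mathcal{B}$-valued $\mathbf{M} \supseteq X$ whose value function extends $F$ also extends every restriction $F \restriction_\Gamma$. So the whole content lies in (B)$\Rightarrow$(A), a genuine compactness statement, and the plan is to reduce it to ordinary first-order compactness as interpreted inside the Boolean-valued universe $V^{\mathcal{B}}$, where all truth values automatically live in $\mathcal{B}$.

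The conceptual backbone I would establish first is the dictionary between the external notion of a full $\mathcal{B}$-valued $\mathcal{L}$-structure and the internal notion of (a $\mathcal{B}$-name for) an $\mathcal{L}$-structure living in $V^{\mathcal{B}}$: to a name $\dot{\mathbf{M}}$ with $\val(\dot{\mathbf{M}} \text{ is an } \mathcal{L}\text{-structure}) = 1$ one associates the value function $\|\phi(\bar a)\|_{\mathbf{M}} = \val(\dot{\mathbf{M}} \models \phi(\bar a))$, where the elements of $\mathbf{M}$ are names $\dot a$ with $\val(\dot a \in \dot{\mathbf{M}}) = 1$ and the parameters $X$ are the canonical names $\check x$. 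Under this correspondence the fullness axioms are exactly the maximum principle for $V^{\mathcal{B}}$ (a witness $a$ with $\|\phi(a)\|_{\mathbf{M}} = \|\exists x\,\phi(x)\|_{\mathbf{M}}$ is precisely a name attaining the value $\val(\dot{\mathbf{M}} \models \exists x\,\phi(x))$), and every full $\mathcal{B}$-valued structure arises this way. Setting up this equivalence carefully — handling arbitrary parameters and giving the {\L}o\'s-style verification that the two value functions agree — is the technical heart of the argument and, I expect, the main obstacle; everything after it is a soft transfer.

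Granting the dictionary, I would argue as follows. Let $\dot G$ be the canonical name for the generic ultrafilter on $\check{\mathcal{B}}$, and recall that $\val(\check b \in \dot G) = b$ for every $b \in \mathcal{B}$. Working inside $V^{\mathcal{B}}$, I form the theory (in the language $\mathcal{L}$ with constants for $X$)
$$\Sigma = \{\, \phi : \check F(\phi) \in \dot G \,\} \cup \{\, \neg\phi : \check F(\phi) \notin \dot G \,\},$$
so that for each standard $\phi \in \mathcal{L}(X)$ (identified with its canonical name) one has $\val(\phi \in \Sigma) = \val(\check F(\phi) \in \dot G) = F(\phi)$. A model $\dot{\mathbf{M}}$ of $\Sigma$ in $V^{\mathcal{B}}$ is, via the dictionary, exactly a full $\mathcal{B}$-valued $\mathbf{M} \supseteq X$ with $\|\phi\|_{\mathbf{M}} = \val(\dot{\mathbf{M}} \models \phi) = \val(\phi \in \Sigma) = F(\phi)$, i.e. one extending $F$. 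Since $V^{\mathcal{B}}$ is a Boolean-valued model of ZFC, the completeness/compactness theorem for first-order logic holds there with value $1$, so it suffices to show that every finite fragment of $\Sigma$ has a model with value $1$.

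This last point is where hypothesis (B) enters. Fix a finite fragment, involving formulas from some finite $\Gamma \subseteq \mathcal{L}(X)$, and let $\mathbf{M}_\Gamma$ be a full $\mathcal{B}$-valued structure extending $F \restriction_\Gamma$. Its name $\dot{\mathbf{M}}_\Gamma$ satisfies $\val(\dot{\mathbf{M}}_\Gamma \models \phi) = F(\phi) = \val(\phi \in \Sigma)$ for every $\phi \in \Gamma$, and hence $\val\big((\dot{\mathbf{M}}_\Gamma \models \phi) \leftrightarrow \phi \in \Sigma\big) = 1$; that is, $\dot{\mathbf{M}}_\Gamma$ models the given finite fragment with value $1$. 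Internal compactness then produces a model of all of $\Sigma$, and the maximum principle extracts an actual name $\dot{\mathbf{M}}$, which the dictionary converts back into the desired full $\mathcal{B}$-valued $\mathbf{M} \supseteq X$ extending $F$. I would remark that the more naive route — taking a product $\prod_\Gamma \mathbf{M}_\Gamma$ over the finite fragments and quotienting by an ultrafilter $\mathcal{D}$ on the index set — fails precisely because the result is $(\mathcal{B}^I/\mathcal{D})$-valued and there is in general no complete homomorphism $\mathcal{B}^I/\mathcal{D} \to \mathcal{B}$ fixing the constants along which to push the quantifier witnesses back into $\mathcal{B}$; passing through $V^{\mathcal{B}}$ is exactly what keeps every Boolean value in $\mathcal{B}$ for free.
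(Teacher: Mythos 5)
Your proposal is correct, but it takes a genuinely different route from the paper's. The paper proves this statement via Theorem~\ref{Compactness0} and Corollary~\ref{Compactness}, whose proof is a direct Henkin construction: one builds an assignment $G_*$ on $\mathcal{L}(X_*)$ (for a set $X_*$ of Henkin constants) by transfinitely extending partial assignments, using the satisfiability of the auxiliary theories $T_{\mathbf{c},\Delta,G}$ below each nonzero $\mathbf{c} \in \mathcal{B}$ to choose a consistent value for each new formula and to add existential witnesses, and then reads off the full $\mathcal{B}$-valued structure from $G_*$. Your route --- passing to $V^{\mathcal{B}}$, encoding $F$ as the internal theory $\Sigma$ determined by the generic ultrafilter, applying internal compactness, and pulling the resulting name back through the maximum principle --- is precisely the ``rather slicker proof'' that the paper explicitly mentions just before Theorem~\ref{Compactness0} and deliberately declines to carry out. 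The trade-off is real: your argument is conceptually transparent and outsources the combinatorics to known theorems about $V^{\mathcal{B}}$, but it requires setting up the full dictionary between full $\mathcal{B}$-valued structures and $\mathcal{B}$-names for $\mathcal{L}$-structures (including the {\L}o\'s-style induction in both directions and the identification of fullness with the maximum principle), which the paper never develops and which is comparable in length to the Henkin argument it replaces; the paper's proof is self-contained and accessible to a reader with no forcing background. You correctly identify this dictionary as the technical heart, and your handling of the finite-fragment step (reducing an internal finite subset of $\Sigma$ to a ground-model finite $\Gamma$ below a dense set of conditions, then invoking (B)) is sound; the only detail you gloss over is axiom (7) --- the injectivity of $x \mapsto \check{x}$ in the quotient structure --- which follows because (B) applied to $\Gamma = \{a = b\}$ forces $F(a=b) < 1$ for distinct $a, b \in X$, exactly as the paper notes in the remark following Corollary~\ref{Compactness}.
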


In Section~\ref{SurveyBVMSec}, we give a first application of Corollary~\ref{Compactness} to show that for every $T$ and for every complete Boolean algebra $\mathcal{B}$, there are $\lambda^+$-saturated full $\mathcal{B}$-valued models of $T$. For example, if $M$ is a $\lambda^+$-saturated model of $T$, then $M^\lambda$ is a $\lambda^+$-saturated $\mathcal{P}(\lambda)$-valued model of $T$; this is Example~\ref{SatModelsExample}.

In Sections~\ref{SurveyKOSec} and ~\ref{SurveySatUltSec} (namely Theorems~\ref{JustifyDefOfSat} and ~\ref{DistributionsWork2}), we prove the following:

\begin{theorem}
	Suppose $T$ is a complete countable theory, $\mathcal{B}$ is a complete Boolean algebra, and $\mathcal{U}$ is an ultrafilter on $\mathcal{B}$. Then the following are equivalent, for all $\lambda$:
	\begin{itemize}
		\item[(I)] $\mathcal{U}$ is $(\lambda, \mathcal{B}, T)$-moral;
		\item[(II)] For some $\lambda^+$-saturated full $\mathcal{B}$-valued model $\mathbf{M}$ of $T$, the specialization $\mathbf{M}/\mathcal{U}$ is $\lambda^+$-saturated;
		\item[(III)] For every $\lambda^+$-saturated full $\mathcal{B}$-valued model $\mathbf{M}$ of $T$, the specialization $\mathbf{M}/\mathcal{U}$ is $\lambda^+$-saturated.
	\end{itemize}
\end{theorem}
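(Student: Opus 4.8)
The plan is to prove the cycle (I) $\Rightarrow$ (III) $\Rightarrow$ (II) $\Rightarrow$ (I), so that the two substantive directions are precisely the ones linking morality to saturation of specializations; the implication (III) $\Rightarrow$ (II) is then nothing more than the existence of $\lambda^+$-saturated full $\mathcal{B}$-valued models of $T$ established in Section~\ref{SurveyBVMSec}. Throughout I will use the {\L}o\'s-style principle for specializations, that $\mathbf{M}/\mathcal{U} \models \phi([\bar a])$ if and only if $\|\phi(\bar a)\|_{\mathbf{M}} \in \mathcal{U}$, together with the two structural features of full $\mathcal{B}$-valued models that do all the work: fullness, which gives $\|\exists x\, \phi(x,\bar a)\|_{\mathbf{M}} = \bigvee_{c \in \mathbf{M}} \|\phi(c,\bar a)\|_{\mathbf{M}}$ with the supremum attained by some $c \in \mathbf{M}$, and multiplicativity, which gives $\|\phi \wedge \psi\|_{\mathbf{M}} = \|\phi\|_{\mathbf{M}} \wedge \|\psi\|_{\mathbf{M}}$.

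For (I) $\Rightarrow$ (III), I fix a $\lambda^+$-saturated full $\mathcal{B}$-valued model $\mathbf{M}$ of $T$ and a finitely satisfiable type $p(x) = \{\phi_i(x,[\bar a_i]) : i < \lambda\}$ over $\mathbf{M}/\mathcal{U}$. Lifting parameters to $\mathbf{M}$ and applying {\L}o\'s, finite satisfiability yields $\mathbf{b}_u := \|\exists x \bigwedge_{i \in u} \phi_i(x,\bar a_i)\|_{\mathbf{M}} \in \mathcal{U}$ for every finite $u \subseteq \lambda$, and since these values come from an actual finitely satisfiable type in the model $\mathbf{M}$ of $T$, the map $u \mapsto \mathbf{b}_u$ is a $T$-distribution. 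Morality then supplies a multiplicative refinement $u \mapsto \mathbf{c}_u$ with $\mathbf{c}_u \le \mathbf{b}_u$, $\mathbf{c}_u \in \mathcal{U}$, and $\mathbf{c}_u = \bigwedge_{i \in u} \mathbf{c}_{\{i\}}$. Now consider the $\mathcal{B}$-valued type over $\mathbf{M}$ asking that $\|\phi_i(x,\bar a_i)\|_{\mathbf{M}} \ge \mathbf{c}_{\{i\}}$ for each $i$; multiplicativity together with $\mathbf{c}_u \le \mathbf{b}_u$ shows every finite fragment is satisfiable in $\mathbf{M}$, so $\lambda^+$-saturation of $\mathbf{M}$ produces a single $c$ realizing it. Since $\mathbf{c}_{\{i\}} \in \mathcal{U}$ forces $\|\phi_i(c,\bar a_i)\|_{\mathbf{M}} \in \mathcal{U}$, the class $[c]$ realizes $p$, and $\mathbf{M}/\mathcal{U}$ is $\lambda^+$-saturated.

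For (II) $\Rightarrow$ (I), I fix a $\lambda^+$-saturated full $\mathbf{M}$ with $\mathbf{M}/\mathcal{U}$ $\lambda^+$-saturated and let $u \mapsto \mathbf{b}_u$ be an arbitrary $T$-distribution. The first task is to realize its witnessing configuration inside $\mathbf{M}$: using Corollary~\ref{Compactness} I would build a full $\mathcal{B}$-valued model of $T$ containing both $\mathbf{M}$ and parameters $\bar a_i$ exhibiting the pattern of the distribution, then use $\lambda^+$-saturation of $\mathbf{M}$ to pull these parameters into $\mathbf{M}$ while preserving their Boolean values. This produces a finitely satisfiable type $\{\phi_i(x,[\bar a_i]) : i < \lambda\}$ over $\mathbf{M}/\mathcal{U}$, which by hypothesis is realized by some $[c]$. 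Setting $\mathbf{c}_u := \mathbf{b}_u \wedge \bigwedge_{i \in u} \|\phi_i(c,\bar a_i)\|_{\mathbf{M}}$ gives the required multiplicative refinement: it lies below $\mathbf{b}_u$ by construction, lies in $\mathcal{U}$ because $[c]$ realizes the type, and is multiplicative automatically because the Boolean value of a conjunction in a full model is the meet of the Boolean values. This last point is exactly where fullness and multiplicativity convert a single realizing element into a multiplicative system, and it is the conceptual payoff of the whole reformulation.

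The main obstacle, in both nontrivial directions, is making the translation between a distribution and a type precise enough that the matching of Boolean values is exact rather than merely approximate; concretely, the hardest step is the transfer in (II) $\Rightarrow$ (I) of the abstract witnessing configuration of a $T$-distribution into the fixed model $\mathbf{M}$, which is where I expect to lean most heavily on the compactness theorem (Corollary~\ref{Compactness}) together with the $\lambda^+$-saturation of $\mathbf{M}$. Once the dictionary ``finite satisfiability of the type $\leftrightarrow$ the values $\mathbf{b}_u$ lie in $\mathcal{U}$'' and ``single realizing element $\leftrightarrow$ multiplicative refinement'' is in place, the equivalences fall out; and since (II) and (III) are each shown equivalent to (I) by this cycle, their mutual equivalence (the content of Theorem~\ref{JustifyDefOfSat}, which justifies that $\lambda^+$-saturation of $\mathbf{M}/\mathcal{U}$ depends only on $\mathcal{U}$ and $T$ and not on the choice of $\mathbf{M}$) is immediate.
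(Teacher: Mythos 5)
Your skeleton is essentially the paper's own route: the paper factors the statement through Theorem~\ref{JustifyDefOfSat} (which gives (II)$\Leftrightarrow$(III) via universality and a pullback argument), Theorem~\ref{LosToMult2} and Theorem~\ref{DistributionsWork} (the dictionary ``realization of $[p]_{\mathcal{U}}$ in some $\mathbf{N}\succeq\mathbf{M}$ $\leftrightarrow$ multiplicative refinement of $\mathbf{L}_{p}$ in $\mathcal{U}$''), and finally Lemma~\ref{ConsRefinements} to reach morality. Your (I)$\Rightarrow$(III) is a compression of that chain and is repairable as stated, but note that ``$\lambda^+$-saturation of $\mathbf{M}$ produces a single $c$'' hides a two-step move you should make explicit: the multiplicative refinement first yields, via Corollary~\ref{Compactness}, an element $c$ in some \emph{extension} $\mathbf{N}\succeq\mathbf{M}_0$ of a small $\mathbf{M}_0\preceq\mathbf{M}$ containing the parameters (this is Theorem~\ref{LosToMult2}); only then does $\lambda^+$-saturation of $\mathbf{M}$, in the paper's sense of extending elementary maps of small submodels, let you copy $c$ back into $\mathbf{M}$. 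The ``$\mathcal{B}$-valued type asking $\|\phi_i(x,\overline{a}_i)\|_{\mathbf{M}}\geq\mathbf{c}_{\{i\}}$'' is not a partial type in the paper's sense, so it cannot be fed to the saturation hypothesis directly.

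The genuine gap is in (II)$\Rightarrow$(I): you never engage with the actual definition of $(\lambda,\mathcal{B},T)$-morality, which quantifies over $(\lambda,T)$-\emph{possibilities}, i.e.\ conservative refinements of {\L}o{\'s} maps, not over {\L}o{\'s} maps themselves. Your argument transfers the ``witnessing configuration'' of a distribution $\mathbf{b}$ into $\mathbf{M}$ and reads off $\mathbf{b}_u=\|\exists x\bigwedge_{i\in u}\phi_i(x,\overline{a}_i)\|_{\mathbf{M}}$; that identity holds only for {\L}o{\'s} maps. For a general possibility $\mathbf{b}_u=\mathbf{A}(u)\wedge\bigwedge_{i\in u}\mathbf{C}(\{i\})$ the configuration realized in $\mathbf{M}$ witnesses the underlying {\L}o{\'s} map $\mathbf{A}$, not $\mathbf{b}$, so your $\mathbf{c}_u:=\mathbf{b}_u\wedge\bigwedge_{i\in u}\|\phi_i(c,\overline{a}_i)\|_{\mathbf{M}}$ needs a separate (short, but nontrivial) verification of multiplicativity using the conservativity of the refinement --- or, as the paper does, you should prove the statement for {\L}o{\'s} maps only and then invoke Lemma~\ref{ConsRefinements}, which says that a conservative refinement in $\mathcal{U}$ has a multiplicative refinement in $\mathcal{U}$ if and only if the original distribution does. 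As written, your (II)$\Rightarrow$(I) establishes only clause (B) of Theorem~\ref{DistributionsWork}, which is weaker than morality until that bridge is supplied.
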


We believe this justifies the following terminology.

\begin{definition}
	Suppose $T$ is a complete countable theory, and $\mathcal{U}$ is an ultrafilter on the complete Boolean algebra $\mathcal{B}$. Then $\mathcal{U}$ $\lambda^+$-saturates $T$ if $\mathcal{U}$ is $(\lambda, \mathcal{B}, T)$-moral.
\end{definition}

We note that Example~\ref{SatModelsExample} and Theorem~\ref{JustifyDefOfSat} together give a new proof of Keisler's Theorem~\ref{KeislerOrigSecond}(B): if $M \models T$ is $\lambda^+$-saturated, then $M^\lambda$ is also $\lambda^+$-saturated. Thus $M^\lambda/\mathcal{U}$ is $\lambda^+$-saturated if and only if $\mathcal{U}$ is $(\lambda, \mathcal{B}, T)$-moral, which does not depend on the choice of $M$.

Additionally, in Theorem~\ref{UltCharKeisler} we give a natural generalization of Theorem~\ref{KeislerOrigSecond}(A) to the context of complete Boolean algebras. This was first announced by Parente \cite{Parente1} \cite{Parente2} in 2017; we were unaware and obtained the result independently. We remark that the definition of $\lambda$-regularity for general complete Boolean algebras is not the straightforward lift of the definition for $\mathcal{P}(\lambda)$. 

\begin{theorem}\label{UltCharKeislerFirst}
	Suppose $\mathcal{B}$ is a complete Boolean algebra, and $\mathcal{U}$ is a $\lambda$-regular ultrafilter on $\mathcal{B}$. Then $\mathcal{U}$ $\lambda^+$-saturates $T$ if and only if for some or every $M \models T$, $M^{\mathcal{B}}/\mathcal{U}$ is $\lambda^+$-saturated.
\end{theorem}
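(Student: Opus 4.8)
The plan is to reduce Theorem~\ref{UltCharKeislerFirst} to the combinatorial condition of $(\lambda, \mathcal{B}, T)$-morality, which by definition is exactly what ``$\mathcal{U}$ $\lambda^+$-saturates $T$'' means, and then to analyze directly when the Boolean ultrapower $M^{\mathcal{B}}/\mathcal{U}$ is $\lambda^+$-saturated. First I would record that $M^{\mathcal{B}}$ is itself a full $\mathcal{B}$-valued model of $T$ --- the evident generalization of the $\mathcal{P}(\lambda)$ computation in Example~\ref{SatModelsExample}, with $\|\phi(\bar a)\|$ given by the mixing/Boolean value --- so that $M^{\mathcal{B}}/\mathcal{U}$ is the specialization of a full $\mathcal{B}$-valued model in the sense of the equivalence (I)$\Leftrightarrow$(II)$\Leftrightarrow$(III) established above. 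The essential point, however, is that unlike the saturated models appearing in that equivalence, $M^{\mathcal{B}}$ need not be $\lambda^+$-saturated as a $\mathcal{B}$-valued structure; the role of $\lambda$-regularity will be precisely to compensate for this, in the same way that regularity compensates for an arbitrary index model in Keisler's Theorem~\ref{KeislerOrigSecond}(A).

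Next I would set up the correspondence between types and distributions. Given a type $p(x) = \{\phi_i(x, \bar a_i) : i < \lambda\}$ over a subset of $M^{\mathcal{B}}/\mathcal{U}$ of size $\leq \lambda$, the Boolean values $d_p(s) = \|\exists x \bigwedge_{i \in s} \phi_i(x, \bar a_i)\|_{M^{\mathcal{B}}}$, for finite $s \subseteq \lambda$, assemble into a distribution (a map from finite subsets of $\lambda$ into $\mathcal{B}$), all of whose values lie in $\mathcal{U}$ precisely because $p$ is finitely satisfiable in the specialization. Realizing $p$ in $M^{\mathcal{B}}/\mathcal{U}$ then unwinds, via fullness of $M^{\mathcal{B}}$ and the definition of the specialization, to the existence of a multiplicative refinement of $d_p$ lying in $\mathcal{U}$ --- that is, to the defining instance of morality for $d_p$. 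Conversely I would show that every $(\lambda, \mathcal{B}, T)$-distribution is, up to the relevant refinement-equivalence, of the form $d_p$ for some type $p$ over $M^{\mathcal{B}}/\mathcal{U}$, and here is where $\lambda$-regularity does the work: a regularizing system for $\mathcal{U}$ supplies $\lambda$ many ``coordinates'' of finite support, along which an arbitrary distribution can be spread so that each coordinate carries only finitely much of the pattern, which the single fixed model $M$ can witness. The mixing property of the Boolean ultrapower then reassembles these coordinatewise witnesses into genuine parameters $\bar a_i \in M^{\mathcal{B}}$ inducing the target distribution.

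Granting this correspondence, the theorem follows formally: $M^{\mathcal{B}}/\mathcal{U}$ is $\lambda^+$-saturated iff every type over a set of size $\leq \lambda$ is realized, iff every distribution $d_p$ admits a multiplicative refinement in $\mathcal{U}$, iff (by surjectivity of the correspondence for the given $M$) every $(\lambda, \mathcal{B}, T)$-distribution does, iff $\mathcal{U}$ is $(\lambda, \mathcal{B}, T)$-moral, i.e.\ $\mathcal{U}$ $\lambda^+$-saturates $T$. Since the final condition mentions neither $M$ nor the particular type, this argument establishes the ``for some'' and ``for every'' forms simultaneously. I expect the main obstacle to be exactly the surjectivity-and-model-independence step: for a general complete Boolean algebra the ``coordinates'' are not honest indices but must be manufactured from a regularizing family in $\mathcal{B}$, and --- as the remark preceding the theorem stresses --- the correct notion of $\lambda$-regularity here is not the naive transcription of the $\mathcal{P}(\lambda)$ definition. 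Verifying that this notion genuinely yields the finite-support decomposition needed to realize an arbitrary distribution over a fixed $M$, while the Boolean values recombine to the prescribed distribution, is the crux on which the whole equivalence turns.
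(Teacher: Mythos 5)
Your proposal is correct and follows essentially the same route as the paper's proof of Theorem~\ref{UltCharKeisler}: reduce both sides to the existence of multiplicative refinements of distributions, use $\lambda$-regularity to replace an arbitrary distribution (or {\L}o{\'s} map of a type over $M^{\mathcal{B}}$) by a conservative $\lambda$-regular refinement whose restriction to each member of a deciding maximal antichain involves only finitely many formulas, witness these finite fragments in the single model $M$, and mix along the antichain to produce parameters (respectively, a realization) in $M^{\mathcal{B}}$. The ``refinement-equivalence'' you invoke is precisely the paper's Lemma~\ref{ConsRefinements} combined with Lemma~\ref{StrongRegularityHereditary}(IV), and your finite-support decomposition plus mixing is exactly how the paper proves both (B)$\Rightarrow$(D) and (C)$\Rightarrow$(B).
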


In Corollary~\ref{KeislerEquivs}, we generalize Malliaris and Shelah's Separation of Variables and Existence Theorem, allowing us to drop the hypothesis that $|\mathcal{B}| \leq 2^\lambda$, and also replace $\mathcal{P}(\lambda)$ by any complete Boolean algebra with an antichain of size $\lambda$. As a special case, we obtain the following theorem.  In most applications, the Boolean algebras we deal with will have the $\lambda$-c.c. and so do not admit $\lambda$-regular ultrafilters, and so it is very convenient to be able to reason directly with non-regular ultrafilters.

\begin{theorem}\label{UltrafilterPullbacks0p5}
	Suppose $T_0, T_1$ are complete countable theories, and $\lambda$ is a cardinal. Then $T_0 \trianglelefteq_\lambda T_1$ if and only if for every complete Boolean algebra $\mathcal{B}$ with the $\lambda^+$-c.c., and for every ultrafilter $\mathcal{U}$ on $\mathcal{B}$, if $\mathcal{U}$ $\lambda^+$-saturates $T_1$, then $\mathcal{U}$ $\lambda^+$-saturates $T_0$.
\end{theorem}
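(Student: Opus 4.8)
The plan is to prove both implications of the biconditional, using as the bridge the fact (recorded in the discussion preceding Theorem~\ref{UltrafilterPullbacks0Second}) that for a $\lambda$-regular ultrafilter $\mathcal{U}$ on $\mathcal{P}(\lambda)$ the two notions of ``$\mathcal{U}$ $\lambda^+$-saturates $T$'' coincide: the Keisler-order notion (for some/every $M \models T$, $M^\lambda/\mathcal{U}$ is $\lambda^+$-saturated) agrees with $(\lambda, \mathcal{P}(\lambda), T)$-morality. The one structural observation I need at the outset is that $\mathcal{P}(\lambda)$ is itself a complete Boolean algebra with the $\lambda^+$-c.c.: any family of pairwise disjoint nonempty subsets of $\lambda$ has size at most $\lambda$, so every antichain has size $< \lambda^+$. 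Thus $\mathcal{P}(\lambda)$ is a legitimate instance of the algebras quantified over on the right-hand side.

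For the easy direction, assume the right-hand side and let $\mathcal{U}$ be a $\lambda$-regular ultrafilter on $\mathcal{P}(\lambda)$ that $\lambda^+$-saturates $T_1$ in the Keisler-order sense. By the coincidence of notions this says $\mathcal{U}$ is $(\lambda, \mathcal{P}(\lambda), T_1)$-moral, i.e.\ $\mathcal{U}$ $\lambda^+$-saturates $T_1$ in the Boolean-algebra sense; applying the right-hand side to the algebra $\mathcal{B} = \mathcal{P}(\lambda)$ gives that $\mathcal{U}$ $\lambda^+$-saturates $T_0$, i.e.\ $\mathcal{U}$ is $(\lambda, \mathcal{P}(\lambda), T_0)$-moral, which translates back to Keisler-order saturation of $T_0$. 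Hence $T_0 \trianglelefteq_\lambda T_1$.

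For the substantive direction, assume $T_0 \trianglelefteq_\lambda T_1$ and let $\mathcal{B}$ be any complete Boolean algebra with the $\lambda^+$-c.c.\ and $\mathcal{U}$ any ultrafilter on $\mathcal{B}$ with $\mathcal{U}$ $\lambda^+$-saturating $T_1$ (that is, $(\lambda, \mathcal{B}, T_1)$-moral). The idea is to transport the situation to $\mathcal{P}(\lambda)$: by the pullback theorem I want a single $\lambda$-regular ultrafilter $\mathcal{U}_*$ on $\mathcal{P}(\lambda)$ with the property that, simultaneously for every complete countable theory $T$, $\mathcal{U}_*$ $\lambda^+$-saturates $T$ if and only if $\mathcal{U}$ is $(\lambda, \mathcal{B}, T)$-moral. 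Granting such a $\mathcal{U}_*$, the argument closes immediately: $\mathcal{U}_*$ $\lambda^+$-saturates $T_1$, so by $T_0 \trianglelefteq_\lambda T_1$ it $\lambda^+$-saturates $T_0$, whence $\mathcal{U}$ is $(\lambda, \mathcal{B}, T_0)$-moral, which is exactly ``$\mathcal{U}$ $\lambda^+$-saturates $T_0$.'' When $|\mathcal{B}| \leq 2^\lambda$ the existence of $\mathcal{U}_*$ is precisely Theorem~\ref{UltrafilterPullbacks0Second}, and I note that no antichain hypothesis on $\mathcal{B}$ is needed there --- the $\lambda$-regularity of $\mathcal{U}_*$ is produced on the $\mathcal{P}(\lambda)$ side --- so the $\lambda$-c.c.\ algebras that arise in applications are already covered in this size range.

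The main obstacle is therefore exactly the removal of the cardinality restriction $|\mathcal{B}| \leq 2^\lambda$, which is the content of Corollary~\ref{KeislerEquivs}. The plan here is a localization/reflection argument exploiting the $\lambda^+$-c.c.: both a candidate distribution witnessing a morality question and any multiplicative refinement of it involve only $\leq \lambda$ elements of $\mathcal{B}$, and in a $\lambda^+$-c.c.\ algebra every supremum is already realized by a subfamily of size $\leq \lambda$. Consequently the $\lambda^+$-complete subalgebra generated by $\lambda$ elements is in fact complete and has size $\leq \lambda^\lambda = 2^\lambda$. I would show that $(\lambda, \mathcal{B}, T)$-morality of $\mathcal{U}$ is detected inside such small complete subalgebras $\mathcal{B}_0$ --- pushing a refinement found in $\mathcal{B}$ down to $\mathcal{B}_0$ via the canonical projection $\mathcal{B} \to \mathcal{B}_0$ attached to a complete subalgebra, and working with $\mathcal{U} \restriction \mathcal{B}_0$ --- thereby reducing the general case to the case $|\mathcal{B}| \leq 2^\lambda$ handled by Theorem~\ref{UltrafilterPullbacks0Second}. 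Making this reflection precise, in particular checking that passing to $\mathcal{B}_0$ and restricting $\mathcal{U}$ preserves both directions of the morality equivalence, is where the real work lies.
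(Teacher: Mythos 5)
Your overall architecture matches the paper's: the easy direction by instantiating the right-hand side at $\mathcal{B} = \mathcal{P}(\lambda)$ (which indeed has the $\lambda^+$-c.c.), and the substantive direction by pulling $\mathcal{U}$ back to a $\lambda$-regular ultrafilter on $\mathcal{P}(\lambda)$ via Separation of Variables and the Existence Theorem, after first reducing to the case $|\mathcal{B}| \leq 2^\lambda$ by a reflection to a complete subalgebra. The problem is that the one step you identify as ``where the real work lies'' is left undone, and the specific mechanism you propose for it would fail. You suggest pushing a multiplicative refinement $\mathbf{B}$ of a distribution $\mathbf{A}$ (with $\mathbf{A}$ ranging in a small complete subalgebra $\mathcal{B}_0$) down to $\mathcal{B}_0$ ``via the canonical projection $\mathcal{B} \to \mathcal{B}_0$.'' That projection $\pi(\mathbf{b}) = \bigwedge\{\mathbf{a} \in \mathcal{B}_0 : \mathbf{a} \geq \mathbf{b}\}$ is not a Boolean homomorphism: it does not commute with meets or complements. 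Consequently $\pi \circ \mathbf{B}$ need not be multiplicative (one only has $\pi(\bigwedge_{i \in s}\mathbf{B}(\{i\})) \leq \bigwedge_{i \in s}\pi(\mathbf{B}(\{i\}))$, typically strictly), and if instead you force multiplicativity by setting $\mathbf{C}(s) := \bigwedge_{i \in s}\pi(\mathbf{B}(\{i\}))$, then $\mathbf{C}$ need no longer refine $\mathbf{A}$. So for a \emph{fixed} small complete subalgebra $\mathcal{B}_0$ there is no reason that saturation of $\mathcal{U}$ reflects to $\mathcal{U} \cap \mathcal{B}_0$, and no projection trick will produce the needed refinement inside $\mathcal{B}_0$.

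The paper's fix (Theorem~\ref{CompleteAlgebrasDLS}) is to abandon projections and instead \emph{build} $\mathcal{B}_*$ by a closing-off chain of length $2^\lambda$: start with a set $X_0$ containing, for each theory $T$ not saturated by $\mathcal{U}$, the range of a witnessing $(\lambda,T)$-{\L}o{\'s} map with no multiplicative refinement in $\mathcal{U}$; then at each successor stage, for every $\lambda$-distribution in $\mathcal{U} \cap \mathcal{B}_\alpha$ that has a multiplicative refinement in $\mathcal{U}$, adjoin the range of one such refinement to $\mathcal{B}_{\alpha+1}$. The counting works because each complete subalgebra generated by $\leq 2^\lambda$ elements of a $\lambda^+$-c.c.\ algebra has size $\leq 2^\lambda$ and carries only $2^\lambda$-many $\lambda$-distributions, and the union of the chain is still a \emph{complete} subalgebra because $\mathrm{cof}(2^\lambda) > \lambda$ while every supremum in a $\lambda^+$-c.c.\ algebra is a supremum of $\leq \lambda$ elements. (The direction you do get for free --- that a non-refinable {\L}o{\'s} map with range in $\mathcal{B}_*$ remains non-refinable in $\mathcal{U} \cap \mathcal{B}_* \subseteq \mathcal{U}$, and remains a {\L}o{\'s} map relative to $\mathcal{B}_*$ by the local criterion of Theorem~\ref{DistributionLocal1} --- is handled exactly as you suggest.) With this replacement for the projection step, your proof becomes the paper's.
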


We mention the following corollary. Say that an ultrafilter $\mathcal{U}$ on the complete Boolean algebra $\mathcal{B}$ is $\lambda^+$-good if $\mathcal{U}$ $\lambda^+$-saturates every complete first-order theory. (Previously, this definition was only made in the case when $\mathcal{U}$ is an $\aleph_1$-incomplete ultrafilter on $\mathcal{P}(\lambda)$, but the generalization is natural.)

\begin{corollary}\label{GoodUltrafiltersExist}
Suppose $\mathcal{B}$ is a complete Boolean algebra with an antichain of size $\lambda$. Then there is a nonprincipal $\lambda^+$-good ultrafilter on $\mathcal{B}$.
\end{corollary}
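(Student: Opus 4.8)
The plan is to read ``$\lambda^+$-good'' through the dictionary set up earlier in the paper: by definition $\mathcal{U}$ is $\lambda^+$-good exactly when $\mathcal{U}$ $\lambda^+$-saturates every complete (countable) theory $T$, which by the definition of that notion means $\mathcal{U}$ is $(\lambda,\mathcal{B},T)$-moral for every such $T$. So it suffices to produce a nonprincipal ultrafilter on $\mathcal{B}$ whose ``moral spectrum'' is everything. I would obtain this as the maximal instance of the generalized separation-of-variables/existence theorem, Corollary~\ref{KeislerEquivs}, taking the \emph{source} algebra to be the trivial two-element algebra $\mathbf{2}=\{0,1\}$ and the \emph{target} algebra to be $\mathcal{B}$ itself, which is an admissible target since it has an antichain of size $\lambda$ by hypothesis.

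The base case to check is that the unique ultrafilter $\mathcal{V}$ on $\mathbf{2}$ is $(\lambda,\mathbf{2},T)$-moral for every complete countable $T$, and this is immediate: a full $\mathbf{2}$-valued $\mathcal{L}$-structure is just an ordinary $\mathcal{L}$-structure, a $\lambda^+$-saturated full $\mathbf{2}$-valued model of $T$ is a $\lambda^+$-saturated model $M\models T$, and its specialization $M/\mathcal{V}$ is $M$ itself, which is $\lambda^+$-saturated. By the equivalence between $(\lambda,\mathcal{B},T)$-morality and $\lambda^+$-saturation of specializations established above (the implication from a saturated specialization to morality), $\mathcal{V}$ is $(\lambda,\mathbf{2},T)$-moral. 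Now Corollary~\ref{KeislerEquivs}, applied with source $(\mathbf{2},\mathcal{V})$ (which trivially has the $\lambda^+$-c.c.) and target $\mathcal{B}$, yields an ultrafilter $\mathcal{U}$ on $\mathcal{B}$ such that, for every complete countable $T$, $\mathcal{U}$ $\lambda^+$-saturates $T$ if and only if $\mathcal{V}$ is $(\lambda,\mathbf{2},T)$-moral. By the base case the right-hand side always holds, so $\mathcal{U}$ $\lambda^+$-saturates every $T$; that is, $\mathcal{U}$ is $\lambda^+$-good.

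It remains to arrange nonprincipality, and here I would use the antichain directly. Fix an infinite antichain $\{a_n:n<\omega\}\subseteq\mathcal{B}$ (possible since $\lambda\geq\aleph_0$) and set $b_n=\bigvee_{m\geq n}a_m$ using completeness; then $b_0\geq b_1\geq\cdots$, and since finite meets distribute over arbitrary joins in a complete Boolean algebra one computes $b_{m+1}\wedge a_m=0$, whence $\bigwedge_n b_n=0$. A principal ultrafilter on a complete Boolean algebra is generated by an atom and is therefore countably complete, so any ultrafilter containing every $b_n$ is automatically nonprincipal. The construction underlying Corollary~\ref{KeislerEquivs} is of Kunen type and in any case produces an $\aleph_1$-incomplete, hence nonprincipal, ultrafilter; if one wants to see nonprincipality directly, one requires the ultrafilter to extend the finite-intersection family $\{b_n:n<\omega\}$, which is harmless and forces countable incompleteness. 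The genuinely hard work is entirely inherited: the main obstacle is the content of Corollary~\ref{KeislerEquivs} itself, namely carrying out the Kunen-style construction of an independent (``optimal'') family and the attendant transfinite recursion \emph{inside} an arbitrary complete $\mathcal{B}$, using only a single antichain of size $\lambda$ together with the completeness of $\mathcal{B}$, and without the cardinality bound $|\mathcal{B}|\leq 2^\lambda$. Given that result, the present corollary is simply its maximal instance. One could alternatively try to transfer a Kunen good $\lambda$-regular ultrafilter on $\mathcal{P}(\lambda)$, but since $\mathcal{P}(\lambda)$ fails the $\lambda^+$-c.c.\ it cannot serve as the source algebra, which is why the trivial algebra is the cleaner choice.
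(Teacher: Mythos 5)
Your argument is correct and is essentially the paper's intended derivation: the corollary is the maximal special case of the ultrafilter-transfer machinery with target algebra $\{0,1\}$, whose unique ultrafilter trivially $\lambda^+$-saturates every theory, and the $\lambda$-regularity built into the construction gives nonprincipality. The one slip is a label: the transfer statement you actually invoke is Theorem~\ref{UltrafilterPullbacks} (equivalently, Theorems~\ref{SeparationOfVariablesFirst} and~\ref{ExistenceTheoremFirst} combined), not Corollary~\ref{KeislerEquivs}, which only compares theories in Keisler's order and does not by itself produce an ultrafilter; the content you rely on is nonetheless the right one.
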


This theorem is sharp. In \cite{LowDividingLine}, we proved that if $\mathcal{B}$ has the $\lambda$-c.c., then no $\aleph_1$-incomplete ultrafilter on $\mathcal{B}$ can $\lambda^+$-saturate any nonlow theory. In  \cite{InterpOrdersUlrich}, we prove that if $\mathcal{B}$ has the $\lambda$-c.c., then no nonprincipal ultrafilter on $\mathcal{B}$ can $\lambda^+$-saturate any nonsimple theory.
%
%
%

\section{Boolean Algebras}

In this section, we set down our notation on Boolean algebras and recall some basic facts about them. For a more in-depth reference, see \cite{Jech}. 

We will mainly be concerned with complete Boolean algebras, and will denote them $(\mathcal{B}, 0, 1, \land, \lor, \leq)$. We say that $\mathcal{B}_0$ is a complete subalgebra of $\mathcal{B}_1$ if it is a subalgebra, and moreover infinite supremums (equivalently, infinums) of subsets of $\mathcal{B}_0$ are computed the same as in $\mathcal{B}_1$.  

One important class of examples of Boolean algebras are $\mathcal{P}(\lambda)$ for cardinals $\lambda$. Also, every separative partial order (i.e. every forcing notion) has a unique Boolean algebra completion; conversely, given a Boolean algebra $\mathcal{B}$, the set $\mathcal{B}_+$ of nonzero elements of $\mathcal{B}$ is a separative partial order by which we can force.  The behavior of ultrafilters on complete Boolean algebras with respect to Keisler's order has deep connections to the behavior of generic extensions; this will be explored more in \cite{AmalgKeislerUlrich}. 

As a convenient piece of notation, given a complete Boolean algebra $\mathcal{B}$ and given $\mathbf{a}, \mathbf{b} \in \mathcal{B}$ with $\mathbf{a}$ nonzero, say that $\mathbf{a}$ decides $\mathbf{b}$ if either $\mathbf{a} \leq \mathbf{b}$ or else $\mathbf{a} \leq \lnot \mathbf{b}$.

Suppose $\mathcal{B}$ is a complete Boolean algebra. Then $\mathbf{C} \subseteq \mathcal{B}$ is an antichain if every element of $\mathbf{C}$ is nonzero, and for all distinct $\mathbf{a}, \mathbf{b} \in \mathbf{C}$, $\mathbf{a} \wedge \mathbf{b} = 0$. Note that an antichain $\mathbf{C}$ is maximal if and only if $\bigwedge \{\lnot \mathbf{\mathbf{a}}: \mathbf{a} \in \mathbf{C}\} = 0$; also, if the infinum is $\mathbf{c}>0$, then $\mathbf{C} \cup \{\mathbf{c}\}$ is maximal.

Given a cardinal $\lambda$, say that $\mathcal{B}$ has the $\lambda$-chain condition (c.c.) if every antichain of $\mathcal{B}$ has size less than $\lambda$. Let $\mbox{c.c.}(\mathcal{B})$ be the least cardinal $\lambda$ such that $\mathcal{B}$ has the $\lambda$-c.c. Note that $\mathcal{B}$ cannot have an antichain $\mathbf{C}$ of size $|\mathcal{B}|$, since then the supremums of subsets of $\mathbf{C}$ would give rise to $2^{|\mathcal{B}|}$ distinct elements of $\mathcal{B}$. 

The following is a theorem due to Erd\"{o}s and Tarski \cite{ErdosTarski}, or see Theorem 7.15 of Jech \cite{Jech}. Note that Jech defines $\mbox{c.c.}(\mathcal{B})$ as the least \emph{infinite} cardinal $\lambda$ such that $\mathcal{B}$ has the $\lambda$-c.c.; the following theorem in particular shows that this is only different when $\mathcal{B}$ is finite. (In that case, $\mathcal{B} \cong \mathcal{P}(\mbox{c.c}(\mathcal{B}))$.)

\begin{theorem}\label{CCTheorem}
	Suppose $\mathcal{B}$ is an infinite complete Boolean algebra. Then $\mbox{c.c.}(\mathcal{B})$ is a regular uncountable cardinal.
\end{theorem}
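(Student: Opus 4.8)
The plan is to prove the two assertions separately: that $\kappa := \mbox{c.c.}(\mathcal{B})$ is uncountable, and that it is regular. Throughout I use that $\mbox{c.c.}(\mathcal{B}) = \kappa$ means exactly that every antichain of $\mathcal{B}$ has size $< \kappa$, while for each $\nu < \kappa$ there is an antichain of size $\geq \nu$ (hence, passing to a subset, one of size exactly $\nu$). For uncountability I would first observe that every infinite Boolean algebra carries an infinite antichain: if $\mathcal{B}$ has infinitely many atoms these form one, and otherwise the supremum $s$ of its (finitely many) atoms satisfies $\lnot s \neq 0$, and below $\lnot s$ the algebra is atomless, where one builds an infinite antichain by repeatedly splitting a nonzero element into two nonzero pieces. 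An infinite antichain witnesses that $\mathcal{B}$ fails the $\aleph_0$-c.c., so $\kappa \geq \aleph_1$.

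For regularity, suppose toward a contradiction that $\kappa$ is singular and set $\mu := \mathrm{cf}(\kappa) < \kappa$. Since $\mu < \kappa$ there is an antichain of size $\mu$, and using completeness I would enlarge it to a partition of unity $\{u_\xi : \xi < \mu\}$ into exactly $\mu$ nonzero pieces (form the supremum $a$ of the antichain and absorb $\lnot a$ into one of its elements). For each $\xi$ let $\mathcal{B}_\xi = \{\mathbf{b} \in \mathcal{B} : \mathbf{b} \leq u_\xi\}$ be the relative algebra, again a complete Boolean algebra, and put $\kappa^\xi := \mbox{c.c.}(\mathcal{B}_\xi)$. Every antichain of $\mathcal{B}_\xi$ is an antichain of $\mathcal{B}$, so $\kappa^\xi \leq \kappa$ for all $\xi$, and the proof splits according to whether $\sup_{\xi < \mu}\kappa^\xi$ equals $\kappa$.

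The main case, and the crux of the argument, is $\sup_{\xi<\mu}\kappa^\xi = \kappa$. Since the $\kappa^\xi$ are cofinal in the limit cardinal $\kappa$, I can choose cardinals $\nu_\xi < \kappa^\xi$ with $\sup_{\xi<\mu}\nu_\xi = \kappa$, so that $\sum_{\xi<\mu}\nu_\xi = \max(\mu,\sup_\xi \nu_\xi) = \kappa$. For each $\xi$ pick an antichain $A_\xi \subseteq \mathcal{B}_\xi$ of size $\nu_\xi$, possible because $\nu_\xi < \kappa^\xi = \mbox{c.c.}(\mathcal{B}_\xi)$. The elements of $A_\xi$ lie below $u_\xi$ and the $u_\xi$ are pairwise disjoint, so $A := \bigcup_{\xi<\mu} A_\xi$ is an antichain of $\mathcal{B}$ of size $\sum_{\xi<\mu}\nu_\xi = \kappa$, contradicting that $\mathcal{B}$ has no antichain of size $\kappa$. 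The delicate point is choosing the $\nu_\xi$ cofinally in $\kappa$: for each cardinal $\rho < \kappa$ there is, since $\sup_\xi\kappa^\xi = \kappa$, some $\xi$ with $\rho^+ < \kappa^\xi$, and for that $\xi$ one may take $\nu_\xi \geq \rho^+$; as $\kappa$ is a limit cardinal this guarantees $\sup_\xi \nu_\xi = \kappa$ even when individual $\kappa^\xi$ are successors.

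In the remaining case $\theta := \sup_{\xi<\mu}\kappa^\xi < \kappa$, I would bound an arbitrary antichain $A$ of $\mathcal{B}$ by a counting argument. For each $\xi$ the map $a \mapsto a \wedge u_\xi$ is injective on $\{a \in A : a \wedge u_\xi \neq 0\}$ and sends it into an antichain of $\mathcal{B}_\xi$, so that set has size $< \kappa^\xi \leq \theta$; since $\{u_\xi : \xi<\mu\}$ partitions unity, every nonzero $a \in A$ meets some $u_\xi$, whence $|A| \leq \mu \cdot \theta = \max(\mu,\theta) < \kappa$. Thus every antichain has size at most $\max(\mu,\theta) < \kappa$, and since there is an antichain of size exactly $\max(\mu,\theta)$ this forces $\kappa = (\max(\mu,\theta))^+$, a successor cardinal, against singularity. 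Either way the assumption that $\kappa$ is singular collapses, so $\kappa$ is regular. Completeness is used essentially in forming the partition of unity (and in knowing the relative algebras are complete), while the combinatorial heart of the proof is the disjoint-union construction of the main case.
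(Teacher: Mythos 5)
Your proof is correct. Note that the paper does not actually prove this statement: it is quoted as a known theorem of Erd\"{o}s and Tarski, with a pointer to Theorem 7.15 of Jech, so there is no in-paper argument to compare against. Your write-up is a clean, self-contained rendition of the classical argument: the uncountability step (every infinite Boolean algebra has an infinite antichain, via the atomic/atomless dichotomy) is fine, and the regularity step, splitting on whether $\sup_{\xi<\mu}\mbox{c.c.}(\mathcal{B}_\xi)$ equals $\kappa$, works in both cases --- in the first case the disjoint union $\bigcup_\xi A_\xi$ really is an antichain of size $\sum_\xi \nu_\xi = \kappa$, and in the second case the counting bound $|A| \leq \mu\cdot\theta$ forces $\kappa$ to be a successor. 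One small remark: in the second case you assert that there is an antichain of size exactly $\max(\mu,\theta)$, which you neither need nor fully justify; the contradiction already follows from $\max(\mu,\theta) < \kappa \leq \bigl(\max(\mu,\theta)\bigr)^{+}$, which pins down $\kappa$ as a successor and hence regular. I would simply delete that clause. It is also worth observing that your proof only uses completeness to form the join of the $\mu$-sized antichain when extending it to a maximal one, which is exactly where the paper's standing hypothesis that $\mathcal{B}$ is complete enters.
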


Finally, we will need the notion of independent antichains.

\begin{definition}
	Suppose $\mathcal{B}$ is a complete Boolean algebra. Suppose $\mathbb{C}$ is a family of maximal antichains of $\mathcal{B}$. Then say that $\mathbb{C}$ is independent if for every $s \in [\mathbb{C}]^{<\aleph_0}$, and for each choice function $f$ on $s$ (that is, a function $f: s \to \mathcal{B}$ such that $f(\mathbf{C}) \in \mathbf{C}$ for all $\mathbf{C} \in s$), $\bigwedge_{\mathbf{C} \in s} f(\mathbf{C})$ is nonzero.
\end{definition}

The following is proven in \cite{Partitions} in the special case $\mathcal{B} = \mathcal{P}(\lambda)$; the general case follows since $\mathcal{P}(\lambda)$ can be embedded as a complete subalgebra of $\mathcal{B}$. 
\begin{theorem}\label{ER1}
	Suppose $\mathcal{B}$ is a complete Boolean algebra with an antichain of size $\lambda$. Then $\mathcal{B}$ admits an independent family $\mathbb{C}$ of maximal antichains such that $|\mathbb{C}| = 2^\lambda$ and each $\mathbf{C} \in \mathbb{C}$ has $|\mathbf{C}| = \lambda$.
\end{theorem}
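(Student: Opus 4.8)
The plan is to reduce to the case $\mathcal{B} = \mathcal{P}(\lambda)$, which is exactly the content cited from \cite{Partitions}. The reduction rests on two observations: the hypothesis that $\mathcal{B}$ has an antichain of size $\lambda$ lets us embed $\mathcal{P}(\lambda)$ into $\mathcal{B}$ as a complete subalgebra, and any such complete embedding carries an independent family of maximal antichains of $\mathcal{P}(\lambda)$ to one of $\mathcal{B}$, preserving the relevant cardinalities.

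First I would fix an antichain $\{\mathbf{a}_\alpha : \alpha < \lambda\}$ of $\mathcal{B}$ of size $\lambda$. Using the remark following the definition of maximal antichain (if $\mathbf{c} := \bigwedge_\alpha \lnot \mathbf{a}_\alpha > 0$, adjoin $\mathbf{c}$), I may assume this antichain is maximal, its size still being $\lambda$ since $\lambda$ is infinite. I then define $e : \mathcal{P}(\lambda) \to \mathcal{B}$ by $e(S) = \bigvee_{\alpha \in S} \mathbf{a}_\alpha$. Because the $\mathbf{a}_\alpha$ are pairwise disjoint with supremum $1$, one checks routinely that $e$ preserves $0$, $1$, arbitrary joins, and complements (hence also finite meets), that $e(S) = 0$ iff $S = \emptyset$, and therefore that $e$ is an injective complete embedding.

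Next I would push an independent family forward. Let $\mathbb{C}_0$ be an independent family of maximal antichains of $\mathcal{P}(\lambda)$ with $|\mathbb{C}_0| = 2^\lambda$ and each member of size $\lambda$, as provided by the special case; each $\mathbf{C} \in \mathbb{C}_0$ is a partition of $\lambda$ into $\lambda$ nonempty pieces. Set $\mathbb{C} = \{\, e[\mathbf{C}] : \mathbf{C} \in \mathbb{C}_0 \,\}$. For each $\mathbf{C}$, the set $e[\mathbf{C}]$ is an antichain (its elements are nonzero, and distinct pieces $p,q$ have $e(p) \wedge e(q) = e(p \cap q) = 0$), it is maximal (its supremum is $\bigvee_{p\in\mathbf{C}} e(p) = e\big(\bigcup_{p\in\mathbf{C}} p\big) = e(\lambda) = 1$), and it has size $\lambda$ since $e$ is injective. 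Injectivity of $e$ also makes $\mathbf{C} \mapsto e[\mathbf{C}]$ injective, so $|\mathbb{C}| = 2^\lambda$. Finally, for independence, take a finite $s \subseteq \mathbb{C}$ and a choice function $g$; each value $g(e[\mathbf{C}])$ equals $e(p_{\mathbf{C}})$ for some piece $p_{\mathbf{C}} \in \mathbf{C}$, so by preservation of finite meets $\bigwedge_{e[\mathbf{C}] \in s} g(e[\mathbf{C}]) = e\big(\bigcap_{\mathbf{C}} p_{\mathbf{C}}\big)$, which is nonzero because $\bigcap_{\mathbf{C}} p_{\mathbf{C}} \neq \emptyset$ by independence of $\mathbb{C}_0$ and $e$ reflects $0$.

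The conceptual content lies entirely in the cited $\mathcal{P}(\lambda)$ case; the only points requiring care here are verifying that $e$ is genuinely a \emph{complete} embedding (preservation of arbitrary joins and of complements, together with reflection of $0$) and the preliminary step of enlarging the given antichain to a maximal one of the same cardinality. I expect the main, though still routine, obstacle to be the complement/join computation showing $e$ is a complete homomorphism, since everything downstream — maximality, the preserved cardinalities $\lambda$ and $2^\lambda$, and independence — follows formally once $e$ is known to preserve finite meets and arbitrary joins and to reflect $0$.
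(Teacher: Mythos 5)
Your proposal is correct and follows exactly the route the paper takes: the paper's entire argument is the remark that the $\mathcal{P}(\lambda)$ case is cited from the literature and that the antichain of size $\lambda$ lets one embed $\mathcal{P}(\lambda)$ as a complete subalgebra of $\mathcal{B}$, through which the independent family is pushed forward. You have simply filled in the routine verifications (maximalizing the antichain, checking $e$ is a complete embedding, transporting maximality and independence) that the paper leaves implicit.
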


\section{A Compactness Theorem for Boolean-Valued Models}\label{SurveyCompactnessSection}
In this section, we define $\mathcal{B}$-valued models and prove a compactness theorem for them. 
%



\begin{definition}
	If $\mathcal{L}$ is a language and $X$ is a set, then let $\mathcal{L}( X)$ be the set of all $\mathcal{L}$-formulas with parameters taken from $X$. To be formal, we view the elements of $X$ as new constant symbols, but it would work equally well to view them as variables.
	
	Suppose $\mathcal{B}$ is a complete Boolean algebra and $\mathcal{L}$ is a language. A $\mathcal{B}$-valued $\mathcal{L}$-structure is a pair $(\mathbf{M}, \|\cdot\|_{\mathbf{M}})$ where:
	
	\begin{enumerate}
		\item $\mathbf{M}$ is a set;
		\item  $\phi \mapsto \|\phi\|_{\mathbf{M}}$ is a map from $\mathcal{L}( \mathbf{M})$ to $\mathcal{B}$;
		\item If $\phi$ is a logically valid sentence then $\|\phi\|_{\mathbf{M}} = 1$;
		\item For every formula $\phi \in \mathcal{L}( \mathbf{M})$, we have that $\|\lnot \phi\|_{\mathbf{M}}= \lnot\|\phi\|_{\mathbf{M}}$;
		\item For all $\phi, \psi$, we have that $\|\phi \land \psi\|_{\mathbf{M}} = \|\phi\|_{\mathbf{M}} \land \|\psi\|_{\mathbf{M}}$;
		\item For every formula $\phi(x)$ with parameters from $\mathbf{M}$, $\|\exists x \phi(x)\|_{\mathbf{M}} = \bigvee_{a \in \mathbf{M}} \|\phi(a)\|_{\mathbf{M}}$;
		\item For all $a, b \in \mathbf{M}$ distinct, $\|a = b\|_{\mathbf{M}} < 1$.
	\end{enumerate}
	
	$(\mathbf{M}, \|\cdot\|_{\mathbf{M}})$ is full if for every formula $\phi(x)$ with parameters from $\mathbf{M}$, there is some $a \in \mathbf{M}$ such that $\|\exists x \phi(x)\|_{\mathbf{M}} = \|\phi(a)\|_{\mathbf{M}}$.

\end{definition}

\begin{remark}\label{history}$\mathcal{B}$-valued models appear to be first considered by Mostowski \cite{Mostowski}. After the advent of forcing, they were independently defined by Scott and Solovay \cite{Scott}, and Vop\v{e}nka \cite{Vopenka}. We follow the more modern notation of Mansfield \cite{BooleanUltrapowers}.
	
	In all these versions, the evaluation map $\|\cdot\|_{\mathbf{M}}$ is defined only on the basic atomic formulas. Note that clauses (4), (5) and (6) show that this completely determines $\|\cdot\|_{\mathbf{M}}$, but then one must check that condition (3) holds. Rasiowa and Sikorski prove this in \cite{RasSik1}, assuming a short list of axioms for equality and function symbols. This is their completeness theorem for $\mathcal{B}$-valued models, and it also follows from the proof of Theorem~\ref{SpecializationThm} below.
	
	Condition (7) is nonstandard, but tame; if it failed, one should mod out by the equivalence relation $\equiv$, defined via $a \equiv b$ if $\|a = b\|_{\mathbf{M}} = 1$. 
\end{remark}

In applications we will only consider full $\mathcal{B}$-valued $\mathcal{L}$-structures. Following typical model-theoretic practice, we will write that $\mathbf{M}$ is a full $\mathcal{B}$-valued $\mathcal{L}$-structure, suppressing $\|\cdot\|_{\mathbf{M}}$.

\begin{remark}\label{6FullComb}
	Axiom 6 together with fullness are equivalent to requiring that for all formulas $\phi(x)$ with parameters from $\mathbf{M}$, $\|\exists x \phi(x)\|_{\mathbf{M}}$ is the maximum of $\{\|\phi(b)\|_{\mathbf{M}}: b \in \mathbf{M}\}$, i.e. always $\|\exists x \phi(x)\|_{\mathbf{M}} \geq \|\phi(b)\|_{\mathbf{M}}$, and there is some $b$ with equality holding. In particular the axioms for a full $\mathcal{B}$-valued $\mathcal{L}$-structure are finitary.
\end{remark}

In the definition of full $\mathcal{B}$-valued $\mathcal{L}$-structures, we only used $\exists, \wedge, \lnot$. It is easy to see that one can add $\vee, \rightarrow$ via the usual definitions, and they behave as expected. Universal quantification is also easy, but we isolate it as a lemma:

\begin{lemma}\label{UnivLemma}
	Suppose $\mathbf{M}$ is a full $\mathcal{B}$-valued $\mathcal{L}$-structure, and $\forall x \phi(x)$ is a formula with parameters from $\mathbf{M}$ (formally, we treat this as $\lnot \exists x \lnot \phi(x)$). Then $\|\forall x \phi(x)\|_{\mathbf{M}}$ is the minimum of $\{\|\phi(b)\|_{\mathbf{M}}: b \in \mathbf{M}\}$, that is, always $\|\forall x \phi(x)\|_{\mathbf{M}} \leq \|\phi(b)\|_{\mathbf{M}}$, and there is some $b \in \mathbf{M}$ with equality holding.
\end{lemma}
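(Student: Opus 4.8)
The plan is to simply unwind the definition $\forall x\,\phi(x) := \lnot \exists x\, \lnot \phi(x)$ and apply Remark~\ref{6FullComb}, which tells us that Axiom 6 plus fullness give us the existential quantifier as a genuine \emph{maximum} over $\mathbf{M}$. Everything then follows by the de Morgan behavior of $\lnot$ guaranteed by Axiom 4, since negation is an order-reversing involution on $\mathcal{B}$.

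First I would compute, using the definition of $\forall$ together with Axiom 4:
\[
\|\forall x\,\phi(x)\|_{\mathbf{M}} = \|\lnot \exists x\, \lnot \phi(x)\|_{\mathbf{M}} = \lnot \|\exists x\, \lnot\phi(x)\|_{\mathbf{M}}.
\]
Next, by Remark~\ref{6FullComb} applied to the formula $\lnot \phi(x)$, the value $\|\exists x\, \lnot \phi(x)\|_{\mathbf{M}}$ is the maximum of $\{\|\lnot \phi(b)\|_{\mathbf{M}} : b \in \mathbf{M}\}$; that is, $\|\lnot\phi(b)\|_{\mathbf{M}} \leq \|\exists x\, \lnot\phi(x)\|_{\mathbf{M}}$ for every $b$, with equality for some particular $b_0 \in \mathbf{M}$. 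Applying $\lnot$ to these inequalities and using that $\lnot$ reverses $\leq$, together with Axiom 4 in the form $\lnot\|\lnot\phi(b)\|_{\mathbf{M}} = \|\phi(b)\|_{\mathbf{M}}$, I obtain $\|\forall x\,\phi(x)\|_{\mathbf{M}} \leq \|\phi(b)\|_{\mathbf{M}}$ for every $b \in \mathbf{M}$, with equality holding at the same witness $b_0$. This is exactly the claim that $\|\forall x\,\phi(x)\|_{\mathbf{M}}$ is the minimum of $\{\|\phi(b)\|_{\mathbf{M}} : b \in \mathbf{M}\}$.

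There is no real obstacle here; the only point to be careful about is making sure the \emph{maximum} (as opposed to a mere supremum) supplied by Remark~\ref{6FullComb} is what transfers through the negation to give a genuine \emph{minimum}, rather than just an infimum. This is why invoking fullness via Remark~\ref{6FullComb} is essential: Axiom 6 alone would only give $\|\exists x\,\lnot\phi(x)\|_{\mathbf{M}} = \bigvee_b \|\lnot\phi(b)\|_{\mathbf{M}}$, whose negation is $\bigwedge_b \|\phi(b)\|_{\mathbf{M}}$, an infimum that need not be attained. Fullness guarantees the witness $b_0$ at which the existential value is achieved, and this same $b_0$ witnesses attainment of the universal minimum.
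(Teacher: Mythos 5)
Your proposal is correct and is exactly the argument the paper intends: the paper omits the proof as routine (``Universal quantification is also easy''), and the intended derivation is precisely to rewrite $\forall x\,\phi(x)$ as $\lnot\exists x\,\lnot\phi(x)$, apply Axiom 4 and Remark~\ref{6FullComb}, and use that $\lnot$ is an order-reversing involution so the attained maximum of $\{\|\lnot\phi(b)\|_{\mathbf{M}}\}$ becomes an attained minimum of $\{\|\phi(b)\|_{\mathbf{M}}\}$. Your remark that fullness is what upgrades the infimum to a genuine minimum is exactly the right point of care.
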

In particular, if $\|\forall \overline{x} \phi(\overline{x})\|_{\mathbf{M}} = 1$, then for all $\overline{a} \in M^{|\overline{x}|}$,  $\|\phi(\overline{a})\|_{\mathbf{M}} =1$.

The following theorem will allow us to define the specialization operation. This operation is first considered by Rasiowa and Sikorski \cite{RasSik2}, although there, in the absence of fullness, one needs the ultrafilter to be sufficiently generic. The version where instead we require $\mathbf{M}$ to be full is implicitly considered by Mansfield \cite{BooleanUltrapowers} and explicitly considered by Hamkins and Seabold \cite{Hamkins}. Actually, Mansfield only considers the case where the language is relational, and Hamkins and Seabold only work out the details when $\mathcal{L} = \{\in\}$, but the general case allowing function symbols adds little difficulty.

\begin{theorem}\label{SpecializationThm}
	Suppose $\mathcal{B}$ is a complete Boolean algebra, $\mathbf{M}$ is a full $\mathcal{B}$-valued $\mathcal{L}$-structure, and $\mathcal{U}$ is an ultrafilter on $\mathcal{B}$. Then there is pair $(M, \pi)$ where:
	
	\begin{itemize}
		\item $M$ is an ordinary $\mathcal{L}$-structure, 
		\item $\pi: \mathbf{M} \to M$ is a surjection, 
		\item for every $\phi(\overline{a}) \in \mathcal{L}(\mathbf{M})$, $\|\phi(\overline{a})\|_{\mathbf{M}} \in \mathcal{U}$ if and only if $M \models \phi(\pi(\overline{a}))$.
	\end{itemize}
	
	If $(M', \pi')$ is any other such pair, then there is a unique isomorphism $\sigma: M \cong M'$ such that $\sigma \circ \pi = \pi'$.
\end{theorem}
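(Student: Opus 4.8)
The plan is to construct $M$ explicitly as a quotient of $\mathbf{M}$ by the equivalence relation determined by $\mathcal{U}$, then verify the three bullet points, and finally establish uniqueness via the universal property. This is the standard specialization (Boolean ultrapower) construction, so the architecture is clear; the only real subtlety is using fullness at exactly the right moment to handle the existential quantifier.

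\medskip

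\noindent\textbf{Construction of $(M,\pi)$.} First I would define an equivalence relation on $\mathbf{M}$ by setting $a \sim b$ iff $\|a = b\|_{\mathbf{M}} \in \mathcal{U}$. Reflexivity follows from axiom (3) since $a=a$ is logically valid, symmetry is immediate, and transitivity follows because $\|a=b\|_{\mathbf{M}} \wedge \|b=c\|_{\mathbf{M}} \leq \|a=c\|_{\mathbf{M}}$ (the conditional $a=b \wedge b = c \to a = c$ is valid, so by axioms (3),(4),(5) its value is $1$) together with $\mathcal{U}$ being a filter. Let $M = \mathbf{M}/\!\sim$ and let $\pi$ be the quotient map, which is automatically surjective. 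Next I would interpret the language: for a relation symbol $R$, set $M \models R(\pi(\overline{a}))$ iff $\|R(\overline{a})\|_{\mathbf{M}} \in \mathcal{U}$; for a function symbol $g$, I would use fullness to pick a witness $c$ with $\|g(\overline{a}) = c\|_{\mathbf{M}} = \|\exists y\, g(\overline{a}) = y\|_{\mathbf{M}} = 1$ and set $g^M(\pi(\overline{a})) = \pi(c)$. In both cases one must check the definition is independent of representatives; this uses that $\mathcal{U}$ is a filter and that substitution into atomic formulas is governed by valid conditionals (e.g. $\overline{a} = \overline{a}' \wedge R(\overline{a}) \to R(\overline{a}')$).

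\medskip

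\noindent\textbf{The satisfaction clause.} The heart of the proof is verifying the third bullet, that $\|\phi(\overline{a})\|_{\mathbf{M}} \in \mathcal{U}$ iff $M \models \phi(\pi(\overline{a}))$, which I would prove by induction on the complexity of $\phi$. The atomic case holds by construction (after reducing terms to elements via the function-symbol interpretation above). The $\wedge$ case uses axiom (5) plus the fact that $\mathcal{U}$ is closed under meets and contains a meet iff it contains both conjuncts; the $\lnot$ case uses axiom (4) together with $\mathcal{U}$ being an ultrafilter, so $\|\lnot\phi\|_{\mathbf{M}} = \lnot\|\phi\|_{\mathbf{M}} \in \mathcal{U}$ iff $\|\phi\|_{\mathbf{M}} \notin \mathcal{U}$. \textbf{The existential step is where I expect the main obstacle, and where fullness is essential.} For the forward direction, suppose $\|\exists x\,\phi(x)\|_{\mathbf{M}} \in \mathcal{U}$; by fullness there is $b \in \mathbf{M}$ with $\|\phi(b)\|_{\mathbf{M}} = \|\exists x\,\phi(x)\|_{\mathbf{M}} \in \mathcal{U}$, so by induction $M \models \phi(\pi(b))$, whence $M \models \exists x\,\phi(x)$. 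The converse is easier: if $M \models \phi(\pi(b))$ for some $b$, then by induction $\|\phi(b)\|_{\mathbf{M}} \in \mathcal{U}$, and axiom (6) gives $\|\exists x\,\phi(x)\|_{\mathbf{M}} \geq \|\phi(b)\|_{\mathbf{M}}$, so the supremum lies in $\mathcal{U}$. Without fullness the forward direction would fail, since a nonprincipal supremum can lie in $\mathcal{U}$ with no single disjunct doing so; this is precisely the point where the earlier remarks note one would otherwise need a generic ultrafilter.

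\medskip

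\noindent\textbf{Uniqueness.} Finally, given another pair $(M',\pi')$ satisfying the three bullets, I would define $\sigma: M \to M'$ by $\sigma(\pi(a)) = \pi'(a)$. This is well-defined and injective because $\pi(a) = \pi(b)$ iff $\|a=b\|_{\mathbf{M}} \in \mathcal{U}$ iff $\pi'(a) = \pi'(b)$ (applying the satisfaction clause to the atomic formula $a = b$ in both structures), surjective since $\pi'$ is, and a homomorphism because both $M$ and $M'$ satisfy the same atomic formulas through $\mathcal{U}$; it preserves and reflects all formulas by the satisfaction clause, so it is an isomorphism. The identity $\sigma \circ \pi = \pi'$ holds by definition, and uniqueness of $\sigma$ follows from surjectivity of $\pi$.
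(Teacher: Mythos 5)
Your proposal is correct and follows exactly the paper's route: the paper defines the same equivalence relation $a \equiv b$ iff $\|a=b\|_{\mathbf{M}} \in \mathcal{U}$, takes the quotient, and leaves the remaining verification as ``straightforward to check.'' Your write-up simply supplies those omitted details (interpretation of function symbols via fullness, the induction on formula complexity with fullness carrying the existential step, and the uniqueness argument), all of which are sound.
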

\begin{proof}
Define $\equiv \subseteq \mathbf{M} \times \mathbf{M}$ via: $a \equiv b$ if and only if $\|a = b\|_{\mathbf{M}} \in \mathcal{U}$. $\equiv$ is an equivalence relation by condition (3) of the definition of $\mathcal{B}$-valued models, and Lemma \ref{UnivLemma}. Let the domain of $M$ be $\mathbf{M}/\equiv$, and let $\pi: \mathbf{M} \to M$ be the canonical surjection. It is straightforward to check that this works.
\end{proof}

\begin{definition}\label{SpecializationDef}
	Suppose $\mathcal{B}$ is a complete Boolean algebra, $\mathbf{M}$ is a full $\mathcal{B}$-valued $\mathcal{L}$-structure, and $\mathcal{U}$ is an ultrafilter on $\mathcal{B}$. Let $(\mathbf{M}/\mathcal{U}, [\cdot]_{\mathbf{M}, \mathcal{U}})$ be the pair $(M, \pi)$ as constructed in Theorem  \ref{SpecializationThm}; we call $\mathbf{M}/\mathcal{U}$ the specialization of $\mathbf{M}$ at $\mathcal{U}$, and we call $[\cdot]_{\mathbf{M}, \mathcal{U}}$ the canonical surjection. We will only ever use the defining property of $(\mathbf{M}/\mathcal{U}, [\cdot]_{\mathbf{M},\mathcal{U}})$: $[\cdot]_{\mathbf{M},\mathcal{U}}$ is a surjection, and for every $\phi(\overline{a}) \in \mathcal{L}(\mathbf{M})$, $\|\phi(\overline{a})\|_{\mathbf{M}} \in \mathcal{U}$ if and only if $\mathbf{M}/\mathcal{U} \models \phi([\overline{a}]_{\mathbf{M}, \mathcal{U}})$.
	
	Usually $\mathbf{M}$ is clear from context, and we omit it in $[\cdot]_{\mathbf{M}, \mathcal{U}}$.
\end{definition}

\begin{remark}
	Suppose $M$ is an (ordinary) $\mathcal{L}$-structure. Then we can define an associated $\{0, 1\}$-valued $\mathcal{L}$-structure $(\mathbf{M}, \|\cdot\|_{\mathbf{M}})$ such that $\mathbf{M}$ is the domain of $M$ and $\|\phi(\overline{a})\|_{\mathbf{M}} = 1$ if and only if $M \models \phi(\overline{a})$. This sets up an exact correspondence between $\mathcal{L}$-structures and $\{0, 1\}$-valued $\mathcal{L}$-structures, with the inverse map given by specialization at $\mathcal{U}$, where $\mathcal{U}$ is the unique ultrafilter on $\{0, 1\}$. Thus, henceforward we identify $\mathcal{L}$-structures with $\{0, 1\}$-valued $\mathcal{L}$-structures, and use the latter term when there is possibility of confusion. 
	
	As a convention, lightface $M$ is used for $\{0, 1\}$-valued models, and boldface $\mathbf{M}$ is used for general $\mathcal{B}$-valued models.
\end{remark}

Note that every $\{0, 1\}$-valued $\mathcal{L}$-structure is automatically full. Note also that whenever $\mathcal{B}_0$ is a subalgebra of $\mathcal{B}_1$, then every full $\mathcal{B}_0$-valued structure is a full $\mathcal{B}_1$-valued structure. The main case we use this is when $\mathcal{B}_0 = \{0, 1\}$.

We now aim to prove a compactness theorem for full $\mathcal{B}$-valued $\mathcal{L}$-structures. The reader familiar with forcing can give a rather slicker proof, noting that $\mathcal{B}$-valued $\mathcal{L}$-structures are in correspondence with $\mathcal{B}$-names for models of $T$; we can thus apply compactness in the generic extension.

\begin{theorem}\label{Compactness0}
	Suppose $\mathcal{B}$ is a complete Boolean algebra, $X$ is a set, and $F: \mathcal{L}(X) \to  \mathcal{B}$. Then the following are equivalent:
	
	\begin{itemize}
		\item[(A)] There is some full $\mathcal{B}$-valued structure $\mathbf{M}$ and some map $\tau: X \to \mathbf{M}$, such that for all $\phi(\overline{a}) \in \mathcal{L}(X)$, $F(\phi(\overline{a})) \leq \|\phi(\tau(\overline{a}))\|_{\mathbf{M}}.$

		\item[(B)] For every finite $\Gamma_0 \subseteq \Gamma$ and for every $\mathbf{c} \in \mathcal{B}_+$, there is some $\{0, 1\}$-valued $\mathcal{L}$-structure $M$ and some map $\tau: X \to M$, such that for every $\phi(\overline{a}) \in \Gamma_0$, if $\mathbf{c} \leq F(\phi(\overline{a}))$ then $M \models \phi(\tau(\overline{a}))$.
	\end{itemize}
\end{theorem}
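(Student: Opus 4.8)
The direction (A)$\Rightarrow$(B) is immediate from the specialization operation of Theorem~\ref{SpecializationThm}. Given a full $\mathcal{B}$-valued $\mathbf{M}$ and $\tau\colon X\to\mathbf{M}$ as in (A), fix a finite $\Gamma_0$ and some $\mathbf{c}\in\mathcal{B}_+$. Extend $\mathbf{c}$ to an ultrafilter $\mathcal{U}$ on $\mathcal{B}$, set $M=\mathbf{M}/\mathcal{U}$, and let $\tau'=[\cdot]_{\mathbf{M},\mathcal{U}}\circ\tau$. If $\phi(\overline{a})\in\Gamma_0$ and $\mathbf{c}\le F(\phi(\overline{a}))$, then $\mathbf{c}\le F(\phi(\overline{a}))\le\|\phi(\tau(\overline{a}))\|_{\mathbf{M}}$, so $\|\phi(\tau(\overline{a}))\|_{\mathbf{M}}\in\mathcal{U}$ by upward closure, whence $M\models\phi(\tau'(\overline{a}))$ by the defining property of specialization. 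This $M$ witnesses (B).

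The content is (B)$\Rightarrow$(A), and I would first record the forcing proof alluded to just before the statement, since it is by far the cleanest. Force with $\mathcal{B}_+$, and in $V[G]$ consider the ground-model-coded set of sentences $T_G=\{\phi(\overline{a})\in\mathcal{L}(X): F(\phi(\overline{a}))\in G\}$. I claim $\Vdash$ ``$T_{\dot G}$ is satisfiable''. Indeed, any finite $\{\phi_1,\dots,\phi_n\}\subseteq T_G$ has $\mathbf{c}:=\bigwedge_i F(\phi_i)\in G$, so $\mathbf{c}>0$ and $\mathbf{c}\le F(\phi_i)$ for each $i$; applying (B) to $\Gamma_0=\{\phi_1,\dots,\phi_n\}$ and this $\mathbf{c}$ produces a two-valued model $M\in V\subseteq V[G]$ realizing all the $\phi_i$, so by absoluteness every finite subset of $T_G$ is satisfiable, and first-order compactness inside $V[G]$ gives the claim. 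By the maximum principle there is a name $\dot N$ and names $\dot\tau(x)$ $(x\in X)$ with $\Vdash$ ``$\dot N\models T_{\dot G}$, interpreting each $x$ by $\dot\tau(x)$''. Letting $\mathbf{M}$ be a suitable full set of names for elements of $\dot N$ and setting $\|\phi(\dot a)\|_{\mathbf{M}}=[\![\dot N\models\phi(\dot a)]\!]$ and $\tau(x)=\dot\tau(x)$, one reads off a $\mathcal{B}$-valued structure; the maximum principle and mixing give fullness and axiom (6), while $F(\phi(\overline{a}))\Vdash\phi(\overline{a})\in T_{\dot G}$ gives exactly $F(\phi(\overline{a}))\le[\![\dot N\models\phi(\dot\tau(\overline{a}))]\!]=\|\phi(\tau(\overline{a}))\|_{\mathbf{M}}$.

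Since the theorem is stated before forcing is developed, I would also give an elementary argument. Henkinize: build $Y\supseteq X$ by adding a witness constant $c_\phi$ for every one-variable formula over the language accumulated so far, so that every $\exists x\,\phi(x)$ over $\mathcal{L}(Y)$ has a witness $c_\phi\in Y$. Let $\mathcal{A}$ be the Lindenbaum algebra of $\mathcal{L}(Y)$-sentences and $J$ the ideal generated by the Henkin deficiencies $[\exists x\,\phi\wedge\lnot\phi(c_\phi)]$. Any homomorphism $h\colon\mathcal{A}/J\to\mathcal{B}$ yields a full $\mathcal{B}$-valued structure: axioms (3)--(5) are the homomorphism laws, axiom (6) and fullness follow because modulo $J$ one has $[\exists x\,\phi]=[\phi(c_\phi)]$, and (7) is arranged by passing to a transversal of the constants. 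Conservativity of Henkinization identifies the subalgebra generated by $\{[\phi]_J:\phi\in\mathcal{L}(X)\}$ with the Lindenbaum algebra $\mathcal{L}_X$ of $\mathcal{L}(X)$, so everything reduces to the following purely Boolean lemma: if for every $\mathbf{c}\in\mathcal{B}_+$ and every finite $\Gamma_0\subseteq\mathcal{L}_X$ there is a two-valued homomorphism (equivalently, by the completeness theorem, a model $M$ with $\tau\colon X\to M$) sending $[\phi]\mapsto 1$ whenever $\mathbf{c}\le F(\phi)$, then there is a homomorphism $h\colon\mathcal{L}_X\to\mathcal{B}$ with $h([\phi])\ge F(\phi)$ for all $\phi$; one extends $h$ to $\mathcal{A}/J$ by Sikorski's extension theorem.

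The crux is constructing this dominating homomorphism. I would do it by transfinite recursion, enumerating $\mathcal{L}_X=\{a_\xi:\xi<\kappa\}$ and building homomorphisms $h_\xi$ on an increasing chain of subalgebras $\mathcal{A}_\xi\ni a_\eta$ $(\eta<\xi)$, with unions at limits and $h_\xi(a_\eta)\ge F(a_\eta)$ for $\eta<\xi$. Extending $h_\xi$ to include $a_\xi$ with any value in the interval $[\,\bigvee\{h_\xi(b):b\le a_\xi\},\ \bigwedge\{h_\xi(b):b\ge a_\xi\}\,]$ is always possible as a homomorphism; the difficulty is keeping this interval above $F(a_\xi)$, and this is exactly where (B) must enter, through a ``local consistency'' invariant asserting that below every $\mathbf{c}\in\mathcal{B}_+$ the partial data $h_\xi$ together with the lower bounds $F$ is realized by some two-valued model. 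Maintaining this invariant through successor and limit stages---so that the finite first-order model hypothesis of (B) propagates to a genuine $\mathcal{B}$-valued object---is the main obstacle, and it is what makes the completeness of $\mathcal{B}$ (hence Sikorski and the supremum/infimum computations) essential. The forcing proof hides all of this inside the maximum principle, which is why it is so much shorter.
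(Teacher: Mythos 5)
Your (A)$\Rightarrow$(B) direction is the paper's argument verbatim. For (B)$\Rightarrow$(A) you give two arguments of very different status. The forcing argument is complete and correct: it is precisely the ``rather slicker proof'' that the paper alludes to in the sentence immediately before the theorem but deliberately does not carry out, since the development is meant to be self-contained and not presuppose names, mixing, and the maximum principle; what it buys is brevity (finite satisfiability of $T_{\dot G}$ is absolute, compactness is applied in $V[G]$, and the maximum principle absorbs the entire Henkin construction), at the price of that machinery. The only cosmetic omission there is condition (7), which you should dispatch as in your second argument by identifying names $\dot a,\dot b$ with $\|\dot a=\dot b\|=1$. Your elementary argument, by contrast, is essentially the paper's actual proof recast in Lindenbaum-algebra language: one assigns Boolean values to formulas one at a time while maintaining the invariant that, for every $\mathbf{c}\in\mathcal{B}_+$, the data decided below $\mathbf{c}$ together with the $F$-constraints is realized by a two-valued model. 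But you stop exactly where the paper does the work: its Claim~1 proves that when a new formula $\phi(\overline{a})$ is to receive a value, the admissible interval $[\mathbf{c}_0,\mathbf{c}_1]$ --- with $\mathbf{c}_0=\bigvee\{\mathbf{c}:T_{\mathbf{c},\Delta,G}\vdash\phi(\overline{a})\}$ and $\mathbf{c}_1=\bigwedge\{\mathbf{c}\ne 1:T_{\lnot\mathbf{c},\Delta,G}\vdash\lnot\phi(\overline{a})\}$ --- is nonempty and that every value in it preserves the invariant, and its Claim~2 does the same for Henkin witnesses. You name this step as ``the main obstacle'' without resolving it, so the elementary argument on its own would have a genuine gap; as submitted, the proof stands on the forcing argument alone.
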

\begin{proof}
	(A) implies (B): suppose (A) holds, and let $\Gamma_0 \subseteq \Gamma$ and $\mathbf{c} \in \mathcal{B}_+$ be given. Let $\mathcal{U}$ be an ultrafilter on $\mathcal{B}$ containing $\mathbf{c}$, let $M  = \mathbf{M}/\mathcal{U}$, and let $\tau': X \to M$ be the composition of $\tau$ with $[\cdot]_{\mathcal{U}}$. Then this witnesses (B) holds.
	
	(B) implies (A): This is a Henkin construction.
	
	Let $X_* = X \cup Y$, where $Y$ is disjoint from $X$ with $|Y| = |X|+ \aleph_0$. Write $\kappa = |X_*| = |X| + \aleph_0$, and write $\Gamma = \mathcal{L}(X)$.
	
	We will be considering pairs $(\Delta, G)$, where $\Delta \subseteq \mathcal{L}(X_*)$ and $G: \Delta \to \mathcal{B}$. The idea is that given $\phi(\overline{a}) \in \Delta$, $G(\phi(\overline{a}))$ specifies what $\| \phi(\tau(\overline{a}))\|_{\mathbf{M}}$ will be in our eventual model $\mathbf{M}$. In order for this to be possible, we need some compatibility conditions: given a pair $(\Delta, G)$, and given $\mathbf{c} \in \mathcal{B}_+$, define $T_{\mathbf{c}, \Delta, G}$ to be the following theory in $\mathcal{L}(X_*)$ (where we view the elements of $X_*$ as constants). Namely $T_{\mathbf{c}, \Delta, G} := \{\phi(\overline{a}) \in \Gamma: \mathbf{c} \leq F(\phi(\overline{a}))\} \cup \{\phi(\overline{a}) \in \Delta: \mathbf{c} \leq G(\phi(\overline{a})) \} \cup \{\lnot\phi(\overline{a}): \phi(\overline{a}) \in \Delta, \mathbf{c} \leq \lnot G(\phi(\overline{a}))\}$. We need $T_{\mathbf{c}, \Delta, G}$ to be satisfiable.
	
	So let $P$ be the set of all pairs $(\Delta, G)$ with $\Delta \subseteq \mathcal{L}(X_*)$ and $G: \Delta \to \mathcal{B}$, such that for every $\mathbf{c} \in \mathcal{B}_+$, $T_{\mathbf{c}, \Delta, G}$ is satisfiable. We view $P$ as partially ordered under componentwise $\subseteq$. Note that by hypothesis, $(\emptyset, \emptyset) \in P$.
	
	We plan to find some $G_*: \mathcal{L}(X_*) \to \mathcal{B}$ such that $(\mathcal{L}(X_*), G_*) \in P$, and such that for all formulas $\phi(x)$ with parameters from $X_*$, there is some $a \in X_*$ such that $G_*(\exists x \phi(x)) = G_*(\phi(a))$. We note now how to finish, assuming this. Note first that for all $\phi(\overline{a}) \in \mathcal{L}(X)$, $G_*(\phi(\overline{a})) \geq F(\phi(\overline{a}))$, as otherwise, set $\mathbf{c} = F(\phi(\overline{a})) \wedge \lnot G(\phi(\overline{a}))$; then $T_{\mathbf{c}, \mathcal{L}(X_*), G_*}$ is unsatisfiable. Also, $(X_*, G_*)$ satisfies all the axioms of a full $\mathcal{B}$-valued model, except that to arrange condition (7), we must identify any $a, b \in X_*$ with $G_*(a = b) = 1$.
	
	So it suffices to find $G_*$. We break this into claims.
	\vspace{1 mm}
	%
	%
	%

	\noindent \textbf{Claim 1.} Suppose $(\Delta, G) \in P$, and suppose $|\Delta| < \kappa$. Then for every $\phi(\overline{a}) \in \mathcal{L}(X_*)$, we can find some $\mathbf{c}_*$ such that $(\Delta \cup \{\phi(\overline{a})\}, G \cup \{(\phi(\overline{a}), \mathbf{c}_*)\}) \in P$.
	
	\begin{proof}
		Let $\mathbf{C}_0$ be the set of all $\mathbf{c} \in \mathcal{B}_+$ such that $T_{\mathbf{c}, \Delta, G}$ implies $\phi(\overline{a})$ (i.e., every model of $T_{\mathbf{c}, \Delta, G}$ is a model of $\phi(\overline{a})$; formally we are viewing the elements of $X_*$ as new constant symbols). Let $\mathbf{c}_0 = \bigvee \mathbf{C}_0$. Let $\mathbf{C}_1$ be the set of all $\mathbf{c} \in \mathcal{B}$ different from $1$, such that $T_{\lnot \mathbf{c}, \Delta, G}$ implies $\lnot \phi(\overline{a})$. Let $\mathbf{c}_1 = \bigwedge \mathbf{C}_1$.
		
		We first of all claim that $\mathbf{c}_0 \leq \mathbf{c}_1$. This amounts to showing that for all $\mathbf{d}_0 \in \mathbf{C}_0$ and for all $\mathbf{d}_1 \in \mathbf{C}_1$, $\mathbf{d}_0 \leq \mathbf{d}_1$. Suppose not; write $\mathbf{c} = \mathbf{d}_0 \wedge \lnot \mathbf{d}_1$. Then by definition of $\mathbf{C}_0$ and $\mathbf{C}_1$, we must have that $T_{\mathbf{c}, \Delta, G}$ implies both $\phi(\overline{a})$ and $\lnot \phi(\overline{a})$, i.e. is unsatisfiable, contradicting that $(\Delta, G)\in P$.
		
		Finally, we claim that any $\mathbf{c}_*$ with $\mathbf{c}_0 \leq \mathbf{c}_* \leq \mathbf{c}_1$ will work for the Claim. Indeed, let $\mathbf{c}_*$ be given as such, and write $\Delta' = \Delta \cup \{\phi(\overline{a})\}$, $G' = G \cup \{(\phi(\overline{a}), \mathbf{c}_*)\}$.
		
		Let $\mathbf{c} \in \mathcal{B}_+$ be given; we can suppose $\mathbf{c}$ decides $\mathbf{c}_*$. Suppose first $\mathbf{c} \leq \mathbf{c}_*$; thus $\mathbf{c} \leq \mathbf{c}_1$. Note that $T_{\mathbf{c}, \Delta', G'} = T_{\mathbf{c}, \Delta, G} \cup \{\phi(\overline{a})\}$; we need to show this is consistent. Suppose towards a contradiction that $T_{\mathbf{c}, \Delta, G}$ implies $\lnot \phi(\overline{a})$. Then $\lnot \mathbf{c} \in \mathbf{C}_1$ by definition of $\mathbf{C}_1$, so $\mathbf{c}_1 \leq \lnot \mathbf{c}$, contradicting $\mathbf{c} \leq \mathbf{c}_1$ is nonzero.
		
		Suppose instead that $\mathbf{c} \leq \lnot \mathbf{c}_*$, thus $\mathbf{c} \leq \lnot \mathbf{c}_0$. Note that $T_{\mathbf{c}, \Delta', G'} = T_{\mathbf{c}, \Delta, G} \cup \{\lnot \phi(\overline{a})\}$; we need to show this is consistent. Suppose towards a contradiction that $T_{\mathbf{c}, \Delta, G}$ implies $\phi(\overline{a})$. Then $\mathbf{c} \in \mathbf{C}_0$ by definition of $\mathbf{C}_0$, so $\mathbf{c} \leq \mathbf{c}_0$, contradicting $\mathbf{c} \leq \lnot \mathbf{c}_0$ is nonzero.
	\end{proof}
	
	\noindent \textbf{Claim 2.} Suppose $(\Delta, G)\in P$, and suppose $|\Delta| < \kappa$. Suppose $\exists x \phi(x) \in \Delta$, where $\phi(x)$ has parameters from $X_*$. Choose $a \in Y$ which does not occur in any formula in $\Delta$ (note $a$ cannot occur in any formula of $\Gamma = \mathcal{L}(X)$ either). Write $\Delta' = \Delta \cup \{\phi(a)\}$, and write $G' = G \cup \{ (\phi(a), G(\exists x \phi(x))\}$. Then $(\Delta', G') \in P$.
	\begin{proof}
		Suppose $\mathbf{c} \in \mathcal{B}_+$ is given; we can suppose $\mathbf{c}$ decides $G(\exists x \phi(x))$ (which is equal to $G'(\phi(a))$). Since $a$ does not appear in $T_{\mathbf{c}, \Delta, G}$, we have that $T_{\mathbf{c}, \Delta, G} \cup \{\phi(a)\}$ is satisfiable if and only if $T_{\mathbf{c}, \Delta, G} \cup \{\exists x \phi(x)\}$ is satisfiable, which is the case if and only if $\mathbf{c} \leq G(\exists x \phi(x))$. Thus, if $\mathbf{c} \leq G'(\phi(a))$ then $T_{\mathbf{c}, \Delta', G'}$ is satisfiable. Finally, if $\mathbf{c} \leq \lnot G'(\phi(a))$, then $T_{\mathbf{c}, \Delta, G}$ implies $\lnot \phi(a)$; since $T_{\mathbf{c}, \Delta,G}$ is satisfiable, so is $T_{\mathbf{c}, \Delta', G'}$.
	\end{proof}
	
	\noindent \textbf{Claim 3.} Suppose $(\Delta_\alpha, G_\alpha: \alpha < \alpha_*)$ is an increasing chain from $P$, where $\alpha_*$ is a limit ordinal. Write $\Delta = \bigcup_\alpha \Delta_\alpha$, write $G = \bigcup_\alpha G_{\alpha}$. Then $(\Delta, G) \in P$.
	\begin{proof}
		Suppose $\mathbf{c} \in \mathcal{B}_+$; then note that $T_{\mathbf{c}, \Delta, G} = \bigcup_{\alpha < \alpha_*} T_{\mathbf{c}, \Delta_\alpha, G_\alpha}$, so we can apply standard compactness.
	\end{proof}
	To finish the construction of $G_*$ and hence the proof, note that using Claims 1 through 3 it is now straightforward to find an increasing chain $((\Delta_\alpha, G_\alpha): \alpha \leq \kappa)$ from $P$ such that: 
	\begin{itemize}
		\item For all $\alpha < \kappa$, $|\Delta_\alpha| \leq |\alpha|$;
		\item For every formula $\phi(\overline{a}) \in \mathcal{L}(X_*)$, there is $\alpha < \kappa$ with $\phi(\overline{a}) \in \Delta_\alpha$; 
		\item For every formula $\phi(x)$ with parameters from $X_*$, there is $a \in X_*$ and $\alpha < \kappa$, such that $\{\exists x \phi(x), \phi(a)\} \subseteq \Delta_\alpha$ and such that $G_\alpha(\exists x \phi(x)) = G_\alpha(\phi(a))$.
	\end{itemize}
	Then $G_\kappa$ is visibly as desired.
\end{proof}

The following minor modification will frequently be more convenient in applications:

\begin{corollary}\label{Compactness}
	Suppose $\mathcal{B}$ is a complete Boolean algebra, $X$ is a set, $\Gamma \subseteq \mathcal{L}( X)$, and $F_0, F_1: \Gamma \to  \mathcal{B}$ with $F_0(\phi(\overline{a})) \leq F_1(\phi(\overline{a}))$ for all $\phi(\overline{a}) \in \Gamma$. Then the following are equivalent:
	
	\begin{itemize}
		\item[(A)] There is some full $\mathcal{B}$-valued structure $\mathbf{M}$ and some map $\tau: X \to \mathbf{M}$, such that for all $\phi(\overline{a}) \in \Gamma$, $F_0(\phi(\overline{a})) \leq \|\phi(\tau(\overline{a}))\|_{\mathbf{M}} \leq F_1(\phi(\overline{a}))$;
		
		\item[(B)] For every finite $\Gamma_0 \subseteq \Gamma$ and for every $\mathbf{c} \in \mathcal{B}_+$, there is some $\{0, 1\}$-valued $\mathcal{L}$-structure $M$ and some map $\tau: X \to M$, such that for every $\phi(\overline{a}) \in \Gamma$, if $\mathbf{c} \leq F_0(\phi(\overline{a}))$ then $M \models \phi(\tau(\overline{a}))$, and if $\mathbf{c} \leq \lnot F_1(\phi(\overline{a}))$ then $M \models \lnot \phi(\tau(\overline{a}))$. 
	\end{itemize}
\end{corollary}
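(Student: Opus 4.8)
The plan is to deduce Corollary~\ref{Compactness} from Theorem~\ref{Compactness0} rather than rerun the Henkin construction. Two features must be absorbed: that $\Gamma$ is an arbitrary subset of $\mathcal{L}(X)$, and that we now impose two-sided bounds $F_0(\phi(\overline a))\leq\|\phi(\overline a)\|_{\mathbf{M}}\leq F_1(\phi(\overline a))$. The first is handled by extending any partial function by $0$, which imposes no constraint since $0\leq\|\psi\|_{\mathbf{M}}$ always. The second is handled by axiom (4): the upper bound $\|\phi\|_{\mathbf{M}}\leq F_1(\phi)$ is equivalent to the lower bound $\lnot F_1(\phi)\leq\lnot\|\phi\|_{\mathbf{M}}=\|\lnot\phi\|_{\mathbf{M}}$. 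I would therefore package all the data into a single $F:\mathcal{L}(X)\to\mathcal{B}$, setting $F(\theta)=\alpha(\theta)\vee\beta(\theta)$, where $\alpha(\theta)=F_0(\theta)$ if $\theta\in\Gamma$ and $\alpha(\theta)=0$ otherwise, and $\beta(\theta)=\lnot F_1(\phi)$ if $\theta$ is syntactically $\lnot\phi$ for some $\phi\in\Gamma$ and $\beta(\theta)=0$ otherwise, and then run Theorem~\ref{Compactness0} on $F$.

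The (A)-conditions match cleanly. For a fixed full $\mathbf{M}$ and $\tau$, the requirement $F(\theta)\leq\|\theta(\tau)\|_{\mathbf{M}}$ for every $\theta$ unwinds, using $\bigvee_i a_i\leq b \iff a_i\leq b$ for all $i$ together with axiom (4), into exactly the two families $F_0(\phi)\leq\|\phi(\tau)\|_{\mathbf{M}}$ and $\|\phi(\tau)\|_{\mathbf{M}}\leq F_1(\phi)$ for $\phi\in\Gamma$; so Theorem~\ref{Compactness0}(A) for $F$ is Corollary~\ref{Compactness}(A). The direction (A)$\Rightarrow$(B) of the Corollary I would prove directly via specialization: given finite $\Gamma_0$ and $\mathbf{c}\in\mathcal{B}_+$, take an ultrafilter $\mathcal{U}\ni\mathbf{c}$, put $M=\mathbf{M}/\mathcal{U}$ and $\tau'=[\cdot]_{\mathcal{U}}\circ\tau$; if $\mathbf{c}\leq F_0(\phi)\leq\|\phi(\tau)\|_{\mathbf{M}}$ then $\|\phi(\tau)\|_{\mathbf{M}}\in\mathcal{U}$, so $M\models\phi(\tau')$, and if $\mathbf{c}\leq\lnot F_1(\phi)\leq\|\lnot\phi(\tau)\|_{\mathbf{M}}$ then $M\models\lnot\phi(\tau')$.

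For the converse it suffices to check that Corollary~\ref{Compactness}(B) implies condition (B) of Theorem~\ref{Compactness0} for $F$; the theorem then delivers condition (A) for $F$, which is Corollary~\ref{Compactness}(A) by the previous paragraph. So fix a finite $\Gamma_0'\subseteq\mathcal{L}(X)$ and $\mathbf{c}\in\mathcal{B}_+$, and let $S=\{\theta\in\Gamma_0':\mathbf{c}\leq F(\theta)\}$; I need one $\{0,1\}$-structure $M$ with $M\models\theta$ for all $\theta\in S$. The one subtlety --- and the step I expect to be the main obstacle --- is that $\mathbf{c}\leq F(\theta)=\alpha(\theta)\vee\beta(\theta)$ need not give $\mathbf{c}\leq\alpha(\theta)$ or $\mathbf{c}\leq\beta(\theta)$; this overlap occurs exactly when $\Gamma$ contains both a formula and its negation, so the induced lower bound is a genuine join. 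I would resolve it by refining $\mathbf{c}$: since $S$ is finite, choose $\mathbf{c}^*\in\mathcal{B}_+$ with $\mathbf{c}^*\leq\mathbf{c}$ deciding each of the finitely many elements $\alpha(\theta),\beta(\theta)$ ($\theta\in S$), built by successively meeting with each element or its complement. Then $\mathbf{c}^*\leq\alpha(\theta)\vee\beta(\theta)$ with $\mathbf{c}^*>0$ deciding both forces $\mathbf{c}^*\leq\alpha(\theta)$ or $\mathbf{c}^*\leq\beta(\theta)$ for each $\theta\in S$. Gathering the relevant formulas of $\Gamma$ into a finite $\Gamma_0\subseteq\Gamma$ and applying Corollary~\ref{Compactness}(B) at $(\Gamma_0,\mathbf{c}^*)$ produces an $M$ with $M\models\theta$ for every $\theta\in S$ --- directly when $\mathbf{c}^*\leq\alpha(\theta)=F_0(\theta)$, and from $M\models\lnot\phi$ when $\mathbf{c}^*\leq\beta(\theta)=\lnot F_1(\phi)$ with $\theta=\lnot\phi$. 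This $M$ witnesses Theorem~\ref{Compactness0}(B) at $(\Gamma_0',\mathbf{c})$, which completes the argument. Everything else (that $F_0\leq F$ and $\lnot F_1\leq F\circ\lnot$, and that the ``otherwise'' values $0$ impose nothing) is routine bookkeeping.
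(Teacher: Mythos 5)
Your proposal is correct and follows essentially the same route as the paper: the auxiliary functions $\alpha,\beta$ are exactly the paper's $G_0,G_1$, the combined $F=\alpha\vee\beta$ is the paper's $F=G_0\vee G_1$, and the key step of shrinking $\mathbf{c}$ to a nonzero $\mathbf{c}^*$ deciding each disjunct so that $\mathbf{c}^*\leq\alpha(\theta)\vee\beta(\theta)$ splits into $\mathbf{c}^*\leq\alpha(\theta)$ or $\mathbf{c}^*\leq\beta(\theta)$ is precisely the paper's argument, as is the specialization proof of (A)$\Rightarrow$(B). Your explicit unwinding of Theorem~\ref{Compactness0}(A) for $F$ into the two-sided bounds via $\|\lnot\phi\|_{\mathbf{M}}=\lnot\|\phi\|_{\mathbf{M}}$ is a welcome elaboration of the paper's ``clearly this works.''
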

\begin{proof}
(A) implies (B) is just like in Theorem~\ref{Compactness0}. So we prove (B) implies (A). 

	Define $G_0: \mathcal{L}(X) \to \mathcal{B}$ via: $G_0(\phi(\overline{a})) = F_0(\phi(\overline{a}))$ if $\phi(\overline{a}) \in \Gamma$, and otherwise $G_0(\phi(\overline{a})) = 0$. Define $G_1: \mathcal{L}(X) \to \mathcal{B}$ via: $G_1(\phi(\overline{a})) = \lnot F_1(\psi(\overline{a}))$ if $\phi(\overline{a}) = \lnot \psi(\overline{a})$ where $\psi(\overline{a}) \in \Gamma$, and otherwise $G_1(\phi(\overline{a})) =0$. Finally, define $F: \mathcal{L}(X) \to \mathcal{B}$ via $F(\phi(\overline{a})) = G_0(\phi(\overline{a})) \vee G_1(\phi(\overline{a}))$.
	
	I claim that Theorem 2.8 (B) holds of $F$. Indeed, suppose $\Delta \subseteq \mathcal{L}(X)$ is finite and $\mathbf{c} \in \mathcal{B}_+$. By decreasing $\mathbf{c}$, we can suppose it decides $G_i(\phi(\overline{a}))$ for each $\phi(\overline{a}) \in \Delta$ and for each $i < 2$. Thus, for each $\phi(\overline{a}) \in \Delta$, $\mathbf{c} \leq F(\phi(\overline{a}))$ if and only if $\mathbf{c} \leq G_i(\phi(\overline{a}))$ for some $i < 2$. Let $\Delta_i \subseteq \Delta$ be the set of all $\phi(\overline{a}) \in \Delta$ with $\mathbf{c} \leq G_i(\phi(\overline{a}))$. Let $\Delta'_0 = \Gamma \cap \Delta_0$, and let $\Delta'_1 = \{\psi(\overline{a}) \in \Gamma: \lnot \psi(\overline{a}) \in \Delta_1\}$. Note that for each $\phi(\overline{a}) \in \Delta'_0$, $\mathbf{c} \leq F_0(\phi(\overline{a}))$, and for each $\phi(\overline{a}) \in \Delta'_1$, $\mathbf{c} \leq F_1(\phi(\overline{a}))$. By hypothesis, we can find some map $\tau: X \to \mathbf{M}$ such that for all $\phi(\overline{a}) \in \Delta'_0$, $M \models \phi(\tau(\overline{a}))$, and for all $\phi(\overline{a}) \in \Delta'_1$, $M \models \lnot \phi(\tau(\overline{a}))$. Then for every $\phi(\overline{a}) \in \Delta$, if $\mathbf{c} \leq F(\phi(\overline{a}))$ then $M \models \phi(\tau(\overline{a}))$, as desired.
	
	Thus Theorem 2.8(A) holds of $F$. So there is some full $\mathcal{B}$-valued structure $\mathbf{M}$ and some map $\tau: X \to \mathbf{M}$, such that for all $\phi(\overline{a}) \in \mathcal{L}(X)$, $F(\phi(\overline{a})) \leq \|\phi(\overline{a})\|_{\mathbf{M}}$. Then clearly this works.
\end{proof}

\begin{remark}
	Theorem~\ref{CompactnessFirst} follows from Corollary~\ref{Compactness}, in the case $F_0 = F_1 = F$: note that under the hypothesis of Theorem~\ref{CompactnessFirst}, we must have that each $F(a =b) < 1$, and it follows that the $\tau$ given by Corollary~\ref{Compactness} must be injective.
\end{remark}

\section{Saturation of Boolean-Valued Models; Boolean Ultrapowers}\label{SurveyBVMSec}

In this section, we define the appropriate notion of maps between full $\mathcal{B}$-valued models, and show that $\lambda^+$-saturated $\mathcal{B}$-valued models exist. Then we define $\mathcal{B}$-valued ultrapowers.

\begin{definition}
	
	Given $\mathbf{M}, \mathbf{N}$ $\mathcal{B}$-valued $\mathcal{L}$-structures, say that $f: \mathbf{M} \preceq \mathbf{N}$ is an elementary map if $f: \mathbf{M} \to \mathbf{N}$, and for every $\phi(\overline{a}) \in \mathcal{L}(\mathbf{M})$, $\|\phi(\overline{a})\|_{\mathbf{M}} = \|\phi(f(\overline{a}))\|_{\mathbf{N}}$. Note this implies $f$ is injective, by condition (7) in the definition of $\mathcal{B}$-valued models (this is why condition (7) exists). Say that $\mathbf{M} \preceq \mathbf{N}$ if $\mathbf{M} \subseteq \mathbf{N}$ and the inclusion map is elementary.
	
	Say that $f: \mathbf{M} \cong \mathbf{N}$ if $f: \mathbf{M} \preceq \mathbf{N}$ is bijective (and so $f^{-1}: \mathbf{N} \preceq \mathbf{M}$).
	
	If $f: \mathbf{M} \preceq \mathbf{N}$ are full $\mathcal{B}$-valued $\mathcal{L}$-structures and $\mathcal{U}$ is an ultrafilter on $\mathcal{B}$, then this induces an elementary map $[f]_{\mathcal{U}}: \mathbf{M}/\mathcal{U} \preceq \mathbf{N}/\mathcal{U}$. When $f$ is the inclusion, we pretend $[f]_{\mathcal{U}}$ is also, even though formally this is not quite true.
	
\end{definition}
The following is a typical application of Corollary~\ref{Compactness}. 

\begin{example}
	Suppose $\mathbf{M}$ is a $\mathcal{B}$-valued $\mathcal{L}$-structure. Then there is some full $\mathbf{N} \succeq \mathbf{M}$.
\end{example}
\begin{proof}
	Write $X = \mathbf{M}$, write $\Gamma = \mathcal{L}(\mathbf{M})$, and write $F_0 = F_1 = \|\cdot\|_{\mathbf{M}}$, and apply Corollary~\ref{Compactness}.
\end{proof}

\begin{remark}
	$\preceq$ has several obvious properties:
	\begin{itemize}
		\item If $f: \mathbf{M} \to \mathbf{N}$ is a function, where $\mathbf{M}, \mathbf{N}$ are full $\mathcal{B}$-valued $\mathcal{L}$-structures, and if $\mathcal{B}$ is a complete subalgebra of $\mathcal{B}_*$, then whether or not $f: \mathbf{M} \preceq \mathbf{N}$ does not depend on whether we consider $\mathbf{M}, \mathbf{N}$ to be $\mathcal{B}$-valued structures or $\mathcal{B}_*$-valued structures.
		\item Suppose $\alpha_*$ is a limit ordinal, and $(\mathbf{M}_\alpha: \alpha < \alpha_*)$ is an increasing chain of full $\mathcal{B}$-valued $\mathcal{L}$-structures with each $\mathbf{M}_\alpha \preceq \mathbf{M}_{\alpha+1}$. Write $\mathbf{M}_{\alpha_*} = \bigcup_{\alpha < \alpha_*} \mathbf{M}_\alpha$ and write $\|\cdot\|_{\mathbf{M}_{\alpha_*}} = \bigcup_{\alpha < \alpha_*} \|\cdot\|_{\mathbf{M}_\alpha}$. Then $\mathbf{M}_{\alpha_*}$ is a full $\mathcal{B}$-valued $\mathcal{L}$-structure, and for all $\alpha < \alpha_*$, $\mathbf{M}_\alpha \preceq \mathbf{M}$.
	\end{itemize}
\end{remark}

\begin{definition}
	If $T$ is a complete countable theory, we say that $\mathbf{M}$ is a  full $\mathcal{B}$-valued model of $T$ if $(\mathbf{M}, \|\cdot\|_{{\mathbf{M}}})$ is full and $\|\phi\|_{\mathbf{M}} = 1$ for all $\phi \in T$. Let $\mathbf{M} \models^{\mathcal{B}} T$ be short-hand for this.
	
\end{definition}

The reader familiar with abstract elementary classes (see, for instance, \cite{Cat}) will note that the class of full $\mathcal{B}$-valued models of $T$ can be formulated as an abstract elementary class, and the following theorem says it has downward L\"{o}wenheim-Skolem number $\aleph_0$, the joint embedding property and the amalgamation property.

\begin{theorem}\label{AmalgForBValuedModels}
	Suppose $T$ is a complete countable theory and $\mathcal{B}$ is a complete Boolean algebra. Then the following all hold:
	
	\begin{enumerate}
		\item (Downward L\"{o}wenheim-Skolem) Suppose $\mathbf{M} \models^{\mathcal{B}} T$ and $X \subseteq \mathbf{M}$. Then there is some full $\mathbf{N} \preceq \mathbf{M}$ with $X \subseteq \mathbf{N}$ and $|\mathbf{N}| \leq |X| + \aleph_0$.
		\item (Joint Embedding) Suppose $\mathbf{M}_0, \mathbf{M}_1 \models^{\mathcal{B}} T$. Then we can find $\mathbf{N} \models^{\mathcal{B}} T$ and embeddings $f_i: \mathbf{M}_i \preceq \mathbf{N}$.
		\item (Amalgamation Property) Suppose $\mathbf{M}, \mathbf{M}_0, \mathbf{M}_1 \models^{\mathcal{B}} T$ and $\mathbf{M} \preceq \mathbf{M}_0$ and $\mathbf{M} \preceq \mathbf{M}_1$. Then we can find $\mathbf{N} \models^{\mathcal{B}} T$ and embeddings $f_i: \mathbf{M}_i \preceq \mathbf{N}$, such that $f_0$ and $f_1$ agree on $\mathbf{M}$.
	\end{enumerate}
\end{theorem}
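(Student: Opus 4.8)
The plan is to handle part (1) by a direct Skolem-hull construction inside $\mathbf{M}$, and to treat parts (2) and (3) as essentially identical applications of the compactness theorem, Corollary~\ref{Compactness}, reducing the $\mathcal{B}$-valued amalgamation problem to ordinary first-order amalgamation via specialization at an ultrafilter.

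For (1), I would build an increasing $\omega$-chain $X = X_0 \subseteq X_1 \subseteq \cdots$ of subsets of $\mathbf{M}$, where $X_{n+1}$ is obtained from $X_n$ by adjoining, for each formula $\phi(x) \in \mathcal{L}(X_n)$, a witness $a_\phi \in \mathbf{M}$ with $\|\phi(a_\phi)\|_{\mathbf{M}} = \|\exists x\, \phi(x)\|_{\mathbf{M}}$; such a witness exists because $\mathbf{M}$ is full. Since there are only $|X_n| + \aleph_0$ formulas over $X_n$, an induction gives $|X_n| \leq |X| + \aleph_0$, so $\mathbf{N} := \bigcup_n X_n$ has size $\leq |X| + \aleph_0$. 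Equipping $\mathbf{N}$ with the restriction $\|\cdot\|_{\mathbf{N}} := \|\cdot\|_{\mathbf{M}} \restriction \mathcal{L}(\mathbf{N})$, axioms (3)--(5) and (7) are inherited verbatim, while the witness closure makes axiom (6) and fullness hold simultaneously: any $\phi(x) \in \mathcal{L}(\mathbf{N})$ has finitely many parameters, hence lies in some $\mathcal{L}(X_n)$, so $a_\phi \in \mathbf{N}$ realizes the supremum $\|\exists x\, \phi(x)\|_{\mathbf{M}}$. Finally $\|\phi\|_{\mathbf{N}} = \|\phi\|_{\mathbf{M}} = 1$ for $\phi \in T$, and the inclusion $\mathbf{N} \hookrightarrow \mathbf{M}$ is elementary by construction.

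For (2) and (3), I would apply Corollary~\ref{Compactness} with $F_0 = F_1 = F$. In the amalgamation case, take $X = \mathbf{M}_0 \cup \mathbf{M}_1$ with $\mathbf{M}_0 \cap \mathbf{M}_1 = \mathbf{M}$, let $\Gamma = \mathcal{L}(\mathbf{M}_0) \cup \mathcal{L}(\mathbf{M}_1)$, and define $F$ to agree with $\|\cdot\|_{\mathbf{M}_i}$ on $\mathcal{L}(\mathbf{M}_i)$; this is well defined because on the overlap $\mathcal{L}(\mathbf{M})$ the hypotheses $\mathbf{M} \preceq \mathbf{M}_0$ and $\mathbf{M} \preceq \mathbf{M}_1$ force $\|\phi(\overline{a})\|_{\mathbf{M}_0} = \|\phi(\overline{a})\|_{\mathbf{M}} = \|\phi(\overline{a})\|_{\mathbf{M}_1}$. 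The joint-embedding case is identical with $\mathbf{M}_0, \mathbf{M}_1$ disjoint; there the only overlap is at sentences, where completeness of $T$ forces $\|\phi\|_{\mathbf{M}_i} \in \{0,1\}$ to agree. Granting clause (B), the full $\mathcal{B}$-valued $\mathbf{N}$ and map $\tau : X \to \mathbf{N}$ returned by Corollary~\ref{Compactness} satisfy $\|\phi(\tau(\overline{a}))\|_{\mathbf{N}} = F(\phi(\overline{a}))$, so each $f_i := \tau \restriction \mathbf{M}_i$ is elementary, they agree on $\mathbf{M}$, and $\mathbf{N} \models^{\mathcal{B}} T$ because $\|\phi\|_{\mathbf{N}} = F(\phi) = 1$ for $\phi \in T$.

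The crux, and the step I expect to be the main obstacle, is verifying clause (B) of Corollary~\ref{Compactness}: for each finite $\Gamma_0 \subseteq \Gamma$ and each $\mathbf{c} \in \mathcal{B}_+$ I must produce a single $\{0,1\}$-valued model $M$ and map $\tau : X \to M$ respecting the $\mathbf{c}$-constraints. I would fix an ultrafilter $\mathcal{U}$ on $\mathcal{B}$ with $\mathbf{c} \in \mathcal{U}$ and pass to the specializations $\mathbf{M}_i/\mathcal{U}$, which by Theorem~\ref{SpecializationThm} are ordinary models of $T$; the induced maps give $\mathbf{M}/\mathcal{U} \preceq \mathbf{M}_i/\mathcal{U}$ (for amalgamation) or simply two models of the complete theory $T$ (for joint embedding). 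Classical first-order elementary amalgamation (respectively joint embedding, via consistency of a union of elementary diagrams) then yields an ordinary $M \models T$ with elementary embeddings $g_i$ agreeing on $\mathbf{M}/\mathcal{U}$, and I set $\tau \restriction \mathbf{M}_i = g_i \circ [\cdot]_{\mathcal{U}}$. The defining property of specialization together with $\mathbf{c} \in \mathcal{U}$ converts ``$\mathbf{c} \leq F(\phi(\overline{a}))$'' into ``$\|\phi(\overline{a})\|_{\mathbf{M}_i} \in \mathcal{U}$'', hence into $M \models \phi(\tau(\overline{a}))$, which is exactly what (B) demands; the dual $\lnot$-constraints follow by the same computation. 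Thus the genuinely nontrivial content is this reduction to the classical case through a well-chosen ultrafilter, after which the remaining verifications are bookkeeping.
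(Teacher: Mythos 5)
Your proposal is correct and follows essentially the same route as the paper: part (1) is the paper's Skolem-closure argument (the paper closes $X$ under Skolem functions $f_\phi$ rather than taking an $\omega$-chain of witness sets, but these are the same construction), and parts (2)--(3) carry out exactly what the paper's one-line proof indicates, namely applying Corollary~\ref{Compactness} with $F$ glued from the $\|\cdot\|_{\mathbf{M}_i}$ and verifying clause (B) by specializing at an ultrafilter containing $\mathbf{c}$ and invoking classical joint embedding and amalgamation for $\{0,1\}$-valued models of $T$.
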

\begin{proof}
	(1): For each formula $\phi(x, \overline{y})$ (with no hidden parameters), choose $f_\phi: \mathbf{M}^{|\overline{y}|} \to \mathbf{M}$ such that always $\|\phi(f_\phi(\overline{a}), \overline{a})\|_{\mathbf{M}} = \|\exists x f_\phi(\overline{a})\|_{\mathbf{M}}$. Choose $\mathbf{N} \subseteq \mathbf{M}$ with $|\mathbf{N}| \leq |X| + \aleph_0$, such that $\mathbf{N}$ is closed under each $f_\phi$; then $\mathbf{N} \preceq \mathbf{M}$.
	
	(2), (3): Use Corollary~\ref{Compactness}, and the fact that $\{0, 1\}$-valued models of $T$ have joint embedding and amalgamation.
\end{proof}

\begin{definition}
	Suppose $\mathbf{M} \models^{\mathcal{B}} T$ where $T$ is a complete countable theory. Then we say that $\mathbf{M}$ is $\lambda$-saturated if for all $\mathbf{M}_0 \preceq \mathbf{N}_0 \models^{\mathcal{B}} T$ with $|\mathbf{N}_0| < \lambda$, we have that every $f: \mathbf{M}_0 \preceq \mathbf{M}$ can be extended to some $g: \mathbf{N}_0 \preceq \mathbf{M}$. $\mathbf{M}$ is $\lambda$-universal if for all $\mathbf{N} \models^{\mathcal{B}} T$ with $|\mathbf{N}| < \lambda$, we can find some $f: \mathbf{N} \preceq \mathbf{M}$.
\end{definition}

\begin{remark}
We are being sloppy in the definitions of $\lambda^+$-saturated and $\lambda^+$-universal, since if $\mathbf{M}$ is a $\mathcal{B}$-valued structure and $\mathcal{B}$ is a complete subalgebra of $\mathcal{B}_1$, then whether or not $\mathbf{M}$ is $\lambda^+$-saturated depends on whether we view it as a $\mathcal{B}$-valued structure or a $\mathcal{B}_1$-valued structure. Really we should write $(\mathcal{B}, \lambda^+)$-saturated; in practice, context will always make the first argument clear.
\end{remark}

The following is a general fact about abstract elementary classes with joint embedding and with amalgamation; the proof is exactly the same as in first order model theory.

\begin{theorem}\label{SatModelsExist}
	If $\mathbf{M} \models^{\mathcal{B}} T$ is $\lambda$-saturated, then it is $\lambda$-universal. Also, for every $\lambda$, there is some $\lambda$-saturated $\mathbf{M} \models^{\mathcal{B}} T$.
\end{theorem}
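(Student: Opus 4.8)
The plan is to mirror the standard first-order arguments, invoking only the structural facts already established for the class of full $\mathcal{B}$-valued models of $T$: downward L\"{o}wenheim--Skolem (Theorem~\ref{AmalgForBValuedModels}(1)), joint embedding (Theorem~\ref{AmalgForBValuedModels}(2)), amalgamation (Theorem~\ref{AmalgForBValuedModels}(3)), together with the fact noted in the preceding Remark that an increasing elementary chain of full models has a full union in which every term is elementary.

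For the first assertion, suppose $\mathbf{M}$ is $\lambda$-saturated and let $\mathbf{N} \models^{\mathcal{B}} T$ with $|\mathbf{N}| < \lambda$; I want $f : \mathbf{N} \preceq \mathbf{M}$. First I would use downward L\"{o}wenheim--Skolem with $X = \emptyset$ to extract a countable $\mathbf{M}_0 \preceq \mathbf{M}$. Applying joint embedding to $\mathbf{M}_0$ and $\mathbf{N}$ and then shrinking by downward L\"{o}wenheim--Skolem yields a single $\mathbf{N}^* \models^{\mathcal{B}} T$ with $|\mathbf{N}^*| < \lambda$ together with elementary maps $e_0 : \mathbf{M}_0 \preceq \mathbf{N}^*$ and $e_1 : \mathbf{N} \preceq \mathbf{N}^*$. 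Identifying $\mathbf{M}_0$ with $e_0(\mathbf{M}_0) \preceq \mathbf{N}^*$, the map $e_0(\mathbf{M}_0) \preceq \mathbf{M}$ obtained by transporting the inclusion along $e_0^{-1}$ is an elementary map from an elementary submodel of $\mathbf{N}^*$ into $\mathbf{M}$, so $\lambda$-saturation extends it to $g : \mathbf{N}^* \preceq \mathbf{M}$; then $g \circ e_1 : \mathbf{N} \preceq \mathbf{M}$ is as required.

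For the existence of a $\lambda$-saturated model, I would build an increasing continuous elementary chain $(\mathbf{M}_\alpha : \alpha \le \lambda^+)$ of full models of $T$, starting from any $\mathbf{M}_0 \models^{\mathcal{B}} T$, taking unions at limit stages (which remain full with each term elementary, by the chain fact), and performing a single ``realize all small extensions'' step at successors. The successor step, producing $\mathbf{M}_{\alpha+1} \succeq \mathbf{M}_\alpha$, is itself an inner amalgamation chain: enumerate, up to isomorphism, all extension problems $(\mathbf{M}_0', \mathbf{N}_0')$ with $\mathbf{M}_0' \preceq \mathbf{M}_\alpha$, $\mathbf{M}_0' \preceq \mathbf{N}_0'$ and $|\mathbf{N}_0'| < \lambda$ (there are only set-many such, up to isomorphism over $\mathbf{M}_\alpha$), and amalgamate them one at a time over $\mathbf{M}_\alpha$ using Theorem~\ref{AmalgForBValuedModels}(3), so that each $\mathbf{N}_0'$ is embedded into $\mathbf{M}_{\alpha+1}$ over $\mathbf{M}_0'$.

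Finally I set $\mathbf{M} = \mathbf{M}_{\lambda^+}$ and verify $\lambda$-saturation. Given $\mathbf{M}_0 \preceq \mathbf{N}_0$ with $|\mathbf{N}_0| < \lambda$ and $f : \mathbf{M}_0 \preceq \mathbf{M}$, the image $f(\mathbf{M}_0)$ has size $< \lambda$, and since $\cf(\lambda^+) = \lambda^+ > \lambda$ it is contained in some $\mathbf{M}_\alpha$ with $\alpha < \lambda^+$; as $\mathbf{M}_\alpha \preceq \mathbf{M}$ and $f(\mathbf{M}_0) \preceq \mathbf{M}$, in fact $f : \mathbf{M}_0 \preceq \mathbf{M}_\alpha$, so the pair $(f(\mathbf{M}_0), \mathbf{N}_0)$ is one of the extension problems handled at stage $\alpha$, giving the desired extension inside $\mathbf{M}_{\alpha+1} \preceq \mathbf{M}$. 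The main points requiring care are the bookkeeping in the successor step---checking that ``all extension problems over $\mathbf{M}_\alpha$ by models of size $< \lambda$'' really form a set, so that the inner amalgamation chain has set length---and the cofinality argument that absorbs a $<\lambda$-sized elementary image into a single $\mathbf{M}_\alpha$; both are routine given the regularity of $\lambda^+$.
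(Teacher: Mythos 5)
Your proposal is correct and is exactly the argument the paper has in mind: the paper gives no details, simply asserting that the statement is the standard fact for abstract elementary classes with downward L\"{o}wenheim--Skolem, joint embedding, and amalgamation, ``exactly the same as in first order model theory.'' Your write-up is a faithful execution of that standard proof (universality via DLS + JEP + one application of saturation; existence via a length-$\lambda^+$ elementary chain realizing all small extension problems by amalgamation, with unions at limits justified by the chain remark), so it matches the paper's approach.
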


We remark that $\lambda^+$-saturated $\mathcal{B}$-valued models are saturated over sets as well as submodels:

\begin{definition}
	Suppose $\mathbf{N}, \mathbf{M} \models^{\mathcal{B}} T$, and $A \subseteq \mathbf{N}$. Then say that $f:A \to \mathbf{M}$ is partial elementary if or all $\phi(\overline{a}) \in \mathcal{L}(A)$, $\|\phi(\overline{a})\|_{\mathbf{N}} = \|\phi(f(\overline{a}))\|_{\mathbf{M}}$.
\end{definition}

\begin{remark}\label{PartElemSat}
	Suppose $\mathbf{M} \models^{\mathcal{B}} T$. Then $\mathbf{M}$ is $\lambda$-saturated if and only if whenever $\mathbf{N} \models^{\mathcal{B}} T$ has $|\mathbf{N}| < \lambda$, and whenever $A \subseteq \mathbf{N}$ and $f: A \to \mathbf{M}$ is partial elementary, there is an extension of $f$ to $\mathbf{N}$ (or equivalently, to $A \cup \{a\}$ for any $a \in \mathbf{N}$). This is because full $\mathcal{B}$-valued models of $T$ actually have amalgamation over sets: if $\mathbf{M}_0, \mathbf{M}_1 \models^{\mathcal{B}} T$ and $\| \cdot \|_{\mathbf{M}_0}, \|\cdot\|_{\mathbf{M}_1}$ agree on $\mathcal{L}(\mathbf{M}_0 \cap \mathbf{M}_1)$, then we can find some $\mathbf{K} \models^{\mathcal{B}} T$ and some $f_i: \mathbf{M}_i \preceq \mathbf{K}$, such that $f_0$ and $f_1$ agree on $\mathbf{M} \cap \mathbf{N}$.
\end{remark}

We now define Boolean ultrapowers. These are implicit in the work of Scott and Solovay \cite{Scott},  and made explicit by Vopenka \cite{Vopenka}. We follow the notation of Mansfield \cite{BooleanUltrapowers} and Hamkins and Sebald \cite{Hamkins}.

\begin{definition}\label{DefOfBoolUlt}
	Suppose $M \models T$. Let the set of all partitions of $\mathcal{B}$ by $M$, denoted $M^{\mathcal{B}}$, be the set of all functions $\mathbf{a}: M \to \mathcal{B}$, such that for all $a, b \in M$, $\mathbf{a}(a) \wedge \mathbf{a}(b) = 0$, and such that $\bigvee_{a \in M} \mathbf{a}(a) = 1$. Given $\phi(\mathbf{a}_i: i < n) \in \mathcal{L}(M^{\mathcal{B}})$, put $\|\phi(\mathbf{a}_i: i < n)\|_{\mathcal{B}} = \bigvee_{M \models \phi(a_i: i < n)}\bigwedge_{i < n} \mathbf{a}_i(a_i)$. (One must check that this does not change if we add dummy parameters to $\phi$, but this is straightforward.)
	
	Let $\mathbf{i}: M \to M^{\mathcal{B}}$ be the embedding sending $a \in M$ to the function $\mathbf{i}(a): M \to \mathcal{B}$ which takes the value $1$ on $a$, and $0$ elsewhere. We call this the pre-{\L}o{\'s} embedding.
\end{definition}

The following theorem is the conjunction of Corollary 1.2 and Theorem 1.4 of \cite{BooleanUltrapowers} (in the special case of a relational language).

\begin{theorem}\label{preLosThm}
	Suppose $M$ is a $\{0, 1\}$-valued structure and $\mathcal{B}$ is a complete Boolean algebra (so $M$ is also a full $\mathcal{B}$-valued structure). Then $M^{\mathcal{B}}$ is a full $\mathcal{B}$-valued $\mathcal{L}$-structure, and $\mathbf{i}: M \preceq M^{\mathcal{B}}$.
\end{theorem}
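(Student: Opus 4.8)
The plan is to verify directly that $M^{\mathcal{B}}$ satisfies the seven axioms of a $\mathcal{B}$-valued structure, that it is full, and that $\mathbf{i}$ is elementary, using the explicit formula $\|\phi(\mathbf{a}_i : i < n)\|_{\mathcal{B}} = \bigvee_{M \models \phi(a_i : i<n)} \bigwedge_{i<n} \mathbf{a}_i(a_i)$. The central organizing observation is that each partition $\mathbf{a} \in M^{\mathcal{B}}$ should be thought of as a $\mathcal{B}$-name for an element of $M$: the value $\mathbf{a}(a)$ is the Boolean probability that $\mathbf{a}$ ``equals'' $a$. Since $\{\mathbf{a}(a) : a \in M\}$ is by definition a partition of unity, the family behaves like a random variable, and the evaluation formula is exactly what one gets by integrating the atomic truth value of $\phi$ against these partitions.

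First I would check the atomic and propositional axioms (3), (4), (5). The key computational lemma is that for \emph{any} formula $\phi$, not merely atomic ones, we have $\bigvee_a \bigwedge_i \mathbf{a}_i(a_i) = 1$ ranging over \emph{all} tuples (not just those satisfying $\phi$), because the $\mathbf{a}_i$ are partitions; more precisely, for a fixed tuple of partitions the values $\{\bigwedge_i \mathbf{a}_i(a_i) : (a_i) \in M^n\}$ themselves form a partition of $1$. Using this, axiom (4) for negation reduces to the Boolean identity $\lnot \bigvee_{M \models \phi} \bigwedge_i \mathbf{a}_i(a_i) = \bigvee_{M \models \lnot\phi} \bigwedge_i \mathbf{a}_i(a_i)$, which holds because the join over satisfying tuples and the join over non-satisfying tuples are complementary blocks of this partition. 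Axiom (5) for conjunction follows similarly by distributing $\land$ over the joins and using that cross-terms $\bigwedge_i \mathbf{a}_i(a_i) \land \bigwedge_i \mathbf{a}_i(a_i')$ vanish whenever the tuples differ in some coordinate (by the disjointness clause $\mathbf{a}(a)\wedge \mathbf{a}(b)=0$ in the definition of a partition). Axiom (3) for logically valid sentences is then immediate from the partition-of-unity fact, and axiom (7) follows since for distinct $\mathbf{a} \neq \mathbf{b}$ there is some $a$ with $\mathbf{a}(a) \wedge \mathbf{b}(a) < \mathbf{a}(a) \vee \mathbf{b}(a)$, forcing $\|\mathbf{a} = \mathbf{b}\|_{\mathcal{B}} < 1$.

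The main obstacle, and the only genuinely nontrivial step, is axiom (6) together with fullness, i.e.\ showing $\|\exists x\, \phi(x)\|_{\mathcal{B}} = \bigvee_{\mathbf{a} \in M^{\mathcal{B}}} \|\phi(\mathbf{a})\|_{\mathcal{B}}$ and that the supremum is attained. The direction $\|\exists x\, \phi(x)\|_{\mathcal{B}} \geq \|\phi(\mathbf{a})\|_{\mathcal{B}}$ for each $\mathbf{a}$ is routine from monotonicity. For the reverse, and for attainment, the idea is a ``mixing'' or gluing construction: using completeness of $\mathcal{B}$, I would build a single partition $\mathbf{a}^*$ that locally selects, on each block of a suitable maximal antichain refining the witnesses, a value $a$ realizing $\phi$ there. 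Concretely, one uses the Boolean-valued axiom of choice / maximum principle available because $\mathcal{B}$ is complete: partition $\|\exists x\,\phi(x)\|_{\mathcal{B}}$ into pieces on which a particular witness $a$ works, assemble a partition $\mathbf{a}^*$ from these pieces, and verify $\|\phi(\mathbf{a}^*)\|_{\mathcal{B}} = \|\exists x\,\phi(x)\|_{\mathcal{B}}$. This is precisely where Mansfield's argument (Corollary 1.2 and Theorem 1.4 of \cite{BooleanUltrapowers}) does the work, and I would either cite it directly or reproduce the gluing argument. Finally, elementarity of $\mathbf{i}$, namely $\|\phi(\mathbf{i}(a_i) : i<n)\|_{\mathcal{B}} = \|\phi(a_i:i<n)\|_M \in \{0,1\}$, follows by plugging the Dirac partitions $\mathbf{i}(a)$ into the evaluation formula: all $\bigwedge_i \mathbf{i}(a_i)(b_i)$ vanish except when $b_i = a_i$ for all $i$, collapsing the big join to the single truth value of $\phi$ at $(a_i)$.
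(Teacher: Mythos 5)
Your proposal is correct and follows essentially the same route the paper takes: the paper gives no proof of its own, simply citing Mansfield's Corollary 1.2 and Theorem 1.4 and remarking that the inverse-partition viewpoint is what makes fullness go through, and your direct verification of axioms (3)--(7) plus the gluing/mixing construction of a witness partition for axiom (6) is precisely that argument. The only step stated a bit loosely is axiom (7), where the clean justification is that $\|\mathbf{a}=\mathbf{b}\|_{\mathcal{B}}=1$ would give $\mathbf{a}(a_0)=\mathbf{a}(a_0)\wedge\bigvee_a(\mathbf{a}(a)\wedge\mathbf{b}(a))=\mathbf{a}(a_0)\wedge\mathbf{b}(a_0)$ for every $a_0$ and hence $\mathbf{a}=\mathbf{b}$, but this is a one-line repair and does not affect correctness.
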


There is another way of viewing $M^{\mathcal{B}}$, as described in Theorem 1.3 of \cite{BooleanUltrapowers}, which is frequently helpful, e.g. in proving $M^{\mathcal{B}}$ is full in Theorem~\ref{preLosThm}.

\begin{definition}
	Suppose $M \models T$. Then define an inverse partition of $\mathcal{B}$ by $M$ to be a pair $(\mathbf{C}, f)$ where $\mathbf{C}$ is a maximal antichain of $\mathcal{B}$ and $f: \mathbf{C} 
	\to M$. Given two inverse partitions $(\mathbf{C}_0, f_0), (\mathbf{C}_1, f_1)$, define $(\mathbf{C}_0, f_0) \sim (\mathbf{C}_1, f_1)$ if there is a common refinement $\mathbf{C}$ of $\mathbf{C}_0, \mathbf{C}_1$, such that for all $\mathbf{c} \in \mathbf{C}$, if $\mathbf{c}_i$ is the unique element of $\mathbf{C}_i$ with $\mathbf{c} \leq \mathbf{c}_i$ (for each $i < 2$), then $f_0(\mathbf{c}_0) = f_1(\mathbf{c}_1)$.
\end{definition}

\begin{remark}We can identify $M^{\mathcal{B}}$ with the set of all $(\mathbf{C}, f)/\sim$, where $(\mathbf{C}, f)$ is an inverse partition of $\mathcal{B}$ by $M$; namely associate to $(\mathbf{C}, f)$ the partition $\mathbf{a}$ of $M$ by $\mathcal{B}$, such that $\mathbf{a} = 0$ outside the range of $f$, and for each $a \in f[\mathbf{C}]$, $\mathbf{a}(a) := \bigvee\{\mathbf{c} \in \mathbf{C}: f(\mathbf{c}) = a\}$. Note that $\|\phi((\mathbf{C}_i, f_i): i < n)\|_{\mathbf{M}}$ can be evaluated as follows: by chooing a common refinement of $(\mathbf{C}_i: i < n)$, we can suppose $\mathbf{C}_i = \mathbf{C}_j = \mathbf{C}$ for all $i < n$. Then $\|\phi((\mathbf{C}, f_i): i < n)\|_{\mathbf{M}} = \bigvee\{\mathbf{c} \in \mathbf{C}: M \models \phi(f_i(\mathbf{c}): i < n)\}$.
\end{remark}

The viewpoint of inverse partitions is particularly natural when $\mathcal{B} = \mathcal{P}(\lambda)$, in which case we can restrict to considering the antichain $\mathbf{C} = \{\{\alpha\}: \alpha < \lambda\}$; this gives an isomorphism $M^{\mathcal{P}(\lambda)} \cong M^\lambda$. This is the reason for the notation $M^{\mathcal{B}}$.

If $M \models T$ and $\mathcal{U}$ is an ultrafilter on $\mathcal{B}$, then we can consider the composition $\mathbf{j} := [\cdot]_{\mathcal{U}} \circ \mathbf{i}: M \to M^{\mathcal{B}}/\mathcal{U}$. We call this the {\L}o{\'s} embedding. {\L}o{\'s}'s theorem states that this map is elementary in the special case $\mathcal{B} = \mathcal{P}(\lambda)$. Mansfield proves the general case in \cite{BooleanUltrapowers}. It follows immediately from the definitions and from Theorem~\ref{preLosThm}.

\begin{cor}\label{Los}
	Suppose $M \models T$ and $\mathcal{U}$ is an ultrafilter on $\mathcal{B}$. Then $\mathbf{j}: M \preceq M^{\mathcal{B}}/\mathcal{U}$.
\end{cor}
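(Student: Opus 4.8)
The plan is to compose the two facts already established: that the pre-{\L}o{\'s} embedding $\mathbf{i}: M \preceq M^{\mathcal{B}}$ is elementary (Theorem~\ref{preLosThm}), and that specialization converts Boolean values lying in $\mathcal{U}$ into genuine truth (the defining property of $\mathbf{M}/\mathcal{U}$ recorded in Definition~\ref{SpecializationDef}). The crucial simplification is that we begin from an ordinary $\{0,1\}$-valued structure $M$, so every Boolean value $\|\phi(\overline{a})\|_M$ is already $0$ or $1$; elementarity of $\mathbf{i}$ then transports this dichotomy to $M^{\mathcal{B}}$, and the ultrafilter $\mathcal{U}$ can only respond to the value $1$.

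Concretely, fix a formula $\phi(\overline{x})$ and a tuple $\overline{a}$ from $M$. First I would apply Theorem~\ref{preLosThm} to obtain $\|\phi(\mathbf{i}(\overline{a}))\|_{M^{\mathcal{B}}} = \|\phi(\overline{a})\|_{M}$. Since $M$ is $\{0,1\}$-valued, the right-hand side equals $1$ when $M \models \phi(\overline{a})$ and equals $0$ otherwise; hence $\|\phi(\mathbf{i}(\overline{a}))\|_{M^{\mathcal{B}}} \in \{0,1\}$, taking the value $1$ exactly when $M \models \phi(\overline{a})$. Because $\mathcal{U}$ is an ultrafilter we have $1 \in \mathcal{U}$ and $0 \notin \mathcal{U}$, so $\|\phi(\mathbf{i}(\overline{a}))\|_{M^{\mathcal{B}}} \in \mathcal{U}$ if and only if $M \models \phi(\overline{a})$.

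Finally I would invoke the defining property of the specialization from Definition~\ref{SpecializationDef}: $M^{\mathcal{B}}/\mathcal{U} \models \phi([\mathbf{i}(\overline{a})]_{\mathcal{U}})$ if and only if $\|\phi(\mathbf{i}(\overline{a}))\|_{M^{\mathcal{B}}} \in \mathcal{U}$. Since $\mathbf{j}(\overline{a}) = [\mathbf{i}(\overline{a})]_{\mathcal{U}}$ by definition of the {\L}o{\'s} embedding, chaining the two equivalences yields $M^{\mathcal{B}}/\mathcal{U} \models \phi(\mathbf{j}(\overline{a}))$ if and only if $M \models \phi(\overline{a})$, which is precisely the elementarity of $\mathbf{j}$. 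There is no genuine obstacle here; the only point demanding a moment's care is recognizing that restricting the source to a $\{0,1\}$-valued structure collapses ``membership in $\mathcal{U}$'' to ``Boolean value $1$,'' which is exactly what allows the ultrafilter to drop out of the argument and reduces the statement to Theorem~\ref{preLosThm}.
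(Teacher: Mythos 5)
Your argument is correct and is exactly the one the paper intends: the paper gives no explicit proof, remarking only that the corollary ``follows immediately from the definitions and from Theorem~\ref{preLosThm},'' and your write-up supplies precisely those details --- elementarity of $\mathbf{i}$ forces $\|\phi(\mathbf{i}(\overline{a}))\|_{M^{\mathcal{B}}} = \|\phi(\overline{a})\|_{M} \in \{0,1\}$, so membership in $\mathcal{U}$ collapses to equality with $1$, and the defining property of the specialization finishes. No issues.
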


We conclude our general discussion of Boolean-valued models with the following important example.

\begin{example}\label{SatModelsExample}
	Suppose $\mathcal{U}$ is an ultrafilter on $\mathcal{P}(\lambda)$, and $M \models T$ is $\lambda^+$-saturated (considered as a $\{0, 1\}$-valued model of $T$). Then $M^\lambda$ is $\lambda^+$-saturated (considered as a full $\mathcal{P}(\lambda)$-valued model of $T$).
\end{example}
\begin{proof}
	Suppose $\mathbf{N} \models^{\mathcal{P}(\lambda)} T$ with $|\mathbf{N}| \leq \lambda$. It suffices to show that whenever $A \subseteq \mathbf{N}$ and $f: A \to M^\lambda$ is partial elementary, and whenever $a \in \mathbf{N}$, there is some partial elementary $g: A \cup \{a\} \to M^\lambda$ extending $f$.
	
	So let $A, f,a$ be given. Enumerate $A = \{a_\beta: \beta < \lambda\}$. Write $b_\beta = f(a_\beta)$, so $b_\beta: \lambda \to M$.  Fix $\alpha < \lambda$; by $\lambda^+$-saturation of $M$, we can find $b(\alpha) \in M$ such that for every $\phi(a, a_{\beta_0}, \ldots, a_{\beta_{n-1}}) \in \mathcal{L}(A \cup \{a\})$, $M \models \phi(b(\alpha), b_{\beta_0}(\alpha), \ldots, b_{\beta_{n-1}}(\alpha))$ if and only if $\alpha \in \|\phi(a, a_{\beta_0}, \ldots, a_{\beta_{n-1}})\|_{\mathbf{N}}$. Then $b: \lambda \to M$ is such that $g := f \cup \{(a, b)\}$ is partial elementary.
\end{proof}

\section{Keisler's Theorem for Full $\mathcal{B}$-valued Models}\label{SurveyKOSec}
In this section, we explore the following question: suppose $\mathbf{M} \models^{\mathcal{B}} T$ is $\lambda^+$-saturated, and $\mathcal{U}$ is an ultrafilter on $\mathcal{B}$. Is $\mathbf{M}/\mathcal{U}$ $\lambda^+$-saturated? 

We begin with the following definitions. For the rest of the paper, the variables $\mathbf{M}, \mathbf{N}$, $\ldots$ range over \emph{full} Boolean-valued models.

\begin{definition}
	Suppose $\mathbf{M} \models^{\mathcal{B}} T$, and $p(\overline{x})$ is a set of $\mathcal{L}$-formulas in the free variables $\overline{x}$ and with parameters from $\mathbf{M}$. Then say that  $p(\overline{x})$ is a partial type over $\mathbf{M}$ if for each finite subset $\Gamma(\overline{x}) \subseteq p(\overline{x})$, $\|\exists \overline{x} \bigwedge \Gamma(\overline{x})\|_{\mathbf{M}} > 0$. By the arity of $p(\overline{x})$ we mean the length of $\overline{x}$ (always finite).
	
	If $p(\overline{x})$ is a partial type over $\mathbf{M}$ and $\mathcal{U}$ is an ultrafilter on $\mathcal{B}$, let $[p(\overline{x})]_{\mathcal{U}}$ be the image of $p(\overline{x})$ under $[\cdot]_{\mathcal{U}}: \mathbf{M} \to \mathbf{M}/\mathcal{U}$. Say that $p(\overline{x})$ is a partial $\mathcal{U}$-type over $\mathbf{M}$ if $[p(\overline{x})]_{\mathcal{U}}$ is consistent. 
	
\end{definition}

\begin{theorem}\label{JustifyDefOfSat}
	Suppose $\mathcal{U}$ is an ultrafilter on the complete Boolean algebra $\mathcal{B}$, suppose $T$ is a complete first order theory in a countable language, and suppose $\lambda$ is an (infinite) cardinal. Then the following are all equivalent.
	
	\begin{itemize}
		\item[(A)] Whenever $\mathbf{M} \models^{\mathcal{B}} T$ has $|\mathbf{M}| \leq \lambda$, and whenever $p(x)$ is partial $\mathcal{U}$-type over $\mathbf{M}$ of arity 1, then there is some $\mathbf{N} \succeq \mathbf{M}$ such that $\mathbf{N}/\mathcal{U}$ realizes $[p(x)]_{\mathcal{U}}$.
		\item[(B)] Whenever $\mathbf{M} \models^{\mathcal{B}} T$, and whenever $p(\overline{x})$ is a partial $\mathcal{U}$-type over $\mathbf{M}$ of cardinality at most $\lambda$, then there is some $\mathbf{N} \succeq \mathbf{M}$ such that $\mathbf{N}/\mathcal{U}$ realizes $[p(\overline{x})]_{\mathcal{U}}$.
		
		\item[(C)] Whenever $\mathbf{M} \models^{\mathcal{B}} T$, there is some $\mathbf{N} \succeq \mathbf{M}$ such that $\mathbf{N}/\mathcal{U}$ is $\lambda^+$-saturated.
		\item[(D)] There is a $\lambda^+$-universal $\mathbf{M} \models^{\mathcal{B}} T$ such that $\mathbf{M}/\mathcal{U}$ is $\lambda^+$-saturated.
		\item[(E)] Every $\lambda^+$-saturated $\mathbf{M} \models^{\mathcal{B}} T$ satisfies that $\mathbf{M}/\mathcal{U}$ is $\lambda^+$-saturated.
	\end{itemize}
\end{theorem}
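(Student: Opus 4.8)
The plan is to prove the cycle of implications $(A)\Rightarrow(B)\Rightarrow(C)\Rightarrow(D)\Rightarrow(E)\Rightarrow(A)$. The two ``soft'' endpoints $(C)\Rightarrow(D)$ and $(E)\Rightarrow(A)$ are immediate from the existence and universality of $\lambda^+$-saturated full $\mathcal{B}$-valued models (Theorem~\ref{SatModelsExist}); the substance lies in $(A)\Rightarrow(B)$ (an arity reduction), $(B)\Rightarrow(C)$ (a chain construction), and above all in $(D)\Rightarrow(E)$ (a ``transport'' argument). Throughout I would use freely that an elementary map $f:\mathbf{M}\preceq\mathbf{N}$ of full $\mathcal{B}$-valued models induces $[f]_{\mathcal{U}}:\mathbf{M}/\mathcal{U}\preceq\mathbf{N}/\mathcal{U}$, that unions of elementary chains commute with specialization, and the characterization of $\lambda^+$-saturation of $\mathcal{B}$-valued models via partial elementary maps over sets from Remark~\ref{PartElemSat}.

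For $(A)\Rightarrow(B)$ I would first reduce to small models: given a partial $\mathcal{U}$-type $p(\overline{x})$ over an arbitrary $\mathbf{M}$ of cardinality $\leq\lambda$, use Downward L\"owenheim--Skolem (Theorem~\ref{AmalgForBValuedModels}(1)) to find $\mathbf{M}_0\preceq\mathbf{M}$ with $|\mathbf{M}_0|\leq\lambda$ carrying all parameters of $p$; then $[p]_{\mathcal{U}}$ is a consistent type over $\mathbf{M}_0/\mathcal{U}$, and once it is realized in some $\mathbf{N}_0/\mathcal{U}$ with $\mathbf{N}_0\succeq\mathbf{M}_0$, amalgamating $\mathbf{N}_0$ with $\mathbf{M}$ over $\mathbf{M}_0$ (Theorem~\ref{AmalgForBValuedModels}(3)) yields the required $\mathbf{N}\succeq\mathbf{M}$. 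This reduces $(B)$ to the case $|\mathbf{M}|\leq\lambda$, which I would handle by induction on the arity $n$ of $\overline{x}$. The base case $n=1$ is exactly $(A)$. For the step, project $p(x_0,\dots,x_n)$ to the arity-$1$ partial $\mathcal{U}$-type $p_0(x_0):=\{\exists x_1\cdots x_n\,\psi:\psi\text{ a finite conjunction from }p\}$, apply the base case to realize $[p_0]_{\mathcal{U}}$ by some $[\mathbf{b}_0]_{\mathcal{U}}$ in $\mathbf{N}_1/\mathcal{U}$ with $\mathbf{N}_1\succeq\mathbf{M}$, observe that $p(\mathbf{b}_0,x_1,\dots,x_n)$ is then a partial $\mathcal{U}$-type over $\mathbf{N}_1$ of arity $n$, and invoke the induction hypothesis.

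For $(B)\Rightarrow(C)$ I would run the usual construction of a saturated model, carried out one specialization-level up. Starting from $\mathbf{M}$, build a continuous increasing elementary chain $(\mathbf{M}_\alpha:\alpha\leq\lambda^+)$ with $\mathbf{M}_0=\mathbf{M}$ so that every type over a $\leq\lambda$-sized subset of $\mathbf{M}_\alpha/\mathcal{U}$ is realized in $\mathbf{M}_{\alpha+1}/\mathcal{U}$: each such type lifts to a partial $\mathcal{U}$-type over $\mathbf{M}_\alpha$ of cardinality $\leq\lambda$, which $(B)$ realizes after a single elementary extension, and I would thread the resulting one-step extensions into a subchain to handle all such types at once. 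Setting $\mathbf{N}=\mathbf{M}_{\lambda^+}$, since $\mathbf{N}/\mathcal{U}=\bigcup_\alpha \mathbf{M}_\alpha/\mathcal{U}$ and $\lambda^+$ is regular, any type over a $\leq\lambda$-sized subset of $\mathbf{N}/\mathcal{U}$ concentrates on some $\mathbf{M}_\alpha/\mathcal{U}$ and is realized at stage $\alpha+1$, so $\mathbf{N}/\mathcal{U}$ is $\lambda^+$-saturated. For $(C)\Rightarrow(D)$, take a $\lambda^+$-saturated (hence $\lambda^+$-universal) $\mathbf{M}_*$ from Theorem~\ref{SatModelsExist}, apply $(C)$ to obtain $\mathbf{N}\succeq\mathbf{M}_*$ with $\mathbf{N}/\mathcal{U}$ $\lambda^+$-saturated; since $\mathbf{M}_*\preceq\mathbf{N}$ is $\lambda^+$-universal, so is $\mathbf{N}$.

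The crux is $(D)\Rightarrow(E)$. Fix the universal $\mathbf{M}_*$ with $\mathbf{M}_*/\mathcal{U}$ $\lambda^+$-saturated given by $(D)$, let $\mathbf{M}\models^{\mathcal{B}}T$ be $\lambda^+$-saturated, and let $q=[p]_{\mathcal{U}}$ be a type over $\mathbf{M}/\mathcal{U}$ with $\leq\lambda$ parameters, where $p$ is a partial $\mathcal{U}$-type over $\mathbf{M}$. Choose $\mathbf{M}_0\preceq\mathbf{M}$ of size $\leq\lambda$ carrying the parameters of $p$; by universality embed $h:\mathbf{M}_0\preceq\mathbf{M}_*$, so $[h(p)]_{\mathcal{U}}$ is consistent and realized by some $\mathbf{c}\in\mathbf{M}_*$ in the $\lambda^+$-saturated $\mathbf{M}_*/\mathcal{U}$. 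Pick $\mathbf{M}_*^0\preceq\mathbf{M}_*$ of size $\leq\lambda$ containing $h(\mathbf{M}_0)\cup\{\mathbf{c}\}$. Now $h^{-1}:h(\mathbf{M}_0)\to\mathbf{M}$ is partial elementary, so by $\lambda^+$-saturation of the $\mathcal{B}$-valued model $\mathbf{M}$ (Remark~\ref{PartElemSat}) it extends to an elementary $g:\mathbf{M}_*^0\preceq\mathbf{M}$. Since $g$ fixes the parameters of $p$ (as $g\circ h$ is the identity on $\mathbf{M}_0$) and is $\mathcal{B}$-elementary, $\|\phi(g(\mathbf{c}),\overline{a})\|_{\mathbf{M}}=\|\phi(\mathbf{c},h(\overline{a}))\|_{\mathbf{M}_*}\in\mathcal{U}$ for each $\phi(x,\overline{a})\in p$, whence $[g(\mathbf{c})]_{\mathcal{U}}$ realizes $q$ in $\mathbf{M}/\mathcal{U}$. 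This is precisely where saturation of $\mathbf{M}$ as a $\mathcal{B}$-valued model does the work: realizing a type in $\mathbf{M}/\mathcal{U}$ is a ``local'' demand seeing only a $\leq\lambda$-sized piece, and saturation lets us transport a witness found in $\mathbf{M}_*$ back into $\mathbf{M}$. Finally $(E)\Rightarrow(A)$ is immediate: given small $\mathbf{M}$ and a $1$-type $p$, embed $\mathbf{M}$ into a $\lambda^+$-saturated $\mathbf{N}$ (Theorem~\ref{SatModelsExist}), and by $(E)$ the type $[p]_{\mathcal{U}}$, consistent over $\mathbf{M}/\mathcal{U}\preceq\mathbf{N}/\mathcal{U}$ with $\leq\lambda$ parameters, is realized in the $\lambda^+$-saturated $\mathbf{N}/\mathcal{U}$. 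I expect $(D)\Rightarrow(E)$ to be the main obstacle, with the bookkeeping in the chain construction for $(B)\Rightarrow(C)$ a secondary technical point.
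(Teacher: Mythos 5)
Your proof is correct, and the two mechanisms that carry all the weight --- using $\lambda^+$-universality to move a type into a model whose specialization is saturated, and using $\lambda^+$-saturation of the Boolean-valued model $\mathbf{M}$ itself (via Remark~\ref{PartElemSat}) to transport the witness back along an elementary map --- are exactly the ones the paper uses. The difference is organizational. The paper treats (A) as a hub: it notes that (E)$\Rightarrow$(D) and (C)$\Rightarrow$(B)$\Rightarrow$(A) are trivial, proves (D)$\Rightarrow$(A) by universality alone (embed the small $\mathbf{M}$ into $\mathbf{M}_*$ and let $\mathbf{M}_*$ itself serve as $\mathbf{N}$, so no pull-back is needed), and then proves (A)$\Rightarrow$(C) by your chain argument and (A)$\Rightarrow$(E) by your transport argument run from hypothesis (A) rather than (D). Your cyclic decomposition (A)$\Rightarrow$(B)$\Rightarrow\cdots\Rightarrow$(E)$\Rightarrow$(A) instead merges the paper's (D)$\Rightarrow$(A) and (A)$\Rightarrow$(E) into a single step (D)$\Rightarrow$(E), which is fine, but it also forces you to prove (A)$\Rightarrow$(B) directly. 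Your arity-reduction induction for that step is correct but is work the paper's routing avoids: since a $\lambda^+$-saturated first-order structure realizes all finite-arity types over $\leq\lambda$-sized parameter sets, (C)$\Rightarrow$(B) is immediate, and the chain argument for (A)$\Rightarrow$(C) only ever needs to realize one lifted $1$-type at a time (localized to a $\leq\lambda$-sized elementary submodel by Downward L\"owenheim--Skolem and re-attached by amalgamation, exactly as in your own reduction), so it needs only (A). In short: same ideas, a slightly longer path, with the (A)$\Rightarrow$(B) induction being the one genuinely redundant piece.
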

\begin{proof}
	(E) implies (D), and (C) implies (B) implies (A) are trivial. 
	
	(D) implies (A): Suppose $\mathbf{M}_*$ is $\lambda^+$-universal and $\mathbf{M}_*/\mathcal{U}$ is $\lambda^+$-saturated. Suppose $\mathbf{M} \models^{\mathcal{B}} T$ has $|\mathbf{M}| \leq \lambda$ and suppose $p(x)$ is a complete $\mathcal{U}$-type over $\mathbf{M}$. Since $\mathbf{M}_*$ is $\lambda^+$-universal, after relabeling we can suppose $\mathbf{M} \preceq \mathbf{M}_*$. But then $\mathbf{M}_*/\mathcal{U}$ realizes $[p(x)]_{\mathcal{U}}$, since it is $\lambda^+$-saturated. 
	
	(A) implies (C) is a standard union of chains argument.
	
	Thus all the upward implications hold.
	
	(A) implies (E): suppose (A) holds, and let $\mathbf{M} \models^{\mathcal{B}} T$ be $\lambda^+$-saturated. Choose $p(x)$ a partial $\mathcal{U}$-type over $\mathbf{M}$ of cardinality at most $\lambda$. We can suppose it is a partial $\mathcal{U}$-type over $\mathbf{M}_0 \preceq \mathbf{M}$, where $|\mathbf{M}_0| \leq \lambda$. Choose $\mathbf{N} \succeq \mathbf{M}_0$ such that $\mathbf{N}/\mathcal{U}$ realizes $[p(x)]_{\mathcal{U}}$; we can suppose $|\mathbf{N}| \leq \lambda$. Since $\mathbf{M}_*$ is $\lambda^+$-saturated, we can suppose after relabeling that $\mathbf{N} \preceq \mathbf{M}_*$, and finish.
\end{proof}

We thus feel justified in making the following definition. Previously, this definition was only made in the case when $\mathcal{U}$ is a $\lambda$-regular ultrafilter on $\mathcal{P}(\lambda)$.

\begin{definition}
	Suppose $\mathcal{U}$ is an ultrafilter on $\mathcal{B}$, and $T$ is a complete countable theory. Then say that $\mathcal{U}$  $\lambda^+$-saturates $T$ if some or every of the equivalent clauses of Theorem~\ref{JustifyDefOfSat} hold; for instance, if some or every $\lambda^+$-saturated $\mathbf{M} \models^{\mathcal{B}} T$ satisfies that $\mathbf{M}/\mathcal{U}$ is $\lambda^+$-saturated. 
\end{definition}

\begin{example}
	Suppose $\mathcal{U}$ is an ultrafilter on $\mathcal{P}(\lambda)$ and $T$ is a complete countable theory. Let $M \models T$ be $\lambda^+$-saturated. By Example~\ref{SatModelsExample}, $M^\lambda$ is a $\lambda^+$-saturated $\mathcal{P}(\lambda)$-valued model of $T$. Thus, $\mathcal{U}$ $\lambda^+$-saturates $T$ if and only if $M^\lambda/\mathcal{U}$ is $\lambda^+$-saturated. In the case when $\mathcal{U}$ is $\lambda$-regular, this agrees with the standard definition of $\lambda^+$-saturation, so we have introduced no conflicts. We have also recovered Theorem~\ref{KeislerOrigSecond}(B).
\end{example}

We now aim for a combinatorial criterion for whether or not $\mathcal{U}$ $\lambda^+$-saturates $T$, aiming towards showing that our notion is equivalent to Malliaris and Shelah's notion of $(\lambda, \mathcal{B}, T)$-morality. This latter notion is slightly more complicated, and will be addressed in the next section (see Theorem~\ref{DistributionsWork2}).

Distributions were already implicit in Keisler's work, but Malliaris was the first to use the word \cite{PhiTypes}. 
\begin{definition}
	Given an index set $I$, an $I$-distribution in $\mathcal{B}$ is a function $\mathbf{A}: [I]^{<\aleph_0} \to \mathcal{B}_+$, such that $\mathbf{A}(\emptyset) = 1$, and $s \subseteq t$ implies $\mathbf{A}(s) \geq \mathbf{A}(t)$. If $\mathcal{D}$ is a filter on $\mathcal{B}$, we say that $\mathbf{A}$ is in $\mathcal{D}$ if $\mbox{im}(\mathbf{A}) \subseteq \mathcal{D}$. $I$ will often be $\lambda$, but at other times it is convenient to let $I$ be a partial type $p(\overline{x})$.
	
	Say that $\mathbf{A}$ is multiplicative if for all $s \in [I]^{<\aleph_0}$, $\mathbf{A}(s) = \bigwedge_{i \in s} \mathbf{A}(\{i\})$. Clearly, multiplicative distributions are in correspondence with maps $\mathbf{A}: I \to \mathcal{B}_+$ such that the image of $\mathbf{A}$ has the finite intersection property.
	
	If $\mathbf{A}, \mathbf{B}$ are $I$-distributions in $\mathcal{B}$, then say that $\mathbf{B}$ refines $\mathbf{A}$ if $\mathbf{B}(s) \leq \mathbf{A}(s)$ for all $s \in [I]^{<\aleph_0}$.  
	
\end{definition}

We now connect the notion of distributions to type-realization in Boolean-valued models. The term {\L}o{\'s} map is first used by Malliaris in \cite{PhiTypes}, in the case of $\mathcal{B} = \mathcal{P}(\lambda)$ and $\mathbf{M} = M^\lambda$ for some $M \models T$.

\begin{definition}
	Suppose $T$ is a theory, and $I$ is an index set. Say that $\overline{\phi}$ is an $I$-sequence of formulas if $\overline{\phi} = (\phi_i(\overline{x}, \overline{y}_i): i \in I)$ for some sequence of formulas $\phi_i(\overline{x}, \overline{y}_i)$, where all of the $\overline{y}_i$'s are disjoint with each other and with $\overline{x}$. We define the arity of $\overline{\phi}$ to be $|\overline{x}|$.
	
	Suppose $T$ is a theory, $\mathcal{B}$ is a complete Boolean algebra, $\mathbf{A}$ is an $I$-distribution in $\mathcal{B}$, and $\overline{\phi} = (\phi_i(\overline{x}, \overline{y}_i): i \in I)$ is an $I$-sequence of formulas. Then we say that $\mathbf{A}$ is an $(I, T, \overline{\phi})$-{\L}o{\'s} map if there is some $\mathbf{M} \models^{\mathcal{B}} T$ and some choice of $\overline{a}_i \in \mathbf{M}^{|\overline{y}_i|}$ such that for every $s \in [I]^{<\aleph_0}$, $\mathbf{A}(s) = \| \exists \overline{x} \bigwedge_{i \in s} \phi_i(\overline{x}, \overline{a}_i) \|_{\mathbf{M}}$. Say that $\mathbf{A}$ is an $(I, T)$-{\L}o{\'s} map if it is an $(I, T, \overline{\phi})$-{\L}o{\'s} map for some $\overline{\phi}$. The arity of $\mathbf{A}$ is the (least possible) length of $\overline{x}$.
	
	Finally, suppose $T$ is a theory, $\mathcal{B}$ is a complete Boolean algebra, $\mathbf{M} \models^{\mathcal{B}} T$, and $p(\overline{x})$ is a partial type over $\mathbf{M}$. Then the {\L}o{\'s} map of $p(\overline{x})$ is the $p(\overline{x})$-distribution $\mathbf{L}_{p(\overline{x})}$ in $\mathcal{B}$ defined as follows: $\mathbf{L}_{p(\overline{x})}(\Gamma(\overline{x})) = \|\exists \overline{x} \bigwedge \Gamma(\overline{x})\|_{\mathbf{M}}$, for each $\Gamma(\overline{x}) \in [p(\overline{x})]^{<\aleph_0}$. 
\end{definition}
Note that $\mathbf{L}_{p(\overline{x})}$ is a $(p(\overline{x}), T)$-{\L}o{\'s} map; conversely, if $\mathbf{A}$ is an $(I, T)$-{\L}o{\'s} map, then $\mathbf{A}$ is a relabeling of the {\L}o{\'s} map of some type $p(\overline{x})$. Also, note that if $\mathcal{U}$ is an ultrafilter on $\mathcal{B}$, then $p(\overline{x})$ is a partial $\mathcal{U}$-type if and only if $\mathbf{L}_{p(\overline{x})}$ is in $\mathcal{U}$.

The following is a criterion for being a  {\L}o{\'s} map that is often easier to evaluate. Essentially this is a special instance of Corollary~\ref{Compactness}.

%
%

\begin{theorem}\label{DistributionLocal1}
	Suppose $\mathcal{B}$ is a complete Boolean algebra, $\mathbf{A}$ is an $I$-distribution, and $\overline{\phi} =(\phi_i(\overline{x}, \overline{y}_i): i \in I)$ is an $I$-sequence of formulas. Then the following are equivalent:
	\begin{itemize}
		\item[(A)] $\mathbf{A}$ is an $(I, T, \overline{\phi})$-{\L}o{\'s} map.
		\item[(B)] For every $s \in [I]^{<\aleph_0}$, and for every $\mathbf{c} \in \mathcal{B}_+$ such that $\mathbf{c}$ decides $\mathbf{A}(t)$ for all $t \subseteq s$, there is some $M \models T$ and some sequence $(\overline{a}_i: i \in s)$ from $M^{<\omega}$, such that for each $t \subseteq s$, $M \models \exists \overline{x} \bigwedge_{i \in t} \phi_i(x, \overline{a}_i: i \in s)$ if and only if $\mathbf{c} \leq \mathbf{A}(t)$.
	\end{itemize}
\end{theorem}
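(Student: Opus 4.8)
The plan is to derive both directions from machinery already in place: the implication (A)$\Rightarrow$(B) comes directly from the specialization construction (Theorem~\ref{SpecializationThm}, Definition~\ref{SpecializationDef}), and (B)$\Rightarrow$(A) comes from the compactness theorem, Corollary~\ref{Compactness}, applied to a carefully chosen input.

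For (A)$\Rightarrow$(B), suppose $\mathbf{A}$ is witnessed by a full $\mathbf{M}\models^{\mathcal{B}} T$ together with tuples $\overline{a}_i\in\mathbf{M}^{|\overline{y}_i|}$, so that $\mathbf{A}(s)=\|\exists\overline{x}\bigwedge_{i\in s}\phi_i(\overline{x},\overline{a}_i)\|_{\mathbf{M}}$ for all finite $s$. Given $s\in[I]^{<\aleph_0}$ and $\mathbf{c}\in\mathcal{B}_+$ deciding $\mathbf{A}(t)$ for every $t\subseteq s$, I would choose an ultrafilter $\mathcal{U}$ on $\mathcal{B}$ with $\mathbf{c}\in\mathcal{U}$ and take $M=\mathbf{M}/\mathcal{U}$, letting the images $[\overline{a}_i]_{\mathcal{U}}$ play the role of the required tuples. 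Then $M\models T$ since $\|\sigma\|_{\mathbf{M}}=1\in\mathcal{U}$ for each $\sigma\in T$, and by the defining property of the specialization, $M\models\exists\overline{x}\bigwedge_{i\in t}\phi_i(\overline{x},[\overline{a}_i]_{\mathcal{U}})$ if and only if $\mathbf{A}(t)\in\mathcal{U}$. As $\mathbf{c}\in\mathcal{U}$ decides $\mathbf{A}(t)$, we have $\mathbf{A}(t)\in\mathcal{U}$ exactly when $\mathbf{c}\leq\mathbf{A}(t)$, which is precisely condition (B).

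For (B)$\Rightarrow$(A), the idea is to manufacture the compactness input. Let $X$ consist of fresh constants naming the coordinates of a tuple $\overline{a}_i$ for each $i\in I$, let $\Gamma=T\cup\{\exists\overline{x}\bigwedge_{i\in s}\phi_i(\overline{x},\overline{a}_i):s\in[I]^{<\aleph_0}\}\subseteq\mathcal{L}(X)$, and set $F_0=F_1=F$ with $F(\sigma)=1$ for $\sigma\in T$ and $F(\exists\overline{x}\bigwedge_{i\in s}\phi_i(\overline{x},\overline{a}_i))=\mathbf{A}(s)$. Condition (A) of Corollary~\ref{Compactness} then unwinds exactly to the assertion that $\mathbf{A}$ is an $(I,T,\overline{\phi})$-{\L}o{\'s} map: the resulting full $\mathbf{M}$ satisfies $\|\sigma\|_{\mathbf{M}}=1$ for $\sigma\in T$, hence $\mathbf{M}\models^{\mathcal{B}} T$, and the tuples $\tau(\overline{a}_i)$ realize the prescribed values $\mathbf{A}(s)$. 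So it suffices to verify condition (B) of Corollary~\ref{Compactness}. Given finite $\Gamma_0\subseteq\Gamma$ and $\mathbf{c}\in\mathcal{B}_+$, I would let $s\in[I]^{<\aleph_0}$ be the union of all index sets $s_k$ for which $\exists\overline{x}\bigwedge_{i\in s_k}\phi_i(\overline{x},\overline{a}_i)$ lies in $\Gamma_0$, pass to a nonzero $\mathbf{c}'\leq\mathbf{c}$ deciding $\mathbf{A}(t)$ for all $t\subseteq s$ (finitely many refinements), apply hypothesis (B) to $s$ and $\mathbf{c}'$ to obtain $M\models T$ with tuples $(\overline{a}_i:i\in s)$, extend $\tau$ arbitrarily on the remaining constants, and check the two required implications hold below $\mathbf{c}'$.

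The main point to get right is this refinement step, which is where the mismatch between the two formulations of (B) is reconciled: hypothesis (B) only yields a model once $\mathbf{c}$ decides the relevant values $\mathbf{A}(t)$, whereas Corollary~\ref{Compactness}(B) asks for a model for an \emph{arbitrary} $\mathbf{c}\in\mathcal{B}_+$. The resolution is the monotonicity observation that a model witnessing the implications below some $\mathbf{c}'\leq\mathbf{c}$ automatically witnesses them below $\mathbf{c}$, since $\mathbf{c}\leq F(\psi)$ forces $\mathbf{c}'\leq F(\psi)$; this frees us to shrink $\mathbf{c}$ until it decides each of the finitely many $\mathbf{A}(t)$, $t\subseteq s$, in play. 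The remaining work is bookkeeping: verifying that $F(\sigma)=1$ pins down $\mathbf{M}\models^{\mathcal{B}} T$, that the empty set is handled by $\mathbf{A}(\emptyset)=1$ together with logical validity of the empty conjunction, and that the biconditional in hypothesis (B) splits into exactly the two one-directional implications of Corollary~\ref{Compactness}(B) once $\mathbf{c}'$ decides each $\mathbf{A}(t)$.
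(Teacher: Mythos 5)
Your proposal is correct and matches the paper's proof: the paper applies Corollary~\ref{Compactness} with exactly the same data ($X=\bigcup\{\overline{y}_i:i\in I\}$, $\Gamma=T\cup\{\exists\overline{x}\bigwedge_{i\in s}\phi_i(\overline{x},\overline{y}_i)\}$, $F_0=F_1=F$ as you define) and asserts the equivalence of the two (A)'s and the two (B)'s, which you have simply spelled out. Your direct specialization argument for (A)$\Rightarrow$(B) and the shrink-$\mathbf{c}$-to-decide-finitely-many-values step are precisely the content hidden in the paper's ``easily.''
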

\begin{proof}
	Let $\Gamma \subseteq \mathcal{L}(\overline{y}_i: i \in I)$ be $T \cup \{\exists \overline{x} \bigwedge_{i \in s } \phi_i(\overline{x}, \overline{y}_i): s \in [I]^{<\aleph_0}\}$. Define $F: \Gamma \to \mathcal{B}$ via $F \restriction_T = 1$ and $F (\exists \overline{x} \bigwedge_{i \in s } \phi_i(\overline{x}, \overline{y}_i)) = \mathbf{A}(s)$ for each $s \in [I]^{<\aleph_0}$. 
	
	Note then that (A) is equivalent to there being some $\mathbf{M} \models^{\mathcal{B}} T$ and some map $\tau: \bigcup\{\overline{y}_i: i \in I\} \to \mathbf{M}$, such that for all $\psi(\overline{y}) \in \Gamma$, $F(\psi(\overline{y})) = \|\psi(\tau(\overline{y}))\|_{\mathbf{M}}$. Consider Corollary~\ref{Compactness} with $X = \bigcup\{\overline{y}_i: i\in I\}$, $\Gamma$ as defined above and $F_0 = F_1 = F$. Then easily, (A), (B) of Corollary~\ref{Compactness} are equivalent to (A), (B) here, and so they are all equivalent.
\end{proof}

The following simple theorem explains why we care about distributions.
\begin{theorem}\label{LosToMult2}
	Suppose $\mathcal{U}$ is an ultrafilter on $\mathcal{B}$, and $p(\overline{x})$ is a partial type over $\mathbf{M} \models^{\mathcal{B}} T$. Then the following are equivalent:
	
	\begin{itemize}
		\item[(A)] There is $\mathbf{N} \succeq \mathbf{M}$ such that $\mathbf{N}/\mathcal{U}$ contains a realization of $[p(\overline{x})]_{\mathcal{U}}$;
		\item[(B)] $\mathbf{L}_{p(\overline{x})}$ has a multiplicative refinement in $\mathcal{U}$. 
	\end{itemize}
\end{theorem}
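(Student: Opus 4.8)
The plan is to prove the two implications separately, handling (A) $\Rightarrow$ (B) directly from the axioms and reserving Corollary~\ref{Compactness} for the substantive direction (B) $\Rightarrow$ (A). For (A) $\Rightarrow$ (B), I would start from $\mathbf{N} \succeq \mathbf{M}$ and a tuple $\overline{d} \in \mathbf{N}/\mathcal{U}$ realizing $[p(\overline{x})]_{\mathcal{U}}$, lift $\overline{d}$ to some $\overline{b} \in \mathbf{N}$ via the surjection $[\cdot]_{\mathcal{U}}$, and define a multiplicative distribution $\mathbf{B}$ on $p(\overline{x})$ by $\mathbf{B}(\{\phi\}) = \|\phi(\overline{b})\|_{\mathbf{N}}$. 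Because $\overline{d}$ realizes the type, each value lies in $\mathcal{U}$, so $\mathbf{B}$ is in $\mathcal{U}$ (and in particular nonzero, hence genuinely a distribution). To check that $\mathbf{B}$ refines $\mathbf{L}_{p(\overline{x})}$, I would fix finite $s \subseteq p(\overline{x})$ and compute, using axiom (5), that $\mathbf{B}(s) = \|\bigwedge_{\phi \in s}\phi(\overline{b})\|_{\mathbf{N}}$, which by Remark~\ref{6FullComb} is $\leq \|\exists \overline{x}\bigwedge_{\phi \in s}\phi(\overline{x})\|_{\mathbf{N}}$; since $\mathbf{M} \preceq \mathbf{N}$ and this sentence has parameters only from $\mathbf{M}$, the latter equals $\mathbf{L}_{p(\overline{x})}(s)$, giving the refinement.

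For (B) $\Rightarrow$ (A), I would let $\mathbf{B}$ be a multiplicative refinement of $\mathbf{L}_{p(\overline{x})}$ in $\mathcal{U}$, adjoin a tuple $\overline{c}$ of new constants of length $|\overline{x}|$, and apply Corollary~\ref{Compactness} with $X$ consisting of $\mathbf{M}$ together with the constants $\overline{c}$, with $\Gamma = \mathcal{L}(\mathbf{M}) \cup \{\phi(\overline{c}) : \phi \in p(\overline{x})\}$, and with bounds $F_0 = F_1 = \|\cdot\|_{\mathbf{M}}$ on $\mathcal{L}(\mathbf{M})$, while on each new formula setting $F_0(\phi(\overline{c})) = \mathbf{B}(\{\phi\})$ and $F_1(\phi(\overline{c})) = 1$. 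If the Corollary's hypothesis (B) can be verified, it produces a full $\mathbf{N} \succeq \mathbf{M}$ (after the usual relabeling turning the elementary map $\tau \restriction_{\mathbf{M}}$ into an inclusion) together with $\overline{b} = \tau(\overline{c})$ satisfying $\mathbf{B}(\{\phi\}) \leq \|\phi(\overline{b})\|_{\mathbf{N}}$ for every $\phi \in p(\overline{x})$. Since $\mathbf{B}(\{\phi\}) \in \mathcal{U}$, this forces $\|\phi(\overline{b})\|_{\mathbf{N}} \in \mathcal{U}$, so $[\overline{b}]_{\mathcal{U}}$ realizes $[p(\overline{x})]_{\mathcal{U}}$ in $\mathbf{N}/\mathcal{U}$, which is exactly (A).

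The remaining work, and the crux of the argument, is verifying hypothesis (B) of the Corollary, where multiplicativity and refinement enter. Given a finite $\Gamma_0 \subseteq \Gamma$ and $\mathbf{d} \in \mathcal{B}_+$, I would write $s = \{\phi \in p(\overline{x}) : \phi(\overline{c}) \in \Gamma_0\}$ and $s' = \{\phi \in s : \mathbf{d} \leq \mathbf{B}(\{\phi\})\}$, choose an ultrafilter $\mathcal{V}$ on $\mathcal{B}$ with $\mathbf{d} \in \mathcal{V}$, and take $M = \mathbf{M}/\mathcal{V}$ with $\tau \restriction_{\mathbf{M}} = [\cdot]_{\mathcal{V}}$; the defining property of specialization then automatically meets every $\mathcal{L}(\mathbf{M})$-constraint that $\mathbf{d}$ imposes. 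The decisive step is interpreting $\overline{c}$ inside this single $M$: by multiplicativity together with $\mathbf{d} \leq \mathbf{B}(\{\phi\})$ for each $\phi \in s'$, I get $\mathbf{d} \leq \bigwedge_{\phi \in s'}\mathbf{B}(\{\phi\}) = \mathbf{B}(s') \leq \mathbf{L}_{p(\overline{x})}(s')$, whence $\mathbf{L}_{p(\overline{x})}(s') \in \mathcal{V}$, so $M \models \exists \overline{x}\bigwedge_{\phi \in s'}\phi(\overline{x})$ and any witness $\overline{e} \in M$ yields $\tau(\overline{c}) := \overline{e}$ with $M \models \phi(\tau(\overline{c}))$ for all $\phi \in s'$, as required. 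I expect this last step to be the main obstacle: it is exactly the passage from the coordinatewise bounds $\mathbf{d} \leq \mathbf{B}(\{\phi\})$ to a lower bound on the \emph{joint} existential value $\mathbf{L}_{p(\overline{x})}(s')$, and it is multiplicativity that makes this possible — a merely refining, non-multiplicative distribution would give no control over the conjunction and hence no single realizing element.
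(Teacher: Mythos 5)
Your proposal is correct and follows essentially the same route as the paper: for (A)$\Rightarrow$(B) the paper likewise reads off $\mathbf{B}(\Gamma(\overline{x})) = \|\bigwedge\Gamma(\overline{b})\|_{\mathbf{N}}$ from a lift $\overline{b}$ of the realizing tuple, and for (B)$\Rightarrow$(A) it applies Corollary~\ref{Compactness} with exactly your choice of $X$, $\Gamma$, $F_0$, $F_1$. The only difference is that you spell out the verification of the Corollary's finitary hypothesis (passing to a specialization $\mathbf{M}/\mathcal{V}$ and using multiplicativity to get a single joint witness), which the paper leaves as a one-line remark.
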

\begin{proof}
	(B) implies (A): let $\mathbf{B}$ be a multiplicative refinement of $\mathbf{L}_{p(\overline{x})}$ in $\mathcal{U}$. We apply Corollary~\ref{Compactness}. Let $X = \mathbf{M} \cup \{\overline{x}\}$. Let $\Gamma = \mathcal{L}(\mathbf{M}) \cup p(\overline{x})$, and define $F_0, F_1: \Gamma \to \mathcal{B}$ via $F_0 \restriction_{\mathbf{M}} = F_1 \restriction_{\mathbf{M}}= \|\cdot\|_{\mathbf{M}}$ and, for each $\phi(\overline{x}) \in p(\overline{x})$, $F_0(\phi(\overline{x})) = \mathbf{B}(\{\phi(\overline{x})\})$ and $F_1(\phi(\overline{x})) = 1$. Multiplicativity of $\mathbf{B}$ and the definition of a {L}o{\'s} map translates into (C) of Corollary~\ref{Compactness} hold, and so by (A) of Corollary~\ref{Compactness}, we can find $\mathbf{N}$ as desired.

	(A) implies (B): Choose $\mathbf{N} \succeq \mathbf{M}$ and $\overline{b} \in \mathbf{N}$ such that $[\overline{b}]_{\mathcal{U}}$ realizes $[p(\overline{x})]_{\mathcal{U}}$. For each $\Gamma(\overline{x}) \in [p(\overline{x})]^{<\aleph_0}$, put $\mathbf{B}(\Gamma(\overline{x})) = \|\Gamma(\overline{b})\|_{\mathbf{N}}$. Then this is easily a multiplicative refinement of $\mathbf{L}_{p(\overline{x})}$.
\end{proof}

Thus:

\begin{theorem}\label{DistributionsWork}
	Suppose $\mathcal{U}$ is an ultrafilter on the complete Boolean algebra $\mathcal{B}$, and suppose $T$ is a theory. Then the following are equivalent:
	
	\begin{itemize}
		\item[(A)] $\mathcal{U}$ $\lambda^+$-saturates $T$;
		\item[(B)] Every $(\lambda, T)$-{\L}o{\'s} map in $\mathcal{U}$ has a multiplicative refinement in $\mathcal{U}$;
		\item[(C)] Every $(\lambda, T)$-{\L}o{\'s} map of arity $1$ in $\mathcal{U}$ has a multiplicative refinement in $\mathcal{U}$.
	\end{itemize}
\end{theorem}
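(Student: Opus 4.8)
The plan is to deduce Theorem~\ref{DistributionsWork} by combining the characterization of $\lambda^+$-saturation from Theorem~\ref{JustifyDefOfSat} with the type-realization criterion from Theorem~\ref{LosToMult2}. The key bridge is the observation (recorded just after the definition of {\L}o{\'s} maps) that $(\lambda, T)$-{\L}o{\'s} maps in $\mathcal{U}$ are, up to relabeling, exactly the {\L}o{\'s} maps $\mathbf{L}_{p(\overline{x})}$ of partial $\mathcal{U}$-types $p(\overline{x})$ over full $\mathcal{B}$-valued models $\mathbf{M} \models^{\mathcal{B}} T$, where the arity of the {\L}o{\'s} map equals the arity of the type and $p(\overline{x})$ is a $\mathcal{U}$-type precisely because $\mathbf{L}_{p(\overline{x})}$ lands in $\mathcal{U}$.

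First I would prove (A)$\Rightarrow$(B). Assume $\mathcal{U}$ $\lambda^+$-saturates $T$ and let $\mathbf{A}$ be any $(\lambda, T)$-{\L}o{\'s} map in $\mathcal{U}$. Relabel $\mathbf{A}$ as $\mathbf{L}_{p(\overline{x})}$ for a partial $\mathcal{U}$-type $p(\overline{x})$ over some $\mathbf{M} \models^{\mathcal{B}} T$; since $\mathbf{A}$ is a $\lambda$-distribution, we may take $|p(\overline{x})| \leq \lambda$, and by downward L\"{o}wenheim--Skolem (Theorem~\ref{AmalgForBValuedModels}(1)) we may assume $|\mathbf{M}| \leq \lambda$. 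By clause (B) of Theorem~\ref{JustifyDefOfSat} (which holds since $\mathcal{U}$ $\lambda^+$-saturates $T$), there is $\mathbf{N} \succeq \mathbf{M}$ with $\mathbf{N}/\mathcal{U}$ realizing $[p(\overline{x})]_{\mathcal{U}}$. Then Theorem~\ref{LosToMult2}, direction (A)$\Rightarrow$(B), yields a multiplicative refinement of $\mathbf{L}_{p(\overline{x})} = \mathbf{A}$ in $\mathcal{U}$.

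Next, (B)$\Rightarrow$(C) is immediate since arity-$1$ {\L}o{\'s} maps are a special case. For (C)$\Rightarrow$(A), I would verify clause (A) of Theorem~\ref{JustifyDefOfSat}: let $\mathbf{M} \models^{\mathcal{B}} T$ with $|\mathbf{M}| \leq \lambda$ and let $p(x)$ be a partial $\mathcal{U}$-type of arity $1$. Its {\L}o{\'s} map $\mathbf{L}_{p(x)}$ is an arity-$1$ $(\lambda, T)$-{\L}o{\'s} map in $\mathcal{U}$, so by (C) it has a multiplicative refinement in $\mathcal{U}$; Theorem~\ref{LosToMult2}, direction (B)$\Rightarrow$(A), then produces $\mathbf{N} \succeq \mathbf{M}$ with $\mathbf{N}/\mathcal{U}$ realizing $[p(x)]_{\mathcal{U}}$, which is exactly clause (A) of Theorem~\ref{JustifyDefOfSat}. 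Hence $\mathcal{U}$ $\lambda^+$-saturates $T$.

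The only genuine point to watch, and the step I expect to require the most care, is the bookkeeping in the relabeling: one must confirm that an abstract $(\lambda, T)$-{\L}o{\'s} map can be presented as $\mathbf{L}_{p(\overline{x})}$ for a type whose cardinality and arity are controlled, and that the index set $I = \lambda$ matches the cardinality bound $|p(\overline{x})| \leq \lambda$ needed to invoke Theorem~\ref{JustifyDefOfSat}. Everything else is a direct composition of two previously established equivalences, so no new combinatorial content is needed beyond assembling these pieces.
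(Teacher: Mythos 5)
Your proposal is correct and follows essentially the same route as the paper, which likewise derives the theorem by combining Theorem~\ref{LosToMult2} with clauses (A) and (B) of Theorem~\ref{JustifyDefOfSat}, together with the observation that $(\lambda,T)$-{\L}o{\'s} maps are relabelings of {\L}o{\'s} maps of partial types. The relabeling/bookkeeping point you flag is exactly the step the paper also leaves implicit, and it is unproblematic.
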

\begin{proof}
	For (A) if and only if (B), use Lemma~\ref{LosToMult2} and formulation (B) of Theorem \ref{JustifyDefOfSat}. For (B) if and only if (C),  use Theorem \ref{JustifyDefOfSat}: (A) if and only if (B).
\end{proof}

\section{Saturation of Ultrapowers}\label{SurveySatUltSec}

In this section, we connect our previous definitions with saturation of ultrapowers; in particular we will generalize Theorem~\ref{KeislerOrigSecond}(A) to the context of a complete Boolean algebra $\mathcal{B}$. The results of this section have been obtained independently by Parente \cite{Parente1} \cite{Parente2}. 

In order to deduce saturation properties of $M^{\mathcal{B}}/\mathcal{U}$ from regularity of $\mathcal{U}$, we will need a more general notion of $(\lambda,T)$-{\L}o{\'s} maps. 

\begin{definition}
	Suppose $\mathbf{A}$ and $\mathbf{B}$ are $I$-distributions in $\mathcal{B}$. Then say that $\mathbf{B}$ conservatively refines $\mathbf{A}$ if there is a multiplicative $I$-distribution $\mathbf{C}$ such that each $\mathbf{B}(s) = \mathbf{A}(s) \wedge \mathbf{C}(s)$. Equivalently, for all $s \in [\lambda]^{<\aleph_0}$, $\mathbf{B}(s) = \mathbf{A}(s) \wedge \bigwedge_{i \in s} \mathbf{B}(\{i\})$.
	
	Say that $\mathbf{A}$ is an $(I, T)$-possibility if $\mathbf{A}$ is a conservative refinement of an $(I, T)$-{\L}o{\'s} map. If $\overline{\phi}$ is an $I$-sequence of formulas, then say that $\mathbf{A}$ is an $(I,T, \overline{\phi})$-possibility if $\mathbf{A}$ is a conservative refinement of an $(I, T, \overline{\phi})$-{L}o{\'s} map.
\end{definition}

We remark on a simple but important point.

\begin{lemma}\label{ConsRefinements}
	Let $\mathcal{B}$ be a complete Boolean algebra and let $\mathcal{U}$ be an ultrafilter on $\mathcal{B}$. If $\mathbf{A}$ is an $I$-distribution in $\mathcal{U}$ and $\mathbf{B}$ is a conservative refinement of $\mathbf{A}$ in $\mathcal{U}$, then $\mathbf{B}$ has a multiplicative refinement in $\mathcal{U}$ if and only if $\mathbf{A}$ does.
\end{lemma}
\begin{proof}
	Since $\mathbf{B}$ refines $\mathbf{A}$, any multiplicative refinement of $\mathbf{B}$ is also one of $\mathbf{A}$.
	
	Suppose $\mathbf{C}$ is a multiplicative refinement of $\mathbf{A}$ in $\mathcal{U}$. Then define $\mathbf{C}'(s) = \mathbf{C}(s) \wedge \bigwedge_{i \in s} \mathbf{A}(\{i\}) \in \mathcal{U}$. This is clearly multiplicative, and since $\mathbf{C}$ refines $\mathbf{B}$, we get $\mathbf{C}'(s) \leq \mathbf{B}(s) \wedge \bigwedge_{i \in s} \mathbf{A}(\{i\}) = \mathbf{A}(s)$ so $\mathbf{C}'$ refines $\mathbf{A}$.
\end{proof}

The following is analogous to Theorem~\ref{DistributionLocal1}. (C) is the definition of possibilities given by Malliaris and Shelah.

\begin{theorem}\label{DistributionLocal2}
	Suppose $\mathcal{B}$ is a complete Boolean algebra, $\mathbf{A}$ is an $I$-distribution, and $\overline{\phi} =(\phi_i(\overline{x}, \overline{y}_i): i \in I)$ is an $I$-sequence of formulas. Then the following are equivalent:
	\begin{itemize}
		\item[(A)] $\mathbf{A}$ is an $(I, T, \overline{\phi})$-possibility.
		
		\item[(B)] For every $s \in [I]^{<\aleph_0}$, and for every $\mathbf{c} \in \mathcal{B}_+$ such that $\mathbf{c}$ decides $\mathbf{A}(t)$ for all $t \subseteq s$ and such that $\mathbf{c} \leq \mathbf{A}(\{i\})$ for all $i \in s$, there is some $M \models T$ and some sequence $(\overline{a}_i: i \in s)$ from $M^{<\omega}$, such that for each $t \subseteq s$, $\exists \overline{x} \bigwedge_{i \in t} \phi_i(x, \overline{a}_i)$ is consistent if and only if $\mathbf{c} \leq \mathbf{A}(t)$.
	\end{itemize}
\end{theorem}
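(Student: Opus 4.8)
The plan is to mirror the proof of Theorem~\ref{DistributionLocal1}, reducing everything to Corollary~\ref{Compactness} while using the defining relation of conservative refinement to absorb the extra multiplicative factor. Throughout I write $\psi_s(\overline{y}) := \exists \overline{x} \bigwedge_{i \in s} \phi_i(\overline{x}, \overline{y}_i)$, and I note that since the parameters $\overline{a}_i$ lie in a fixed $M \models T$ with $T$ complete, ``$\exists \overline{x} \bigwedge_{i \in t} \phi_i(\overline{x}, \overline{a}_i)$ is consistent'' is the same as $M \models \psi_t(\overline{a})$; I use this identification freely. I also repeatedly use that $\mathbf{A}(s) \leq \bigwedge_{i \in s} \mathbf{A}(\{i\})$, since $\mathbf{A}$ is a distribution.

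For (A) implies (B), I would fix an $(I, T, \overline{\phi})$-{\L}o{\'s} map $\mathbf{A}'$ with $\mathbf{A}(s) = \mathbf{A}'(s) \wedge \bigwedge_{i \in s} \mathbf{A}(\{i\})$ for all $s$, as provided by the definition of possibility. The key observation is that, under the hypotheses on $\mathbf{c}$, namely that $\mathbf{c}$ decides each $\mathbf{A}(t)$ for $t \subseteq s$ and $\mathbf{c} \leq \mathbf{A}(\{i\})$ for all $i \in s$, the element $\mathbf{c}$ decides each $\mathbf{A}'(t)$ in the very same way. Indeed, for $t \subseteq s$ we have $\mathbf{c} \leq \bigwedge_{i \in t} \mathbf{A}(\{i\})$, so $\mathbf{c} \wedge \mathbf{A}'(t) = \mathbf{c} \wedge \mathbf{A}'(t) \wedge \bigwedge_{i \in t} \mathbf{A}(\{i\}) = \mathbf{c} \wedge \mathbf{A}(t)$; since $\mathbf{c}$ decides $\mathbf{A}(t)$, this forces $\mathbf{c} \leq \mathbf{A}'(t)$ exactly when $\mathbf{c} \leq \mathbf{A}(t)$, and $\mathbf{c} \leq \lnot \mathbf{A}'(t)$ otherwise. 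Applying Theorem~\ref{DistributionLocal1} (direction (A) implies (B)) to the {\L}o{\'s} map $\mathbf{A}'$ with this same $s$ and $\mathbf{c}$ then yields $M \models T$ and $(\overline{a}_i : i \in s)$ with $M \models \psi_t(\overline{a})$ iff $\mathbf{c} \leq \mathbf{A}'(t)$, i.e. iff $\mathbf{c} \leq \mathbf{A}(t)$, which is precisely (B).

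For (B) implies (A), I would invoke Corollary~\ref{Compactness} with $X = \bigcup_{i \in I} \overline{y}_i$, $\Gamma = T \cup \{\psi_s : s \in [I]^{<\aleph_0}\}$, and bounds $F_0 \restriction_T = F_1 \restriction_T = 1$, $F_0(\psi_s) = \mathbf{A}(s)$, $F_1(\psi_s) = \mathbf{A}(s) \vee \lnot \bigwedge_{i \in s} \mathbf{A}(\{i\})$. The point of this $F_1$ is that, for any candidate value $\mathbf{A}'(s)$, the pair of inequalities $\mathbf{A}(s) \leq \mathbf{A}'(s) \leq F_1(\psi_s)$ is together equivalent to $\mathbf{A}'(s) \wedge \bigwedge_{i \in s} \mathbf{A}(\{i\}) = \mathbf{A}(s)$ (using $\mathbf{A}(s) \leq \bigwedge_{i \in s}\mathbf{A}(\{i\})$ and the Boolean identity $a \wedge b \leq c \iff a \leq c \vee \lnot b$). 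Hence, if Corollary~\ref{Compactness}(A) holds, the resulting full $\mathbf{M} \models^{\mathcal{B}} T$ together with $\overline{a}_i = \tau(\overline{y}_i)$ realizes an $(I,T,\overline{\phi})$-{\L}o{\'s} map $\mathbf{A}'(s) := \|\psi_s(\overline{a})\|_{\mathbf{M}}$ that is conservatively refined by $\mathbf{A}$, with multiplicative part $\mathbf{C}(s) = \bigwedge_{i \in s}\mathbf{A}(\{i\})$ (which lies in $\mathcal{B}_+$ since it dominates $\mathbf{A}(s)$), exhibiting $\mathbf{A}$ as a possibility.

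The hard part will be verifying Corollary~\ref{Compactness}(B) from hypothesis (B): our hypothesis only constrains $\mathbf{c}$ with $\mathbf{c} \leq \mathbf{A}(\{i\})$ for all relevant $i$, while Corollary~\ref{Compactness}(B) must be checked for arbitrary $\mathbf{c} \in \mathcal{B}_+$. I plan to resolve this by localization. Given a finite $\Gamma_0 \subseteq \Gamma$, let $s^*$ be the finite union of all $s$ with $\psi_s \in \Gamma_0$, and given $\mathbf{c} \in \mathcal{B}_+$ set $s_0 = \{i \in s^* : \mathbf{c} \leq \mathbf{A}(\{i\})\}$. For any such $s$ with $s \not\subseteq s_0$, neither $\mathbf{c} \leq F_0(\psi_s)$ nor $\mathbf{c} \leq \lnot F_1(\psi_s)$ can hold, as both would force $\mathbf{c} \leq \bigwedge_{i \in s}\mathbf{A}(\{i\})$; so no such constraint is active. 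I then shrink $\mathbf{c}$ to some $\mathbf{c}' \leq \mathbf{c}$ in $\mathcal{B}_+$ deciding each $\mathbf{A}(t)$ for $t \subseteq s_0$ (possible, as there are finitely many); then $\mathbf{c}' \leq \mathbf{A}(\{i\})$ for $i \in s_0$, so hypothesis (B) applies to $s_0, \mathbf{c}'$ and gives $M \models T$ and $(\overline{a}_i : i \in s_0)$ with $M \models \psi_t(\overline{a})$ iff $\mathbf{c}' \leq \mathbf{A}(t)$. Sending the remaining variables of $X$ to arbitrary elements, the two active families of constraints check out directly: if $\mathbf{c} \leq F_0(\psi_s) = \mathbf{A}(s)$ then $\mathbf{c}' \leq \mathbf{A}(s)$ so $M \models \psi_s$; and if $\mathbf{c} \leq \lnot F_1(\psi_s)$ then $\mathbf{c}' \leq \lnot \mathbf{A}(s)$, so $\mathbf{c}' \not\leq \mathbf{A}(s)$ and hence $M \models \lnot \psi_s$. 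This establishes Corollary~\ref{Compactness}(B) and completes the argument.
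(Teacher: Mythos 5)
Your proposal is correct and follows essentially the same route as the paper: the core of both is Corollary~\ref{Compactness} applied with $F_0(\psi_s) = \mathbf{A}(s)$ and the identical upper bound $F_1(\psi_s) = \mathbf{A}(s) \vee \lnot\bigwedge_{i\in s}\mathbf{A}(\{i\})$, whose two-sided inequality you correctly recognize as encoding the conservative-refinement identity $\mathbf{A}(s) = \mathbf{A}'(s)\wedge\bigwedge_{i\in s}\mathbf{A}(\{i\})$. The only (harmless) deviation is that you route the (A)~implies~(B) direction through Theorem~\ref{DistributionLocal1} applied to the underlying {\L}o\'s map rather than through the corollary itself, and you usefully spell out the localization to $s_0$ that the paper leaves as ``easily.''
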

\begin{proof}
	Write $X = \bigcup\{\overline{y}_i: i \in I\}$. 
	Let $\Gamma \subseteq \mathcal{L}(X)$ be $T \cup \{\exists \overline{x} \bigwedge_{i \in s } \phi_i(\overline{x}, \overline{y}_i): s \in [I]^{<\aleph_0}\}$. Define $F_0: \Gamma \to \mathcal{B}$ via $F_0\restriction_T = 1$ and $F_0(\exists \overline{x} \bigwedge_{i \in s } \phi_i(\overline{x}, \overline{y}_i)) = \mathbf{A}(s)$ for each $s \in [I]^{<\aleph_0}$. Define $F_1: \Gamma \to \mathcal{B}$ via $F_1\restriction_T = 1$ and $F_1(\exists \overline{x} \bigwedge_{i \in s } \phi_i(\overline{x}, \overline{y}_i)) = \mathbf{A}(s) \vee \bigvee_{i \in s} \lnot \mathbf{A}_{\{i\}}$ for each $s \in [I]^{<\aleph_0}$.

	We claim that (A) is equivalent to there being some $\mathbf{M} \models^{\mathcal{B}} T$ and some map $\tau: X \to \mathbf{M}$, such that for all $\psi(\overline{y}) \in \Gamma$, $F_0(\psi(\overline{y})) \leq \|\psi(\tau(\overline{y}))\|_{\mathbf{M}} \leq F_1(\psi(\overline{y}))$. Indeed, suppose $\mathbf{A}$ is an $(I, T, \overline{\phi})$-possitiblity; choose an $(I, T, \overline{\phi})$-{\L}o{\'s} map $\mathbf{B}$ such that $\mathbf{A}$ is a conservative refinement of $\mathbf{B}$. Choose some $\mathbf{M} \model^{\mathcal{B}} T$ and some $(\overline{a}_i: i \in I)$ from $\mathbf{M}^{<\omega}$ witnessing that $\mathbf{B}$ is an $(I, T, \overline{\phi})$-{\L}o{\'s} map. Let $\tau: X \to \mathbf{M}$ be the union of the maps $\overline{y}_i \mapsto \overline{a}_i$ ($\tau$ is a function since the $\overline{y}_i$'s are pairwise disjoint). Clearly this works, and the argument reverses.
	
	Consider Corollary~\ref{Compactness} with $X, \Gamma, F_0, F_1$ as defined above. Then easily, (A), (B) of Corollary~\ref{Compactness} are equivalent to (A), (B) here, and so they are all equivalent.
\end{proof}
%
%
%
%

It is now convenient to state Malliaris and Shelah's definition of morality from \cite{DividingLine}:

\begin{definition}
	Suppose $T$ is a complete countable theory, $\mathcal{B}$ is a complete Boolean algebra, and $\mathcal{U}$ is an ultrafilter on $\mathcal{B}$. Then $\mathcal{U}$ is $(\lambda, \mathcal{B}, T)$-moral if every $(\lambda, T)$-possibility in $\mathcal{U}$ has a multiplicative refinement in $\mathcal{U}$.
\end{definition}

We have already shown the following:

\begin{theorem}\label{DistributionsWork2}
	Suppose $\mathcal{U}$ is an ultrafilter on $\mathcal{B}$. Then $\mathcal{U}$ $\lambda^+$-saturates $T$ if and only if $\mathcal{U}$ is $(\lambda, \mathcal{B}, T)$-moral.
\end{theorem}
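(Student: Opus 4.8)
The plan is to observe that, via Theorem~\ref{DistributionsWork}, both sides of the claimed equivalence are already reduced to a single combinatorial assertion about multiplicative refinements, so that all that remains is to reconcile the two classes of distributions involved: $(\lambda, T)$-{\L}o{\'s} maps on the one hand, and $(\lambda, T)$-possibilities on the other. Concretely, by Theorem~\ref{DistributionsWork} the statement ``$\mathcal{U}$ $\lambda^+$-saturates $T$'' is equivalent to ``every $(\lambda, T)$-{\L}o{\'s} map in $\mathcal{U}$ has a multiplicative refinement in $\mathcal{U}$'', whereas by definition ``$\mathcal{U}$ is $(\lambda, \mathcal{B}, T)$-moral'' says ``every $(\lambda, T)$-possibility in $\mathcal{U}$ has a multiplicative refinement in $\mathcal{U}$.'' Thus I would reduce the theorem to showing these two universally quantified statements coincide.

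For the implication that morality gives saturation, I would note that every $(\lambda, T)$-{\L}o{\'s} map is automatically a $(\lambda, T)$-possibility, since it conservatively refines itself: taking the multiplicative witness $\mathbf{C}$ with $\mathbf{C}(\{i\}) = 1$ for all $i$ yields $\mathbf{C}(s) = 1$ and hence $\mathbf{B}(s) \wedge \mathbf{C}(s) = \mathbf{B}(s)$. Consequently, if every possibility in $\mathcal{U}$ admits a multiplicative refinement in $\mathcal{U}$, then in particular every {\L}o{\'s} map in $\mathcal{U}$ does, and Theorem~\ref{DistributionsWork} delivers $\lambda^+$-saturation. For the reverse implication, I would take an arbitrary $(\lambda, T)$-possibility $\mathbf{A}$ in $\mathcal{U}$, so that $\mathbf{A}$ conservatively refines some $(\lambda, T)$-{\L}o{\'s} map $\mathbf{B}$. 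Since conservative refinement is in particular a refinement, $\mathbf{A}(s) \leq \mathbf{B}(s)$ for all $s$, and as $\mathcal{U}$ is upward closed and $\mathbf{A}$ is in $\mathcal{U}$, the coarser map $\mathbf{B}$ is also in $\mathcal{U}$. Saturation (via Theorem~\ref{DistributionsWork}) then gives $\mathbf{B}$ a multiplicative refinement in $\mathcal{U}$, and Lemma~\ref{ConsRefinements}—applied with its ``$\mathbf{A}$'' instantiated as our $\mathbf{B}$ and its ``$\mathbf{B}$'' as our $\mathbf{A}$—transfers this multiplicative refinement from $\mathbf{B}$ back to $\mathbf{A}$.

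I do not expect a genuine obstacle here; the argument is essentially bookkeeping against the definitions. The one place to exercise care is the final invocation of Lemma~\ref{ConsRefinements}: one must correctly match the lemma's roles (its coarser distribution is our {\L}o{\'s} map, its finer one is our possibility) and verify the lemma's hypothesis that \emph{both} distributions lie in $\mathcal{U}$, which is exactly why the preliminary observation that the coarser map $\mathbf{B}$ falls into $\mathcal{U}$ is needed. With that in place the equivalence follows immediately.
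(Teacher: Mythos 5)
Your proposal is correct and follows exactly the route the paper takes: the paper's proof of Theorem~\ref{DistributionsWork2} is precisely ``by Theorem~\ref{DistributionsWork} and Lemma~\ref{ConsRefinements},'' and your write-up just makes explicit the two observations that proof leaves implicit (every {\L}o{\'s} map is a possibility via the trivial conservative refinement, and a possibility in $\mathcal{U}$ lies below a {\L}o{\'s} map that is therefore also in $\mathcal{U}$). The care you take in matching the roles of the two distributions when invoking Lemma~\ref{ConsRefinements} is exactly the right bookkeeping.
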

\begin{proof}
	By Theorem~\ref{DistributionsWork} and Lemma~\ref{ConsRefinements}.
\end{proof}

Henceforward we avoid the terminology $``(\lambda, \mathcal{B}, T)"$ moral.

 The following is a natural question to ask:
\vspace{1 mm}

\noindent \textbf{Question.} Suppose $\mathcal{U}$ is an ultrafilter on $\mathcal{B}$ and $T$ is a complete countable theory. Are the following equivalent?

\begin{itemize}
	\item $\mathcal{U}$ $\lambda^+$-saturates $T$;
	\item For some or every $\lambda^+$-saturated $M \models T$, $M^\mathcal{B}/\mathcal{U}$ is $\lambda^+$-saturated.
\end{itemize}

\vspace{1 mm}

This is true when $\mathcal{U} = \mathcal{P}(\lambda)$. When $\mathcal{B}$ is not $\lambda^+$-distributive, however, the proof breaks down, and as far as we know it is open if this holds in general.

In the remainder of this section, we extend part (A) of Keisler's fundamental Theorem~\ref{KeislerOrigSecond} to general Boolean algebras $\mathcal{B}$. Note that we have already generalized part (B), via Theorem~\ref{JustifyDefOfSat} and Example~\ref{SatModelsExample}.

First, we must find the right generalization of regularity. We choose our notation to align with Parente's \cite{Parente1} \cite{Parente2}.

\begin{definition}
	Suppose $I$ is an index set and suppose $\mathcal{B}$ is a complete Boolean algebra. Then $(\mathbf{a}_i: i \in I)$ is an $I$-quasiregular sequence from $\mathcal{B}$ if $(\mathbf{a}_i: i \in I)$ has the finite intersection property, and for each $J \in [I]^{\aleph_0}$, $\bigwedge_{i \in J} \mathbf{a}_i = 0$. Say that $(\mathbf{a}_i: i \in I)$ is $I$-regular if additionally, the set of all $\mathbf{b} \in \mathcal{B}_+$ which decide each $\mathbf{a}_i$ is dense.
	
	Suppose $\mathcal{D}$ is a filter on $\mathcal{B}$ and $\lambda$ is a cardinal. Then $\mathcal{D}$ is $\lambda$-(quasi)regular if there is a $\lambda$-(quasi)regular sequence $(\mathbf{a}_\alpha: \alpha < \lambda)$ such that each $\mathbf{a}_\alpha \in \mathcal{D}$.

	Suppose $\mathbf{A}$ is an $I$-distribution in $\mathcal{B}$. Then say that $\mathbf{A}$ is $I$-(quasi)regular if $(\mathbf{A}(\{i\}): i \in I)$ is $I$-(quasi)regular.
\end{definition}

For example, if $\mathcal{U}$ is a $\lambda$-quasiregular ultrafilter on $\mathcal{P}(\lambda)$, then $\mathcal{U}$ is also $\lambda$-regular. In general this holds whenever $\mathcal{B}$ is $\lambda^+$-distributive.

The following theorem is standard in the special case when $\mathcal{B}$ is of the form $\mathcal{P}(\mu)$ for some $\mu$; see e.g. Lemma 1.3 from Chapter 6 of \cite{ShelahIso}. The general case has exactly the same proof, and has independently been observed by Parente \cite{Parente1}.

\begin{theorem}\label{RegSeqExistChar}
	Suppose $\lambda$ is infinite and $\mathcal{B}$ has an antichain of size $\lambda$. Then $\mathcal{B}$ admits a $\lambda$-regular sequence (and hence a  $\lambda$-regular ultrafilter) if and only if $\mathcal{B}$ has an antichain of size $\lambda$.
\end{theorem}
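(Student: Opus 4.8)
The plan is to prove the full characterization behind the statement: $\mathcal{B}$ admits a $\lambda$-regular sequence if and only if $\mathcal{B}$ has an antichain of size $\lambda$. Both directions hinge on the standard model example. Let $S = [\lambda]^{<\aleph_0}$, so $|S| = \lambda$, and for $\alpha < \lambda$ set $X_\alpha = \{s \in S : \alpha \in s\}$. Then $(X_\alpha : \alpha < \lambda)$ is $\lambda$-regular in $\mathcal{P}(S)$: for finite $F$ we have $F \in \bigcap_{\alpha \in F} X_\alpha$ (finite intersection property), $\bigcap_{\alpha \in J} X_\alpha = \emptyset$ for infinite $J$ (no finite $s$ contains $J$), and each singleton $\{s\}$ decides every $X_\alpha$, so the deciders are dense.

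For the direction ``antichain $\Rightarrow$ regular sequence'', I would first extend the given antichain of size $\lambda$ to a maximal one by adjoining at most one element (namely $\bigwedge\{\lnot\mathbf{c}\}$, as in the remarks preceding Theorem~\ref{CCTheorem}), so its size stays $\lambda$, and then reindex it bijectively as $\{\mathbf{c}_s : s \in S\}$ using $|S| = \lambda$. A maximal antichain induces a complete embedding $\iota : \mathcal{P}(S) \to \mathcal{B}$ via $\iota(A) = \bigvee_{s \in A} \mathbf{c}_s$ (the fact already invoked after Theorem~\ref{ER1}). Putting $\mathbf{a}_\alpha = \iota(X_\alpha)$, the finite intersection property and the vanishing of infinite meets transport at once, since $\iota$ preserves arbitrary meets: $\bigwedge_{\alpha \in F} \mathbf{a}_\alpha = \iota(\bigcap_{\alpha \in F} X_\alpha) > 0$ and $\bigwedge_{\alpha \in J} \mathbf{a}_\alpha = \iota(\emptyset) = 0$ for infinite $J$. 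Regularity I would check directly in $\mathcal{B}$: each $\mathbf{c}_s = \iota(\{s\})$ decides each $\mathbf{a}_\alpha$ because $\mathbf{c}_s \wedge \mathbf{a}_\alpha = \iota(\{s\} \cap X_\alpha)$ equals $\mathbf{c}_s$ or $0$; and since $\{\mathbf{c}_s : s \in S\}$ is a maximal antichain, every $\mathbf{b} \in \mathcal{B}_+$ meets some $\mathbf{c}_s$, so $\mathbf{b} \wedge \mathbf{c}_s$ is a nonzero decider below $\mathbf{b}$, giving density.

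For the converse ``regular sequence $\Rightarrow$ antichain'', I would mimic the classical $\mathcal{P}(I)$ argument, letting the dense set $D$ of deciders of $(\mathbf{a}_\alpha : \alpha < \lambda)$ play the role of the points of $I$. By Zorn, choose a maximal antichain $\mathbf{C} \subseteq D$; density of $D$ forces $\mathbf{C}$ to be maximal in $\mathcal{B}$. For $\mathbf{c} \in \mathbf{C}$ set $u(\mathbf{c}) = \{\alpha < \lambda : \mathbf{c} \leq \mathbf{a}_\alpha\}$. Each $u(\mathbf{c})$ is finite, for if it contained an infinite $J$ then $\mathbf{c} \leq \bigwedge_{\alpha \in J} \mathbf{a}_\alpha = 0$. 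Conversely $\bigcup_{\mathbf{c} \in \mathbf{C}} u(\mathbf{c}) = \lambda$: given $\alpha$, maximality of $\mathbf{C}$ yields $\mathbf{c}$ with $\mathbf{c} \wedge \mathbf{a}_\alpha > 0$, and since $\mathbf{c} \in D$ decides $\mathbf{a}_\alpha$ this forces $\mathbf{c} \leq \mathbf{a}_\alpha$, i.e. $\alpha \in u(\mathbf{c})$. Thus $\lambda$ is a union of finitely many elements per member of $\mathbf{C}$, whence $|\mathbf{C}| \geq \lambda$ as $\lambda$ is infinite, and any $\lambda$-sized subfamily of $\mathbf{C}$ is the required antichain.

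The one genuinely delicate point is that the decidability clause in the definition of regularity is exactly what makes the familiar powerset arguments run verbatim: in the forward direction it certifies regularity from the maximal antichain, and in the converse it lets me substitute a maximal antichain of deciders for ``points'' while keeping every $u(\mathbf{c})$ finite. Finally the parenthetical ``and hence a $\lambda$-regular ultrafilter'' is immediate, since a $\lambda$-regular sequence has the finite intersection property, hence generates a filter, and any ultrafilter extending it is $\lambda$-regular by definition.
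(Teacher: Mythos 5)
Your proof is correct and follows essentially the same route as the paper: in the forward direction you build the regular sequence as $\mathbf{a}_\alpha = \bigvee_{\alpha \in s}\mathbf{c}_s$ from an antichain reindexed by $[\lambda]^{<\aleph_0}$ (your embedding $\iota:\mathcal{P}(S)\to\mathcal{B}$ is just a cleaner packaging of the same computation), and in the converse you take a maximal antichain of deciders and count via the finite sets $u(\mathbf{c})$ exactly as the paper does with its $Y_\gamma$. Your write-up is actually slightly more careful than the paper's on the density-of-deciders check and on concluding only $|\mathbf{C}|\geq\lambda$ rather than equality, but these are presentational, not substantive, differences.
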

\begin{proof}
	Suppose first $\mathcal{B}$ has an antichain of size $\lambda$. After reindexing, we can get an antichain $(\mathbf{c}_s: s \in [\lambda]^{<\aleph_0})$ indexed by $[\lambda]^{<\aleph_0}$. For each $\alpha < \lambda$, let $\mathbf{a}_\alpha = \bigvee_{\alpha \in s} \mathbf{c}_s$. Then $(\mathbf{a}_\alpha: \alpha < \lambda)$ is $\lambda$-regular. 
	
	Conversely, suppose $\mathcal{B}$ admits a $\lambda$-regular sequence $(\mathbf{a}_\alpha: \alpha < \lambda)$. Let $\mathbf{c}_\gamma: \gamma< \kappa$ be a maximal antichain from $\mathcal{B}$ such that each $\mathbf{c}_\gamma$ decides each $\mathbf{a}_\alpha$. For each $\gamma < \kappa$, define $Y_\gamma = \{\alpha < \lambda: \mathbf{c}_\gamma \leq \mathbf{a}_\alpha\}$. Then each $|Y_\gamma| < \aleph_0$, but $\bigcup_\gamma Y_\gamma = \lambda$. Thus $\lambda$ is the union of $\kappa$-many finite sets, so $\kappa = \lambda$, and $\mathcal{B}$ has an antichain of size $\lambda$.
\end{proof}

We now just need a couple of technical remarks on regularity. We collect them into one lemma:

\begin{lemma}\label{StrongRegularityHereditary}
	Suppose $\mathcal{B}$ is a complete Boolean algebra. Then the following are all true.
	
	\begin{itemize}
	\item[(I)] Suppose $(\mathbf{a}_i: i \in I)$ is an $I$-regular sequence from $\mathcal{B}$, and $\mathbf{b}_i \leq \mathbf{a}_i$ for each $i$, and $(\mathbf{b}_i: i \in I)$ has the finite intersection property. Then $(\mathbf{b}_i: i \in I)$ is  $I$-regular.
	\item[(II)] Suppose $\mathbf{A}$ is an $I$-regular distribution (i.e., $\mathbf{A}$ is an $I$-distribution, and $(\mathbf{A}(\{i\}): i \in I)$ is $I$-regular). Then any refinement of $\mathbf{A}$ is $I$-regular.
	\item[(III)] Suppose $\mathbf{B}$ is an $I$-regular distribution. Then $(\mathbf{B}(s): s \in [I]^{<\aleph_0})$ is an $[I]^{<\aleph_0}$-regular sequence.
	\item[(IV)] Suppose $\mathcal{U}$ is a $\lambda$-regular ultrafilter on $\mathcal{B}$. Then whenever $|I| \leq \lambda$, any $I$-distribution in $\mathcal{U}$ has a conservative, $\lambda$-regular refinement.
	\end{itemize}
\end{lemma}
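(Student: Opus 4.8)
The plan is to prove (I) first and then treat everything else as a consequence of it, since (I) is the engine: (II) and (IV) reduce to it once the right domination is exhibited, and (III) is (I) carried out one dimension up. Throughout, the one genuinely useful fact is that quasiregularity forbids a nonzero element from lying below the meet of infinitely many of the terms; this is what will let me reduce each ``deciding'' problem to finitely many coordinates.

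For (I), the finite intersection property is assumed, and quasiregularity is immediate: for countably infinite $J \subseteq I$ one has $\bigwedge_{i \in J} \mathbf{b}_i \leq \bigwedge_{i \in J} \mathbf{a}_i = 0$. The only real content is density. Here I would first record the key observation that if $\mathbf{b} \in \mathcal{B}_+$ decides each $\mathbf{a}_i$, then $J := \{i : \mathbf{b} \leq \mathbf{a}_i\}$ is finite, since otherwise $\mathbf{b}$ would lie below the meet of a countably infinite subfamily of $(\mathbf{a}_i)$, which is $0$ by quasiregularity. So, given nonzero $\mathbf{d}$, I would use $I$-regularity of $(\mathbf{a}_i)$ to pick nonzero $\mathbf{b}_0 \leq \mathbf{d}$ deciding every $\mathbf{a}_i$; its positive set $J$ is finite. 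For $i \notin J$ we get $\mathbf{b}_0 \leq \lnot \mathbf{a}_i \leq \lnot \mathbf{b}_i$, so these $\mathbf{b}_i$ are already decided negatively; for the finitely many $i \in J$, I would refine $\mathbf{b}_0$ one coordinate at a time (replacing the current $\mathbf{b}$ by $\mathbf{b} \wedge \mathbf{b}_i$ when nonzero, and otherwise keeping $\mathbf{b} \leq \lnot \mathbf{b}_i$), arriving at a nonzero $\mathbf{b}_* \leq \mathbf{d}$ deciding every $\mathbf{b}_i$. Monotonicity guarantees that coordinates decided earlier stay decided.

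For (II), if $\mathbf{B}$ refines $\mathbf{A}$ then $\mathbf{B}(\{i\}) \leq \mathbf{A}(\{i\})$, and $(\mathbf{B}(\{i\}))$ has the finite intersection property simply because $\mathbf{B}$ is a distribution: $\bigwedge_{i \in s} \mathbf{B}(\{i\}) \geq \mathbf{B}(s) > 0$. Then (I) applies with $\mathbf{a}_i = \mathbf{A}(\{i\})$ and $\mathbf{b}_i = \mathbf{B}(\{i\})$. For (III), I would index by $K = [I]^{<\aleph_0}$ and set $\mathbf{c}_s = \mathbf{B}(s)$. The finite intersection property follows from monotonicity, $\bigwedge_{j} \mathbf{c}_{s_j} \geq \mathbf{B}\!\left(\bigcup_j s_j\right) > 0$. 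For quasiregularity, a countably infinite $\mathcal{J} \subseteq K$ must have infinite union (a countably infinite family of subsets of a fixed finite set is impossible), so choosing distinct $i_k \in \bigcup \mathcal{J}$ and $s_k \in \mathcal{J}$ with $i_k \in s_k$ gives $\bigwedge_{s \in \mathcal{J}} \mathbf{B}(s) \leq \bigwedge_k \mathbf{B}(\{i_k\}) = 0$ by quasiregularity of $\mathbf{B}$. Density repeats the argument of (I) verbatim: pick nonzero $\mathbf{b}_0 \leq \mathbf{d}$ deciding each $\mathbf{B}(\{i\})$, note its positive set $J$ is finite, observe that any $s \not\subseteq J$ is decided negatively since some $i \in s \setminus J$ yields $\mathbf{b}_0 \leq \lnot \mathbf{B}(\{i\}) \leq \lnot \mathbf{B}(s)$, and finally refine over the finitely many $s \subseteq J$.

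For (IV), I would fix a $\lambda$-regular sequence $(\mathbf{a}_\alpha : \alpha < \lambda)$ in $\mathcal{U}$ and an injection $f : I \to \lambda$ (possible as $|I| \leq \lambda$), then define the multiplicative distribution $\mathbf{C}(s) = \bigwedge_{i \in s} \mathbf{a}_{f(i)}$ and set $\mathbf{B}(s) = \mathbf{A}(s) \wedge \mathbf{C}(s)$; this is a conservative refinement by definition, and each value is a finite meet of members of $\mathcal{U}$, so $\mathbf{B}$ is an $I$-distribution in $\mathcal{U}$. The reindexed family $(\mathbf{a}_{f(i)} : i \in I)$ is $I$-regular, since the finite intersection property and quasiregularity pass to subfamilies and the deciders of all the $\mathbf{a}_\alpha$ sit among the deciders of the $\mathbf{a}_{f(i)}$, preserving density. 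Since $\mathbf{B}(\{i\}) \leq \mathbf{a}_{f(i)}$ and $(\mathbf{B}(\{i\}))$ has the finite intersection property (being in $\mathcal{U}$), part (I) shows $(\mathbf{B}(\{i\}))$ is $I$-regular, which is exactly the regularity asserted here given $|I| \leq \lambda$. The main obstacle in the whole lemma is the density clause in (I) (reused in (III) and (IV)): the slightly non-obvious point is that quasiregularity concentrates every positive decider on only finitely many coordinates, and it is this finiteness that makes the iterated binary refinement terminate; the remaining verifications are routine bookkeeping with monotonicity and the finite intersection property.
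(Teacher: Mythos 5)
Your proof is correct and follows essentially the same route as the paper: part (I) is the workhorse, established via the observation that quasiregularity forces any decider's positive set to be finite, and (II) and (IV) are reduced to it with the same dominating constructions (the paper pads $I$ to $\lambda$ where you reindex by an injection, a cosmetic difference). The only divergence is that you verify (III) directly rather than invoking (I) on an auxiliary family $(\mathbf{a}_s : s \in [I]^{<\aleph_0})$ as the paper does; your explicit check of quasiregularity of $(\mathbf{B}(s))$ there is in fact slightly more careful than the paper's one-line reduction.
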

\begin{proof}
	(I): We need to show that the set of all $\mathbf{c} \in \mathcal{B}_+$ which decide each $\mathbf{b}_i$ is dense. 
	
	Given $\mathbf{c} \in \mathcal{B}_+$, choose $\mathbf{c}_0 \leq \mathbf{c}$ such that $\mathbf{c}_0$ decides each $\mathbf{a}_i$. Let $X = \{i \in I: \mathbf{c}_0 \leq \mathbf{a}_i\}$, a finite subset of $I$. Note that $\mathbf{c}_0 \leq \lnot \mathbf{b}_i$ for each $i \not \in X$. Choose $\mathbf{c}_1 \leq \mathbf{c}_0$ such that $\mathbf{c}_1$ decides $\mathbf{b}_i$ for each $i \in X$; then clearly $\mathbf{c}_1$ decides $\mathbf{b}_i$ for all $i \in I$.
	
	(II): immediate from (I).
	
	(III): Write $\mathbf{a}_\emptyset = 1$, and for each $s \in [\lambda]^{<\aleph_0}$, write $\mathbf{a}_s = \mathbf{B}(\{i\})$ for some choice of $i \in s$. Apply (1) to the sequences $(\mathbf{a}_s: s \in [\lambda]^{<\aleph_0})$ and $(\mathbf{B}(s): s \in [\lambda]^{<\aleph_0})$, noting that the former is $\lambda$-regular.
	
	(IV): We can suppose $I = \lambda$. Let $\mathbf{A}$ be a $\lambda$-distribution in $\mathcal{U}$, and let $(\mathbf{c}_\alpha: \alpha < \lambda)$ be a $\lambda$-regular sequence in $\mathcal{U}$. Put $\mathbf{B}(s) = \mathbf{A}(s) \wedge \bigwedge_{\alpha \in s} \mathbf{c}_\alpha$, and apply (II).
\end{proof}

%

We now obtain the following generalization of Keisler's Theorem~\ref{KeislerOrigSecond} part (A). Parente independently proves the equivalence of (C) and (D) in \cite{Parente2}, under the assumption that $\mathcal{B}$ is $(\lambda, 2)$-distributive.

\begin{theorem}\label{UltCharKeisler}
	Suppose $\mathcal{B}$ is a complete Boolean algebra, and $\mathcal{U}$ is a $\lambda$-regular ultrafilter on $\mathcal{B}$. Then the following are equivalent:
	
	\begin{itemize}
		\item[(A)] $\mathcal{U}$ $\lambda^+$-saturates $T$;
		\item[(B)] Every $\lambda$-regular $(\lambda, T)$-possibility in $\mathcal{U}$ has a multiplicative refinement in $\mathcal{U}$;
		\item[(C)] For some $M \models T$, $M^{\mathcal{B}}/\mathcal{U}$ is $\lambda^+$-saturated;
		\item[(D)] For every $M \models T$, $M^{\mathcal{B}}/\mathcal{U}$ is $\lambda^+$-saturated.
	\end{itemize}
\end{theorem}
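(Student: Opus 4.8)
The plan is to prove the cycle $(A)\Rightarrow(D)\Rightarrow(C)\Rightarrow(B)\Rightarrow(A)$, where $(D)\Rightarrow(C)$ is trivial and the equivalence of $(A)$ and $(B)$ is essentially a regularity reduction. For $(A)\Rightarrow(B)$ there is nothing to do, since $\lambda$-regular possibilities are in particular possibilities and $(A)$ is morality by Theorem~\ref{DistributionsWork2}. For $(B)\Rightarrow(A)$, given an arbitrary $(\lambda,T)$-possibility $\mathbf{A}$ in $\mathcal{U}$, I would use Lemma~\ref{StrongRegularityHereditary}(IV) to pass to a conservative, $\lambda$-regular refinement $\mathbf{B}\in\mathcal{U}$; since a conservative refinement of a conservative refinement of a {\L}o{\'s} map is again a conservative refinement of that {\L}o{\'s} map, $\mathbf{B}$ is still a possibility, now $\lambda$-regular, so $(B)$ supplies a multiplicative refinement of $\mathbf{B}$, and Lemma~\ref{ConsRefinements} transports it back to a multiplicative refinement of $\mathbf{A}$. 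By Theorem~\ref{DistributionsWork2} this is exactly $(A)$.

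The heart of the argument is a single gluing construction over a regular maximal antichain, used in both remaining implications. Given a $\lambda$-regular distribution $\mathbf{B}$, density of the deciding elements lets me fix a maximal antichain $\mathbf{D}$ each of whose members decides every $\mathbf{B}(\{i\})$; by quasiregularity (all countable meets vanish) each $\mathbf{d}\in\mathbf{D}$ lies below only finitely many $\mathbf{B}(\{i\})$, so that $s_{\mathbf{d}}=\{i:\mathbf{d}\le\mathbf{B}(\{i\})\}$ is finite. After further refining $\mathbf{D}$ so that each $\mathbf{d}$ decides $\mathbf{B}(t)$ for all $t\subseteq s_{\mathbf{d}}$, every $\mathbf{d}$ decides $\mathbf{B}(s)$ for all finite $s$, and $\mathbf{B}(\{i\})=\bigvee\{\mathbf{d}:i\in s_{\mathbf{d}}\}$. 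The finiteness of $s_{\mathbf{d}}$ is the whole point: it reduces an infinitary matching problem to finitely many first-order conditions on each piece of the antichain.

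For $(A)\Rightarrow(D)$: fix $M\models T$ and a partial $\mathcal{U}$-type $p$ over $M^{\mathcal{B}}/\mathcal{U}$ of size $\le\lambda$; lifting its parameters into the full structure $M^{\mathcal{B}}$ (Theorem~\ref{preLosThm}) produces a $(\lambda,T)$-{\L}o{\'s} map $\mathbf{A}\in\mathcal{U}$. By $(A)$ it has a multiplicative refinement in $\mathcal{U}$, which, intersecting with a regular sequence via Lemma~\ref{StrongRegularityHereditary}(IV), I may take to be multiplicative, $\lambda$-regular, and still refining $\mathbf{A}$; call it $\mathbf{B}$. Running the antichain construction, on each $\mathbf{d}$ we have $\mathbf{d}\le\mathbf{B}(s_{\mathbf{d}})\le\mathbf{A}(s_{\mathbf{d}})=\|\exists\overline{x}\bigwedge_{i\in s_{\mathbf{d}}}\phi_i\|_{M^{\mathcal{B}}}$, so fullness of $M^{\mathcal{B}}$ supplies a witness $\overline{\mathbf{b}}^{\mathbf{d}}$; mixing the $\overline{\mathbf{b}}^{\mathbf{d}}$ over $\mathbf{D}$ (immediate in the partition presentation of $M^{\mathcal{B}}$) yields one $\overline{\mathbf{b}}$ with $\|\phi_i(\overline{\mathbf{b}},\cdot)\|_{M^{\mathcal{B}}}\ge\bigvee\{\mathbf{d}:i\in s_{\mathbf{d}}\}=\mathbf{B}(\{i\})\in\mathcal{U}$, so $[\overline{\mathbf{b}}]_{\mathcal{U}}$ realizes $p$. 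For $(C)\Rightarrow(B)$ I reverse the flow: starting from a $\lambda$-regular possibility $\mathbf{A}\in\mathcal{U}$, Theorem~\ref{DistributionLocal2} gives for each $\mathbf{d}$ a finite configuration of parameters in some model of $T$ realizing exactly the pattern $\{t\subseteq s_{\mathbf{d}}:\mathbf{d}\le\mathbf{A}(t)\}$; this pattern is a single existential sentence, so by completeness of $T$ the configuration is found inside the fixed $M$ of $(C)$. Gluing these over $\mathbf{D}$ defines parameters $\overline{\mathbf{a}}_i\in M^{\mathcal{B}}$ whose {\L}o{\'s} map $\mathbf{A}^M$ satisfies $\mathbf{A}\le\mathbf{A}^M$ (so $\mathbf{A}^M\in\mathcal{U}$) and, crucially, $\mathbf{A}^M(s)\wedge\bigwedge_{i\in s}\mathbf{A}(\{i\})\le\mathbf{A}(s)$. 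Saturation of $M^{\mathcal{B}}/\mathcal{U}$ realizes the resulting $\mathcal{U}$-type; meeting the realization's multiplicative {\L}o{\'s} map with the singletons $\mathbf{A}(\{i\})$ then yields, via the displayed inequality, a multiplicative refinement of $\mathbf{A}$ in $\mathcal{U}$.

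The main obstacle is precisely this gluing step and its bookkeeping: verifying that mixing the witnesses (resp.\ the local configurations) over the regular antichain produces an element (resp.\ parameters) with the intended {\L}o{\'s} map, and checking the inequality $\mathbf{A}^M(s)\wedge\bigwedge_{i\in s}\mathbf{A}(\{i\})\le\mathbf{A}(s)$ that forces the extracted multiplicative refinement down below $\mathbf{A}$. Everything else is a repackaging of Theorems~\ref{DistributionsWork2}, \ref{DistributionLocal2} and \ref{LosToMult2} together with the hereditary regularity facts of Lemma~\ref{StrongRegularityHereditary}, and $\lambda$-regularity of $\mathcal{U}$ enters exactly where it makes each $s_{\mathbf{d}}$ finite.
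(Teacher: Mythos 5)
Your proposal is correct and follows essentially the same route as the paper: reduce (A)$\Leftrightarrow$(B) by regularizing a possibility via Lemma~\ref{StrongRegularityHereditary}(IV) and Lemma~\ref{ConsRefinements}, then prove the saturation direction by decomposing a multiplicative $\lambda$-regular refinement over a maximal antichain with finite supports and gluing local witnesses into $M^{\mathcal{B}}$, and prove (C)$\Rightarrow$(B) by transferring the finite configurations of Theorem~\ref{DistributionLocal2} into $M$ and invoking saturation of $M^{\mathcal{B}}/\mathcal{U}$. Your extra step of meeting the extracted multiplicative distribution with $\bigwedge_{i\in s}\mathbf{A}(\{i\})$ in (C)$\Rightarrow$(B) is a careful touch that handles the pieces of the antichain where the arbitrarily chosen parameters might accidentally satisfy the existential, a point the paper's proof passes over quickly.
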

\begin{proof}

	(A) if and only if (B) is by Lemmas~\ref{ConsRefinements} and \ref{StrongRegularityHereditary}(IV).
	
	(B) implies (D): suppose $M \models T$, and $p(\overline{x})$ is a partial $\mathcal{U}$-type over $M^{\mathcal{B}}$ of cardinality $\leq \lambda$. Let $\mathbf{A}$ be a conservative, $I$-regular refinement of $\mathbf{L}_{p(\overline{x})}$ in $\mathcal{U}$, as given by Lemma~\ref{StrongRegularityHereditary}(IV). Let $\mathbf{B}$ be a multiplicative refinement of $\mathbf{A}$ in $\mathcal{U}$; so $\mathbf{B}$ is $I$-regular, by Lemma~\ref{StrongRegularityHereditary}(II). Let $\mathbf{C}$ be a maximal antichain of $\mathcal{B}$ such that each $\mathbf{c} \in \mathbf{C}$ decides each $\mathbf{B}(\{\phi(\overline{x})\})$. For each $\mathbf{c} \in \mathbf{C}$, let $\Gamma_{\mathbf{c}}(\overline{x}) = \{\phi(\overline{x}) \in p(\overline{x}): \mathbf{c} \leq \mathbf{B}(\{\phi(\overline{x})\})\}$. Then $|\Gamma_{\mathbf{c}}(\overline{x})| < \aleph_0$, and so we can find $\overline{a}(\mathbf{c}) \in M^{|\overline{x}|}$ realizing $\Gamma_{\mathbf{c}}(\overline{x})$. Note then that $(\mathbf{C}, \overline{a})$ is an inverse partition of $\mathcal{B}$ by $M^{|\overline{x}|}$, and hence determines an element $\overline{\mathbf{a}}$ of $(M^{\mathcal{B}})^{|\overline{x}|}$. Then $[\overline{\mathbf{a}}]_{\mathcal{U}}$ realizes $p(\overline{x})$.
	
	(D) implies (C): trivial.
	
	(C) implies (B): Choose $M \models T$ such that $M^\mathcal{B}/\mathcal{U}$ is $\lambda^+$-saturated, and let $\mathbf{A}$ be a  $\lambda$-regular $(\lambda, T)$-possibility. Say $\mathbf{A}$ is a $(\lambda, T, \overline{\phi})$-possibility, where $\overline{\phi} = (\phi_\alpha(\overline{x}, \overline{y}_\alpha): \alpha < \lambda)$. Let $\mathbf{C}$ be a maximal antichain of $\mathcal{B}$ such that each $\mathbf{c} \in \mathbf{C}$ decides each $\mathbf{A}(s)$, for $s \in [\lambda]^{<\aleph_0}$. Given $\mathbf{c} \in \mathbf{C}$, let $\Delta_{\mathbf{c}} = \{\alpha < \lambda: \mathbf{c} \leq \mathbf{A}(\{\alpha\})\}$; so $|\Delta_{\mathbf{c}}| < \aleph_0$. Thus we can find $(\overline{b}_{\alpha} (\mathbf{c})): \alpha\in \Delta_{\mathbf{c}})$ in $M^{<\omega}$ such that for all $s \in [\Delta_{\mathbf{c}}]^{<\aleph_0}$, $M \models \exists \overline{x} \bigwedge_{\alpha \in s} \phi_\alpha(\overline{x}, \overline{b}_{\alpha} (\mathbf{c}))$ if and only if $\mathbf{c} \leq \mathbf{A}(s)$. Let $\overline{b}_\alpha(\mathbf{c})$ be arbitrary if $\alpha \not \in \Delta_{\mathbf{c}}$.
	
	For each $\alpha < \lambda$, note that $(\mathbf{C}, \overline{b}_\alpha)$ is an inverse partition of $\mathcal{B}$ by $M^{|\overline{y}_\alpha|}$, and thus determines an element  $\overline{\mathbf{b}}_\alpha$ of $(M^{\mathcal{B}})^{|\overline{y}_\alpha|}$. Let $p(\overline{x}) = \{\phi_\alpha(\overline{x}, \overline{\mathbf{b}}_\alpha): \alpha < \lambda\}$. Note that given $s \in [\lambda]^{<\aleph_0}$, $\|\exists \overline{x} \bigwedge_{\alpha \in s} \phi_\alpha(\overline{x}, \overline{\mathbf{b}}_\alpha)\|_{M^{\mathcal{B}}} = \bigvee\{\mathbf{c} \in \mathbf{C}: \mathbf{c} \leq \mathbf{A}(s)\} = \mathbf{A}(s) \in \mathcal{U}$. In particular $p(\overline{x})$ is a partial $\mathcal{U}$-type over $M^{\mathcal{B}}$. Let $\mathbf{\overline{a}} \in (M^{\mathcal{B}})^{|\overline{x}|}$ be such that $[\mathbf{\overline{a}}]_{\mathcal{U}}$ realizes $p(\overline{x})$.
	
	For each $s \in [\lambda]^{<\aleph_0}$, let $\mathbf{B}(s) = \|\bigwedge_{\alpha \in s} \phi_\alpha(\overline{\mathbf{a}}, \overline{\mathbf{b}}_\alpha)\|_{M^{\mathcal{B}}}$. So $\mathbf{B}(s)$ is a multiplicative distribution in $\mathcal{U}$, and clearly $\mathbf{B}(s)$ refines $\mathbf{A}(s)$. 
\end{proof}

\section{Good Ultrafilters}\label{SurveyGoodUltSec}

The following definition is natural, in view of Theorem~\ref{DistributionsWork}; it is originally due to Keisler \cite{Keisler} (in the case $\mathcal{B} = \mathcal{P}(\lambda)$), although we drop his requirement that $\mathcal{U}$ be $\aleph_1$-incomplete.

\begin{definition}
	The filter $\mathcal{D}$ on the complete Boolean algebra $\mathcal{B}$ is $\lambda^+$-good if every $\lambda$-distribution in $\mathcal{D}$ has a multiplicative refinement in $\mathcal{D}$.
\end{definition}

For example, the unique ultrafilter on $\{0, 1\}$ is $\lambda^+$-good for all $\lambda$. More generally, principal ultrafilters are $\lambda^+$-good for all $\lambda$. In \cite{InterpOrdersUlrich}, we prove the converse to this holds as well.

Also, note that if $\mathcal{U}$ is $\lambda$-complete then it is $\lambda^+$-good: given a $\lambda$-distribution $\mathbf{A}$ in $\mathcal{U}$, just let $\mathbf{B}(s)= \bigwedge\{\mathbf{A}(t): \mbox{max}(t) \leq \mbox{max}(s)\}$. Then $\mathbf{B}$ is a multiplicative refinement of $\mathbf{A}$ in $\mathcal{U}$. In particular, every ultrafilter is $\aleph_1$-good. Also note that if $\mathcal{U}$ is $\lambda^+$-good and $\kappa \leq \lambda$ then $\mathcal{U}$ is also $\kappa^+$-good.

The following theorem of Keisler \cite{Keisler} is the key property of $\lambda^+$-good ultrafilters.

\begin{theorem}\label{Goodness1}
	Suppose $\mathcal{U}$ is an ultrafilter on $\mathcal{B}$. Then the following are equivalent:
	
	\begin{itemize}
		\item[(A)] $\mathcal{U}$ is $\lambda^+$-good.
		\item[(B)] For every countable complete theory $T$, $\mathcal{U}$ $\lambda^+$-saturates $T$.
		\item[(C)] $\mathcal{U}$ $\lambda^+$-saturates $\mbox{Th}([\omega]^{<\aleph_0}, \subseteq)$.
	\end{itemize}
	Thus $\mbox{Th}([\omega]^{<\aleph_0}, \subseteq)$ is maximal in Keisler's order.
\end{theorem}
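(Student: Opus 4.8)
The plan is to prove the cycle (A)$\Rightarrow$(B)$\Rightarrow$(C)$\Rightarrow$(A), using Theorem~\ref{DistributionsWork} throughout as the bridge between $\lambda^+$-saturation and the existence of multiplicative refinements of {\L}o{\'s} maps. For (A)$\Rightarrow$(B): by Theorem~\ref{DistributionsWork}, $\mathcal{U}$ $\lambda^+$-saturates $T$ if and only if every $(\lambda, T)$-{\L}o{\'s} map in $\mathcal{U}$ has a multiplicative refinement in $\mathcal{U}$. Since every $(\lambda, T)$-{\L}o{\'s} map is in particular a $\lambda$-distribution, if it lies in $\mathcal{U}$ then $\lambda^+$-goodness of $\mathcal{U}$ immediately supplies the required multiplicative refinement. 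The implication (B)$\Rightarrow$(C) is trivial, as $T_* := \Th([\omega]^{<\aleph_0}, \subseteq)$ is a single countable complete theory.

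The substance is (C)$\Rightarrow$(A). Here I would start with an arbitrary $\lambda$-distribution $\mathbf{A}$ in $\mathcal{U}$ and aim to produce a multiplicative refinement in $\mathcal{U}$. The key claim is that $\mathbf{A}$ is \emph{itself} a $(\lambda, T_*)$-{\L}o{\'s} map. Granting this claim, since $\mathbf{A}$ is in $\mathcal{U}$ and (C) asserts that $\mathcal{U}$ $\lambda^+$-saturates $T_*$, the direction (A)$\Rightarrow$(B) of Theorem~\ref{DistributionsWork} hands back a multiplicative refinement of $\mathbf{A}$ in $\mathcal{U}$, which is exactly what $\lambda^+$-goodness demands.

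To prove the claim I would invoke the local criterion of Theorem~\ref{DistributionLocal1}, using the single fixed formula $\phi(x,y) := (x \neq \emptyset \wedge x \subseteq y)$, taking $\phi_i = \phi$ (with its own parameter variable $y_i$) for all $i < \lambda$. Thus I must check: for each finite $s \subseteq \lambda$ and each $\mathbf{c} \in \mathcal{B}_+$ deciding every $\mathbf{A}(t)$ with $t \subseteq s$, there is a model of $T_*$ and parameters $(a_i : i \in s)$ realizing the consistency pattern $\mathcal{F} := \{t \subseteq s : \mathbf{c} \leq \mathbf{A}(t)\}$. By monotonicity of $\mathbf{A}$ and $\mathbf{A}(\emptyset) = 1$, this family $\mathcal{F}$ is downward closed and contains $\emptyset$. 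Working inside $([\omega]^{<\aleph_0}, \subseteq)$ itself, I would enumerate the maximal elements $m_1, \dots, m_k$ of $\mathcal{F}$, introduce one witness point $j$ for each $m_j$, and set $a_i := \{\, j \leq k : i \in m_j \,\}$ for $i \in s$. Then $\bigcap_{i \in t} a_i = \{\, j : t \subseteq m_j \,\}$, so $\exists x\,(x \neq \emptyset \wedge \bigwedge_{i \in t} x \subseteq a_i)$ holds if and only if $\bigcap_{i \in t} a_i \neq \emptyset$ if and only if $t \subseteq m_j$ for some $j$ if and only if $t \in \mathcal{F}$ if and only if $\mathbf{c} \leq \mathbf{A}(t)$. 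This verifies condition (B) of Theorem~\ref{DistributionLocal1}, establishing the claim.

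The hard part is precisely this encoding step: recognizing that an arbitrary finite downward-closed pattern of consistency can be realized uniformly in $([\omega]^{<\aleph_0}, \subseteq)$ by a single formula; this witness-point construction is the combinatorial heart of the maximality of $T_*$, and everything else is bookkeeping through Theorems~\ref{DistributionsWork} and~\ref{DistributionLocal1}. For the final assertion of maximality, note that the equivalence (B)$\Leftrightarrow$(C) shows that for every countable complete $T$ and every ultrafilter $\mathcal{U}$, if $\mathcal{U}$ $\lambda^+$-saturates $T_*$ then $\mathcal{U}$ $\lambda^+$-saturates $T$. Applying this to $\lambda$-regular ultrafilters on $\powerset(\lambda)$ (or, more generally, invoking the reformulation in Theorem~\ref{UltrafilterPullbacks0p5}), we conclude $T \trianglelefteq_\lambda T_*$ for every $\lambda$, hence $T \trianglelefteq T_*$; thus $T_*$ is maximal in Keisler's order.
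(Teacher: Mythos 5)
Your proposal is correct and follows essentially the same route as the paper: the cycle (A)$\Rightarrow$(B)$\Rightarrow$(C)$\Rightarrow$(A), with the heart being the verification via Theorem~\ref{DistributionLocal1} that an arbitrary $\lambda$-distribution in $\mathcal{U}$ is a $(\lambda, T_*)$-{\L}o{\'s} map for the subset formula, realized by witness points in $([\omega]^{<\aleph_0},\subseteq)$. Your variant $\phi(x,y) := (x\neq\emptyset \wedge x\subseteq y)$ is in fact a slight improvement over the paper's bare $x \subseteq y_\alpha$, which as written is trivially witnessed by $\emptyset$; otherwise the two arguments coincide.
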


\begin{proof}
	(A) implies (B): by Theorem~\ref{DistributionsWork}.
	
	(B) implies (C): trivial.
	
	(C) implies (A): Let $T = \mbox{Th}([\omega]^{<\aleph_0}, \subseteq)$.
	
	Suppose $\mathcal{U}$ is an ultrafilter on $I$, and is not $\lambda^+$-good. That means that there is a distribution $\mathbf{A}$ in $\mathcal{U}$ which has no multiplicative refinement in $\mathcal{U}$. Let $\overline{\phi} = (\phi_\alpha(x, y_\alpha))$ be defined by: $\phi_\alpha(x, y_\alpha)$ is $x \subseteq y_\alpha$. Then by Theorem~\ref{DistributionsWork2} it suffices to show that $\mathbf{A}$ is a $(\lambda, T, \overline{\phi})$-{\L}o{\'s} map. 
	
	We apply Lemma~\ref{DistributionLocal1}, using Characterization (C). So let $s \subset \lambda$ be finite and let $\mathbf{c} \in \mathcal{B}$ decide $\mathbf{A}(t)$ for all $t \subseteq s$. Let $\mathcal{J} = \{t \subseteq s: \mathbf{c} \leq \mathbf{A}(\{t\})$. Let $(n_t: t \in \mathcal{J})$ be distinct elements of $\omega$, and for each $i \in s$, let $a_i = \{n_t: i \in t \in \mathcal{J}\}$. Then clearly for any $t \subseteq s$, $\bigcap_{i \in t} a_i$ is nonempty if and only if $t \in \mathcal{J}$, as desired. 
\end{proof}

\begin{example}
	Suppose $\mathcal{U}$ is a normal ultrafilter on $[\lambda]^{<\kappa}$ (i.e. it is $\kappa$-complete, and contains $\{A \in [\lambda]^{<\kappa}: \alpha \in A\}$ for each $\alpha < \lambda$, and for every choice function $f$ on $[\lambda]^{<\kappa} \backslash \{\emptyset\}$, there is $\alpha < \lambda$ with $\{A: f(A) = \alpha\} \in \mathcal{U}$). We claim that $\mathcal{U}$ is $\lambda^+$-good:
	
	Let $\mathbb{M}$ be the ultraproduct $\mathbb{V}^{[\lambda]^{<\kappa}}/\mathcal{U}$. Suppose $M$ is any $\lambda^+$-saturated structure in a countable language. Note that $\mathbf{j}(M)$ is naturally isomorphic to the ultrapower $M^{[\lambda]^{<\kappa}}/\mathcal{U}$. Now $\mathbb{M}$ believes $\mathbf{j}(M)$ is $\mathbf{j}(\lambda^+)$-saturated, and hence also $\lambda^+$-saturated; since $\mathbb{M}^\lambda \subseteq \mathbb{M}$, we conclude that $\mathbf{j}(M)$ really is $\lambda^+$-saturated.
\end{example}

We now discuss the existence of $\lambda^+$-good ultrafilters. In \cite{Keisler}, Keisler showed that if $2^\lambda = \lambda^+$, then there is a $\lambda$-regular ultrafilter on $\mathcal{P}(\lambda)$. In \cite{Kunen}, Kunen removed the hypothesis that $2^\lambda = \lambda^+$. With Theorem~\ref{UltrafilterPullbacks}, we will prove that if $\mathcal{B}$ is any complete Boolean algebra with an antichain of size $\lambda$, then $\mathcal{B}$ admits a $\lambda$-regular, $\lambda^+$-good ultrafilter. Conversely, in \cite{InterpOrdersUlrich} we show that if $\mathcal{U}$ is a nonprincipal, $\lambda^+$-good ultrafilter on $\mathcal{B}$, then $\mathcal{B}$ has an antichain of size $\lambda$. 

Constructions of ultrafilters on arbitrary Boolean algebras are somewhat complicated. In this section, we present a considerably simpler proof that there is a $\lambda$-regular, $\lambda^+$-good ultrafilter on $\mathcal{B}_{2^\lambda \aleph_0 \aleph_0}$ (defined below). We hope the reader will find this as a useful precursor to the more general construction in Section~\ref{ExistenceSection}.

\begin{definition}
	For sets $X, Y$ and a regular cardinal $\theta$, let $P_{XY\theta}$ be the forcing notion of all functions partial functions from $X$ to $Y$ of cardinality less than $\theta$, ordered by reverse inclusion. Let $\mathcal{B}_{XY \theta}$ be its Boolean algebra completion.
\end{definition}

\begin{theorem}\label{Goodness2}
	Suppose $\lambda$ is a cardinal. Then there is a $\lambda$-regular, $\lambda^+$-good ultrafilter $\mathcal{U}$ on $\mathcal{B}_{2^\lambda  \aleph_0 \aleph_0}$.
\end{theorem}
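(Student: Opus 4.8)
The plan is to build $\mathcal{U}$ by a transfinite recursion of length $2^\lambda$, producing an increasing chain of proper filters $(\mathcal{D}_\alpha : \alpha < 2^\lambda)$ whose union is $\mathcal{U}$. Since $\mathcal{B} = \mathcal{B}_{2^\lambda \aleph_0 \aleph_0}$ has an antichain of size $\lambda$ (fixing the value of a condition at a single coordinate already yields one), Theorem~\ref{ER1} supplies an independent family $\{\mathbf{C}_\xi : \xi < 2^\lambda\}$ of maximal antichains, each reindexed as $\mathbf{C}_\xi = \{\mathbf{c}_{\xi, t} : t \in [\lambda]^{<\aleph_0}\}$ (legitimate since $|\mathbf{C}_\xi| = \lambda = |[\lambda]^{<\aleph_0}|$); for this particular $\mathcal{B}$ such a family is moreover visible directly from the product structure of the forcing, which is the source of the promised simplification. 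Throughout I maintain a set $W_\alpha \subseteq 2^\lambda$ of unused indices with $|2^\lambda \setminus W_\alpha| \le |\alpha| + \lambda < 2^\lambda$, together with the invariant that $\{\mathbf{C}_\xi : \xi \in W_\alpha\}$ is independent modulo $\mathcal{D}_\alpha$, meaning $d \wedge \mathbf{c}_{\xi_1, t_1} \wedge \cdots \wedge \mathbf{c}_{\xi_n, t_n} > 0$ for every $d \in \mathcal{D}_\alpha$, distinct $\xi_1, \dots, \xi_n \in W_\alpha$, and arbitrary $t_1, \dots, t_n$. By K\"onig's theorem $\cf(2^\lambda) > \lambda$, so the bound $|\alpha| + \lambda < 2^\lambda$ persists at all stages and $2^\lambda$-many indices always remain available; also $|\mathcal{B}| = 2^\lambda$ since $\mathcal{B}$ is $\lambda^+$-c.c.

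At stage $0$ I seed regularity: reserving one index $\xi_{\mathrm{reg}}$, I set $\mathbf{a}_\gamma = \bigvee_{\gamma \in t} \mathbf{c}_{\xi_{\mathrm{reg}}, t}$ for $\gamma < \lambda$, exactly as in the proof of Theorem~\ref{RegSeqExistChar}, and let $\mathcal{D}_0$ be the filter they generate; this sequence is $\lambda$-regular, and since $\lambda$-regularity of a sequence is a property of $\mathcal{B}$ rather than of the filter, it is inherited by $\mathcal{U} \supseteq \mathcal{D}_0$. I fix enumerations with cofinal repetitions $(\mathbf{b}_\alpha)$ of $\mathcal{B}$ and $(\mathbf{A}_\alpha)$ of all $\lambda$-distributions in $\mathcal{B}$, both of length $2^\lambda$. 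At stage $\alpha$ I do two things. First, a deciding step: either adjoining $\mathbf{b}_\alpha$ to $\mathcal{D}_\alpha$ preserves independence of all of $W_\alpha$, or, letting $(\sigma, d)$ witness its failure (so $\mathbf{b}_\alpha \wedge d \wedge \bigwedge \sigma = 0$ for a finite choice $\sigma$), adjoining $\lnot \mathbf{b}_\alpha$ preserves independence of $W_\alpha \setminus \dom(\sigma)$; the point is that $d \wedge \bigwedge \sigma \le \lnot \mathbf{b}_\alpha$ furnishes a positive lower bound that rescues every remaining independence requirement. Thus $\mathbf{b}_\alpha$ is decided at the cost of finitely many indices. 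Second, a goodness step: if every value of $\mathbf{A}_\alpha$ already lies in $\mathcal{D}_\alpha$, I pick a fresh $\xi_0 \in W_\alpha$ and build a multiplicative refinement from the single antichain $\mathbf{C}_{\xi_0}$.

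The heart of the argument is this refinement, which I expect to be the main obstacle. Setting $\mathbf{B}(\{\beta\}) = \bigvee_{t \ni \beta}(\mathbf{c}_{\xi_0, t} \wedge \mathbf{A}_\alpha(t))$ and $\mathbf{B}(s) = \bigwedge_{\beta \in s}\mathbf{B}(\{\beta\})$, orthogonality of the antichain collapses the meet to $\mathbf{B}(s) = \bigvee_{t \supseteq s}(\mathbf{c}_{\xi_0, t} \wedge \mathbf{A}_\alpha(t))$, so $\mathbf{B}$ is multiplicative by construction and refines $\mathbf{A}_\alpha$ since each joinand is $\le \mathbf{A}_\alpha(t) \le \mathbf{A}_\alpha(s)$ by monotonicity. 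Because $\mathbf{B}(s) \ge \mathbf{c}_{\xi_0, s} \wedge \mathbf{A}_\alpha(s)$ and $\mathbf{A}_\alpha(s) \in \mathcal{D}_\alpha$, independence modulo $\mathcal{D}_\alpha$ shows each $\mathbf{B}(s)$ is $\mathcal{D}_\alpha$-positive, so I adjoin all the $\mathbf{B}(\{\beta\})$ to obtain $\mathcal{D}_{\alpha+1}$, in which $\mathbf{B}$ is a multiplicative refinement of $\mathbf{A}_\alpha$. The same lower bound $\mathbf{B}(s) \ge \mathbf{c}_{\xi_0, s} \wedge \mathbf{A}_\alpha(s)$, combined with independence of the full family $W_\alpha$ (which still contains $\xi_0$), shows $\{\mathbf{C}_\xi : \xi \in W_\alpha \setminus \{\xi_0\}\}$ stays independent modulo $\mathcal{D}_{\alpha+1}$, so only one further index is consumed. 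Unions at limits (independence and properness pass through routinely) complete the recursion.

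Finally I read off the three properties of $\mathcal{U} = \bigcup_\alpha \mathcal{D}_\alpha$: it is an ultrafilter because each $\mathbf{b} \in \mathcal{B}$ is decided at the stage where it is enumerated; it is $\lambda$-regular because $\mathcal{D}_0$ contains a $\lambda$-regular sequence; and it is $\lambda^+$-good because if $\mathbf{A}$ is any $\lambda$-distribution with all values in $\mathcal{U}$, then, as $\mathbf{A}$ has only $\lambda$ values and $\cf(2^\lambda) > \lambda$, all of them lie in some $\mathcal{D}_{\alpha^*}$, and a repetition of $\mathbf{A}$ past $\alpha^*$ triggered the goodness step, placing a multiplicative refinement of $\mathbf{A}$ into $\mathcal{U}$. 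The genuinely load-bearing points are the two lemmas sketched above — that deciding a Boolean value costs only finitely many antichains, and that the explicit $\mathbf{B}$ simultaneously refines $\mathbf{A}_\alpha$, is multiplicative, enters the filter, and preserves independence — supported by the cardinal arithmetic $|\mathcal{B}| = 2^\lambda$ and $\cf(2^\lambda) > \lambda$ that keeps fresh antichains available at every stage.
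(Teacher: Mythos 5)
Your argument is correct in all its essentials, but it takes a genuinely different and noticeably heavier route than the paper's. The paper exploits the product structure of the forcing: writing $\mathcal{B}_\alpha$ for the complete subalgebra generated by the conditions supported on the first $\alpha$ coordinates, it builds an increasing chain of genuine \emph{ultrafilters} $\mathcal{U}_\alpha$ on the $\mathcal{B}_\alpha$, whose union exhausts the whole algebra because each $\mathcal{B}_\alpha$ has the $\lambda^+$-c.c.\ and $\mbox{cof}(2^\lambda)>\lambda$. At stage $\alpha$ the antichain $\mathbf{c}_s=\{(\alpha,\rho(s))\}$ living on the fresh coordinate $\alpha$ is automatically compatible with every nonzero element of $\mathcal{B}_\alpha$, so the multiplicative refinement $\mathbf{B}(s)=\bigvee\{\mathbf{A}(t)\wedge\mathbf{c}_t: s\subseteq t\}$ can be adjoined with no bookkeeping whatsoever: there is no deciding step (one always has an ultrafilter on the current subalgebra) and no independent family to maintain. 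Your construction --- a chain of filters on the full algebra, an independent family from Theorem~\ref{ER1} kept independent modulo the filter, a Kunen-style deciding step costing finitely many antichains, and one antichain spent per distribution --- is essentially the general Existence Theorem of Section~\ref{ExistenceSection} specialized to $\mathcal{B}_1=\{0,1\}$ (compare the good pairs of Lemmas~\ref{ExistenceLemma4}--\ref{ExistenceLemma9}). What your approach buys is generality: it works verbatim for any complete $\mathcal{B}$ with an antichain of size $\lambda$ and $|\mathcal{B}|\le 2^\lambda$. What it gives up is exactly the simplification this section was written to advertise. The core computation --- that $\bigwedge_{\beta\in s}\mathbf{B}(\{\beta\})$ collapses to $\bigvee_{t\supseteq s}(\mathbf{c}_{\xi_0,t}\wedge\mathbf{A}(t))$ by orthogonality of the antichain --- is identical in both arguments, and your supporting claims (the deciding dichotomy, positivity of $\mathbf{B}(s)$ modulo $\mathcal{D}_\alpha$ via independence, preservation of independence of $W_\alpha\setminus\{\xi_0\}$, and the cardinal arithmetic) all check out.

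One caveat, which you inherit from the statement itself. As literally written, $\mathcal{B}_{2^\lambda\aleph_0\aleph_0}$ is the completion of the finite partial functions from $2^\lambda$ to $\omega$, which is c.c.c.\ by the $\Delta$-system lemma; for $\lambda>\aleph_0$ it therefore has no antichain of size $\lambda$ and, by Theorem~\ref{RegSeqExistChar}, admits no $\lambda$-regular ultrafilter at all. In particular your parenthetical that fixing the value at a single coordinate yields an antichain of size $\lambda$ only produces $\aleph_0$ pairwise incompatible conditions. The intended algebra is evidently $\mathcal{B}_{2^\lambda\,\lambda\,\aleph_0}$ --- the paper's own proof uses conditions $\{(\alpha,\rho(s))\}$ with $\rho$ a bijection onto $\lambda$, which requires the value set to have size $\lambda$ --- and under that corrected reading your antichain, your appeal to Theorem~\ref{ER1}, and the rest of your argument are all sound.
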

\begin{proof}
	For each $\alpha \leq 2^\lambda$, write $\mathcal{B}_\alpha = \mathcal{B}_{\alpha \aleph_0 \aleph_0}$, a complete subalgebra of $\mathcal{B}_{2^\lambda \aleph_0 \aleph_0}$. We construct an increasing chain of ultrafilters $\mathcal{U}_\alpha$ on $\mathcal{B}_\alpha$, by induction on $\alpha \leq 2^\lambda$.
	
	By Theorem~\ref{RegSeqExistChar}, we can find a $\lambda$-regular ultrafilter $\mathcal{U}_1$ on $\mathcal{B}_1$. Note that any ultrafilter on $\mathcal{B}_{2^\lambda}$ extending $\mathcal{U}_1$ will also be $\lambda$-regular; so now all we have to arrange is $\lambda^+$-goodness.
	
	Note that each $\mathcal{B}_\alpha$ has the $\lambda^+$-c.c. (by the $\Delta$-system lemma). In particular, every element of $\mathcal{B}_{2^\lambda}$ can be written as the join of $\lambda$-many elements from $P_{2^\lambda \aleph_0 \aleph_0}$. Hence $\mathcal{B}_{2^\lambda} = \bigcup_{\alpha < 2^\lambda} \mathcal{B}_\alpha$ (since $\mbox{cof}(2^\lambda) > \lambda$), and every $\lambda$-distribution in $\mathcal{B}_{2^\lambda}$ is in $\mathcal{B}_\alpha$ for some $\alpha < \lambda$. Also, there are only $|\mathcal{B}^\lambda| = 2^\lambda$-many $\lambda$-distributions in $\mathcal{B}$. 
	
	Thus, by a typical diagonalization argument, it suffices to verify the following:
	
	\vspace{1 mm}
	
	\noindent \textbf{Claim.} Suppose $\mathcal{U}_\alpha$ is an ultrafilter on $\mathcal{B}_\alpha$, and $\mathbf{A}$ is a $\lambda$-distribution in $\mathcal{U}_\alpha$. Then there is an ultrafilter $\mathcal{U}_{\alpha+1}$ on $\mathcal{B}_{\alpha+1}$ extending $\mathcal{U}_\alpha$, such that $\mathbf{A}$ has a multiplicative refinement in $\mathcal{U}_{\alpha+1}$.
	
	\vspace{1 mm}
	\noindent \emph{Proof.} Choose a bijection $\rho: [\lambda]^{<\aleph_0} \to \lambda$. For each $s \in [\lambda]^{<\aleph_0}$, let $\mathbf{c}_s = \{(\alpha, \rho(s))\} \in P_{\alpha+1\, \aleph_0 \aleph_0} \subseteq \mathcal{B}_{\alpha+1}$; so $(\mathbf{c}_s: s \in [\lambda]^{<\aleph_0})$ is an antichain, and whenever $\mathbf{a} \in \mathcal{B}_\alpha$ is nonzero, then $\mathbf{a} \wedge \mathbf{c}_s$ is nonzero for all $s$. For each $s \in [\lambda]^{<\aleph_0}$, define $\mathbf{B}(s) = \bigvee\{\mathbf{A}(t) \wedge \mathbf{c}_t: s \subseteq t \in [\lambda]^{<\aleph_0}$. 
	
	$\mathbf{B}$ is clearly a $\lambda$-distribution. 
	
	I claim that $\mathbf{B}$ is multiplicative; let $s \in [\lambda]^{<\aleph_0}$. Suppose towards a contradiction $\mathbf{e} := \left(\bigwedge_{\alpha \in s} \mathbf{B}_{\{\alpha\}}\right) \wedge (\lnot \mathbf{B}(s))$ were nonzero. Then we can find $\mathbf{e}' \leq \mathbf{e}$ nonzero, and $(s_\alpha: \alpha \in s)$ a sequence from $[\lambda]^{<\aleph_0}$, such that each $\alpha \in s_\alpha$, and such that $\mathbf{e}' \leq \mathbf{A}(s_\alpha) \wedge \mathbf{c}_{s_\alpha}$ for each $\alpha \in s$. Since $(\mathbf{c}_s: s \in [\lambda]^{<\aleph_0})$ is an antichain this implies $s_\alpha = s_{\alpha'} = t$ say, for all $\alpha, \alpha' \in s$. Visibly then $s\subseteq t$, and so $\mathbf{e}' \leq \mathbf{A}(t) \wedge \mathbf{c}_t$, contradicting that $\mathbf{e}' \wedge \mathbf{B}(s) = 0$. 
	
	I claim that $\mathcal{U}_\alpha \cup \{\mathbf{B}(s): s \in [\lambda]^{<\aleph_0}\}$ has the finite intersection property, which suffices.  So suppose towards a contradiction it did not; then we can find $s \in [\lambda]^{<\aleph_0}$ and $\mathbf{a} \in \mathcal{U}_\alpha$ such that $\mathbf{a} \wedge \mathbf{B}(s) = 0$. But then $\mathbf{a} \wedge \mathbf{A}(s) \wedge \mathbf{c}_s = 0$, so $\mathbf{a} \wedge \mathbf{A}(s) = 0$, but $\mathbf{A}(s) \in \mathcal{U}_\alpha$ so this is a contradiction.
\end{proof}

\section{Ultrafilter Pullbacks}\label{SurveyUltPullbackSec}
Suppose $\mathcal{B}_0, \mathcal{B}_1$ are complete Boolean algebras, and $\mathcal{U}_1$ is an ultrafilter on $\mathcal{B}_1$. When can we find $\mathcal{B}_0$, an ultrafilter on $\mathcal{U}_0$, which $\lambda^+$-saturates the same theories as $\mathcal{U}_1$? We will prove the  following:

\begin{theorem}\label{UltrafilterPullbacks}
	Suppose $\mathcal{B}_0, \mathcal{B}_1$ are complete Boolean algebras such that $\mbox{c.c.}(\mathcal{B}_0) > \lambda$ (i.e. $\mathcal{B}_0$ has an antichain of size $\lambda$) and $2^{<\mbox{\scriptsize{c.c.}}(\mathcal{B})} \leq 2^\lambda$. Suppose $\mathcal{U}_1$ is an ultrafilter on $\mathcal{B}_1$. Then there is a $\lambda$-regular ultrafilter $\mathcal{U}_0$ on $\mathcal{B}_0$ such that for all complete countable theories $T$, $\mathcal{U}_0$ $\lambda^+$-saturates $T$ if and only if $\mathcal{U}_1$ does.
\end{theorem}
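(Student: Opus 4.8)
The plan is to build $\mathcal{U}_0$ by a transfinite recursion of length $2^\lambda$ and then to read off the desired equivalence from the combinatorial characterizations already in hand. By Theorem~\ref{DistributionsWork2}, for each $i<2$ the assertion ``$\mathcal{U}_i$ $\lambda^+$-saturates $T$'' is the same as ``every $(\lambda,T)$-possibility in $\mathcal{U}_i$ has a multiplicative refinement in $\mathcal{U}_i$''; and since the $\mathcal{U}_0$ I construct will be $\lambda$-regular, Theorem~\ref{UltCharKeisler} lets me restrict attention on the $\mathcal{B}_0$-side to $\lambda$-regular possibilities. So it suffices to produce a $\lambda$-regular ultrafilter $\mathcal{U}_0$ on $\mathcal{B}_0$ whose behaviour on $\lambda$-regular possibilities faithfully mirrors the moral behaviour of $\mathcal{U}_1$: for each $\lambda$-regular $(\lambda,T,\overline{\phi})$-possibility, $\mathcal{U}_0$ admits a multiplicative refinement precisely when the corresponding possibility is refined by $\mathcal{U}_1$. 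The raw materials are a $\lambda$-regular sequence on $\mathcal{B}_0$ (Theorem~\ref{RegSeqExistChar}, available because $\mathcal{B}_0$ has an antichain of size $\lambda$), which I put into the initial filter to guarantee regularity of the result, and an independent family $\mathbb{C}=\{\mathbf{C}_\xi:\xi<2^\lambda\}$ of maximal antichains of $\mathcal{B}_0$ each of size $\lambda$ (Theorem~\ref{ER1}), which supplies $2^\lambda$ ``fresh'' antichains to spend during the recursion.

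The heart of the argument is a transfer dictionary relating $\lambda$-regular possibilities on $\mathcal{B}_0$ to the moral questions posed to $\mathcal{U}_1$. Using $\lambda$-regularity together with Lemma~\ref{StrongRegularityHereditary} and the local characterization of possibilities in Theorem~\ref{DistributionLocal2}, a $\lambda$-regular possibility on either algebra is determined, up to everything relevant to multiplicative refinement, by a maximal antichain deciding it over which only finitely many coordinates are active, together with the finite first-order consistency pattern attached to each block. On the $\mathcal{B}_1$-side the chain condition forces every such antichain to have size $<\mbox{c.c.}(\mathcal{B}_1)$, so the total number of distinct moral questions that can be asked of $\mathcal{U}_1$ is at most $2^{<\mbox{\scriptsize{c.c.}}(\mathcal{B}_1)}\le 2^\lambda$. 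Accordingly I will arrange that $\mathcal{U}_0$ is generated by a complete subalgebra $\mathcal{B}_0^*\le\mathcal{B}_0$ of size $\le 2^\lambda$ containing both the regular sequence and $\mathbb{C}$; then there are only $2^\lambda$ relevant possibilities on each side, matching the length of the recursion and the size of $\mathbb{C}$.

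At stage $\xi$ I maintain a $\lambda$-regular filter $\mathcal{D}_\xi$ on $\mathcal{B}_0$ extending the regular sequence and handle one task: either deciding a designated element of $\mathcal{B}_0^*$ (so that the union is an ultrafilter) or processing the $\xi$-th $\lambda$-regular possibility $\mathbf{A}$. For the latter I translate $\mathbf{A}$ through the dictionary to its associated possibility on $\mathcal{B}_1$; if $\mathcal{U}_1$ refines that possibility, I adjoin a multiplicative refinement of $\mathbf{A}$ to $\mathcal{D}_{\xi+1}$ manufactured from the fresh antichain $\mathbf{C}_\xi$ exactly as in the good-ultrafilter construction of Theorem~\ref{Goodness2}, via $\mathbf{B}(s)=\bigvee\{\mathbf{A}(t)\wedge\mathbf{c}_t : s\subseteq t\}$, tying the new values back to $\mathcal{U}_1$'s refinement so that membership is preserved; if $\mathcal{U}_1$ does not refine it, I adjoin nothing. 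Independence of $\mathbb{C}$ is precisely what keeps each adjunction compatible with the finite-intersection property, just as the antichain met every nonzero element of the earlier-stage algebra in Theorem~\ref{Goodness2}, and decreasing by the regular sequence keeps the filter $\lambda$-regular by Lemma~\ref{StrongRegularityHereditary}. The positive direction follows because every $\lambda$-regular $(\lambda,T)$-possibility in $\mathcal{U}_0$ is processed at some stage and does receive a refinement whenever $\mathcal{U}_1$ is moral for $T$; the negative direction follows by pulling an unrefined witness for $\mathcal{U}_1$ back across the dictionary to a $\lambda$-regular possibility in $\mathcal{U}_0$ which, by construction, is never refined.

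The main obstacle I expect is the transfer dictionary itself, namely proving that ``has a multiplicative refinement in $\mathcal{U}_i$'' is genuinely preserved in both directions. The delicate case is the negative direction once $\mathcal{B}_1$ is allowed antichains larger than $\lambda$ (so that the $\lambda^+$-c.c. of the classical Theorem~\ref{UltrafilterPullbacks0Second} is unavailable): one must verify that the $\lambda$-regular possibility produced on $\mathcal{B}_0$ truly admits no multiplicative refinement, which forces a careful check that the conditional adjunctions never accidentally refine it and that the value-correspondence between $\mathcal{B}_1$ and $\mathcal{B}_0$ is faithful on the relevant blocks. It is exactly here that the counting bound $2^{<\mbox{\scriptsize{c.c.}}(\mathcal{B}_1)}\le 2^\lambda$ is essential, guaranteeing that no relevant moral question is left unaddressed within the $2^\lambda$ available stages.
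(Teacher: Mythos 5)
There is a genuine gap, and it sits exactly where you flag it: the ``transfer dictionary.'' You assert that a $\lambda$-regular possibility is determined, ``up to everything relevant to multiplicative refinement,'' by a deciding maximal antichain together with the finite consistency pattern on each block. That is true for the property of \emph{being} a $(\lambda,T,\overline{\phi})$-possibility (Theorem~\ref{DistributionLocal2}), but it is false for the property of \emph{having a multiplicative refinement in a given ultrafilter}: the latter depends on how the actual Boolean values of the distribution sit inside $\mathcal{U}_1$, not just on the pattern data. Consequently there is no well-defined ``associated possibility on $\mathcal{B}_1$'' to hand to $\mathcal{U}_1$ at stage $\xi$, and your counting bound ($\le 2^{<\mbox{\scriptsize{c.c.}}(\mathcal{B}_1)}$ ``moral questions'') does not follow -- when $|\mathcal{B}_1|>2^\lambda$ there are $|\mathcal{B}_1|^\lambda>2^\lambda$ distributions in $\mathcal{U}_1$ and nothing in your argument collapses them. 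The second, independent gap is the negative direction: ``by construction, never refined'' is not the same as ``admits no multiplicative refinement in $\mathcal{U}_0$.'' An ultrafilter built by adjoining elements for other tasks can accidentally contain a multiplicative refinement of a distribution you deliberately skipped, and ruling this out is the entire content of the theorem, not a ``careful check'' to be deferred.

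The paper closes both gaps by routing through a surjective Boolean algebra homomorphism $\mathbf{j}:\mathcal{B}_0\to\mathcal{B}_1$ whose kernel filter $\mathcal{D}=\mathbf{j}^{-1}(1_{\mathcal{B}_1})$ is $\lambda$-regular and $\lambda^+$-good (Theorem~\ref{ExistenceTheoremFirst}, built by the good-pairs recursion of Section~\ref{ExistenceSection}), and setting $\mathcal{U}_0=\mathbf{j}^{-1}(\mathcal{U}_1)$. The homomorphism is what makes the dictionary real: $\mathbf{A}_0\mapsto\mathbf{j}\circ\mathbf{A}_0$ canonically transfers distributions, pushes any multiplicative refinement in $\mathcal{U}_0$ forward to one in $\mathcal{U}_1$ (giving the negative direction for free), and the $\lambda^+$-goodness of $\mathcal{D}$ is precisely what lets one correct a lifted refinement mod $\mathcal{D}$ to get a genuine multiplicative refinement back in $\mathcal{U}_0$ (Lemmas~\ref{SepOfVarsLemma1} and~\ref{SepOfVarsLemma2}). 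The case $|\mathcal{B}_1|>2^\lambda$ is handled separately by a downward L\"{o}wenheim--Skolem argument (Theorem~\ref{CompleteAlgebrasDLS}) replacing $\mathcal{B}_1$ by a complete subalgebra of size $\le 2^\lambda$ on which $\mathcal{U}_1$ saturates the same theories; this uses the hypothesis $2^{<\mbox{\scriptsize{c.c.}}(\mathcal{B}_1)}\le 2^\lambda$ in a closure argument, not in a counting of patterns. If you want to salvage your approach, you would in effect have to build $\mathbf{j}$ alongside $\mathcal{U}_0$ -- which is what the good-pairs construction does.
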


\begin{remark}
	Note that, for instance, if $\mathcal{B}_1$ has the $\lambda^+$-c.c., or if $|\mathcal{B}_1| \leq 2^\lambda$, then $\mathcal{B}_1$ satisfies the hypothesis of the theorem.
	
	 Possibly the chain condition hypothesis on $\mathcal{B}_1$ can be dropped entirely. As stated, the hypothesis on $\mathcal{B}_0$ is trivially sharp, because if $\mathcal{B}_0$ has the $\lambda$-c.c. then it does not admit any $\lambda$-regular ultrafilters. In \cite{InterpOrdersUlrich} we show that the hypothesis on $\mathcal{B}_0$ is sharp, even if we weaken ``$\lambda$-regular" to ``nonprincipal."
\end{remark}

In the next two sections, we will mildly generalize two theorems of Malliaris and Shelah, which we overview now, and which together handle the case of the preceding theorem when $|\mathcal{B}_1| \leq 2^\lambda$. In the final section, we show how to handle $\mathcal{B}_1$ of arbitrary size.

First of all, we want some definitions:

\begin{definition}
	Let $\mathcal{L}_{\mathbb{B}} = (0, 1, \leq, \wedge, \vee, \lnot)$ be the language of Boolean algebras (where the operations $\wedge, \vee$ are binary). 
	
	If $\mathcal{B}_0, \mathcal{B}_1$ are complete Boolean algebras, then a Boolean algebra homomorphism $\mathbf{j}: \mathcal{B}_0 \to \mathcal{B}_1$ is a homomorphism of $\mathcal{L}_{\mathbb{B}}$-structures; we do not require it preserve infinite meets and joins.
\end{definition}

In Section~\ref{SepOfVarsSection}, we prove the following; it is generalizes Theorem 5.11 (``Separation of Variables") of \cite{DividingLine} by Malliaris and Shelah (they require that $\mathcal{B}_0 = \mathcal{P}(\lambda)$ and that $\mathcal{D}$ is $\lambda$-regular).

\begin{theorem}\label{SeparationOfVariablesFirst}
	Suppose $\mathcal{B}_0, \mathcal{B}_1$ are complete Boolean algebras, and $\mathbf{j}: \mathcal{B}_0 \to \mathcal{B}_1$ is a surjective homomorphism. Write $\mathcal{D} = \mathbf{j}^{-1}(1_{\mathcal{B}_1})$; suppose $\mathcal{D}$ is $\lambda^+$-good. Suppose $\mathcal{U}_1$ is an ultrafilter on $\mathcal{B}_1$; let $\mathcal{U}_0 = \mathbf{j}^{-1}(\mathcal{U}_1)$, so $\mathcal{U}_0$ is an ultrafilter extending $\mathcal{D}$. Then for every complete countable theory $T$, $\mathcal{U}_0$ $\lambda^+$-saturates $T$ if and only if $\mathcal{U}_1$ $\lambda^+$-saturates $T$.
\end{theorem}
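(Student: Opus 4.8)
The plan is to route everything through the distribution criterion of Theorem~\ref{DistributionsWork}: $\mathcal{U}_i$ $\lambda^+$-saturates $T$ if and only if every $(\lambda,T)$-\L os map in $\mathcal{U}_i$ has a multiplicative refinement in $\mathcal{U}_i$. The basic bookkeeping is that $\mathbf{j}$ transports structure \emph{forward} cheaply. Note first that $\mathcal{D} = \mathbf{j}^{-1}(1) \subseteq \mathbf{j}^{-1}(\mathcal{U}_1) = \mathcal{U}_0$. Given a full $\mathcal{B}_0$-valued $\mathbf{M}$, let $\mathbf{j}_*\mathbf{M}$ be the $\mathcal{B}_1$-valued structure on the same domain with $\|\phi\|_{\mathbf{j}_*\mathbf{M}} = \mathbf{j}(\|\phi\|_{\mathbf{M}})$; because $\mathbf{M}$ is full each existential is witnessed by a single value, so although $\mathbf{j}$ need not preserve infinite joins, $\mathbf{j}_*\mathbf{M}$ still satisfies axiom 6 and is a full $\mathcal{B}_1$-valued model of $T$ (after identifying any $a,b$ with $\|a=b\|=1$). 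Hence, if $\mathbf{A}$ is a $(\lambda,T,\overline{\phi})$-\L os map in $\mathcal{U}_0$, witnessed by $\mathbf{M}$ and parameters $\overline{a}_i$, then $s \mapsto \mathbf{j}(\mathbf{A}(s))$ is a $(\lambda,T,\overline{\phi})$-\L os map in $\mathcal{U}_1$, witnessed by $\mathbf{j}_*\mathbf{M}$ and the same parameters; and $\mathbf{j}$ clearly sends multiplicative distributions to multiplicative distributions and refinements to refinements.

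For the direction ``$\mathcal{U}_1$ $\lambda^+$-saturates $T$ implies $\mathcal{U}_0$ does,'' let $\mathbf{A}$ be a $(\lambda,T)$-\L os map in $\mathcal{U}_0$. Then $\mathbf{j}\circ\mathbf{A}$ is a \L os map in $\mathcal{U}_1$, so it has a multiplicative refinement $\mathbf{B}_1$ in $\mathcal{U}_1$, say $\mathbf{B}_1(s) = \bigwedge_{i\in s}\mathbf{b}^1_i$ with $\mathbf{b}^1_i \leq \mathbf{j}(\mathbf{A}(\{i\}))$. I would lift each $\mathbf{b}^1_i$ to $\mathbf{b}_i \in \mathcal{B}_0$ with $\mathbf{j}(\mathbf{b}_i)=\mathbf{b}^1_i$ and, replacing $\mathbf{b}_i$ by $\mathbf{b}_i\wedge\mathbf{A}(\{i\})$, arrange $\mathbf{b}_i\leq\mathbf{A}(\{i\})$ and $\mathbf{b}_i\in\mathcal{U}_0$. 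The obstruction is that $\bigwedge_{i\in s}\mathbf{b}_i$ need only satisfy $\bigwedge_{i\in s}\mathbf{b}_i\leq\mathbf{A}(s)$ modulo $\mathcal{D}$; that is, the error distribution $\mathbf{E}(s) := \bigwedge_{t\subseteq s}\big(\lnot(\bigwedge_{i\in t}\mathbf{b}_i)\vee\mathbf{A}(t)\big)$ lies in $\mathcal{D}$ rather than being constantly $1$. Since $\mathbf{E}$ is a $\lambda$-distribution in $\mathcal{D}$ and $\mathcal{D}$ is $\lambda^+$-good, $\mathbf{E}$ has a multiplicative refinement $\bigwedge_{i\in s}\mathbf{c}_i$ in $\mathcal{D}$; setting $\mathbf{b}_i' = \mathbf{b}_i\wedge\mathbf{c}_i$ produces a multiplicative distribution with $\bigwedge_{i\in s}\mathbf{b}_i'\leq(\bigwedge_{i\in s}\mathbf{b}_i)\wedge\mathbf{A}(s)\leq\mathbf{A}(s)$, all of whose values lie in $\mathcal{U}_0$. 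This is the desired multiplicative refinement of $\mathbf{A}$ in $\mathcal{U}_0$.

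The reverse direction ``$\mathcal{U}_0$ $\lambda^+$-saturates $T$ implies $\mathcal{U}_1$ does'' is where the real work lies, since now I must transport a \L os map \emph{backward} along $\mathbf{j}$. Given a $(\lambda,T,\overline{\phi})$-\L os map $\mathbf{A}_1$ in $\mathcal{U}_1$, witnessed by $\mathbf{M}_1\models^{\mathcal{B}_1} T$ and parameters, I would first use downward L\"owenheim--Skolem (Theorem~\ref{AmalgForBValuedModels}(1)) to take $|\mathbf{M}_1|\leq\lambda$, and then lift $\mathbf{M}_1$ to a full $\mathcal{B}_0$-valued model $\mathbf{M}_0$ on the same domain with $\mathbf{j}(\|\phi\|_{\mathbf{M}_0})=\|\phi\|_{\mathbf{M}_1}$, i.e. $\mathbf{j}_*\mathbf{M}_0=\mathbf{M}_1$. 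With the same parameters this makes $\mathbf{A}_0(s) := \|\exists\overline{x}\bigwedge_{i\in s}\phi_i(\overline{x},\overline{a}_i)\|_{\mathbf{M}_0}$ a \L os map in $\mathcal{U}_0$ lifting $\mathbf{A}_1$; applying $\mathcal{U}_0$-saturation and then pushing the resulting multiplicative refinement forward by $\mathbf{j}$ finishes, using the forward transport of the first paragraph.

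The main obstacle is precisely this \emph{model lifting}. Abstractly it amounts to lifting the Lindenbaum valuation --- a Boolean homomorphism from the (at most $\lambda$-generated) algebra of $\mathcal{L}(\mathbf{M}_1)$-formulas modulo $T$ into $\mathcal{B}_1 = \mathcal{B}_0/\mathcal{D}$ --- along the quotient map back to $\mathcal{B}_0$, and this is exactly what $\lambda^+$-goodness of $\mathcal{D}$ is designed to supply. Concretely I would build $\|\cdot\|_{\mathbf{M}_0}$ by a Henkin-style recursion as in the proof of Corollary~\ref{Compactness}, at each stage choosing a value lifting the prescribed $\mathcal{B}_1$-value; the subtlety is that naively chosen lifts respect $\wedge$, $\lnot$, and the existential witnesses only modulo $\mathcal{D}$, so I expect to package the accumulated $\mathcal{D}$-errors into a single $\lambda$-distribution in $\mathcal{D}$ and use one multiplicative refinement from goodness to correct them all simultaneously, exactly as in the second paragraph. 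Fullness is essential here: it is what lets every $\|\exists x\,\phi\|$ be read off from a single witness value, so that backward transport never confronts an infinite join, which $\mathbf{j}$ would fail to preserve.
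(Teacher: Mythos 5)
Your forward machinery is sound: pushing a full $\mathcal{B}_0$-valued model forward to $\mathbf{j}_*\mathbf{M}$ works precisely because fullness turns axiom (6) into a \emph{maximum} (Remark~\ref{6FullComb}), which any homomorphism preserves, so $\mathbf{j}\circ\mathbf{A}$ is indeed a {\L}o{\'s} map in $\mathcal{U}_1$. Your second paragraph (lift the singleton values of the multiplicative refinement, measure the failure of $\bigwedge_{i\in s}\mathbf{b}_i\leq\mathbf{A}(s)$ by an error distribution in $\mathcal{D}$, and kill it with one application of $\lambda^+$-goodness) is exactly the paper's Lemma~\ref{SepOfVarsLemma1}. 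The key point that makes this correction work is that the constraints being repaired are \emph{monotone} inequalities, and intersecting with elements of $\mathcal{D}$ repairs them without changing anything mod $\mathcal{D}$.

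The gap is in the backward direction. You propose to lift the entire model, i.e.\ to produce a full $\mathcal{B}_0$-valued $\mathbf{M}_0$ with $\mathbf{j}_*\mathbf{M}_0=\mathbf{M}_1$ on the nose, and to dispose of the ``accumulated $\mathcal{D}$-errors'' with a single multiplicative refinement as in the previous step. But the constraints that must hold \emph{exactly} in $\mathbf{M}_0$ are not monotone: you need $G(\lnot\phi)=\lnot G(\phi)$, $G(\phi\wedge\psi)=G(\phi)\wedge G(\psi)$, value $1$ on the equality and function axioms, and $G(\phi(a_0))\geq G(\phi(b))$ for every $b$ simultaneously with $\lnot$ applied to these instances elsewhere. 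Shrinking a value by an element of $\mathcal{D}$ forces its negation to grow, which can break a domination constraint in which that negation occurs as a witness; goodness gives you no tool for this kind of two-sided repair, and it is not clear (and not needed) that such an exact lift exists at all. The theorem does not require it: it suffices to lift the single distribution $\mathbf{A}_1$ to some $\mathbf{A}_0$ with $\mathbf{j}\circ\mathbf{A}_0=\mathbf{A}_1$ --- here the only defect of a naive pointwise lift is the monotone condition $\mathbf{A}_0(s)\leq\mathbf{A}_0(t)$ for $t\subseteq s$, which holds mod $\mathcal{D}$ and is repaired by intersecting with a multiplicative refinement of the error distribution (Lemma~\ref{SepOfVarsLemma2}) --- and then to check that $\mathbf{A}_0$ is a $(\lambda,T,\overline{\phi})$-{\L}o{\'s} map using the finitary criterion of Theorem~\ref{DistributionLocal1}(B): a nonzero $\mathbf{c}_0\in\mathcal{B}_0$ deciding the relevant $\mathbf{A}_0(t)$'s is compared with its image in $\mathcal{B}_1$, so the consistency pattern is inherited from $\mathbf{A}_1$ finite piece by finite piece, with no backward transport of models whatsoever (Lemma~\ref{SepOfVarsLemma3}). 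Replacing your model-lifting step by this distribution-lifting plus the local criterion closes the argument.
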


In Section~\ref{ExistenceSection}, we show that the setup described in Theorem~\ref{SeparationOfVariablesFirst} can occur, and moreover we can arrange $\mathcal{D}$ to be $\lambda$-regular:

\begin{theorem}\label{ExistenceTheoremFirst}
	Suppose $\mathcal{B}_0, \mathcal{B}_1$ are complete Boolean algebras, such that $\mathcal{B}_0$ has an antichain of size $\lambda$, and $|\mathcal{B}_1| \leq 2^\lambda$. Then there is a surjective Boolean algebra homomorphism $\mathbf{j}: \mathcal{B}_0 \to \mathcal{B}_1$, such that $\mathbf{j}^{-1}(1_{\mathcal{B}_1})$ is $\lambda^+$-good and $\lambda$-regular.
\end{theorem}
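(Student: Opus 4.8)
The plan is to reduce the statement to the construction of a single filter and then build it from the independent antichains supplied by Theorem~\ref{ER1}. Giving a surjective Boolean-algebra homomorphism $\mathbf{j}:\mathcal{B}_0\to\mathcal{B}_1$ is the same as giving a filter $\mathcal{D}$ on $\mathcal{B}_0$ with $\mathbf{j}^{-1}(1)=\mathcal{D}$ and $\mathcal{B}_0/\mathcal{D}\cong\mathcal{B}_1$; so I want to produce such a $\mathcal{D}$ that is simultaneously $\lambda$-regular and $\lambda^+$-good. Since $\mathcal{B}_0$ has an antichain of size $\lambda$, Theorem~\ref{ER1} gives an independent family $\mathbb{C}=\{\mathbf{C}_\eta:\eta<2^\lambda\}$ of maximal antichains, each of size $\lambda$. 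I would earmark these antichains for three jobs: one antichain to manufacture a $\lambda$-regular sequence exactly as in the forward direction of Theorem~\ref{RegSeqExistChar} (so that $\lambda$-regularity of $\mathcal{D}$ is automatic); a block of $\le|\mathcal{B}_1|\le 2^\lambda$ of them, together with a standard Sikorski extension (valid because $\mathcal{B}_1$, being complete, is injective as a Boolean algebra), to get an honest surjection onto $\mathcal{B}_1$; and the remaining $2^\lambda$ antichains held in reserve to force goodness.

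The engine for goodness is the multiplicative-refinement formula already used in the Claim inside Theorem~\ref{Goodness2}. Given a $\lambda$-distribution $\mathbf{A}$ with values in $\mathcal{D}$ and a reserved antichain reindexed as $\{\mathbf{c}_t:t\in[\lambda]^{<\aleph_0}\}$, set $\mathbf{B}(s)=\bigvee\{\mathbf{A}(t)\wedge\mathbf{c}_t: s\subseteq t\}$. One checks directly, using that the $\mathbf{c}_t$ are pairwise disjoint, that $\mathbf{B}$ is multiplicative and refines $\mathbf{A}$, so the only thing to arrange is $\mathbf{B}(s)\in\mathcal{D}$, i.e.\ $\mathbf{j}(\lnot\mathbf{B}(s))=0$ for every finite $s$. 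Here I would use the \emph{independence} of the reserved antichain from the (finitely many relevant, hence $\le\lambda$-generated) values of $\mathbf{A}$: any already-committed $a\le\lnot\mathbf{B}(s)$ satisfies $a\wedge\mathbf{A}(s)=0$, and since $\mathbf{A}(s)\in\mathcal{D}$ this forces $\mathbf{j}(a)=0$. Thus the lower endpoint of the Sikorski interval for $\lnot\mathbf{B}(s)$ is $0$, so I may legally send it there; a short finite-intersection-property computation (paralleling the FIP check at the end of Theorem~\ref{Goodness2}) shows this can be done for all $s$ at once without collapsing the quotient.

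Assembling this, I would build $\mathbf{j}$ by transfinite recursion: fix at the outset the images of the regular-sequence antichain and of the surjection block, and then process $\lambda$-distributions in $\mathcal{D}$ one at a time, devoting a fresh reserved antichain $\mathbf{C}_\eta$ to each and committing $\mathbf{j}$ so that its multiplicative refinement lands in $\mathcal{D}$. At every stage I must check that the antichains not yet used remain independent modulo the growing kernel (so later steps still have the independence they need) and that $\mathbf{j}$ stays a well-defined homomorphism whose quotient is still exactly $\mathcal{B}_1$. By Theorem~\ref{Goodness2} this is routine when $|\mathcal{B}_0|=2^\lambda$.

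The hard part will be the cardinality bookkeeping when $|\mathcal{B}_0|>2^\lambda$. Then $\mathcal{D}$ may carry more than $2^\lambda$ distinct $\lambda$-distributions, while Theorem~\ref{ER1} only guarantees $2^\lambda$ reserved antichains, so a naive ``one fresh antichain per distribution'' recursion runs out of antichains; and I cannot simply run the whole construction on a subalgebra of size $2^\lambda$, since goodness of $\mathcal{D}$ is a condition about distributions valued anywhere in $\mathcal{B}_0$ and does not obviously descend from a subalgebra. The crux is therefore to show that $2^\lambda$ antichains genuinely suffice — either by reflecting the goodness requirement to a canonical family of at most $2^\lambda$ distributions, or by arguing that a reserved antichain, once it has been committed with the requisite $\bigvee_{s\subseteq t}\mathbf{c}_t\in\mathcal{D}$ property, can be \emph{reused} for every further distribution whose values are independent from it modulo $\mathcal{D}$. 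Making one of these reductions precise, while keeping the independence, surjectivity, and the isomorphism $\mathcal{B}_0/\mathcal{D}\cong\mathcal{B}_1$ all intact through the recursion, is where I expect the real work to lie.
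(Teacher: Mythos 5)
Your overall architecture matches the paper's: reduce to building a filter $\mathcal{D}$ with $\mathcal{B}_0/\mathcal{D}\cong\mathcal{B}_1$, use the independent family from Theorem~\ref{ER1}, get $\lambda$-regularity from one antichain as in Theorem~\ref{RegSeqExistChar}, and force goodness by consuming a fresh antichain $\{\mathbf{c}_t: t\in[\lambda]^{<\aleph_0}\}$ per distribution via $\mathbf{B}(s)=\bigvee\{\mathbf{A}(t)\wedge\mathbf{c}_t: s\subseteq t\}$ (this is exactly Lemma~\ref{ExistenceLemma9}). But the difficulty you explicitly defer --- ``the cardinality bookkeeping when $|\mathcal{B}_0|>2^\lambda$'' --- is the actual content of the proof, and neither of the two escape routes you sketch is carried out. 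The paper's resolution is the notion of a \emph{good pair} $(\mathcal{D},\mathbb{C})$: a filter that is \emph{maximal} subject to keeping the reserved antichains $\mathbb{C}$ independent over the coding constraints ($\Sigma_1\subseteq\mathcal{D}$, $\Sigma_+$ positive mod $\mathcal{D}$). Maximality yields a density statement (Lemma~\ref{ExistenceLemma6}: below any $\mathbf{a}$ positive mod $\mathcal{D}$ sits some $\mathbf{x}_f\wedge\mathbf{c}_\alpha$ mod $\mathcal{D}$), and from that a $\Delta$-system/chain-condition argument transferring antichains of $\mathcal{B}_0/\mathcal{D}$ into antichains of $\mathcal{B}_1$ gives Lemma~\ref{ExistenceLemma6p5}: $|\mathcal{B}_0/\mathcal{D}|\le 2^\lambda$ for every good pair. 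This single estimate is what caps the number of challenges (both elements to hit for surjectivity and distributions to refine) at $2^\lambda$ and makes the recursion close. Without maintaining maximality at every stage you have no such bound, and your recursion genuinely runs out of antichains; this is not routine bookkeeping but the crux you would need to supply.

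Two further points would fail as written. First, your plan to get surjectivity by fixing a block of antichains and invoking Sikorski's extension theorem at the end does not work: the extension enlarges the kernel filter $\mathbf{j}^{-1}(1_{\mathcal{B}_1})$ in an uncontrolled way, and $\lambda^+$-goodness is not preserved under passing to a larger filter (a distribution in the larger filter need not lie in the smaller one). The paper instead arranges, via the representatives $\mathbf{X}$ and Lemma~\ref{ExistenceLemma8}, that every $\mathbf{a}\in\mathcal{B}_0$ is already equivalent mod $\mathcal{D}$ to a designated coding element $\mathbf{c}_\alpha$, so $\mathbf{j}$ is fully determined by $\mathcal{D}$ and no extension step is needed; each such $\mathbf{a}$ costs fewer than $\mathrm{c.c.}(\mathcal{B}_1)$ reserved antichains, which is why $2^{<\mathrm{c.c.}(\mathcal{B}_1)}\le 2^\lambda$ enters. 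Second, in your goodness step the inference from $a\wedge\mathbf{A}(s)\wedge\mathbf{c}_s=0$ to $a\wedge\mathbf{A}(s)=0$ is not automatic for an arbitrary ``already-committed'' $a$: it holds precisely because the elements that must remain positive mod $\mathcal{D}$ have the canonical form $\mathbf{x}_f\wedge\sigma(\overline{\mathbf{c}})$ with $f$ a choice function on the still-reserved antichains, so that adjoining the coordinate $\mathbf{c}_s$ produces another element of the same form. That, too, is exactly what the good-pair formalism is designed to guarantee, and it is missing from your argument.
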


This theorem has a long history. The first iteration is due to Keisler \cite{Keisler}, who proved that if $2^\lambda = \lambda^+$ then $\mathcal{P}(\lambda)$ admits a $\lambda$-regular, $\lambda^+$-good ultrafilter; this is the special case of the Existence Theorem where $\mathcal{B}_1 = \{0, 1\}$. Next, Kunen \cite{Kunen} removed the hypothesis that $2^\lambda = \lambda^+$. Then Shelah listed a special case of Theorem~\ref{ExistenceTheoremFirst} as Exercise VI.3.11(2) of \cite{ShelahIso} (he required that $\mathcal{B}_0 = \mathcal{P}(\lambda)$ and that $\mathcal{B}_1$ also have the $\lambda^+$-c.c.). This exercise was then proved as Theorem 7.1 in the paper \cite{DividingLine} by Malliaris and Shelah, where the result is given the name ``Existence Theorem." We will follow their proof. Parente has since formulated Malliaris and Shelah's result in terms of $\lambda^+$-saturation of ultrapowers \cite{Parente1} \cite{Parente2} in the case when $\mathcal{U}$ is $\lambda$-regular. Namely, if $\mathcal{U}$ is $\lambda$-regular, he defines that $\mathcal{U}$ $\lambda^+$-saturates $T$ if and only if for every $\lambda^+$-saturated $M \models T$, the ultraproduct $M^\lambda/\mathcal{U}$ is $\lambda^+$-saturated (this is equivalent to our definition by Theorem~\ref{UltCharKeisler}), and he shows that $T_0 \trianglelefteq_{\lambda} T_1$ if and only if for every complete Boolean algebra $\mathcal{B}$ with the $\lambda^+$-c.c. and with $|\mathcal{B}| \leq 2^\lambda$, and for every $\lambda$-regular ultrafilter $\mathcal{U}$ on $\mathcal{B}$, if $\mathcal{U}$ $\lambda^+$-saturates $T_1$, then it also $\lambda^+$-saturates $T_0$.

Finally, in Section~\ref{DLSSatSec}, we will finish the proof of Theorem~\ref{UltrafilterPullbacks}. What remains is to handle the case when $|\mathcal{B}_1| > 2^\lambda$; this is accomplished in Theorem~\ref{CompleteAlgebrasDLS} with a downward L\"{o}wenheim-Skolem argument.

\section{Separation of Variables}\label{SepOfVarsSection}

We establish some very useful terminology.

\begin{definition}
	If $\mathcal{D}$ is a filter on the complete Boolean algebra $\mathcal{B}$, then by $\mathcal{B}/\mathcal{D}$ we mean the Boolean algebra whose elements are equivalence classes of $=_{\mathcal{D}}$, where $\mathbf{a} =_{\mathcal{D}} \mathbf{b}$ if $\lnot (\mathbf{a} \triangle \mathbf{b}) \in \mathcal{D}$. (Here $\mathbf{a} \triangle \mathbf{b} = (\mathbf{a} \wedge (\lnot \mathbf{b})) \lor ((\lnot \mathbf{a}) \wedge \mathbf{b})$ is symmetric difference.) Thus if $\mathbf{j}: \mathcal{B}_0 \to \mathcal{B}_1$ is a surjective homomorphism, then $\mathcal{B}_0 / \mathbf{j}^{-1}(1_{\mathcal{B}_1}) \cong \mathcal{B}_1$.
	
	If $\phi(x_i: i < n)$ is an $\mathcal{L}_{\mathbb{B}}$-formula, and $\mathcal{D}$ is a filter on $\mathcal{B}$, then say that $\phi(\mathbf{a}_i: i < n)$ holds mod $\mathcal{D}$ if $\phi([\mathbf{a}_0/\mathcal{D}], \ldots, [\mathbf{a}_{n-1}/\mathcal{D}])$ holds in $\mathbf{B}/\mathcal{D}$.
\end{definition}

In the rest of the subsection we prove Theorem~\ref{SeparationOfVariablesFirst}, following \cite{DividingLine}. We fix the setup: suppose $\mathcal{B}_0, \mathcal{B}_1$ are complete Boolean algebras, and $\mathbf{j}: \mathcal{B}_0 \to \mathcal{B}_1$ is a surjective homomorphism. Write $\mathcal{D} = \mathbf{j}^{-1}(1_{\mathcal{B}_1})$; suppose $\mathcal{D}$ is $\lambda^+$-good. Suppose $\mathcal{U}_1$ is an ultrafilter on $\mathcal{B}_1$; let $\mathcal{U}_0 = \mathbf{j}^{-1}(\mathcal{U}_1)$, so $\mathcal{U}_0$ is an ultrafilter extending $\mathcal{D}$. We wish to show that for every complete, countable $T$,  $\mathcal{U}_0$ $\lambda^+$-saturates $T$ if and only if $\mathcal{U}_1$ does.

We will prove three lemmas, after which we will be done via Theorem~\ref{DistributionsWork}.
\begin{lemma}\label{SepOfVarsLemma3}
	Suppose $\mathbf{A}_0$ is a $\lambda$-distribution in $\mathcal{B}_0$; write $\mathbf{A}_1 = \mathbf{j} \circ \mathbf{A}_0$, a $\lambda$-distribution in $\mathcal{B}_1$. Suppose $T$ is a complete countable theory. Then $\mathbf{A}_0$ is a $(\lambda, T)$-{\L}o{\'s} map if and only if $\mathbf{A}_1$ is a $(\lambda, T)$-{\L}o{\'s} map.
\end{lemma}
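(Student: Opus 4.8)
The plan is to run both directions through the local, finitary characterization of {\L}o{\'s} maps in Theorem~\ref{DistributionLocal1}, which reduces ``$\mathbf{A}$ is an $(I,T,\overline{\phi})$-{\L}o{\'s} map'' to the realizability, in ordinary models of $T$, of every finite consistency pattern $\{t \subseteq s : \mathbf{c} \leq \mathbf{A}(t)\}$ cut out by an element $\mathbf{c}$ deciding the relevant values. The two facts I will lean on about $\mathbf{j}$ are that it is a Boolean homomorphism (so it preserves $\wedge$, $\lnot$ and $\leq$, and $\mathbf{A}_1(t) = \mathbf{j}(\mathbf{A}_0(t))$ throughout) and that it is surjective with $\mathcal{D} = \mathbf{j}^{-1}(1_{\mathcal{B}_1})$. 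For a pattern $\mathcal{J}$ on a finite $s$ I write $\mathbf{a}_*(\mathcal{J}) = \bigwedge_{t \in \mathcal{J}} \mathbf{A}_0(t) \wedge \bigwedge_{t \notin \mathcal{J}} \lnot \mathbf{A}_0(t)$, so that $\mathcal{J}$ is the pattern of some deciding $\mathbf{c}$ in $\mathcal{B}_0$ iff $\mathbf{a}_*(\mathcal{J}) \neq 0$, and is a pattern in $\mathcal{B}_1$ iff $\mathbf{j}(\mathbf{a}_*(\mathcal{J})) \neq 0$.

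For the forward direction ($\mathbf{A}_0$ a {\L}o{\'s} map $\Rightarrow \mathbf{A}_1$ a {\L}o{\'s} map) the cleanest route is to push the witnessing model through $\mathbf{j}$: if $\mathbf{M} \models^{\mathcal{B}_0} T$ with parameters $\overline{a}_i$ witnesses that $\mathbf{A}_0$ is an $(I,T,\overline{\phi})$-{\L}o{\'s} map, I set $\|\cdot\|_{\mathbf{N}} = \mathbf{j} \circ \|\cdot\|_{\mathbf{M}}$ on the same underlying set. Conditions (3)--(5) survive because $\mathbf{j}$ preserves the finite operations and sends $1$ to $1$; condition (6) together with fullness survives because, by fullness of $\mathbf{M}$, each supremum $\|\exists x\, \psi(x)\|_{\mathbf{M}}$ is attained at some witness $b$, and since $\mathbf{j}$ is order preserving the image $\mathbf{j}(\|\psi(b)\|_{\mathbf{M}})$ is still the maximum of the $\mathbf{j}(\|\psi(b')\|_{\mathbf{M}})$; finally I restore condition (7) by quotienting by $a \equiv b \iff \mathbf{j}(\|a = b\|_{\mathbf{M}}) = 1$. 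Then $\mathbf{A}_1(s) = \mathbf{j}(\mathbf{A}_0(s)) = \|\exists \overline{x} \bigwedge_{i \in s} \phi_i(\overline{x}, \overline{a}_i)\|_{\mathbf{N}}$, so $\mathbf{A}_1$ is a {\L}o{\'s} map witnessed by the same $\overline{\phi}$. (Equivalently, via Theorem~\ref{DistributionLocal1}: any $\mathbf{d} \in (\mathcal{B}_1)_+$ deciding $\mathbf{A}_1$ lifts, by surjectivity, to a $\mathbf{c} \in (\mathcal{B}_0)_+$ deciding $\mathbf{A}_0$ with the same pattern, and that pattern is realizable.) Notably this direction does not use goodness of $\mathcal{D}$.

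For the reverse direction I again use Theorem~\ref{DistributionLocal1}: fixing a realization $\mathbf{N}, \overline{b}_i, \overline{\phi}$ of $\mathbf{A}_1$, I must realize each pattern $\mathcal{J}$ of $\mathbf{A}_0$ in an ordinary model of $T$. When $\mathbf{j}(\mathbf{a}_*(\mathcal{J})) \neq 0$ this is immediate: the pattern is already a pattern in $\mathcal{B}_1$, so I choose an ultrafilter $\mathcal{V}$ on $\mathcal{B}_1$ with $\mathbf{j}(\mathbf{a}_*(\mathcal{J})) \in \mathcal{V}$ and read the pattern off the specialization $\mathbf{N}/\mathcal{V}$ with parameters $[\overline{b}_i]_{\mathcal{V}}$, using that $\mathbf{A}_1(t) = \|\exists \overline{x} \bigwedge_{i\in t}\phi_i\|_{\mathbf{N}} \in \mathcal{V}$ exactly for $t \in \mathcal{J}$. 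The genuinely hard case is a ``gap'' pattern with $\mathbf{a}_*(\mathcal{J}) \neq 0$ in $\mathcal{B}_0$ but $\mathbf{j}(\mathbf{a}_*(\mathcal{J})) = 0$: such a pattern is invisible to $\mathcal{B}_1$ and so receives no help from $\mathbf{N}$, yet it must still be realizable if $\mathbf{A}_0$ is to be a {\L}o{\'s} map.

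I expect these gap patterns to be exactly where the hypothesis that $\mathcal{D}$ is $\lambda^+$-good must be deployed, and where the main obstacle lies. Since goodness is a statement about $\lambda$-distributions, it cannot be fed into the finitary clause (B) of Theorem~\ref{DistributionLocal1} one pattern at a time; instead I would realize $\mathbf{A}_0$ by a single global construction, lifting $\mathbf{N}$ to a full $\mathcal{B}_0$-valued model whose existential values agree with $\mathbf{A}_0$ modulo $\mathcal{D}$, and then using a multiplicative refinement in $\mathcal{D}$ (supplied by $\lambda^+$-goodness) to correct the values on the fibre of $\mathbf{j}$ so that they equal $\mathbf{A}_0(s)$ on the nose. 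Showing that the discrepancy between the lift and $\mathbf{A}_0$ organizes into a distribution in $\mathcal{D}$ to which goodness applies, and that the resulting multiplicative correction actually produces the required witnesses, is the step I expect to require the most care; it is also the reason the statement belongs to the separation-of-variables development rather than following from the soft pushforward argument alone.
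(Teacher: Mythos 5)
Your forward direction ($\mathbf{A}_0 \Rightarrow \mathbf{A}_1$) is correct and complete: pushing $\|\cdot\|_{\mathbf{M}}$ through $\mathbf{j}$ preserves clauses (3)--(5) because $\mathbf{j}$ is a homomorphism, and preserves clause (6) together with fullness because, by Remark~\ref{6FullComb}, that conjunction is the finitary statement that each existential value is an \emph{attained maximum}, which any order-preserving map respects; quotienting restores (7). This is a slightly different, and arguably more robust, route than the paper's, which runs the forward direction through Theorem~\ref{DistributionLocal1} by lifting a deciding $\mathbf{c}_1 \in (\mathcal{B}_1)_+$ with pattern $\mathcal{J}$ to the element $\mathbf{c}_0 = \bigwedge_{t \in \mathcal{J}}\mathbf{A}_0(t) \wedge \bigwedge_{t \notin \mathcal{J}}\lnot\mathbf{A}_0(t)$, which is nonzero since $\mathbf{j}(\mathbf{c}_0) \geq \mathbf{c}_1$. (Your parenthetical ``lifts by surjectivity'' is the one loose point here: an arbitrary $\mathbf{j}$-preimage of a deciding element need not itself decide $\mathbf{A}_0$; one needs the displayed element, i.e.\ your $\mathbf{a}_*(\mathcal{J})$.) Note also that the paper fixes $\overline{\phi}$ at the outset and proves the equivalence one formula-sequence at a time, which is how the ``for some $\overline{\phi}$'' statement is meant to reduce.

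The backward direction of your proposal is not a proof: you isolate the gap patterns (those $\mathcal{J}$ with $\mathbf{a}_*(\mathcal{J}) \neq 0$ but $\mathbf{j}(\mathbf{a}_*(\mathcal{J})) = 0$) and sketch a goodness-based strategy without carrying it out, so as written there is a genuine gap. You should know, however, that you have put your finger on exactly the step the paper elides. The paper's backward direction takes an arbitrary nonzero $\mathbf{c}_0$ deciding the relevant $\mathbf{A}_0(t)$, sets $\mathbf{c}_1 = \mathbf{j}(\mathbf{c}_0)$, and asserts that $\mathbf{c}_1$ is nonzero --- but that assertion fails precisely for your gap patterns, i.e.\ whenever $\lnot\mathbf{c}_0 \in \mathcal{D}$, and such nonzero $\mathbf{c}_0$ exist as soon as $\mathcal{D} \neq \{1_{\mathcal{B}_0}\}$. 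Moreover the paper explicitly claims that $\lambda^+$-goodness of $\mathcal{D}$ is not needed for this lemma, so it offers no mechanism for realizing patterns that are invisible in $\mathcal{B}_1$. The honest summary is therefore: your direction from $\mathbf{A}_0$ to $\mathbf{A}_1$ is done; your direction from $\mathbf{A}_1$ to $\mathbf{A}_0$ is incomplete, and the difficulty you identify is real --- the paper's own argument does not dispose of it either. Completing this direction (whether via goodness of $\mathcal{D}$ as you propose, or by weakening what the proof of Theorem~\ref{SeparationOfVariablesFirst} actually requires of the lifted distribution) is where the remaining work lies.
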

\begin{proof}
	In this lemma, we actually won't need the hypothesis that $\mathcal{D}$ is $\lambda^+$-good.
	
	Let $\overline{\phi} = (\phi_\alpha(x, y_\alpha): \alpha < \lambda)$ be a sequence of formulas. It suffices to show that $\mathbf{A}_0$ is a $(\lambda, T, \overline{\phi})$-{\L}o{\'s} map if and only if $\mathbf{A}_1$ is a $(\lambda, T, \overline{\phi})$-{\L}o{\'s} map.
	
	We apply Theorem~\ref{DistributionLocal1}, using characterization (C).
	
	First suppose $\mathbf{A}_0$ is a $(\lambda, T, \overline{\phi})$-map. Let $s \in [\lambda]^{<\aleph_0}$, and let $\mathbf{c}_1 \in \mathcal{B}_1$ be nonzero, such that $\mathbf{c}_1$ decides $\mathbf{A}_1(t)$ for all $t \subseteq s$. Let $\mathcal{J} = \{t \subseteq s: \mathbf{c}_1 \leq \mathbf{A}_1(t)\}$, and let $\mathbf{c}_0 = \bigwedge_{t \in \mathcal{J}} \mathbf{A}_0(t) \, \wedge \, \bigwedge_{t \in \mathcal{P}(s) \backslash \mathcal{J}} \lnot \mathbf{A}_0(t)$. Then since $\mathbf{j}$ is a homomorphism and each $\mathbf{j}(\mathbf{A}_0(t)) = \mathbf{A}_1(t)$ we get that $\mathbf{j}(\mathbf{c}_0) \geq \mathbf{c}_1$. In particular $\mathbf{c}_0$ is nonzero. Since $\mathbf{A}_0$ is a $(\lambda, T, \overline{\phi})$-{\L}o{\'s} map, we can find $M \models T$ and $(a_\alpha: \alpha \in s)$ from $M$, such that for each $t \subseteq s$, $\exists x \bigwedge_{\alpha \in t} \phi_\alpha(x, a_\alpha)$ is consistent if and only if $t \in \mathcal{J}$, as desired.
	
	Next, suppose $\mathbf{A}_1$ is a $(\lambda, T, \overline{\phi})$-{\L}o{\'s} map. Let $s \in [\lambda]^{<\aleph_0}$, and let $\mathbf{c}_0 \in \mathcal{B}_0$ be nonzero, such that $\mathbf{c}_0$ decides $\mathbf{A}_0(t)$ for all $t \subseteq s$. Let $\mathbf{c}_1 = \mathbf{j}(\mathbf{c}_0)$. Then $\mathbf{c}_1$ is nonzero, and $\mathbf{c}_1$ decides each $\mathbf{A}_1(t)$, in the same way that $\mathbf{c}_0$ decides $\mathbf{A}_0(t)$. Thus, since $\mathbf{A}_1$ is a $(\lambda, T, \overline{\phi})$-{\L}o{\'s} map, we conclude as above.
\end{proof}

The next two lemmas say that distributions in $\mathcal{U}_0$ correspond to distributions in $\mathcal{U}_1$, in a way that preserves the existence of multiplicative refinements.

\begin{lemma}\label{SepOfVarsLemma1}
	Suppose $\mathbf{A}_0$ is a $\lambda$-distribution in $\mathcal{U}_0$, so $\mathbf{A}_1 := \mathbf{j} \circ \mathbf{A}_0$ is a $\lambda$-distribution in $\mathcal{U}_1$. Then: $\mathbf{A}_0$ has a multiplicative refinement in $\mathcal{U}_0$ if and only if $\mathbf{A}_1$ has a multiplicative refinement in $\mathcal{U}_1$.
\end{lemma}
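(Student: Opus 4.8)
The plan is to prove the two directions separately; the forward direction is essentially formal, while the backward direction is where the $\lambda^+$-goodness of $\mathcal{D}$ does the work.

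For the forward direction, suppose $\mathbf{B}_0$ is a multiplicative refinement of $\mathbf{A}_0$ in $\mathcal{U}_0$. I would simply push it through $\mathbf{j}$: set $\mathbf{B}_1 = \mathbf{j} \circ \mathbf{B}_0$. Since $\mathbf{j}$ is a Boolean algebra homomorphism it preserves finite meets, so $\mathbf{B}_1(s) = \mathbf{j}(\bigwedge_{i \in s} \mathbf{B}_0(\{i\})) = \bigwedge_{i \in s} \mathbf{B}_1(\{i\})$, i.e. $\mathbf{B}_1$ is multiplicative; it preserves $\leq$, so $\mathbf{B}_1(s) \leq \mathbf{j}(\mathbf{A}_0(s)) = \mathbf{A}_1(s)$; and since $\mathbf{B}_0(s) \in \mathcal{U}_0 = \mathbf{j}^{-1}(\mathcal{U}_1)$ we get $\mathbf{B}_1(s) \in \mathcal{U}_1$ (in particular $\mathbf{B}_1(s) \neq 0$). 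Thus $\mathbf{B}_1$ is the desired multiplicative refinement, and this direction uses neither goodness nor surjectivity.

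For the backward direction, suppose $\mathbf{B}_1$ is a multiplicative refinement of $\mathbf{A}_1$ in $\mathcal{U}_1$. The first step is to lift the singleton values: using surjectivity of $\mathbf{j}$, for each $i < \lambda$ choose $\mathbf{b}_i \in \mathcal{B}_0$ with $\mathbf{j}(\mathbf{b}_i) = \mathbf{B}_1(\{i\})$, and after replacing $\mathbf{b}_i$ by $\mathbf{b}_i \wedge \mathbf{A}_0(\{i\})$ (which leaves the image unchanged, since $\mathbf{B}_1(\{i\}) \leq \mathbf{A}_1(\{i\})$) I may assume $\mathbf{b}_i \leq \mathbf{A}_0(\{i\})$. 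The naive candidate is the multiplicative $\mathbf{B}_0^0(s) = \bigwedge_{i \in s} \mathbf{b}_i$; applying $\mathbf{j}$ gives $\mathbf{j}(\mathbf{B}_0^0(s)) = \mathbf{B}_1(s) \in \mathcal{U}_1$, so each $\mathbf{B}_0^0(s) \in \mathcal{U}_0$. The trouble, which is the main obstacle, is that $\mathbf{B}_0^0$ need not refine $\mathbf{A}_0$: I control only the singletons, and $\mathbf{A}_0$ is not assumed multiplicative, so $\mathbf{B}_0^0(s) \leq \mathbf{A}_0(s)$ can fail for larger $s$.

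To repair this I would record the error inside $\mathcal{D}$ and straighten it out using goodness. For each $s$ put $\mathbf{d}(s) = \lnot \mathbf{B}_0^0(s) \lor \mathbf{A}_0(s)$; since $\mathbf{j}(\mathbf{B}_0^0(s)) = \mathbf{B}_1(s) \leq \mathbf{A}_1(s) = \mathbf{j}(\mathbf{A}_0(s))$ we get $\mathbf{j}(\mathbf{d}(s)) = 1$, i.e. $\mathbf{d}(s) \in \mathcal{D}$. Then $\mathbf{D}(s) := \bigwedge_{t \subseteq s} \mathbf{d}(t)$ is a $\lambda$-distribution in $\mathcal{D}$ (monotone, with $\mathbf{D}(\emptyset) = 1$). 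Invoking $\lambda^+$-goodness of $\mathcal{D}$ yields a multiplicative refinement $\mathbf{E}$ of $\mathbf{D}$ in $\mathcal{D}$, and I define the corrected candidate by $\mathbf{C}_0(\{i\}) = \mathbf{b}_i \wedge \mathbf{E}(\{i\})$, extended multiplicatively. The verification is then a short computation: $\mathbf{C}_0(s) = \mathbf{B}_0^0(s) \wedge \mathbf{E}(s)$ with $\mathbf{E}(s) \leq \mathbf{d}(s)$, so $\mathbf{C}_0(s) \leq \mathbf{B}_0^0(s) \wedge (\lnot \mathbf{B}_0^0(s) \lor \mathbf{A}_0(s)) = \mathbf{B}_0^0(s) \wedge \mathbf{A}_0(s) \leq \mathbf{A}_0(s)$; hence $\mathbf{C}_0$ refines $\mathbf{A}_0$ and is multiplicative by construction, and $\mathbf{C}_0(s)$ lies in $\mathcal{U}_0$ because $\mathbf{B}_0^0(s) \in \mathcal{U}_0$ and $\mathbf{E}(s) \in \mathcal{D} \subseteq \mathcal{U}_0$ (in particular $\mathbf{C}_0(s) \neq 0$). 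The decisive idea is exactly that the discrepancy between $\mathbf{B}_0^0$ and $\mathbf{A}_0$ is confined to $\mathcal{D}$, where goodness lets us correct it multiplicatively; this is the single place the hypothesis on $\mathcal{D}$ enters.
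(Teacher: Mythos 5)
Your proposal is correct and follows essentially the same route as the paper: push forward through $\mathbf{j}$ for one direction, and for the other lift the singleton values, form the multiplicative candidate, record the failure of refinement as a $\lambda$-distribution in $\mathcal{D}$ (your $\mathbf{D}(s)=\bigwedge_{t\subseteq s}(\lnot\mathbf{B}_0^0(t)\vee\mathbf{A}_0(t))$ is exactly the paper's correction distribution), and intersect with a multiplicative refinement obtained from $\lambda^+$-goodness. The only cosmetic difference is your preliminary replacement of $\mathbf{b}_i$ by $\mathbf{b}_i\wedge\mathbf{A}_0(\{i\})$, which is harmless but not needed.
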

\begin{proof}
	Clearly, if $\mathbf{B}_0$ is a multiplicative refinement of $\mathbf{A}_0$ in $\mathcal{U}_0$, then $\mathbf{j} \circ \mathbf{B}_0$ is a multiplicative refinement of $\mathbf{A}_1$ in $\mathcal{U}_1$. 
	
	So suppose $\mathbf{B}_1$ is a multiplicative refinement of $\mathbf{A}_1$ in $\mathcal{U}_1$. For each $\alpha < \lambda$, choose $\mathbf{B}'_0(\{\alpha\}) \in \mathcal{B}_0$ such that $\mathbf{j}(\mathbf{B}'_0(\{\alpha\})) = \mathbf{B}_1(\{\alpha\})$; for each $s \in [\lambda]^{<\aleph_0}$, define $\mathbf{B}'_0(s) = \bigwedge_{\alpha \in s} \mathbf{B}'_0(\{\alpha\})$. Note that $\mathbf{B}'_0$ is a multiplicative $\lambda$-distribution in $\mathcal{U}_0$, and for all $s \in [\lambda]^{<\aleph_0}$, $ \mathbf{B}'_0(s) \leq \mathbf{A}_0(s)$ mod $\mathcal{D}$. We will use the $\lambda^+$-goodness of $\mathcal{D}$ to tweak $\mathbf{B}'_0$ into a multiplicative refinement of $\mathbf{A}_0$.
	
	For each $s \in [\lambda]^{<\aleph_0}$, let $\mathbf{C}(s) = \bigwedge_{t \subseteq s} ( \mathbf{A}_0(t) \vee \lnot \mathbf{B}'_0(t))$, so $\mathbf{C}$ is a $\lambda$-distribution in $\mathcal{D}$, and we can think of it as measuring where $\mathbf{B}'_0$ is a refinement of $\mathbf{A}_0$. Let $\mathbf{D}$ be a multiplicative refinement of $\mathbf{C}$ in $\mathcal{D}$, possible by $\lambda^+$-goodness of $\mathcal{D}$. For each $s \in [\lambda]^{<\aleph_0}$, define $\mathbf{B}_0(s) = \mathbf{B}'_0(s) \wedge \mathbf{D}(s)$. Clearly, $\mathbf{B}_0$ is a multiplicative $\lambda$-distribution in $\mathcal{U}_0$, so it suffices to note that $\mathbf{B}_0$ refines $\mathbf{A}_0$. Let $s$ be given; then $\mathbf{B}_0(s) \leq \mathbf{B}'_0(s) \wedge \mathbf{C}(s) \leq \mathbf{B}'_0(s) \wedge \mathbf{A}_0(s)$, by definition of $\mathbf{C}$.
\end{proof}

\begin{lemma}\label{SepOfVarsLemma2}
	Suppose $\mathbf{A}_1$ is a $\lambda$-distribution in $\mathcal{U}_1$. Then there is a $\lambda$-distribution $\mathbf{A}_0$ in $\mathcal{U}_0$ such that $\mathbf{j} \circ \mathbf{A}_0 = \mathbf{A}_1$.
\end{lemma}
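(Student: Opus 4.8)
The plan is to build $\mathbf{A}_0$ by lifting the values of $\mathbf{A}_1$ through $\mathbf{j}$ one subset at a time, using surjectivity, and then repairing monotonicity with a finite-meet trick. First I would note the cheap half: since $\mathbf{j}$ is surjective, for each $s \in [\lambda]^{<\aleph_0}$ I can choose some $\mathbf{a}_s \in \mathcal{B}_0$ with $\mathbf{j}(\mathbf{a}_s) = \mathbf{A}_1(s)$, and I will stipulate $\mathbf{a}_\emptyset = 1$ (a legal choice, as $\mathbf{j}(1) = 1 = \mathbf{A}_1(\emptyset)$). The naive definition $\mathbf{A}_0(s) := \mathbf{a}_s$ already has $\mathbf{j}(\mathbf{A}_0(s)) = \mathbf{A}_1(s) \in \mathcal{U}_1$, hence $\mathbf{A}_0(s) \in \mathbf{j}^{-1}(\mathcal{U}_1) = \mathcal{U}_0$, so membership in $\mathcal{U}_0$ (and in particular nonzeroness) comes for free for \emph{any} choice of preimages. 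The only distribution axiom this naive choice can violate is monotonicity.

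To fix monotonicity, I would instead define
\[
\mathbf{A}_0(s) = \bigwedge_{s' \subseteq s} \mathbf{a}_{s'},
\]
a \emph{finite} meet (there are only $2^{|s|}$ terms), so no appeal to infinite operations in $\mathcal{B}_0$ is needed and the fact that $\mathbf{j}$ need not preserve infinite joins is irrelevant. Monotonicity is then immediate: if $s \subseteq t$ then $\{s' : s' \subseteq s\} \subseteq \{s' : s' \subseteq t\}$, so $\mathbf{A}_0(t) \leq \mathbf{A}_0(s)$. The empty-set normalization holds since $\mathbf{A}_0(\emptyset) = \mathbf{a}_\emptyset = 1$.

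The one point requiring an argument — and the step I'd flag as the crux, though it is mild — is checking that the meet-over-subsets does not disturb the fiber, i.e. that $\mathbf{j} \circ \mathbf{A}_0 = \mathbf{A}_1$ still holds. Here I would use that $\mathbf{j}$ is a Boolean-algebra homomorphism (preserving finite meets) together with the hypothesis that $\mathbf{A}_1$ is itself a distribution, hence decreasing: $\mathbf{j}(\mathbf{A}_0(s)) = \bigwedge_{s' \subseteq s} \mathbf{j}(\mathbf{a}_{s'}) = \bigwedge_{s' \subseteq s} \mathbf{A}_1(s') = \mathbf{A}_1(s)$, the last equality because every $s' \subseteq s$ gives $\mathbf{A}_1(s') \geq \mathbf{A}_1(s)$ while $s' = s$ attains the minimum. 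This re-establishes $\mathbf{A}_0(s) \in \mathcal{U}_0$ and $\mathbf{A}_0(s) \neq 0$ for the new definition as well, so $\mathbf{A}_0$ is a $\lambda$-distribution in $\mathcal{U}_0$ lifting $\mathbf{A}_1$, as required. Notably this lemma needs neither the $\lambda^+$-goodness of $\mathcal{D}$ nor any chain condition; it is a purely formal consequence of surjectivity and $\mathcal{U}_0 = \mathbf{j}^{-1}(\mathcal{U}_1)$.
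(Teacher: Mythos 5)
Your proof is correct, but it takes a genuinely different and simpler route than the paper. The paper also starts by choosing pointwise preimages $\mathbf{A}'_0(s)$ with $\mathbf{j}(\mathbf{A}'_0(s)) = \mathbf{A}_1(s)$, but then repairs monotonicity (which a priori holds only mod $\mathcal{D}$) by invoking the $\lambda^+$-goodness of $\mathcal{D}$: it forms the auxiliary distribution $\mathbf{C}(s) = \bigwedge_{t \subseteq t' \subseteq s} \bigl(\mathbf{A}'_0(t) \vee \lnot \mathbf{A}'_0(t')\bigr)$ in $\mathcal{D}$, takes a multiplicative refinement $\mathbf{D}$ of it in $\mathcal{D}$, and sets $\mathbf{A}_0(s) = \mathbf{A}'_0(s) \wedge \mathbf{D}(s)$. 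Your finite-meet trick $\mathbf{A}_0(s) = \bigwedge_{s' \subseteq s} \mathbf{a}_{s'}$ achieves the same end purely formally: monotonicity is automatic, and since $\mathbf{j}$ preserves finite meets and $\mathbf{A}_1$ is decreasing, the fiber condition $\mathbf{j} \circ \mathbf{A}_0 = \mathbf{A}_1$ is preserved. All the checks you list (nonzeroness via $\mathcal{U}_0 = \mathbf{j}^{-1}(\mathcal{U}_1)$, normalization at $\emptyset$) go through. Your closing observation is also correct and is the real payoff of your argument: this lemma needs only surjectivity of $\mathbf{j}$ and the definition of $\mathcal{U}_0$, not the $\lambda^+$-goodness of $\mathcal{D}$; in the paper's development, goodness is genuinely used only in Lemma~\ref{SepOfVarsLemma1}, where multiplicative refinements must be pulled back, and the paper's use of it here is an avoidable convenience.
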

\begin{proof}
	Choose $\mathbf{A}'_0: [\lambda]^{<\aleph_0} \to \mathcal{B}_0$ such that $\mathbf{j} \circ \mathbf{A}'_0 = \mathbf{A}_1$ (possible since $\mathbf{j}$ is surjective). We have that each $\mathbf{A}'_0(s) \in \mathcal{U}_0$, and further, for all $t \subseteq s \in [\lambda]^{<\aleph_0}$, $\mathbf{A}_0(s) \leq \mathbf{A}_0(t) \mbox{ mod } \mathcal{D}$. We can further suppose $\mathbf{A}'_0(\emptyset) = 1$. Similarly to the preceding lemma, we will use $\lambda^+$-goodness of $\mathcal{D}$ to tweak $\mathbf{A}'_0$ into an actual $\lambda$-distribution $\mathbf{A}_0$. 
	
	Define $\mathbf{C}(s) = \bigwedge_{t \subseteq t' \subseteq s} \mathbf{A}'_0(t) \vee (\lnot \mathbf{A}'_0(t'))$; then $\mathbf{C}$ is a $\lambda$-distribution in $\mathcal{D}$, and we can think of it as measuring where $\mathbf{A}'_0$ is a distribution. Let $\mathbf{D}$ be a multiplicative refinement of $\mathbf{C}$ in $\mathcal{D}$, and define $\mathbf{A}_0(s) = \mathbf{A}'_0(s) \wedge \mathbf{D}(s)$ for each $s \in [\lambda]^{<\aleph_0}$. Clearly $\mathbf{j} \circ \mathbf{A}_0 = \mathbf{A}_1$ still, and so $\mathbf{A}_0(s) \in \mathcal{U}_0$ for all $s$. But also, since $\mathbf{D}$ refines $\mathbf{C}$, so it suffices to check that $\mathbf{A}_0$ is a distribution. Clearly $\mathbf{A}_0(\emptyset) = 1$, so it suffices to show that for al $t \subseteq s \in [\lambda]^{<\aleph_0}$, $\mathbf{A}_0(s) \leq \mathbf{A}_0(t)$. But we have that $\mathbf{A}_0(s) \leq \mathbf{A}'_0(s) \wedge \mathbf{C}(s) \leq \mathbf{A}'_0(t)$, by definition of $\mathbf{C}$.
\end{proof}

This completes the  proof of Theorem \ref{SeparationOfVariablesFirst}.

%

\section{The Existence Theorem}\label{ExistenceSection}

In this subsection, we prove Theorem~\ref{ExistenceTheoremFirst}. So let $\mathcal{B}_0$ be a complete Boolean algebra with an antichain of size $\lambda$, and let $\mathcal{B}_1$ be a complete Boolean algebra with $|\mathcal{B}_1| \leq 2^\lambda$. We aim to find a surjective homomorphism $\mathbf{j}: \mathcal{B}_0 \to \mathcal{B}_1$, such that $\mathbf{j}^{-1}(1_{\mathcal{B}_1})$ is $\lambda^+$-good and $\lambda$-regular. Write $\mu = \mbox{c.c.}(\mathcal{B}_1)$, so $\mu$ is regular. Note that $2^{<\mu} \leq |\mathcal{B}_1|$, since if $\mathbf{C}$ is an antichain of $\mathcal{B}$, then $2^{|\mathbf{C}|} \leq |\mathcal{B}_1|$.

By Theorem~\ref{ER1}, we can choose an independent family $\mathbb{C}_*$ of $2^\lambda$-many maximal antichains of $\mathcal{B}_0$, such that each $\mathbf{C} \in \mathbb{C}_*$ has size $\lambda$. Write $\mathbb{C}_*$ as the disjoint union of $\mathbb{C}_{-1}$ and $\mathbb{C}'$, each having size $2^\lambda$. Enumerate $\mathbb{C}' = (\mathbf{C}_\alpha: \alpha < 2^\lambda)$, and choose $\mathbf{c}_\alpha \in \mathbf{C}_\alpha$ for each $\alpha$. Also, let $(\mathbf{c}'_\alpha: \alpha < 2^\lambda)$ be an enumeration of $\mathcal{B}_1$, with repetitions if necessary.

Let $\mathbf{j}$ denote our eventual surjective homomorphism $\mathbf{j}: \mathcal{B}_0 \to \mathcal{B}_1$. We plan to arrange that $\mathbf{j}(\mathbf{c}_\alpha) = \mathbf{c}'_\alpha$ for all $\alpha < 2^\lambda$. Subject to this condition, note that $\mathbf{j}$ is determined by the filter $\mathcal{D} := \mathbf{j}^{-1}(1_{\mathcal{B}_1})$, since for all $\mathbf{a} \in \mathcal{B}_0$, there is some $\alpha < 2^\lambda$ with $\mathbf{a} = \mathbf{c}_\alpha$ mod $\mathcal{D}$. We can conveniently describe what properties we need $\mathcal{D}$ to have:

\begin{lemma}\label{ExistenceLemma1}
	Suppose  $\mathcal{D}$ is a filter on $\mathcal{B}_0$ satisfying the following:
	
	\begin{itemize}
		\item[(A)] For every $\mathbf{a} \in \mathcal{B}_0$, there is some $\alpha < 2^\lambda$ such that $\mathbf{a} = \mathbf{c}_\alpha$ mod $\mathcal{D}$;
		\item[(B)] For every $\mathcal{L}_{\mathbb{B}}$-term $\tau(x_i: i < n)$ and for every $\alpha_0 < \ldots < \alpha_{n-1}$ from $2^\lambda$, $\tau(\mathbf{c}'_{\alpha_i}: i < n) = 1_{\mathcal{B}_1}$ if and only if $\tau(\mathbf{c}_{\alpha_i}: i < n) \in \mathcal{D}$.
	\end{itemize}
	
	Define $\mathbf{j}: \mathcal{B}_0 \to \mathcal{B}_1$ via $\mathbf{j}(\mathbf{a}) = \mathbf{c}'_\alpha$, for some or every $\alpha < 2^\lambda$ such that $\mathbf{a} = \mathbf{c}_\alpha$ mod $\mathcal{D}$. Then $\mathbf{j}$ is a well-defined surjective homomorphism from $\mathcal{B}_0$ to $\mathcal{B}_1$, with $\mathbf{j}^{-1}(1_{\mathcal{B}_1}) = \mathcal{D}$.
\end{lemma}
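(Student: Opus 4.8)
The plan is to extract from condition (B) a stronger statement that applies to \emph{arbitrary} tuples of indices, and then read off every required property of $\mathbf{j}$ from it together with (A). Concretely, I would first prove the following generalization of (B): for every $\mathcal{L}_{\mathbb{B}}$-term $\tau(x_0, \dots, x_{n-1})$ and all ordinals $\alpha_0, \dots, \alpha_{n-1} < 2^\lambda$ (neither assumed distinct nor increasing), $\tau(\mathbf{c}'_{\alpha_i}: i<n) = 1_{\mathcal{B}_1}$ if and only if $\tau(\mathbf{c}_{\alpha_i}: i<n) \in \mathcal{D}$. This is obtained from (B) purely by bookkeeping: let $\beta_0 < \dots < \beta_{m-1}$ enumerate the distinct values among the $\alpha_i$, fix $\rho: n \to m$ with $\alpha_i = \beta_{\rho(i)}$, and set $\sigma(y_0, \dots, y_{m-1}) = \tau(y_{\rho(0)}, \dots, y_{\rho(n-1)})$, which is again an $\mathcal{L}_{\mathbb{B}}$-term; applying (B) to $\sigma$ and the increasing tuple $\beta_0 < \dots < \beta_{m-1}$ gives the claim, since substituting variables commutes with evaluation in both $\mathcal{B}_0$ and $\mathcal{B}_1$.

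With this in hand, well-definedness is immediate. If $\mathbf{a} =_{\mathcal{D}} \mathbf{c}_\alpha$ and $\mathbf{a} =_{\mathcal{D}} \mathbf{c}_\beta$, then $\mathbf{c}_\alpha =_{\mathcal{D}} \mathbf{c}_\beta$, i.e. $\lnot(\mathbf{c}_\alpha \triangle \mathbf{c}_\beta) \in \mathcal{D}$; applying the generalized (B) to the term $\lnot(x_0 \triangle x_1)$ yields $\lnot(\mathbf{c}'_\alpha \triangle \mathbf{c}'_\beta) = 1_{\mathcal{B}_1}$, i.e. $\mathbf{c}'_\alpha = \mathbf{c}'_\beta$. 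Thus the value $\mathbf{c}'_\alpha$ does not depend on the choice of $\alpha$, so ``some'' and ``every'' agree, and $\mathbf{j}$ is total by (A). Surjectivity is then free: given $\mathbf{b} \in \mathcal{B}_1$, pick $\alpha$ with $\mathbf{c}'_\alpha = \mathbf{b}$ (the $\mathbf{c}'_\alpha$ enumerate $\mathcal{B}_1$), and note $\mathbf{j}(\mathbf{c}_\alpha) = \mathbf{c}'_\alpha = \mathbf{b}$.

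To see $\mathbf{j}$ is a homomorphism I would treat each operation uniformly. For a binary $\ast \in \{\wedge, \vee\}$, given $\mathbf{a}, \mathbf{b}$ choose via (A) ordinals $\alpha, \beta, \gamma$ with $\mathbf{a} =_{\mathcal{D}} \mathbf{c}_\alpha$, $\mathbf{b} =_{\mathcal{D}} \mathbf{c}_\beta$, and $\mathbf{a} \ast \mathbf{b} =_{\mathcal{D}} \mathbf{c}_\gamma$. Since $=_{\mathcal{D}}$ is a congruence (this is exactly what makes $\mathcal{B}_0/\mathcal{D}$ a Boolean algebra), $\mathbf{c}_\gamma =_{\mathcal{D}} \mathbf{c}_\alpha \ast \mathbf{c}_\beta$, i.e. $\lnot(\mathbf{c}_\gamma \triangle (\mathbf{c}_\alpha \ast \mathbf{c}_\beta)) \in \mathcal{D}$; the generalized (B) applied to $\lnot(x_0 \triangle (x_1 \ast x_2))$ and $\gamma, \alpha, \beta$ gives $\mathbf{c}'_\gamma = \mathbf{c}'_\alpha \ast \mathbf{c}'_\beta$, i.e. $\mathbf{j}(\mathbf{a} \ast \mathbf{b}) = \mathbf{j}(\mathbf{a}) \ast \mathbf{j}(\mathbf{b})$. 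The unary case is identical with the term $\lnot(x_0 \triangle \lnot x_1)$, and preservation of $0, 1$ then follows. Finally, for the kernel note that $\mathbf{a} =_{\mathcal{D}} 1$ amounts to $\mathbf{a} \in \mathcal{D}$; applying the generalized (B) to $\tau = x_0$ shows $\mathbf{j}(\mathbf{a}) = 1_{\mathcal{B}_1}$ iff $\mathbf{c}_\alpha \in \mathcal{D}$ iff $\mathbf{a} \in \mathcal{D}$ (using that $\mathcal{D}$ is a union of $=_{\mathcal{D}}$-classes), so $\mathbf{j}^{-1}(1_{\mathcal{B}_1}) = \mathcal{D}$.

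The only real subtlety — and the step I would be most careful about — is the passage from (B), which is stated only for strictly increasing tuples of distinct indices, to the generalized form for arbitrary index tuples; everything afterwards is a mechanical application of that form to a fixed short list of terms. The reindexing argument handles both repetitions (by merging the corresponding variables) and disorder (by permuting them), and it is worth checking that collapsing variables indeed produces a legitimate $\mathcal{L}_{\mathbb{B}}$-term whose evaluation agrees with the original on the relevant tuples in both algebras.
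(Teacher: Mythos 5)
Your proof is correct. The paper itself gives no argument here (it simply declares the lemma ``Straightforward''), and your writeup is exactly the intended filling-in of the details: the one genuinely non-mechanical step is the extension of (B) from strictly increasing tuples of distinct indices to arbitrary tuples via collapsing repeated variables into a new term, and you identify and handle that correctly; everything else (well-definedness via $\lnot(x_0 \triangle x_1)$, the congruence property of $=_{\mathcal{D}}$, the kernel computation with $\tau = x_0$) is the routine application of that generalized form that the paper is implicitly relying on.
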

\begin{proof}
	Straightforward.
\end{proof}

Thus, it suffices to find a $\lambda$-regular, $\lambda^+$-good filter $\mathcal{D}$ on $\mathcal{B}_0$ which satisfies conditions (A) and (B) of Lemma~\ref{ExistenceLemma1}.

As some convenient notation, let $\Sigma$ be the set of all $\mathcal{L}_{\mathbb{B}}$-terms $\sigma(\overline{x})$ in the variables $\overline{x} =(x_\alpha: \alpha < 2^\lambda)$ (so $\sigma(\overline{x})$ only uses finitely much of $\overline{x}$). Given $\sigma(\overline{x}) \in \Sigma$, we let $\sigma(\overline{\mathbf{c}}) \in \mathcal{B}_0$ be the result of substituting in $\mathbf{c}_\alpha$ for $x_\alpha$, and we let $\sigma(\overline{\mathbf{c}'}) \in \mathcal{B}_1$ be the result of substituting in $\mathbf{c}'_\alpha$ for $x_\alpha$. Let $\Sigma_1$ be the set of all of $\sigma(\overline{x}) \in \Sigma$ such that $\sigma(\overline{\mathbf{c}'})  = 1_{\mathcal{B}_1}$, and let $\Sigma_+$ be the set of all $\sigma(\overline{x}) \in \Sigma$ such that $\sigma(\overline{\mathbf{c}'})$ is nonzero.

Then Condition (B) of Lemma~\ref{ExistenceLemma1} can be reformulated as: for all $\sigma(\overline{x}) \in \Sigma_1$, $\sigma(\overline{\mathbf{c}}) \in \mathcal{D}$, and for all $\sigma(\overline{x}) \in \Sigma_+$, $\sigma(\overline{\mathbf{c}})$ is nonzero mod $\mathcal{D}$.

We will be approximating our desired filter $\mathcal{D}$ as a union of filters $(\mathcal{D}_\alpha: \alpha < 2^\lambda)$. To make sure we don't run out of space, at each stage $\alpha < 2^\lambda$ we will also have a large subset $\mathbb{C}_\alpha$ of $\mathbb{C}_{-1}$ which is independent mod $\mathcal{D}_\alpha$, in the strong sense described below.

\begin{definition}
	Given $\mathbb{C} \subseteq \mathbb{C}_{-1}$, let $P_{\mathbb{C}}$ be the set of all functions $f$, such that $\mbox{dom}(f)$ is a finite subset of $\mathbb{C}$, and $f$ is a choice function on $\mbox{dom}(f)$. Let $\mathbf{x}_f = \bigwedge_{\mathbb{C} \in \mbox{dom}(f)} f(\mathbf{C})$.

	Suppose $\mathbb{C} \subseteq \mathbb{C}_{-1}$ has size $2^\lambda$ and $\mathcal{D}$ is a filter on $\mathcal{B}_0$. Then say that $(\mathcal{D}, \mathbb{C})$ is a pre-good pair the following two conditions hold: first, for each $\sigma(\overline{x}) \in \Sigma^1$, $\sigma(\overline{\mathbf{c}}) \in \mathcal{D}$. Second, for each  $f \in P_{\mathbb{C}}$ and for each $\sigma(\overline{x}) \in \Sigma_+$, $\mathbf{x}_f \wedge \sigma(\overline{\mathbf{c}})$ is nonzero.
	
	Say that $(\mathcal{D}, \mathbb{C})$ is a good pair if $\mathcal{D}$ is maximal subject to the condition that $(\mathcal{D}, \mathbb{C})$ is a pre-good pair; i.e. $(\mathcal{D}, \mathbb{C})$ is a pre-good pair, but whenever $\mathcal{D}'$ properly extends $\mathcal{D}$ then $(\mathcal{D}', \mathbb{C})$ is not.
\end{definition}

Our plan then is to build a sequence $(\mathcal{D}_\alpha, \mathcal{C}_\alpha: \alpha < 2^\lambda)$ where each $(\mathcal{D}_\alpha, \mathcal{C}_\alpha)$ is a good pair; $\mathcal{D} := \bigcup_\alpha \mathcal{D}_\alpha$ will be our desired filter. 

Note that if $(\mathcal{D}, \mathbb{C})$ is a pre-good pair, we can find $\mathcal{D}'$ extending $\mathcal{D}$ such that $(\mathcal{D}', \mathbb{C})$ is a good pair, since the conditions involved are finitary.

\begin{lemma}\label{ExistenceLemma4}
	There is a filter $\mathcal{D}_{-1}$ on $\mathcal{B}_0$ such that $(\mathcal{D}_{-1}, \mathbb{C}_{-1})$ is a good pair.
\end{lemma}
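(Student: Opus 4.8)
The plan is to produce a single pre-good pair $(\mathcal{E}, \mathbb{C}_{-1})$ and then invoke the remark immediately preceding the lemma, which upgrades any pre-good pair to a good pair by a routine Zorn's lemma argument (pre-goodness is preserved under unions of chains, since both of its defining clauses refer to only finitely much data at a time). So the real content is to exhibit a filter $\mathcal{E}$ with $(\mathcal{E}, \mathbb{C}_{-1})$ pre-good. The natural candidate is the filter $\mathcal{E}$ generated by $\{\sigma(\overline{\mathbf{c}}) : \sigma \in \Sigma_1\}$; the first clause of pre-goodness then holds by fiat, and everything else reduces to a single non-triviality statement.

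The key claim I would prove is: for every $f \in P_{\mathbb{C}_{-1}}$ and every $\sigma \in \Sigma_+$, $\mathbf{x}_f \wedge \sigma(\overline{\mathbf{c}}) \neq 0$. The proof uses independence of $\mathbb{C}_*$. Let $V$ be the finite set of indices $\alpha$ with $x_\alpha$ occurring in $\sigma$. Viewing $\sigma$ as an element of the finite free Boolean algebra on $\{x_\alpha : \alpha \in V\}$, the hypothesis $\sigma \in \Sigma_+$ (i.e. $\sigma(\overline{\mathbf{c}'}) \neq 0$ in $\mathcal{B}_1$) forces $\sigma \neq 0$ in that free algebra, so some atom (minterm) $m = \bigwedge_{\alpha \in V} x_\alpha^{\epsilon(\alpha)}$ lies below $\sigma$, where $\epsilon : V \to \{0,1\}$ and $z^1 = z$, $z^0 = \lnot z$. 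Applying the evaluation homomorphism $x_\alpha \mapsto \mathbf{c}_\alpha$ gives $\bigwedge_{\alpha \in V} \mathbf{c}_\alpha^{\epsilon(\alpha)} \leq \sigma(\overline{\mathbf{c}})$. For each $\alpha \in V$ I choose $\mathbf{e}_\alpha \in \mathbf{C}_\alpha$ with $\mathbf{e}_\alpha \leq \mathbf{c}_\alpha^{\epsilon(\alpha)}$: take $\mathbf{e}_\alpha = \mathbf{c}_\alpha$ when $\epsilon(\alpha) = 1$, and when $\epsilon(\alpha) = 0$ take any $\mathbf{e}_\alpha \in \mathbf{C}_\alpha \setminus \{\mathbf{c}_\alpha\}$, which exists since $|\mathbf{C}_\alpha| = \lambda \geq 2$ and $\lnot \mathbf{c}_\alpha = \bigvee (\mathbf{C}_\alpha \setminus \{\mathbf{c}_\alpha\})$ because $\mathbf{C}_\alpha$ is a maximal antichain. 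Then $g = f \cup \{(\mathbf{C}_\alpha, \mathbf{e}_\alpha) : \alpha \in V\}$ is a choice function on a finite set of pairwise distinct members of $\mathbb{C}_*$ — here I use that $\mathbb{C}_{-1}$ and $\mathbb{C}'$ are disjoint and that $(\mathbf{C}_\alpha)$ is injectively enumerated — so independence gives $\mathbf{x}_f \wedge \bigwedge_{\alpha \in V} \mathbf{e}_\alpha \neq 0$, and this meet is $\leq \mathbf{x}_f \wedge \sigma(\overline{\mathbf{c}})$, proving the claim.

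With the key claim in hand I finish as follows. Since $\Sigma_1$ is closed under finite meets, $\mathcal{E} = \{\mathbf{a} : \mathbf{a} \geq \sigma(\overline{\mathbf{c}}) \text{ for some } \sigma \in \Sigma_1\}$, and because $\Sigma_1 \subseteq \Sigma_+$ the claim with $f = \emptyset$ (so $\mathbf{x}_\emptyset = 1$) gives $\sigma(\overline{\mathbf{c}}) \neq 0$ for all $\sigma \in \Sigma_1$; hence $0 \notin \mathcal{E}$ and $\mathcal{E}$ is a proper filter. The first clause of pre-goodness holds by construction. For the second clause I verify the stronger statement that $\mathbf{d} \wedge \mathbf{x}_f \wedge \sigma(\overline{\mathbf{c}}) \neq 0$ for every $\mathbf{d} \in \mathcal{E}$, $f \in P_{\mathbb{C}_{-1}}$, and $\sigma \in \Sigma_+$ (which subsumes the clause as stated, and also its natural ``mod $\mathcal{E}$'' reading): given such $\mathbf{d}$, pick $\sigma' \in \Sigma_1$ with $\sigma'(\overline{\mathbf{c}}) \leq \mathbf{d}$, note $\sigma' \wedge \sigma \in \Sigma_+$ since its value at $\overline{\mathbf{c}'}$ is $1 \wedge \sigma(\overline{\mathbf{c}'}) \neq 0$, and apply the key claim to $\sigma' \wedge \sigma$ to get $\mathbf{d} \wedge \mathbf{x}_f \wedge \sigma(\overline{\mathbf{c}}) \geq \mathbf{x}_f \wedge (\sigma' \wedge \sigma)(\overline{\mathbf{c}}) \neq 0$. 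Thus $(\mathcal{E}, \mathbb{C}_{-1})$ is pre-good, and the remark produces the desired good pair $\mathcal{D}_{-1} \supseteq \mathcal{E}$. The main obstacle is the key claim — specifically, recognizing that the role of $\sigma \in \Sigma_+$ is only to guarantee $\sigma \neq 0$ in the free Boolean algebra, after which negative literals $\lnot \mathbf{c}_\alpha$ are absorbed by passing to alternative antichain elements and the entire meet is kept nonzero by the independence of $\mathbb{C}_*$.
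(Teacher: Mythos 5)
Your proof is correct and takes essentially the same route as the paper's: establish that $\mathbf{x}_f \wedge \sigma(\overline{\mathbf{c}}) \neq 0$ for all $f \in P_{\mathbb{C}_{-1}}$ and $\sigma \in \Sigma_+$ by reducing to a minterm via disjunctive normal form and invoking independence of $\mathbb{C}_{-1} \cup \mathbb{C}'$, then take the filter generated by $\{\sigma(\overline{\mathbf{c}}) : \sigma \in \Sigma_1\}$ and extend the resulting pre-good pair to a good pair. You simply spell out the details the paper leaves implicit (absorbing negative literals by switching to another element of the antichain, and checking the ``mod $\mathcal{E}$'' version of the second clause), all of which is accurate.
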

\begin{proof}
	It suffices to show that there is a pre-good pair $(\mathcal{D}_{-1}', \mathbb{C}_{-1})$.
	
	First of all, note that for every $\sigma(\overline{x}) \in \Sigma_+$, and for every $f \in P_{\mathbb{C}_{-1}}$, $\sigma(\overline{\mathbf{c}}) \wedge \mathbf{x}_f$ is nonzero. To see this, note that we can suppose $\sigma(\overline{x})$ is a conjunction of terms of the form $x_\alpha$ or $\lnot x_\alpha$ (consider the disjunctive normal form of $\sigma(\overline{x})$), and so this follows from independence of $\mathbb{C}_{-1} \cup \mathbb{C}'$.
	
	Thus, $\{\sigma(\overline{\mathbf{c}}): \sigma(\overline{x}) \in \Sigma_1\}$ generates a filter $\mathcal{D}_{-1}'$, and $(\mathcal{D}_{-1}', \mathbb{C}_{-1})$ is a pre-good pair.
\end{proof}

We now handle $\lambda$-regularity.

\begin{lemma}\label{ExistenceLemma5}
	There is a good pair $(\mathcal{D}_{0}, \mathbb{C}_0)$ with $\mathcal{D}_{-1} \subseteq \mathcal{D}_0$, such that $\mathcal{D}_0$ is $\lambda$-regular.
\end{lemma}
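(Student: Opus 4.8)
The plan is to ``spend'' a single antichain of $\mathbb{C}_{-1}$ to manufacture the regular sequence --- exactly as in the proof of Theorem~\ref{RegSeqExistChar} --- and then fold that sequence into the filter while deleting the spent antichain from the family. Fix one $\mathbf{C}^* \in \mathbb{C}_{-1}$; since $|\mathbf{C}^*| = \lambda = |[\lambda]^{<\aleph_0}|$, reindex it as $\mathbf{C}^* = \{\mathbf{c}^*_s : s \in [\lambda]^{<\aleph_0}\}$ and set $\mathbf{a}_\alpha = \bigvee_{s \ni \alpha} \mathbf{c}^*_s$ for each $\alpha < \lambda$. By the computation in Theorem~\ref{RegSeqExistChar} (using that $\mathbf{C}^*$ is a maximal antichain) the sequence $(\mathbf{a}_\alpha : \alpha < \lambda)$ is $\lambda$-regular. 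I would then put $\mathbb{C}_0 = \mathbb{C}_{-1} \setminus \{\mathbf{C}^*\}$, which still has size $2^\lambda$, and let $\mathcal{D}_0'$ be the filter generated by $\mathcal{D}_{-1} \cup \{\mathbf{a}_\alpha : \alpha < \lambda\}$.

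The main step is to verify that $(\mathcal{D}_0', \mathbb{C}_0)$ is a pre-good pair; granting this, the observation following the definition of a good pair lets me enlarge $\mathcal{D}_0'$ to some $\mathcal{D}_0 \supseteq \mathcal{D}_0'$ with $(\mathcal{D}_0, \mathbb{C}_0)$ good, and since $\mathbf{a}_\alpha \in \mathcal{D}_0$ for all $\alpha$ this $\mathcal{D}_0$ is $\lambda$-regular and extends $\mathcal{D}_{-1}$, as desired. The first condition of a pre-good pair is inherited from $(\mathcal{D}_{-1}, \mathbb{C}_{-1})$ because $\mathcal{D}_{-1} \subseteq \mathcal{D}_0'$. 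For the second condition, fix $f \in P_{\mathbb{C}_0}$ and $\sigma(\overline{x}) \in \Sigma_+$; I must show $\mathbf{x}_f \wedge \sigma(\overline{\mathbf{c}})$ is nonzero mod $\mathcal{D}_0'$. Every basic element of $\mathcal{D}_0'$ lies above some $\mathbf{d} \wedge \bigwedge_{\alpha \in F} \mathbf{a}_\alpha$ with $\mathbf{d} \in \mathcal{D}_{-1}$ and $F \in [\lambda]^{<\aleph_0}$, and since $\bigwedge_{\alpha \in F} \mathbf{a}_\alpha \geq \mathbf{c}^*_F$, it suffices to prove $\mathbf{x}_f \wedge \sigma(\overline{\mathbf{c}}) \wedge \mathbf{d} \wedge \mathbf{c}^*_F \neq 0$ for every such $\mathbf{d}$ and $F$.

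The key move is to absorb the factor $\mathbf{c}^*_F$ back into a choice function: set $f' = f \cup \{(\mathbf{C}^*, \mathbf{c}^*_F)\}$, which is legitimate since $\mathbf{C}^* \notin \mathbb{C}_0 \supseteq \mathrm{dom}(f)$ and $\mathbf{c}^*_F \in \mathbf{C}^*$, so $f' \in P_{\mathbb{C}_{-1}}$ with $\mathbf{x}_{f'} = \mathbf{x}_f \wedge \mathbf{c}^*_F$. Applying the second condition of the good pair $(\mathcal{D}_{-1}, \mathbb{C}_{-1})$ to $f'$ and $\sigma$ yields precisely that $\mathbf{x}_f \wedge \mathbf{c}^*_F \wedge \sigma(\overline{\mathbf{c}})$ is nonzero mod $\mathcal{D}_{-1}$, which is the required inequality (the case $f = \emptyset$, $\sigma \equiv 1$ also shows the generators have the finite intersection property, so $\mathcal{D}_0'$ is proper). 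This is exactly the place where it matters that $\mathbf{C}^*$ came from the \emph{full} independent family $\mathbb{C}_{-1}$, and it is also why $\mathbf{C}^*$ must be removed from $\mathbb{C}_0$: if $\mathbf{C}^*$ were left in, choosing $\mathbf{c}^*_s \in \mathbf{C}^*$ against an $\mathbf{a}_\alpha \in \mathcal{D}_0'$ with $\alpha \notin s$ would meet to $0$ and wreck independence. I expect this bookkeeping --- checking that injecting the regular sequence does not collapse the independence of the surviving $2^\lambda$ antichains --- to be the only real obstacle, and the $f'$ trick resolves it.
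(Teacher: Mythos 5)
Your proposal is correct and follows essentially the same route as the paper's proof: spend one antichain $\mathbf{C}^*$ of $\mathbb{C}_{-1}$, build the regular sequence as in Theorem~\ref{RegSeqExistChar}, and verify pre-goodness of the generated filter against $\mathbb{C}_0 = \mathbb{C}_{-1}\setminus\{\mathbf{C}^*\}$ by absorbing $\mathbf{c}^*_F$ into an extended choice function on $\mathbb{C}_{-1}$. The $f'$ trick you spell out is exactly the step the paper performs implicitly when it invokes goodness of $(\mathcal{D}_{-1}, \mathbb{C}_{-1})$ on $\mathbf{a}_s \wedge \mathbf{x}_f \wedge \sigma(\overline{\mathbf{c}})$.
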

\begin{proof}
	Pick $\mathbf{C} \in \mathbb{C}_{-1}$ and let $\mathbb{C}_0 = \mathbb{C}_{-1} \backslash \{\mathbf{C}\}$. Enumerate $\mathbf{C} = (\mathbf{a}_s: s \in [\lambda]^{<\aleph_0})$, and for each $\alpha < \lambda$ let $\mathbf{b}_\alpha = \bigvee\{\mathbf{a}_s: \alpha \in s \in[\lambda]^{<\aleph_0}\}$. As in Theorem~\ref{RegSeqExistChar}, $(\mathbf{b}_\alpha: \alpha < \lambda)$ is a $\lambda$-regular sequence. Let $\mathcal{D}'_0$ be the filter generated by $\mathcal{D}_{-1}$ and $(\mathbf{b}_\alpha: \alpha < \lambda)$. 
	
	We claim that $(\mathcal{D}'_0, \mathbb{C}_0)$ is a pre-good pair, which suffices, since then we can extend it to a good pair $(\mathcal{D}_0, \mathbb{C}_0)$. So suppose $f \in P_{\mathbb{C}_0}$ and $\sigma(\overline{x}) \in \Sigma_+$; it suffices to show that $\mathbf{x}_f \wedge \sigma(\overline{\mathbf{c}})$ is nonzero mod $\mathcal{D}'_0$. But given $s \in [\lambda]^{<\aleph_0}$, we have that $\mathbf{a}_s \wedge \mathbf{x}_f \wedge \sigma(\overline{\mathbf{c}})$ is nonzero mod $\mathcal{D}_{-1}$ since $(\mathcal{D}_{-1}, \mathbb{C}_{-1})$ is a good pair. Since $\mathbf{a}_s \leq \bigwedge_{\alpha \in s} \mathbf{b}_\alpha$ we conclude.
\end{proof}

Before explaining how the rest of the construction will proceed, we prove two key properties of good pairs.

\begin{lemma}\label{ExistenceLemma6}
	If $(\mathcal{D}, \mathbb{C})$ is a good pair and $\mathbf{a} \in \mathcal{B}_0$ is nonzero mod $\mathcal{D}$, then there is some $f \in P_{\mathbb{C}}$ and some $\sigma(\overline{x}) \in \Sigma_+$ such that $\mathbf{x}_f \wedge \sigma(\overline{\mathbf{c}}) \leq \mathbf{a}$ mod $\mathcal{D}$. In fact, we can choose $\sigma(\overline{x})$ to be of the form $x_\alpha$ for some $\alpha < 2^\lambda$ (necessarily with $\mathbf{c}'_\alpha \not= 0$).
	
\end{lemma}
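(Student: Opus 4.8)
The plan is to exploit the maximality of $\mathcal{D}$ that is built into the notion of a good pair. First I would dispose of the trivial case $\mathbf{a} \in \mathcal{D}$: here $\mathbf{a} =_{\mathcal{D}} 1$, so taking $f = \emptyset$ (whence $\mathbf{x}_f = 1$) and any $\alpha$ with $\mathbf{c}'_\alpha = 1_{\mathcal{B}_1}$ gives $\mathbf{x}_f \wedge \mathbf{c}_\alpha \leq \mathbf{a}$ mod $\mathcal{D}$ with $\mathbf{c}'_\alpha \neq 0$. So I may assume $\mathbf{a} \notin \mathcal{D}$.

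Since $\mathbf{a}$ is nonzero mod $\mathcal{D}$ we have $\lnot\mathbf{a} \notin \mathcal{D}$, and since $\mathbf{a} \notin \mathcal{D}$ the filter $\mathcal{D}'$ generated by $\mathcal{D} \cup \{\lnot\mathbf{a}\}$ is proper and strictly contains $\mathcal{D}$. By maximality of $\mathcal{D}$, the pair $(\mathcal{D}', \mathbb{C})$ is not pre-good. The first pre-good condition survives the passage from $\mathcal{D}$ to the larger filter $\mathcal{D}'$ (each $\sigma(\overline{\mathbf{c}})$ with $\sigma \in \Sigma_1$ is still in $\mathcal{D}'$), so it must be the second condition that fails: there are $f \in P_{\mathbb{C}}$ and $\sigma \in \Sigma_+$ with $\mathbf{x}_f \wedge \sigma(\overline{\mathbf{c}})$ zero mod $\mathcal{D}'$. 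Unwinding the description of $\mathcal{D}'$, this means $\lnot(\mathbf{x}_f \wedge \sigma(\overline{\mathbf{c}})) \geq \mathbf{d} \wedge \lnot\mathbf{a}$ for some $\mathbf{d} \in \mathcal{D}$; taking complements gives $\mathbf{x}_f \wedge \sigma(\overline{\mathbf{c}}) \leq \lnot\mathbf{d} \vee \mathbf{a}$, and meeting with $\mathbf{d}$ yields $\mathbf{d} \wedge \mathbf{x}_f \wedge \sigma(\overline{\mathbf{c}}) \leq \mathbf{a}$, i.e. $\mathbf{x}_f \wedge \sigma(\overline{\mathbf{c}}) \leq \mathbf{a}$ mod $\mathcal{D}$. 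This already yields the first assertion.

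For the refinement to a single variable I would use that $(\mathbf{c}'_\beta : \beta < 2^\lambda)$ enumerates all of $\mathcal{B}_1$. Since $\sigma \in \Sigma_+$, the element $\sigma(\overline{\mathbf{c}'})$ is nonzero, so I can choose $\alpha$ with $\mathbf{c}'_\alpha = \sigma(\overline{\mathbf{c}'})$; in particular $\mathbf{c}'_\alpha \neq 0$. Consider the $\mathcal{L}_{\mathbb{B}}$-term $\rho := \lnot(x_\alpha \triangle \sigma)$. Then $\rho(\overline{\mathbf{c}'}) = \lnot(\mathbf{c}'_\alpha \triangle \sigma(\overline{\mathbf{c}'})) = 1_{\mathcal{B}_1}$, so $\rho \in \Sigma_1$, and hence $\rho(\overline{\mathbf{c}}) = \lnot(\mathbf{c}_\alpha \triangle \sigma(\overline{\mathbf{c}})) \in \mathcal{D}$ by the first pre-good condition for $(\mathcal{D}, \mathbb{C})$. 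That is precisely $\mathbf{c}_\alpha =_{\mathcal{D}} \sigma(\overline{\mathbf{c}})$, so $\mathbf{x}_f \wedge \mathbf{c}_\alpha =_{\mathcal{D}} \mathbf{x}_f \wedge \sigma(\overline{\mathbf{c}}) \leq \mathbf{a}$ mod $\mathcal{D}$, completing the proof.

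I do not expect a serious obstacle here; the whole argument is a maximality-plus-congruence computation. The one point requiring care is getting the direction right — one must adjoin $\lnot\mathbf{a}$ rather than $\mathbf{a}$, so that the forced failure of independence produces a basic piece lying \emph{below} $\mathbf{a}$. The cleanest step, and the only genuinely clever one, is the passage to the form $x_\alpha$, which rests on the enumeration of $\mathcal{B}_1$ by the $\mathbf{c}'_\beta$ together with condition (1) of pre-goodness forcing $\mathbf{c}_\alpha =_{\mathcal{D}} \sigma(\overline{\mathbf{c}})$.
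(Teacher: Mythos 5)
Your proof is correct and follows exactly the route the paper takes: adjoin $\lnot\mathbf{a}$ to $\mathcal{D}$, use maximality of the good pair to force a failure of the second pre-goodness condition (the first being monotone in the filter), unwind that failure to get $\mathbf{x}_f \wedge \sigma(\overline{\mathbf{c}}) \leq \mathbf{a}$ mod $\mathcal{D}$, and then replace $\sigma$ by $x_\alpha$ via the enumeration of $\mathcal{B}_1$ and condition (1) applied to $\lnot(x_\alpha \triangle \sigma)$. You have merely written out the details (including the harmless extra case $\mathbf{a} \in \mathcal{D}$) that the paper leaves implicit.
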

\begin{proof}
	If there were no such $f, \sigma(\overline{x})$, then let $\mathcal{D}'$ be the filter generated by $\mathcal{D}$ and $\lnot \mathbf{a}$; we are exactly assuming that $(\mathcal{D}', \mathbb{C})$ is a pre-good pair, contradicting the maximality of $\mathcal{D}$. We can arrange $\sigma(\overline{x})$ to be of the form $x_\alpha$ for some $\alpha$, because we can choose $\alpha <2^\lambda$ with $\mathbf{c}'_\alpha = \sigma(\overline{\mathbf{c}}')$; then $\mathbf{c}_\alpha = \sigma(\overline{\mathbf{c}})$ mod $\mathcal{D}$.
	
\end{proof}

\begin{lemma}\label{ExistenceLemma6p5}
	If $(\mathcal{D}, \mathbb{C})$ is a good pair, then $|\mathcal{B}_0/\mathcal{D}| \leq 2^\lambda$.
\end{lemma}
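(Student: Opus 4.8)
The plan is to produce a small dense subset of $\mathcal{B}_0/\mathcal{D}$, establish a chain condition on $\mathcal{B}_0/\mathcal{D}$, and then count the elements using maximal antichains drawn from that dense set.

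First, by Lemma~\ref{ExistenceLemma6} the basic elements $\mathbf{x}_f \wedge \mathbf{c}_\alpha$ (for $f \in P_{\mathbb{C}}$ and $\alpha < 2^\lambda$) are dense in $\mathcal{B}_0/\mathcal{D}$; let $E$ be the set of their classes mod $\mathcal{D}$. Since each $f \in P_{\mathbb{C}}$ has finite domain and each antichain in $\mathbb{C}$ has size $\lambda$, we have $|P_{\mathbb{C}}| = 2^\lambda$, so $|E| \leq 2^\lambda$. Set $\mu = \mbox{c.c.}(\mathcal{B}_1)$ and $\theta = \max(\mu, \lambda^+)$; note $\theta$ is regular and, since $2^{<\mu} \leq |\mathcal{B}_1| \leq 2^\lambda$, we have $2^{<\theta} \leq 2^\lambda$ and $\theta \leq 2^\lambda$.

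The heart of the argument is to show $\mathcal{B}_0/\mathcal{D}$ has no antichain of size $\theta$. By density it suffices to rule out an antichain of basic elements, so suppose $(\mathbf{x}_{f_i} \wedge \mathbf{c}_{\alpha_i})_{i < \theta}$ are pairwise disjoint mod $\mathcal{D}$. I write each as a finite meet $\bigwedge_{\mathbf{C} \in S_i} g_i(\mathbf{C})$ of choices from distinct members $S_i \in [\mathbb{C}_*]^{<\aleph_0}$ of the independent family (the $\mathbb{C}_{-1}$-coordinates coming from $f_i$ and the single $\mathbb{C}'$-coordinate $\mathbf{C}_{\alpha_i}$). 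Applying the $\Delta$-system lemma to $(S_i)_{i<\theta}$ yields a subfamily of size $\theta$ with finite root $R$; since $|\prod_{\mathbf{C} \in R} \mathbf{C}| \leq \lambda^{|R|} = \lambda < \theta$, I thin out further so that all $g_i$ agree on $R$. For two indices $i, j$ in the resulting set, $S_i \cap S_j = R$ with the choice functions agreeing there, so the meet $\mathbf{x}_{f_i} \wedge \mathbf{c}_{\alpha_i} \wedge \mathbf{x}_{f_j} \wedge \mathbf{c}_{\alpha_j}$ is again of the form $\mathbf{x}_f \wedge \sigma(\overline{\mathbf{c}})$. The key translation is that for any such meet, the pre-good conditions give $\mathbf{x}_f \wedge \sigma(\overline{\mathbf{c}}) = 0$ mod $\mathcal{D}$ if and only if $\sigma(\overline{\mathbf{c}'}) = 0$ in $\mathcal{B}_1$: one direction is condition (ii), and the other follows from condition (i) applied to $\lnot \sigma \in \Sigma_1$. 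When the $\mathbb{C}'$-coordinates are distinct (the $\Delta$-system case with $\mathbf{C}_{\alpha_i} \notin R$), the $\alpha_i$ are pairwise distinct and disjointness mod $\mathcal{D}$ becomes exactly $\mathbf{c}'_{\alpha_i} \wedge \mathbf{c}'_{\alpha_j} = 0$ in $\mathcal{B}_1$; the root case $\mathbf{C}_{\alpha_i} \in R$ collapses all $\alpha_i$ to a single value with nonzero class, giving an immediate contradiction. (That each $\mathbf{c}'_{\alpha_i} \neq 0$ also follows from condition (i), since otherwise $\lnot x_{\alpha_i} \in \Sigma_1$ would force $\mathbf{c}_{\alpha_i} = 0$ mod $\mathcal{D}$.) Thus $(\mathbf{c}'_{\alpha_i})$ is an antichain of size $\theta \geq \mu$ in $\mathcal{B}_1$, contradicting $\mu = \mbox{c.c.}(\mathcal{B}_1)$. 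Hence $\mbox{c.c.}(\mathcal{B}_0/\mathcal{D}) \leq \theta$.

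To finish, I count. In any Boolean algebra with dense subset $E$, each element $\mathbf{b}$ has a maximal antichain $A_{\mathbf{b}} \subseteq E$ of elements below it; maximality makes $A_{\mathbf{b}}$ dense below $\mathbf{b}$, and a short argument (if $\mathbf{b} \wedge \lnot \mathbf{b}'$ were nonzero it would meet some member of $A_{\mathbf{b}} = A_{\mathbf{b}'}$ lying under $\mathbf{b}'$) shows $\mathbf{b} \mapsto A_{\mathbf{b}}$ is injective. Each $A_{\mathbf{b}}$ is an antichain, so $|A_{\mathbf{b}}| < \theta$, whence $\mathcal{B}_0/\mathcal{D}$ injects into $[E]^{<\theta}$; cardinal arithmetic with $|E| \leq 2^\lambda$, $\theta \leq 2^\lambda$, and $2^{<\theta} \leq 2^\lambda$ gives $|[E]^{<\theta}| \leq 2^\lambda$. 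The main obstacle is the chain-condition step: carefully tracking the two kinds of coordinates ($\mathbb{C}_{-1}$ versus $\mathbb{C}'$) through the $\Delta$-system and converting ``disjoint mod $\mathcal{D}$'' into a genuine antichain of $\mathcal{B}_1$ so that its chain condition can be invoked; the rest reduces to Lemma~\ref{ExistenceLemma6} and elementary cardinal arithmetic.
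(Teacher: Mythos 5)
Your proof is correct and is essentially the paper's argument: both rest on Lemma~\ref{ExistenceLemma6} giving density of the elements $\mathbf{x}_f \wedge \mathbf{c}_\alpha$, both identify each element of $\mathcal{B}_0/\mathcal{D}$ with a maximal antichain of such elements below it, and both bound the size of that antichain by a $\Delta$-system refinement that converts pairwise disjointness mod $\mathcal{D}$ into an antichain $(\mathbf{c}'_{\alpha_i})$ of $\mathcal{B}_1$, then finish with the same cardinal arithmetic using $2^{<\mbox{c.c.}(\mathcal{B}_1)} \leq 2^\lambda$. The only difference is presentational: you package the antichain bound as a global $\max(\mbox{c.c.}(\mathcal{B}_1), \lambda^+)$-chain condition on $\mathcal{B}_0/\mathcal{D}$, whereas the paper bounds $2^\kappa$ for each element's family separately.
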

\begin{proof}
Let $\mathbf{a} \in \mathcal{B}_0/\mathcal{D}$ be given. Choose a sequence $\{(f_\gamma, \alpha_\gamma): \gamma < \kappa\}$ satisfying:
	\begin{itemize}
		\item Each $f_\gamma \in P_{\mathbb{C}}$, and each $\alpha_\gamma < 2^\lambda$ satisfies $\mathbf{c}'_{\alpha_\gamma} \not= 0$;
		\item Each $\mathbf{x}_{f_\gamma} \wedge \mathbf{c}_{\alpha_\gamma} \leq \mathbf{a}$ mod $\mathcal{D}$;
		\item For $\gamma \not= \gamma'$, $(\mathbf{x}_{f_\gamma} \wedge \mathbf{c}_{\alpha_\gamma}) \wedge (\mathbf{x}_{f_{\gamma'}} \wedge \mathbf{c}_{\alpha_{\gamma'}}) = 0$ mod $\mathcal{D}$;
		\item $\{(f_\gamma, \alpha_\gamma): \gamma < \kappa\}$ is maximal subject to the preceding conditions.
	\end{itemize}
	We claim that $[\mathbf{a}/\mathcal{D}] = \bigvee_{\gamma < \kappa} [\mathbf{x}_{f_\gamma} \wedge \mathbf{c}_{\alpha_\gamma}/\mathcal{D}]$ in $\mathcal{B}_0/\mathcal{D}$ (note $\mathcal{B}_0/\mathcal{D}$ is not necessarily complete, so the join on the right-hand-side is not required to exist in general). Clearly $[\mathbf{a}/\mathcal{D}]$ is an upper bound to $\{[\mathbf{x}_{f_\gamma} \wedge \mathbf{c}_{\alpha_\gamma}/\mathcal{D}]: \gamma < \kappa\}$. If it were not the least upper bound, then we could find $\mathbf{b} \leq \mathbf{a}$ mod $\mathcal{D}$ with $\mathbf{b}$ nonzero mod $\mathcal{D}$, such that each $\mathbf{b} \wedge \mathbf{x}_{f_\gamma} \wedge \mathbf{c}_{\alpha_\gamma} = 0$ mod $\mathcal{D}$. But then by the first part of the lemma, we can choose $f, \alpha$ with $\mathbf{x}_f \wedge \mathbf{c}_\alpha \leq \mathbf{b}$ mod $\mathcal{D}$, contradicting maximality.
	
	Thus it suffices to show that $2^{\kappa} \leq 2^\lambda$, since then there are only $|P_{\mathbb{C}}|^{\kappa} \cdot |\mathcal{B}_1|^{\kappa} \leq 2^\lambda$-many possibilities for $\{(f_\gamma, \alpha_\gamma): \gamma < \kappa\}$ and this determines $\mathbf{a}$ mod $\mathcal{D}$. 
	
	We can suppose $\kappa > \lambda$. Since $2^{<\mbox{c.c.}(\mathcal{B}_1} \leq 2^\lambda$, it suffices to show that $\mathcal{B}_1$ has an antichain of size $\kappa$. By Theorem~\ref{CCTheorem}, it suffices to show that $\mathcal{B}_1$ has an antichain of size $\sigma$, for every regular $\sigma \leq \kappa$. 
	
	By the $\Delta$-system lemma we can find $I \in [\kappa]^{\sigma}$ such that for all $\gamma, \gamma' \in I$, $f_\gamma$ and $f_{\gamma'}$ are compatible. Then I claim that $(\mathbf{c}'_{\alpha_\gamma}: \gamma \in I)$ is an antichain of $\mathcal{B}_1$, as desired. Indeed, suppose we had $\gamma < \gamma'$ both in $I$, with $\mathbf{c}'_{\alpha_\gamma} \wedge \mathbf{c}'_{\alpha_{\gamma'}} \not= 0$. We know that $(\mathbf{x}_{f_\gamma} \wedge \mathbf{c}_{\alpha_\gamma}) \wedge (\mathbf{x}_{f_{\gamma'}} \wedge \mathbf{c}_{\alpha_{\gamma'}}) = 0$ mod $\mathcal{D}$. But this contradicts that $(\mathcal{D}, \mathbb{C})$ is a good pair, since $x_{\alpha_\gamma} \wedge x_{\alpha_{\gamma'}} \in \Sigma_+$ and $\mathbf{x}_{f_\gamma} \wedge \mathbf{x}_{f_{\gamma'}} = \mathbf{x}_{g}$, where $g := f_\gamma \cup f_{\gamma'} \in P_{\mathbb{C}}$.
\end{proof}

Let $\mathbf{X} \subseteq \mathcal{B}_0$ be a choice of representatives for $\mathcal{B}_0/\mathcal{D}_0$; so by Lemma~\ref{ExistenceLemma6p5}, $|\mathbf{X}| \leq 2^\lambda$.

The following lemma describes our strategy for finishing:

\begin{lemma}\label{ExistenceLemma7}
	It suffices to find good pairs $((\mathcal{D}_\alpha, \mathbb{C}_\alpha): \alpha < 2^\lambda)$ such that:
	\begin{enumerate}
		\item $(\mathcal{D}_0, \mathbb{C}_0)$ is as defined above;
		\item For $\alpha < \beta < 2^\lambda$, $\mathcal{D}_\alpha \subseteq \mathcal{D}_\beta$ and $\mathbb{C}_\beta \subseteq \mathbb{C}_\alpha$;
		
		\item If $\mathbf{A}$ is a $\lambda$-distribution in $\mathcal{D}_\alpha$, then for some $\beta > \alpha$, $\mathbf{A}$ has a multiplicative refinement in $\mathcal{D}_\beta$;
		\item For each $\mathbf{a} \in \mathbf{X}$, there are some $\alpha, \beta < 2^\lambda$ such that $\mathbf{a} = \mathbf{c}_\beta$  mod $\mathcal{D}_\alpha$.
	\end{enumerate}
\end{lemma}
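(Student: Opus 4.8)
The plan is to set $\mathcal{D} = \bigcup_{\alpha < 2^\lambda} \mathcal{D}_\alpha$ and verify that this filter is $\lambda$-regular, $\lambda^+$-good, and satisfies conditions (A) and (B) of Lemma~\ref{ExistenceLemma1}; that lemma will then hand us the surjective homomorphism $\mathbf{j}$ with $\mathbf{j}^{-1}(1_{\mathcal{B}_1}) = \mathcal{D}$ demanded by Theorem~\ref{ExistenceTheoremFirst}. By condition (2) the $\mathcal{D}_\alpha$ form an increasing chain of filters, so $\mathcal{D}$ is a filter; and since $\mathcal{D} \supseteq \mathcal{D}_0$ and $\mathcal{D}_0$ is $\lambda$-regular (Lemma~\ref{ExistenceLemma5}), the witnessing $\lambda$-regular sequence still lies in $\mathcal{D}$, so $\mathcal{D}$ is $\lambda$-regular.

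Next I would establish $\lambda^+$-goodness. Let $\mathbf{A}$ be a $\lambda$-distribution in $\mathcal{D}$; its image has size at most $|[\lambda]^{<\aleph_0}| = \lambda$. Since $\mbox{cof}(2^\lambda) > \lambda$ by K\"{o}nig's theorem, any subset of $\bigcup_\alpha \mathcal{D}_\alpha$ of size at most $\lambda$ lies in a single $\mathcal{D}_\alpha$ (take the supremum of the stages at which each value first appears, which stays below $2^\lambda$). Thus $\mathbf{A}$ is a $\lambda$-distribution in $\mathcal{D}_\alpha$ for some $\alpha$, and condition (3) gives $\beta > \alpha$ with a multiplicative refinement of $\mathbf{A}$ in $\mathcal{D}_\beta \subseteq \mathcal{D}$. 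This is the one step that genuinely uses the cofinality of $2^\lambda$, and it is the crux of why condition (3) suffices.

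For condition (B) of Lemma~\ref{ExistenceLemma1}, recall its reformulation: $\sigma(\overline{\mathbf{c}}) \in \mathcal{D}$ for every $\sigma \in \Sigma_1$, and $\sigma(\overline{\mathbf{c}})$ nonzero mod $\mathcal{D}$ for every $\sigma \in \Sigma_+$. The first holds because each $(\mathcal{D}_\alpha, \mathbb{C}_\alpha)$ is a pre-good pair, so $\sigma(\overline{\mathbf{c}}) \in \mathcal{D}_\alpha \subseteq \mathcal{D}$. For the second, I would apply the pre-good-pair condition for $(\mathcal{D}_\alpha, \mathbb{C}_\alpha)$ with the empty choice function $f = \emptyset \in P_{\mathbb{C}_\alpha}$, for which $\mathbf{x}_f = 1$; this yields $\sigma(\overline{\mathbf{c}})$ nonzero mod $\mathcal{D}_\alpha$, i.e. $\lnot \sigma(\overline{\mathbf{c}}) \notin \mathcal{D}_\alpha$, for every $\alpha$, and hence $\lnot\sigma(\overline{\mathbf{c}}) \notin \mathcal{D}$. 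For condition (A), given $\mathbf{a} \in \mathcal{B}_0$ choose its representative $\mathbf{a}' \in \mathbf{X}$, so $\mathbf{a} = \mathbf{a}'$ mod $\mathcal{D}_0 \subseteq \mathcal{D}$; condition (4) then supplies $\alpha, \beta$ with $\mathbf{a}' = \mathbf{c}_\beta$ mod $\mathcal{D}_\alpha \subseteq \mathcal{D}$, whence $\mathbf{a} = \mathbf{c}_\beta$ mod $\mathcal{D}$.

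Since this is a reduction lemma, there is no serious obstacle: the content is purely the bookkeeping that the four listed properties of the chain $(\mathcal{D}_\alpha, \mathbb{C}_\alpha)$ deliver exactly the hypotheses of Lemma~\ref{ExistenceLemma1} together with $\lambda$-regularity and $\lambda^+$-goodness. The two points most worth stating carefully are the cofinality argument for goodness and the use of the empty choice function to extract nonvanishing of $\sigma(\overline{\mathbf{c}})$ at each stage; with $\mathcal{D}$ in hand, Lemma~\ref{ExistenceLemma1} completes the proof of Theorem~\ref{ExistenceTheoremFirst}.
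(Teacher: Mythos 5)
Your proposal is correct and follows essentially the same route as the paper: take $\mathcal{D} = \bigcup_{\alpha < 2^\lambda} \mathcal{D}_\alpha$, inherit $\lambda$-regularity from $\mathcal{D}_0$, get $\lambda^+$-goodness from condition (3) plus $\mathrm{cof}(2^\lambda) > \lambda$, get condition (A) of Lemma~\ref{ExistenceLemma1} from condition (4) and the choice of $\mathbf{X}$, and get condition (B) because each stage is a (pre-)good pair. The paper states these verifications in one line each; you have merely spelled out the cofinality argument and the empty-choice-function observation, both of which are correct.
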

\begin{proof}
	Write $\mathcal{D} = \bigcup_{\alpha<2^\lambda} \mathcal{D}_\alpha$. Since $\mathcal{D}_0$ is $\lambda$-regular, so is $\mathcal{D}$. By condition (3), $\mathcal{D}$ is $\lambda^+$-good. By condition (4) and the definition of $\mathbf{X}$, $\mathcal{D}$ satisfies condition (A) of Lemma~\ref{ExistenceLemma1}. Since each $\mathcal{D}_\alpha$ satisfies condition (B) of Lemma~\ref{ExistenceLemma1}, so does $\mathcal{D}$.
\end{proof}

Note that there are only $2^\lambda$ many $\lambda$-distributions in $\mathcal{B}$; since $|\mathbf{X}| \leq 2^\lambda$, there are only $2^\lambda$-many challenges we have to handle in total. Thus, it suffices to show that we can handle each challenge individually, without using up to much of $\mathbb{C}_{0}$. Formally, the following two lemmas will finish. Recall that $\mu = \mbox{c.c.}(\mathcal{B}_1) \leq 2^\lambda$ is regular, so in the terminology of Lemma~\ref{ExistenceLemma7}, if each $\mathbb{C}_{\alpha} \backslash \mathbb{C}_{\alpha+1}$ has size less than $\mu$, then at limit stages $\delta$ we can set $\mathbb{C}_\delta = \bigcap_{\alpha < \delta} \mathbb{C}_\alpha$ and still have $|\mathbb{C}_\delta| \geq 2^\lambda$.

\begin{lemma}\label{ExistenceLemma8}
	Suppose $(\mathcal{E}, \mathbb{D})$ is a good pair, and suppose $\mathbf{a} \in X$. Then there is a good pair $(\mathcal{E}', \mathbb{D}')$ with $\mathcal{E} \subseteq \mathcal{E}'$, $\mathbb{D}' \subseteq \mathbb{D}$ and $|\mathbb{D} \backslash \mathbb{D}'| < \mu $, and such that there is some $\alpha^* < 2^\lambda$ with $\mathbf{a} = \mathbf{c}_{\alpha_*}$ mod $\mathcal{D}'$.
\end{lemma}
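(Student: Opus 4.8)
The plan is to decide the value of $\mathbf{a}$ modulo the filter by reading off, from the independent family, the ``intended image'' of $\mathbf{a}$ in $\mathcal{B}_1$ and then collapsing $\mathbf{a}$ onto a generator carrying that image. Concretely, I first run the decomposition from the proof of Lemma~\ref{ExistenceLemma6p5} on $\mathbf{a}$: choose a maximal family $\{(f_\gamma,\alpha_\gamma):\gamma<\kappa\}$ with each $f_\gamma\in P_{\mathbb{D}}$, each $\mathbf{c}'_{\alpha_\gamma}\ne 0$, the elements $\mathbf{x}_{f_\gamma}\wedge\mathbf{c}_{\alpha_\gamma}$ pairwise disjoint mod $\mathcal{E}$ and all $\le\mathbf{a}$ mod $\mathcal{E}$, and maximal as such. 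As in that proof, the $\mu$-c.c.\ of $\mathcal{B}_1$ forces $\kappa<\mu$. I then set $\mathbb{D}'=\mathbb{D}\setminus\bigcup_{\gamma}\mathrm{dom}(f_\gamma)$; since each $f_\gamma$ has finite domain and $\mu$ is regular uncountable, $|\mathbb{D}\setminus\mathbb{D}'|\le\kappa<\mu$, as required. Working in the complete algebra $\mathcal{B}_1$, put $\mathbf{v}=\bigvee_{\gamma<\kappa}\mathbf{c}'_{\alpha_\gamma}$ and, using that $(\mathbf{c}'_\alpha)$ enumerates $\mathcal{B}_1$ with repetitions, choose $\alpha_*$ with $\mathbf{c}'_{\alpha_*}=\mathbf{v}$. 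Note $\mathbf{c}'_{\alpha_\gamma}\le\mathbf{v}=\mathbf{c}'_{\alpha_*}$, so the term $\lnot x_{\alpha_\gamma}\vee x_{\alpha_*}$ lies in $\Sigma_1$ and hence $\mathbf{c}_{\alpha_\gamma}\le\mathbf{c}_{\alpha_*}$ mod $\mathcal{E}$ for every $\gamma$; this is the one place the infinite join in $\mathcal{B}_1$ is legitimately ``seen'' by the finitary conditions, and it is what makes the collapse possible.

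Next I let $\mathcal{E}'$ be the filter generated by $\mathcal{E}$ together with $\mathbf{a}\leftrightarrow\mathbf{c}_{\alpha_*}$, so that $\mathbf{a}=\mathbf{c}_{\alpha_*}$ mod $\mathcal{E}'$ is immediate, and I aim to show $(\mathcal{E}',\mathbb{D}')$ is a pre-good pair (then extend to a good pair as usual). The $\Sigma_1$-condition is inherited from $\mathcal{E}$, so everything reduces to the single claim that for every $g\in P_{\mathbb{D}'}$ and $\sigma\in\Sigma_+$, $(\mathbf{a}\leftrightarrow\mathbf{c}_{\alpha_*})\wedge\mathbf{x}_g\wedge\sigma(\overline{\mathbf{c}})\ne 0$ mod $\mathcal{E}$; equivalently, no piece $\mathbf{x}_g\wedge\sigma(\overline{\mathbf{c}})$ lies below the symmetric difference $\mathbf{a}\triangle\mathbf{c}_{\alpha_*}$ mod $\mathcal{E}$. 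Splitting such a hypothetical piece by $\mathbf{c}_{\alpha_*}$ and passing to a meet-of-literals disjunct, it suffices to rule out two cases: a nonzero piece $\mathbf{x}_g\wedge\tau(\overline{\mathbf{c}})\le\mathbf{c}_{\alpha_*}\wedge\lnot\mathbf{a}$ with $\tau$ implying $x_{\alpha_*}$, and a nonzero piece $\mathbf{x}_g\wedge\tau(\overline{\mathbf{c}})\le\mathbf{a}\wedge\lnot\mathbf{c}_{\alpha_*}$.

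For the first case I use the good-pair property to deduce $\tau(\overline{\mathbf{c}'})\le\mathbf{c}'_{\alpha_*}=\mathbf{v}$ (otherwise $\tau\wedge\lnot x_{\alpha_*}\in\Sigma_+$ would give a nonzero piece below both $\mathbf{c}_{\alpha_*}$ and $\lnot\mathbf{c}_{\alpha_*}$); since $\tau(\overline{\mathbf{c}'})\ne 0$ it meets some $\mathbf{c}'_{\alpha_\gamma}$, and because $\mathrm{dom}(g)\subseteq\mathbb{D}'$ is disjoint from $\mathrm{dom}(f_\gamma)$ the function $g\cup f_\gamma$ lies in $P_{\mathbb{D}}$, so $\mathbf{x}_{g\cup f_\gamma}\wedge\tau(\overline{\mathbf{c}})\wedge\mathbf{c}_{\alpha_\gamma}\ne 0$ mod $\mathcal{E}$ by goodness; but this element is $\le\mathbf{x}_g\wedge\tau\le\lnot\mathbf{a}$ and also $\le\mathbf{x}_{f_\gamma}\wedge\mathbf{c}_{\alpha_\gamma}\le\mathbf{a}$, a contradiction. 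For the second case the piece is $\le\mathbf{a}$, so by maximality of the decomposition (via Lemma~\ref{ExistenceLemma6} to extract a sub-piece $\mathbf{x}_f\wedge\mathbf{c}_\alpha$, which could otherwise be adjoined) it must meet some $\mathbf{x}_{f_\gamma}\wedge\mathbf{c}_{\alpha_\gamma}$; that witness is simultaneously $\le\lnot\mathbf{c}_{\alpha_*}$ and $\le\mathbf{c}_{\alpha_\gamma}\le\mathbf{c}_{\alpha_*}$ mod $\mathcal{E}$, again a contradiction. The main obstacle, and the reason the decomposition and the removal of $\bigcup_\gamma\mathrm{dom}(f_\gamma)$ are needed, is exactly that the eventual homomorphism does not preserve infinite joins: a priori $\mathbf{c}_{\alpha_*}$ can ``stick out'' past $\bigvee_\gamma\mathbf{c}_{\alpha_\gamma}$ over the independent antichains, and the disjointness of $\mathrm{dom}(g)$ from the $\mathrm{dom}(f_\gamma)$ is precisely what guarantees the compatibility $g\cup f_\gamma\in P_{\mathbb{D}}$ that lets independence convert every such protrusion into the impossible inequality $0\ne\mathbf{x}_{g\cup f_\gamma}\wedge\cdots\le\mathbf{a}\wedge\lnot\mathbf{a}$.
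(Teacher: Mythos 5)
Your verification that $(\mathcal{E}',\mathbb{D}')$ is pre-good is essentially sound, and collapsing $\mathbf{a}$ onto a single generator by adjoining the one element $\mathbf{a}\leftrightarrow\mathbf{c}_{\alpha_*}$ to a fixed filter $\mathcal{E}$ is a genuinely different organization from the paper's. But there is a real gap at the very first step: the claim that the maximal family $\{(f_\gamma,\alpha_\gamma):\gamma<\kappa\}$ has $\kappa<\mu$. The $\mu$-c.c.\ of $\mathcal{B}_1$ only sees subfamilies whose choice functions are pairwise \emph{compatible}: if $f_\gamma$ and $f_{\gamma'}$ are compatible then $f_\gamma\cup f_{\gamma'}\in P_{\mathbb{D}}$ and goodness converts disjointness of the pieces mod $\mathcal{E}$ into $\mathbf{c}'_{\alpha_\gamma}\wedge\mathbf{c}'_{\alpha_{\gamma'}}=0_{\mathcal{B}_1}$, but pairwise \emph{incompatible} $f_\gamma$'s impose no disjointness on the $\mathbf{c}'_{\alpha_\gamma}$ whatsoever. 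Since each antichain in $\mathbb{D}$ has $\lambda$ elements, one can have $\lambda$-many pairwise incompatible $f_\gamma$'s (for instance all disagreeing on one fixed $\mathbf{C}\in\mathbb{D}$ while their domains spread over $\lambda$-many further antichains), and $\mu=\mbox{c.c.}(\mathcal{B}_1)$ can perfectly well be $\leq\lambda$ (e.g.\ $\mathcal{B}_1$ a c.c.c.\ algebra with $\mu=\aleph_1\leq\lambda$). This is exactly why Lemma~\ref{ExistenceLemma6p5} extracts only the bound $2^\kappa\leq 2^\lambda$: its $\Delta$-system argument produces compatible functions only for regular cardinals exceeding $\lambda$, where the restrictions to the finite root can be stabilized. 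So neither $\kappa$ nor $|\bigcup_\gamma\mathrm{dom}(f_\gamma)|$ is bounded by $\mu$, and the requirement $|\mathbb{D}\setminus\mathbb{D}'|<\mu$ --- which the bookkeeping feeding into Lemma~\ref{ExistenceLemma7} genuinely needs --- is not established.

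The paper avoids this by running a transfinite recursion rather than a one-shot decomposition: at stage $\gamma$ it puts $\mathbf{x}_{f_\gamma}$ \emph{into} the filter $\mathcal{E}_{\gamma+1}$, so that $\mathbf{c}_{\alpha_\gamma}$ by itself decides $\mathbf{a}$ modulo the enlarged filter, and it maintains $\mathbf{c}'_{\alpha_\gamma}\wedge\mathbf{c}'_{\alpha_{\gamma'}}=0_{\mathcal{B}_1}$ as an explicit invariant; the $\mu$-c.c.\ then terminates the recursion in fewer than $\mu$ steps, each of which removes only finitely much of $\mathbb{D}$. If you try to keep your one-shot format by building disjointness of the $\mathbf{c}'_{\alpha_\gamma}$ into the choice of the maximal family, your Case 2 maximality argument breaks: a candidate pair produced by Lemma~\ref{ExistenceLemma6} might fail to be adjoinable only because its $\mathbf{c}'_\alpha$ meets an earlier $\mathbf{c}'_{\alpha_\gamma}$ in $\mathcal{B}_1$, not because its piece meets an earlier piece mod $\mathcal{E}$. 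Some form of the paper's recursive filter-enlargement therefore seems unavoidable here.
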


\begin{proof} We will try to define by induction on $\gamma < \mu$ filters $\mathcal{E}_\alpha \supseteq \mathcal{E}$, subsets $\mathbb{D}_\alpha \subseteq \mathbb{D}$, and ordinals $\alpha_\gamma < 2^\lambda$ such that:
	
	\begin{itemize}
		\item[(a)] Each $(\mathcal{E}_\gamma, \mathbb{D}_\gamma)$ is a good pair, and each $\mathbf{c}'_{\alpha_\gamma}$ is nonzero.
		\item[(b)] $(\mathcal{E}_0, \mathbb{D}_0) = (\mathcal{E}, \mathbb{D})$, and $\gamma < \gamma'$ implies $\mathcal{E}_\gamma \subseteq \mathcal{E}_{\gamma'}$ and $\mathbb{D}_{\gamma} \supseteq \mathbb{D}_{\gamma'}$ and $\mathbf{c}'_{\alpha_\gamma} \wedge \mathbf{c}'_{\alpha_{\gamma'}} = 0_{\mathcal{B}_1}$.
		\item[(c)] For limit $\delta < \mu^+$, $\mathbb{D}_\delta = \bigcap_{\gamma < \delta} \mathbb{D}_\gamma$.
		\item[(d)] For each $\gamma$, $\mathbb{D}_\gamma \backslash \mathbb{D}_{\gamma+1}$ is finite.
		\item[(e)] For each $\gamma$, either $\mathbf{c}_{\alpha_\gamma} \leq \mathbf{a}$ mod $\mathcal{E}_{\gamma+1}$ or else $\mathbf{c}_{\alpha_\gamma} \wedge \mathbf{a} =0_{\mathcal{B}_0}$ mod $\mathcal{E}_{\gamma+1}$.
	\end{itemize}
	
	Since $\mathcal{B}_1$ has the $\mu$-c.c. we must eventually reach a stage at which we cannot continue. Clearly this must happen at a successor stage, i.e. for some $\gamma_* < \mu$, we have constructed $(\mathcal{E}_{\gamma}, \mathbb{D}_\gamma: \gamma \leq \gamma_*)$ and $(\alpha_\gamma: \gamma < \gamma_*)$, and we cannot find $(\mathcal{E}_{\gamma_*+1}, \mathbb{D}_{\gamma_*+1}, \alpha_{\gamma_*})$.
	
	Let 
	$$\mathbf{c}' = \bigvee\{ \mathbf{c}'_{\alpha_\gamma}: \gamma < \gamma_* \mbox{ and } \mathbf{c}_{\alpha_\gamma} \leq \mathbf{a} \mbox{ mod } \mathcal{E}_{\gamma_*}\}.$$
	
	(We define that the empty join is $0_{\mathcal{B}_1}$.)
	
	Choose $\alpha$ with $\mathbf{c}' = \mathbf{c}'_\alpha$. We claim that $\mathbf{a} = \mathbf{c}_\alpha$ mod $\mathcal{E}_{\gamma_*}$. This suffices, since we can set $\mathcal{E}' = \mathcal{E}_{\gamma_*}$, $\mathbb{D}' = \mathbb{D}_{\gamma_*}$, and $\alpha^* = \alpha$.
	
	So suppose towards a contradiction that $\mathbf{a} \not= \mathbf{c}_\alpha$ mod $\mathcal{E}_{\gamma_*}$. Let $\mathbf{b}$ be either $\mathbf{a} \wedge \lnot \mathbf{c}_\alpha$ or else $\mathbf{c}_\alpha \wedge \lnot \mathbf{a}$, chosen so that $\mathbf{b}$ is nonzero mod $\mathcal{E}_{\gamma_*}$. By Lemma~\ref{ExistenceLemma6} we can choose $f \in P_{\mathbb{D}_{\gamma_*}}$ and $\alpha_{\gamma_*} < 2^\lambda$ so that $\mathbf{c}'_{\alpha_{\gamma_*}}$ is nonzero and $\mathbf{x}_f \wedge \mathbf{c}_{\alpha_{\gamma_*}} \leq \mathbf{b}$ mod $\mathcal{E}_{\gamma_*}$. Let $\mathbb{D}_{\gamma_*+1} = \mathbb{D}_{\gamma_*} \backslash \mbox{dom}(f)$, and let $\mathcal{E}'_{\gamma_*+1}$ be the filter generated by $\mathcal{E}_{\gamma_*}$ and $\mathbf{x}_f$. Then $(\mathcal{E}_{\gamma_*+1}', \mathbb{D}_{\gamma_*+1})$ is a pre-good pair so we can choose $\mathcal{E}_{\gamma_*+1} \supseteq \mathcal{E}_{\gamma_*+1}'$ such that $(\mathcal{E}_{\gamma_*+1}, \mathbb{G}_{\gamma_*+1})$ is a good pair. Then $\mathcal{E}_{\gamma_*+1}, \mathbb{D}_{\gamma_*+1}, \gamma_\alpha$ contradicts that we could not continue.
	
\end{proof}

We have saved the crux of the argument until the end:

\begin{lemma}\label{ExistenceLemma9}
	Suppose $(\mathcal{E}, \mathbb{D})$ is a good pair, and suppose $\mathbf{A}$ is a $\lambda$-distribution in $\mathcal{E}$. Then there is a good pair $(\mathcal{E}', \mathbb{D}')$ with $\mathcal{E} \subseteq \mathcal{E}'$, $\mathbb{D}' \subseteq \mathbb{D}$ and $|\mathbb{D} \backslash \mathbb{D}'| =1$, and such that $\mathbf{A}$ has a multiplicative refinement in $\mathcal{E}'$.
\end{lemma}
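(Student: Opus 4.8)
The plan is to imitate the single-step construction from the proof of Theorem~\ref{Goodness2}, using one of the reserved maximal antichains in $\mathbb{D}$ to manufacture a multiplicative refinement. Concretely, fix any $\mathbf{C} \in \mathbb{D}$ and set $\mathbb{D}' = \mathbb{D} \setminus \{\mathbf{C}\}$, so that $|\mathbb{D} \setminus \mathbb{D}'| = 1$ while $\mathbb{D}'$ still has size $2^\lambda$. Since $\mathbf{C}$ is a maximal antichain of size $\lambda$ and $|[\lambda]^{<\aleph_0}| = \lambda$, I enumerate $\mathbf{C} = (\mathbf{a}_s : s \in [\lambda]^{<\aleph_0})$ and define $\mathbf{B}(\emptyset) = 1$ and, for nonempty $s$,
\[
\mathbf{B}(s) = \bigvee\{\mathbf{A}(t) \wedge \mathbf{a}_t : s \subseteq t \in [\lambda]^{<\aleph_0}\}.
\]
The eventual $\mathcal{E}'$ will be any good-pair extension (with respect to $\mathbb{D}'$) of the filter $\mathcal{E}_*$ generated by $\mathcal{E} \cup \mathrm{im}(\mathbf{B})$.

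First I would record the structural facts about $\mathbf{B}$. Since $\mathbf{A}(t) \leq \mathbf{A}(s)$ whenever $s \subseteq t$, each joinand satisfies $\mathbf{A}(t) \wedge \mathbf{a}_t \leq \mathbf{A}(s)$, hence $\mathbf{B}(s) \leq \mathbf{A}(s)$ and $\mathbf{B}$ refines $\mathbf{A}$. For multiplicativity it suffices to check $\bigwedge_{i \in s} \mathbf{B}(\{i\}) \leq \mathbf{B}(s)$, the reverse inequality being monotonicity. If $\mathbf{e} := \bigwedge_{i \in s} \mathbf{B}(\{i\}) \wedge \lnot \mathbf{B}(s)$ were nonzero, I would refine it below a single antichain element, choosing $t_0$ with $\mathbf{e}' := \mathbf{e} \wedge \mathbf{A}(t_0) \wedge \mathbf{a}_{t_0}$ nonzero. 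Because $\{\mathbf{a}_t\}$ is an antichain, for each $i \in s$ the relation $\mathbf{e}' \leq \mathbf{B}(\{i\})$ forces $i \in t_0$ (otherwise every joinand meets $\mathbf{e}'$ in $0$), so $s \subseteq t_0$ and $\mathbf{e}' \leq \mathbf{A}(t_0) \wedge \mathbf{a}_{t_0} \leq \mathbf{B}(s)$, contradicting $\mathbf{e}' \leq \lnot \mathbf{B}(s)$. This is exactly the antichain argument of Theorem~\ref{Goodness2}.

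The main work, and the step I expect to be the real obstacle, is verifying that $(\mathcal{E}_*, \mathbb{D}')$ is a pre-good pair; condition (1) is inherited from $\mathcal{E} \subseteq \mathcal{E}_*$, so the content is condition (2). Given $f \in P_{\mathbb{D}'}$ and $\sigma(\overline{x}) \in \Sigma_+$, any finite meet of values of $\mathbf{B}$ equals a single $\mathbf{B}(s)$ by multiplicativity, so it suffices to show $\mathbf{x}_f \wedge \sigma(\overline{\mathbf{c}}) \wedge \mathbf{B}(s)$ is nonzero mod $\mathcal{E}$, and by definition of $\mathbf{B}(s)$ it is enough to produce one $t \supseteq s$ with $\mathbf{x}_f \wedge \sigma(\overline{\mathbf{c}}) \wedge \mathbf{A}(t) \wedge \mathbf{a}_t$ nonzero mod $\mathcal{E}$. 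Here the reserved antichain pays off: take $t = s$ and form $f' = f \cup \{(\mathbf{C}, \mathbf{a}_s)\} \in P_{\mathbb{D}}$ (legitimate since $\mathbf{C} \notin \mathbb{D}'$), so $\mathbf{x}_{f'} = \mathbf{x}_f \wedge \mathbf{a}_s$. Because $\mathbf{A}$ is a distribution in $\mathcal{E}$ we have $\mathbf{A}(s) \in \mathcal{E}$, so modulo $\mathcal{E}$ the meet equals $\mathbf{x}_{f'} \wedge \sigma(\overline{\mathbf{c}})$, which is nonzero mod $\mathcal{E}$ precisely because $(\mathcal{E}, \mathbb{D})$ is a good pair with $f' \in P_{\mathbb{D}}$ and $\sigma \in \Sigma_+$. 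Taking $f = \emptyset$ and $\sigma$ identically $1$ also shows each $\mathbf{B}(s) \neq 0$, so $\mathbf{B}$ is genuinely $\mathcal{B}_+$-valued and $\mathcal{E}_*$ is a proper filter. The crux, then, is exactly this folding move: reserving one antichain lets every new constraint $\mathbf{B}(s)$ collapse, modulo $\mathcal{E}$, to a constraint already controlled by the good-pair hypothesis. Finally I extend $(\mathcal{E}_*, \mathbb{D}')$ to a good pair $(\mathcal{E}', \mathbb{D}')$ as the text permits; then $\mathrm{im}(\mathbf{B}) \subseteq \mathcal{E}'$, so $\mathbf{B}$ is a multiplicative refinement of $\mathbf{A}$ in $\mathcal{E}'$, as required.
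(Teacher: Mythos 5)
Your proposal is correct and follows essentially the same route as the paper's proof: reserve one maximal antichain $\mathbf{C}\in\mathbb{D}$, index it by $[\lambda]^{<\aleph_0}$, define $\mathbf{B}(s)=\bigvee\{\mathbf{A}(t)\wedge\mathbf{a}_t: s\subseteq t\}$, verify multiplicativity via the antichain argument, and verify pre-goodness by folding $\mathbf{a}_s$ into the choice function to get $f'\in P_{\mathbb{D}}$ and using $\mathbf{A}(s)\in\mathcal{E}$. The only (welcome) differences are cosmetic: you set $\mathbf{B}(\emptyset)=1$ explicitly and phrase the pre-goodness check directly rather than by contradiction.
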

\begin{proof}
	Pick $\mathbf{D} \in \mathbb{D}$ and let $\mathbb{D}' = \mathbb{D} \backslash \{\mathbf{D}\}$. Enumerate $\mathbf{D} = \{\mathbf{d}_s: s \in [\lambda]^{<\aleph_0}\}$. For each $s \in [\lambda]^{<\aleph_0}$, let $\mathbf{B}(s) := \bigvee\{\mathbf{A}(t) \wedge \mathbf{d}_t: s \subseteq t \in [\lambda]^{<\aleph_0}\}$. Clearly this is a $\lambda$-distribution in $\mathcal{B}_0$, refining $\mathbf{A}$.
	
	We claim that $\mathbf{B}$ is multiplicative; let $s \in [\lambda]^{<\aleph_0}$. Suppose towards a contradict $\mathbf{e} := \left(\bigwedge_{\alpha \in s} \mathbf{B}_{\{\alpha\}}\right) \wedge (\lnot \mathbf{B}(s))$ were nonzero. Then we can find $\mathbf{e}' \leq \mathbf{e}$ nonzero, and $(s_\alpha: \alpha \in s)$ a sequence from $[\lambda]^{<\aleph_0}$, such that each $\alpha \in s_\alpha$, and such that $\mathbf{e}' \leq \mathbf{A}(s_\alpha) \wedge \mathbf{d}_{s_\alpha}$ for each $\alpha \in s$. Since $\mathbf{D}$ is an antichain this implies $s_\alpha = s_{\alpha'} = t$ say, for all $\alpha, \alpha' \in s$. Visibly then $s\subseteq t$, and so $\mathbf{e}' \leq \mathbf{A}(t) \wedge \mathbf{d}_t$, contradicting that $\mathbf{e}' \wedge \mathbf{B}(s) = 0$. 
	
	Let $\mathcal{E}_0'$ be the filter generated by $\mathcal{E}$ and $(\mathbf{B}(s): s \in [\lambda]^{<\aleph_0})$. We claim that $(\mathcal{E}'_0, \mathbb{D}')$ is pre-good, which suffices. So suppose towards a contradiction it were not; then we could find $f \in P_{\mathbb{D}'}$ and $\sigma(\overline{x}) \in \Sigma_+$ such that $\mathbf{x}_f \wedge \sigma(\overline{\mathbf{c}})= 0$ mod $\mathcal{E}'_0$. Then we can find $s \in [\lambda]^{<\aleph_0}$ such that $\mathbf{x}_f \wedge \sigma(\overline{\mathbf{c}}) \wedge \mathbf{B}(s) = 0$ mod $\mathcal{E}$. Thus $\mathbf{x}_f \wedge \sigma(\overline{\mathbf{c}}) \wedge \mathbf{A}(s) \wedge \mathbf{d}_s = 0$ mod $\mathcal{E}$. But since $\mathbf{A}(s) \in \mathcal{E}$, if we set $g = f \cup \{(\mathbf{D}, \mathbf{d}_s)\} \in P_{\mathbb{D}}$, then this implies that $\mathbf{x}_g \wedge \sigma(\overline{\mathbf{c}}) = 0$ mod $\mathcal{E}$, contradicting that $(\mathcal{E}, \mathbb{D})$ is a good pair.
\end{proof}

As mentioned above, using Lemmas \ref{ExistenceLemma8} and \ref{ExistenceLemma9} it is now straightforward to meet the requirements of Lemma \ref{ExistenceLemma7}; thus this finishes the proof of Theorem \ref{ExistenceTheoremFirst}.

\section{Downward L\"{o}wenheim-Skolem for Saturation}\label{DLSSatSec}
The following finishes the proof of Theorem \ref{UltrafilterPullbacks}.

\begin{theorem}\label{CompleteAlgebrasDLS}
	Suppose $\lambda$ is a cardinal, $\mathcal{B}$ is a complete Boolean algebra such that $2^{<\mbox{\scriptsize{c.c.}}(\mathcal{B})} \leq 2^\lambda$. Suppose $\mathcal{U}$ is an ultrafilter on $\mathcal{B}$. Then there is a complete subalgebra $\mathcal{B}_*$ of $\mathcal{B}$ with $|\mathcal{B}_*| \leq 2^\lambda$, such that $\mathcal{U} \cap \mathcal{B}_*$ $\lambda^+$-saturates exactly the same theories as $\mathcal{U}$.
\end{theorem}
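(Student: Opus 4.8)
The plan is to pass to the combinatorial reformulation of $\lambda^+$-saturation provided by Theorem~\ref{DistributionsWork} and then build $\mathcal{B}_*$ by a L\"{o}wenheim--Skolem closure whose cardinality and completeness are controlled by the chain condition. Throughout write $\mathcal{U}_* = \mathcal{U} \cap \mathcal{B}_*$, which is an ultrafilter on $\mathcal{B}_*$ whenever $\mathcal{B}_*$ is a complete subalgebra. By Theorem~\ref{DistributionsWork}, for a fixed $T$ the assertion that $\mathcal{U}$ $\lambda^+$-saturates $T$ is equivalent to: every $(\lambda,T)$-{\L}o{\'s} map in $\mathcal{U}$ has a multiplicative refinement in $\mathcal{U}$, and likewise for $\mathcal{U}_*$. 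Two absoluteness observations drive everything. First, if $\mathbf{A}$ is a $\lambda$-distribution all of whose values lie in $\mathcal{B}_*$, then $\mathbf{A}$ is a $(\lambda,T)$-{\L}o{\'s} map computed in $\mathcal{B}_*$ if and only if it is one computed in $\mathcal{B}$: by the local criterion of Theorem~\ref{DistributionLocal1} this depends only on which finite Boolean combinations $\bigwedge_{t \in \mathcal{J}} \mathbf{A}(t) \wedge \bigwedge_{t \notin \mathcal{J}} \lnot \mathbf{A}(t)$ are nonzero, and since $\mathcal{B}_*$ is a subalgebra these are literally the same elements with the same vanishing behavior. Second, for such an $\mathbf{A}$ a multiplicative refinement in $\mathcal{U}_*$ is exactly a multiplicative refinement in $\mathcal{U}$ whose values happen to lie in $\mathcal{B}_*$; in particular, if $\mathbf{A}$ has \emph{no} multiplicative refinement in $\mathcal{U}$ then it has none in $\mathcal{U}_*$, with no further hypotheses.

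With these in hand the two directions split asymmetrically, and only one requires genuine closure. Suppose first that $\mathcal{U}$ $\lambda^+$-saturates $T$ and let $\mathbf{A}$ be a $(\lambda,T)$-{\L}o{\'s} map in $\mathcal{U}_*$; by the first observation $\mathbf{A}$ is a $(\lambda,T)$-{\L}o{\'s} map in $\mathcal{U}$, so it has a multiplicative refinement in $\mathcal{U}$, and I will arrange that $\mathcal{B}_*$ already contains the values of one such refinement. Thus the only closure condition I impose is \textbf{(R)}: for every $\lambda$-distribution $\mathbf{A}$ with values in $\mathcal{B}_*$ that admits a multiplicative refinement in $\mathcal{U}$, the range of some fixed such refinement is contained in $\mathcal{B}_*$. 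Conversely, suppose $\mathcal{U}$ does \emph{not} $\lambda^+$-saturate $T$; then by Theorem~\ref{DistributionsWork} there is a $(\lambda,T)$-{\L}o{\'s} map $\mathbf{A}_T$ in $\mathcal{U}$ with no multiplicative refinement in $\mathcal{U}$. Here no reflection of an arbitrary bad map is required: a single witness suffices, so I simply \emph{seed} $\mathcal{B}_*$ with $\mathrm{range}(\mathbf{A}_T)$ for each of the at most $2^{\aleph_0} \le 2^\lambda$ non-saturated theories. Once $\mathrm{range}(\mathbf{A}_T) \subseteq \mathcal{B}_*$, the first observation makes $\mathbf{A}_T$ a $(\lambda,T)$-{\L}o{\'s} map in $\mathcal{U}_*$, and the second observation guarantees it still has no multiplicative refinement in $\mathcal{U}_*$; hence $\mathcal{U}_*$ does not $\lambda^+$-saturate $T$. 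This is the conceptual crux: the apparently hard reflection is free because non-refinability passes downward automatically, and all the real work lies in the existence direction \textbf{(R)}.

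It remains to build a complete subalgebra $\mathcal{B}_*$ of size $\le 2^\lambda$ that contains the seed and satisfies \textbf{(R)}. Write $\mu = \mbox{c.c.}(\mathcal{B})$, which is regular uncountable by Theorem~\ref{CCTheorem} (the finite case being trivial), and note $\mu \le 2^{<\mu} \le 2^\lambda$ by hypothesis. I form an increasing chain $(\mathcal{B}_\xi : \xi < \mu)$: let $\mathcal{B}_0$ be the complete subalgebra generated by the seed; at successors let $\mathcal{B}_{\xi+1}$ be the complete subalgebra generated by $\mathcal{B}_\xi$ together with the refinement-ranges demanded by \textbf{(R)} for all $\lambda$-distributions with values in $\mathcal{B}_\xi$; at limits take unions. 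The governing cardinal computation is that in a $\mu$-c.c. complete Boolean algebra the complete subalgebra generated by $\le 2^\lambda$ elements has size $\le (2^\lambda)^{<\mu} \le 2^\lambda$: one need only close under finite operations and joins of antichains, which have size $<\mu$, and $(2^\lambda)^\nu \le 2^\lambda$ for every $\nu < \mu$ since $2^\nu \le 2^{<\mu} \le 2^\lambda$. Each successor stage thus adds witnesses for at most $(2^\lambda)^\lambda = 2^\lambda$ distributions and stays of size $\le 2^\lambda$. Setting $\mathcal{B}_* = \bigcup_{\xi < \mu} \mathcal{B}_\xi$, the regularity $\mathrm{cf}(\mu) = \mu$ ensures that any subset of $\mathcal{B}_*$ of size $<\mu$—in particular every antichain, and the range of every $\lambda$-distribution since $\lambda < \mu$—already lies in a single $\mathcal{B}_\xi$. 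Hence $\mathcal{B}_*$ is closed under $<\mu$-joins, so it is a complete subalgebra, it satisfies \textbf{(R)}, and $|\mathcal{B}_*| \le \mu \cdot 2^\lambda = 2^\lambda$; this completes the argument.

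The main obstacle is the last paragraph: the simultaneous control of completeness, of the closure \textbf{(R)}, and of cardinality. Generating a complete subalgebra can in general blow up cardinality, and \textbf{(R)} refers to refinements whose values may lie anywhere in $\mathcal{B}$, so at every stage one must lean on the chain condition (through $2^{<\mu} \le 2^\lambda$) both to keep generated complete subalgebras of size $\le 2^\lambda$ and to see that a chain of length $\mu$ is automatically closed under the length-$<\mu$ operations (antichain joins and $\lambda$-indexed refinement ranges) that witness completeness and \textbf{(R)}. I would also verify carefully the two facts that make the single-witness treatment of the negative direction and the closure \textbf{(R)} for the positive direction interlock—namely that the restriction of $\mathcal{U}$ to a complete subalgebra is an ultrafilter, and that ``$(\lambda,T)$-{\L}o{\'s} map'' and ``multiplicative refinement'' are genuinely absolute between $\mathcal{B}_*$ and $\mathcal{B}$ for distributions valued in $\mathcal{B}_*$.
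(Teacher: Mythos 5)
Your overall strategy is the same as the paper's: reduce to the {\L}o{\'s}-map/multiplicative-refinement characterization of Theorem~\ref{DistributionsWork}, observe that being a $(\lambda,T)$-{\L}o{\'s} map and failing to have a multiplicative refinement both pass between $\mathcal{B}$ and a complete subalgebra containing the relevant ranges (so one seeded witness per non-saturated theory suffices), and then close under adding multiplicative refinements while controlling cardinality through $|\mathcal{B}(X)| \le |X|^{<\mu} \le 2^\lambda$. All of that is correct and matches the paper's proof.

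The gap is in the final closure argument. You build a chain of length $\mu = \mbox{c.c.}(\mathcal{B})$ and claim that the range of every $\lambda$-distribution valued in $\mathcal{B}_* = \bigcup_{\xi<\mu}\mathcal{B}_\xi$ lies in a single $\mathcal{B}_\xi$ ``since $\lambda < \mu$.'' But $\lambda < \mu$ is not a hypothesis and is false in general: the assumption is only $2^{<\mu} \le 2^\lambda$, which is satisfied, for instance, by any $\aleph_1$-c.c.\ complete Boolean algebra (so $\mu = \aleph_1$) with $\lambda$ uncountable --- a case that genuinely arises in the intended application, since in Theorem~\ref{UltrafilterPullbacks} the algebra $\mathcal{B}_1$ is only assumed to satisfy $2^{<\mbox{\scriptsize{c.c.}}(\mathcal{B}_1)} \le 2^\lambda$. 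When $\mu \le \lambda$, a $\lambda$-distribution with values in $\mathcal{B}_*$ need not be captured at any stage of a chain of length $\mu$, so your closure condition \textbf{(R)} is not inherited by the union and the positive direction of the proof collapses. The fix (and what the paper does) is to run the chain for $2^\lambda$ steps instead: then $\mbox{cof}(2^\lambda) > \lambda$ (K\"{o}nig) captures every $\lambda$-distribution at some stage, while the union remains a complete subalgebra because $\mbox{cof}(2^\lambda) \ge \mu$ --- a fact that is not automatic but is derived from $2^{<\mu} \le 2^\lambda$ by a short case analysis (if $2^{<\mu} < 2^\lambda$ then $\mu \le \lambda$ and K\"{o}nig applies; otherwise compare the cardinals $2^\kappa$ for $\kappa < \mu$ with $2^\lambda$). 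You need to supply both of these cofinality facts in place of the appeal to $\lambda < \mu$; the rest of your argument then goes through as written.
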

\begin{proof}
	We use the characterization of $\lambda^+$-saturation given by Theorem~\ref{DistributionsWork}. In short, we na\"{i}vely build a complete Boolean subalgebra $\mathcal{B}_*$ of $\mathcal{B}$ such that: for every complete theory $T$, if there is a $(\lambda, T)$-L\"{o}s map in $\mathcal{U}$ with no multiplicative refinement in $\mathcal{U}$, then there is  some such $(\lambda, T)$-L\"{o}s map in $\mathcal{B}_* \cap \mathcal{U}$; and if $\mathbf{A}$ is a distribution in $\mathcal{B}_* \cap \mathcal{U}$ with a multiplicative refinement in $\mathcal{U}$, then $\mathbf{A}$ has a multiplicative refinement in $\mathcal{B}_*$. The combinatorics work out that we can arrange $|\mathcal{B}_*| \leq 2^\lambda$:
	
	Write $\mu= \mbox{c.c.}(\mathcal{B})$. Note that $\mbox{cof}(2^\lambda) \geq \mu$: to see this, consider three cases. First, if $2^{<\mu} < 2^\lambda$, then necessarily $\mu \leq \lambda$, so conclude by K\"{o}nig's theorem (always $\mbox{cof}(2^\lambda) > \lambda$). If $2^{<\mu} = 2^\lambda$ and for all $\kappa < \mu$, $2^\kappa < 2^\lambda$, then visibly $(2^\kappa: \kappa < \mu)$ is a cofinal sequence in $2^\lambda$, and since $\mu$ is regular we conclude $\mbox{cof}(2^\lambda) \geq \mu$. Finally, suppose there is some $\kappa < \mu$ with $2^\kappa = 2^\lambda$. Then for all $\kappa'$ with $\kappa \leq \kappa' < \mu$, $2^{\kappa'} = 2^\lambda$; thus for each such $\kappa'$, $\mbox{cof}(2^\lambda) = \mbox{cof}(2^{\kappa'}) > \kappa'$; thus $\mbox{cof}(2^\lambda) \geq \mu$.
	
	Note that for every $X \subseteq \mathcal{B}$ with $|X| \leq 2^\lambda$, if we let $\mathcal{B}(X)$ be the complete subalgebra of $\mathcal{B}$ generated by $X$, then $|\mathcal{B}(X)| \leq |X|^{<\mu} \leq 2^\lambda$. Also, since $\mbox{cof}(2^\lambda) \geq \mu$, whenever $(\mathcal{B}_\alpha: \alpha <2^\lambda)$ is an increasing sequence of complete subalgebras of $\mathcal{B}$, we have that $\bigcup_{\alpha < 2^\lambda} \mathcal{B}_\alpha$ is a complete subalgebra of $\mathcal{B}$.
	
	Choose $X_0 \subseteq \mathcal{B}$ with $|X_0| \leq 2^\lambda$ such that for every complete theory $T$ in a countable language, if $\mathcal{U}$ does not $\lambda^+$-saturate $T$, then there is some $(\lambda, T)$-{\L}o{\'s} map $\mathbf{A}$ in $\mathcal{U}$ with no multiplicative refinement in $\mathcal{U}$, such that $\mbox{im}(\mathbf{A}) \subseteq  X_0$. Then choose an increasing sequence $(\mathcal{B}_\alpha: \alpha < 2^\lambda)$ of complete subalgebras of $\mathcal{B}$, such that $X_0 \subseteq \mathcal{B}_0$, and each $|\mathcal{B}_\alpha| \leq 2^\lambda$, and for each $\alpha < 2^\lambda$ and each $\lambda$-distribution $\mathbf{A}$ in $\mathcal{U} \cap \mathcal{B}_\alpha$, if $\mathbf{A}$ has a multiplicative refinement in $\mathcal{U}$ then it has one in $\mathcal{U} \cap \mathcal{B}_{\alpha+1}$. (There are only $|\mathcal{B}_\alpha|^\lambda \leq 2^\lambda$-many distributions to handle.)
	
	Write $\mathcal{B}_* = \bigcup_{\alpha < 2^\lambda} \mathcal{B}_\alpha$. Then $\mathcal{B}_*$ is a complete subalgebra of $\mathcal{B}$ and and $|\mathcal{B}_*| \leq 2^\lambda$. Write $\mathcal{U}_* = \mathcal{U} \cap \mathcal{B}_*$. Since $X_0 \subseteq \mathcal{B}_*$, we get that if $\mathcal{U}$ does not $\lambda^+$-saturate $T$, then neither does $\mathcal{U}_*$. Suppose on the other hand that $\mathcal{U}$ $\lambda^+$-saturates $T$, and that $\mathbf{A}$ is a $(\lambda, T)$-{\L}o{\'s} map in $\mathcal{U}_*$. Then $\mathbf{A}$ is in some $\mathcal{U} \cap \mathcal{B}_\alpha$ for some $\alpha < 2^\lambda$, since $\mbox{cof}(2^\lambda) > \lambda$. Thus $\mathbf{A}$ has a multiplicative refinement in $\mathcal{U} \cap \mathcal{B}_{\alpha+1} \subseteq \mathcal{U}_*$.
\end{proof}

Note then that Theorem \ref{UltrafilterPullbacks} follows immediately from Theorems \ref{SeparationOfVariablesFirst}, \ref{ExistenceTheoremFirst} and \ref{CompleteAlgebrasDLS}. 

As a corollary, we get the following.

\begin{cor}\label{KeislerEquivs}
	Let $\lambda$ be a cardinal and let $T_0, T_1$ be theories. The following are equivalent:
	
	\begin{itemize}
		\item[(A)] For every $\lambda$-regular ultrafilter on $\mathcal{P}(\lambda)$, if $\mathcal{U}$ $\lambda^+$-saturates $T_1$ then $\mathcal{U}$ $\lambda^+$-saturates $T_0$ (i.e. $T_0 \trianglelefteq_\lambda T_1$).
		\item[(B)] There is a complete Boolean algebra $\mathcal{B}$ with an antichain of size $\lambda$, such that for every $\lambda$-regular ultrafilter $\mathcal{U}$ on $\mathcal{B}$, if $\mathcal{U}$ $\lambda^+$-saturates $T_1$ then $\mathcal{U}$ $\lambda^+$-saturates $T_0$.
		\item[(C)] For every complete Boolean algebra $\mathcal{B}$ with $2^{<\mbox{\scriptsize{c.c.}}(\mathcal{B})} \leq 2^\lambda$, and for every ultrafilter $\mathcal{U}$ on $\mathcal{B}$, if $\mathcal{U}$ $\lambda^+$-saturates $T_1$ then $\mathcal{U}$ $\lambda^+$-saturates $T_0$;
		
		\item[(D)] For every complete Boolean algebra $\mathcal{B}$ with the $\lambda^+$-c.c., and for every ultrafilter $\mathcal{U}$ on $\mathcal{B}$, if $\mathcal{U}$ $\lambda^+$-saturates $T_1$ then $\mathcal{U}$ $\lambda^+$-saturates $T_0$.
	\end{itemize}
\end{cor}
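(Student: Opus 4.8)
The plan is to prove the four conditions equivalent by running the cycle $(A) \Rightarrow (B) \Rightarrow (C) \Rightarrow (D) \Rightarrow (A)$. Three of these implications are mere comparisons of the classes of ultrafilters being quantified over, so the only substantive step will be $(B) \Rightarrow (C)$, which is exactly where the pullback theorem (Theorem~\ref{UltrafilterPullbacks}) does the work.

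First, $(A) \Rightarrow (B)$ should be immediate: I will simply take $\mathcal{B} = \mathcal{P}(\lambda)$ as the witness demanded by $(B)$, using that the singletons $\{\{\alpha\}: \alpha < \lambda\}$ form an antichain of size $\lambda$. With this choice $(B)$ is literally the assertion $(A)$. The heart of the argument is $(B) \Rightarrow (C)$. Here I fix a witness $\mathcal{B}_0$ for $(B)$; since $\mathcal{B}_0$ has an antichain of size $\lambda$ we have $\mbox{c.c.}(\mathcal{B}_0) > \lambda$. Given any complete Boolean algebra $\mathcal{B}_1$ with $2^{<\mbox{c.c.}(\mathcal{B}_1)} \leq 2^\lambda$ and any ultrafilter $\mathcal{U}_1$ on $\mathcal{B}_1$ that $\lambda^+$-saturates $T_1$, I apply Theorem~\ref{UltrafilterPullbacks} with target algebra $\mathcal{B}_0$ and source $(\mathcal{B}_1,\mathcal{U}_1)$, whose hypotheses are met precisely by the two facts just noted. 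This yields a $\lambda$-regular ultrafilter $\mathcal{U}_0$ on $\mathcal{B}_0$ that $\lambda^+$-saturates exactly the same theories as $\mathcal{U}_1$. In particular $\mathcal{U}_0$ $\lambda^+$-saturates $T_1$, so $(B)$ forces it to $\lambda^+$-saturate $T_0$, and hence $\mathcal{U}_1$ does too, giving $(C)$.

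The two remaining implications are pure chain-condition bookkeeping. For $(C) \Rightarrow (D)$: if $\mathcal{B}$ has the $\lambda^+$-c.c.\ then every antichain has size at most $\lambda$, so $\mbox{c.c.}(\mathcal{B}) \leq \lambda^+$ and therefore $2^{<\mbox{c.c.}(\mathcal{B})} \leq 2^{<\lambda^+} = 2^\lambda$; thus every algebra appearing in $(D)$ already satisfies the hypothesis of $(C)$, and $(C)$ specializes to $(D)$. For $(D) \Rightarrow (A)$: a maximal antichain of $\mathcal{P}(\lambda)$ is a family of pairwise disjoint nonempty subsets of $\lambda$ and so has size at most $\lambda$, whence $\mathcal{P}(\lambda)$ has the $\lambda^+$-c.c.; consequently $(D)$ applies to every ultrafilter on $\mathcal{P}(\lambda)$, in particular to the $\lambda$-regular ones, which is exactly $(A)$.

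I do not anticipate a genuine obstacle, since all the real content is packaged into Theorem~\ref{UltrafilterPullbacks}. The only places requiring care are the cardinal-arithmetic verifications — confirming $2^{<\lambda^+} = 2^\lambda$, that $\mathcal{P}(\lambda)$ has both the $\lambda^+$-c.c.\ and an antichain of size exactly $\lambda$, and that the hypotheses $\mbox{c.c.}(\mathcal{B}_0) > \lambda$ and $2^{<\mbox{c.c.}(\mathcal{B}_1)} \leq 2^\lambda$ are correctly lined up when invoking the pullback theorem in $(B) \Rightarrow (C)$.
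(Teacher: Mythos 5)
Your proof is correct and follows exactly the paper's route: the cycle $(A)\Rightarrow(B)\Rightarrow(C)\Rightarrow(D)\Rightarrow(A)$, with the only substantive step being $(B)\Rightarrow(C)$ via Theorem~\ref{UltrafilterPullbacks} and the rest being routine checks that $\mathcal{P}(\lambda)$ has an antichain of size $\lambda$ and the $\lambda^+$-c.c., and that the $\lambda^+$-c.c.\ gives $2^{<\mbox{\scriptsize{c.c.}}(\mathcal{B})}\leq 2^\lambda$. The paper dismisses these three implications as trivial; your write-up just spells them out.
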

\begin{proof}
	(A) implies (B) and (C) implies (D) and (D) implies (A): trivial.
	
	(B) implies (C): by Theorem~\ref{UltrafilterPullbacks}.
\end{proof}
In particular, we have the following formulation of Keisler's order.  
\begin{theorem}
	Suppose $T_0, T_1$ are theories. Then $T_0 \trianglelefteq T_1$ if and only if for every $\lambda$, for every complete Boolean algebra $\mathcal{B}$ with the $\lambda^+$-c.c., and for every ultrafilter $\mathcal{U}$ on $\mathcal{B}$, if $\mathcal{U}$ $\lambda^+$-saturates $T_1$, then $\mathcal{U}$ $\lambda^+$-saturates $T_0$.
\end{theorem}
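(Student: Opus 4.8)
The plan is to observe that this theorem is nothing more than Corollary~\ref{KeislerEquivs} quantified over all cardinals $\lambda$, so essentially no new work is required beyond unwinding the definition of $\trianglelefteq$. First I would recall that, by definition, $T_0 \trianglelefteq T_1$ means precisely that $T_0 \trianglelefteq_\lambda T_1$ holds for every cardinal $\lambda$; and that, again by definition, $T_0 \trianglelefteq_\lambda T_1$ is exactly clause (A) of Corollary~\ref{KeislerEquivs} for that fixed $\lambda$. Thus $T_0 \trianglelefteq T_1$ is equivalent to the assertion that clause (A) of Corollary~\ref{KeislerEquivs} holds for every $\lambda$.

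Next I would invoke Corollary~\ref{KeislerEquivs} itself, which for each fixed $\lambda$ gives the equivalence of clause (A) with clause (D), namely: for every complete Boolean algebra $\mathcal{B}$ with the $\lambda^+$-c.c.\ and for every ultrafilter $\mathcal{U}$ on $\mathcal{B}$, if $\mathcal{U}$ $\lambda^+$-saturates $T_1$ then $\mathcal{U}$ $\lambda^+$-saturates $T_0$. Quantifying this equivalence over all $\lambda$, the statement ``clause (A) holds for every $\lambda$'' is equivalent to ``clause (D) holds for every $\lambda$,'' and the latter is verbatim the right-hand side of the theorem. Chaining these equivalences completes the argument.

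Since the entire content is delegated to Corollary~\ref{KeislerEquivs}, there is no genuine obstacle at this stage: all the substance lives in the implication (B) $\Rightarrow$ (C) of that corollary, which is where Theorem~\ref{UltrafilterPullbacks} (and hence the Separation of Variables, Existence, and downward L\"{o}wenheim--Skolem theorems) does the heavy lifting. The only point requiring a moment's care is that the universal quantifier over $\lambda$ distributes correctly across the biconditional; this is immediate, because the equivalence (A) $\iff$ (D) of Corollary~\ref{KeislerEquivs} is established for each individual $\lambda$, so the corresponding universally quantified biconditional over $\lambda$ follows at once.
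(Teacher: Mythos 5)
Your proposal is correct and matches the paper exactly: the paper states this theorem immediately after Corollary~\ref{KeislerEquivs} with no further argument, treating it precisely as the equivalence of clauses (A) and (D) of that corollary quantified over all $\lambda$, together with the definition of $\trianglelefteq$ as the conjunction of $\trianglelefteq_\lambda$ over all $\lambda$. Nothing more is needed.
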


\bibliography{mybib}

\end{document}